\theoremstyle{plain}
\newtheorem{thm}{\protect\theoremname}[section]
\theoremstyle{plain}
\newtheorem{lem}[thm]{\protect\lemmaname}
\theoremstyle{plain}
\newtheorem{claim}[thm]{\protect\claimname}
\theoremstyle{plain}
\newtheorem{cor}[thm]{\protect\corollaryname}
\providecommand{\customgenericname}{}
\newcommand{\newcustomtheorem}[2]{%
  \newenvironment{#1}[1]
  {%
   \renewcommand\customgenericname{#2}%
   \renewcommand\theinnercustomgeneric{##1}%
   \innercustomgeneric
  }
  {\endinnercustomgeneric}
}
\providecommand{\claimname}{Claim}
\providecommand{\lemmaname}{Lemma}
\providecommand{\theoremname}{Theorem}
\providecommand{\corollaryname}{Corollary}
\begin{document}
\title{Subset sums, completeness and colorings}
\author{David Conlon\thanks{Department of Mathematics, California Institute of Technology, Pasadena, CA 91125. Email:  {\tt dconlon@caltech.edu.} Research supported by NSF Award DMS-2054452.} \and Jacob Fox\thanks{Department of Mathematics, Stanford University, Stanford, CA 94305. Email: {\tt jacobfox@stanford.edu}. Research supported by a Packard Fellowship and by NSF Award DMS-1855635.} \and Huy Tuan Pham\thanks{Department of Mathematics, Stanford University, Stanford, CA 94305. Email: {\tt huypham@stanford.edu}.} }
\date{}
\maketitle

\begin{abstract}
We develop novel techniques which allow us to prove a diverse range of results relating to subset sums and complete sequences of positive integers, including solutions to several longstanding open problems. These include: solutions to the three problems of Burr and Erd\H{o}s on Ramsey complete sequences, for which Erd\H{o}s later offered a combined total of \$350; analogous results for the new notion of density complete sequences; the solution to a conjecture of Alon and Erd\H{o}s on the minimum number of colors needed to color the positive integers less than $n$ so that $n$ cannot be written as a monochromatic sum; the exact determination of an extremal function introduced by Erd\H{o}s and Graham on sets of integers avoiding a given subset sum; and, answering a question reiterated by several authors, a homogeneous strengthening of a seminal result of Szemer\'edi and Vu on long arithmetic progressions in subset sums.
\end{abstract}

\section{Introduction }

Many of the most famous problems and results in mathematics concern the representation of positive integers as the sum of elements from a sparse sequence. For example, the long open Goldbach conjecture states that every even integer at least four is the sum of two primes, while Vinogradov's theorem states that every sufficiently large odd integer is the sum of three primes (and was recently extended by Helfgott~\cite{Helfgott} to cover all odd integers at least seven). Some other notable results of this type include Lagrange's four-square theorem that every positive integer is the sum of four squares, Gauss' Eureka theorem that every positive integer is the sum of three triangular numbers and the Hilbert--Waring theorem. 

While these problems concern the representation of integers as the sum of a bounded number of terms from a particular sequence, there are many results and open problems which do not stipulate a bound on the number of terms. A prominent example of such a result is a theorem of Szemer\'edi and Vu~\cite{SV}, confirming an old conjecture of Erd\H{o}s~\cite{Er62}, which says that there is a constant $C$ such that if $A = (a_n)_{n=1}^\infty$ is an infinite increasing sequence of integers with $|A \cap [n]| \geq C \sqrt{n}$ for all sufficiently large $n$ which intersects every infinite arithmetic progression of integers, then we can represent any sufficiently large integer as a sum of distinct terms from the sequence. In this paper, we develop general methods which solve many open problems of precisely this type. 

To be more precise, given a set or a sequence $A$ of integers, we define the set of subset sums $\Sigma(A)$ to be the set of all integers representable as a sum of distinct elements from $A$. That is, 
\[\Sigma(A) = \left\{ \sum_{s \in S} s : S \subseteq A \right\}.\]
Our contribution then is to solve several open problems on conditions which guarantee that $\Sigma(A)$ contains either a particular integer or all sufficiently large integers. In particular, we answer several old questions of Burr and Erd\H{o}s~\cite{BE2} on the density of so-called Ramsey complete sequences, for whose solution Erd\H{o}s~\cite{Er95} later offered \$350. We also solve a conjecture of Alon and Erd\H{o}s~\cite{AEr} on the minimum number of colors needed to color the positive integers less than $n$ so that $n$ cannot be written as a monochromatic sum and determine exactly the answer to an extremal question first studied by Alon, Erd\H{o}s and Graham \cite{A, Er89} on the maximum size of a set avoiding a particular subset sum. Finally, answering a question reiterated by several groups of authors, including Erd\H{o}s and S\'ark\"ozy~\cite{ES}, S\'ark\"ozy~\cite{Sar3} and Tran, Vu, and Wood \cite{TVW}, we prove a homogeneous strengthening of another result of Szemer\'edi and Vu~\cite{SV} from which the Erd\H{o}s conjecture mentioned above was derived.

What unites these seemingly disparate topics is a common proof framework that allows us to show the existence of a long interval in the set of subset sums of an integer set $S$. This framework has several steps:

\begin{enumerate}
\item We partition $S$ into $\ell$ parts $S_1,\ldots,S_{\ell}$ of roughly equal size for an appropriate choice of $\ell$. 
\item We further partition each part $S_i$ into two parts $S_i'$ and $S_i''$ of appropriate size and show that, for any $s\in S_i''$, the set of subset sums of $S_i'$ modulo $s$ is large. 
\item Using step (ii), we show that $\Sigma(S_i) = \Sigma(S_i' \cup S_i'')$ is dense in some long interval.  
\item Using step (iii), we show that $\Sigma(S) = \Sigma(S_1 \cup \dots \cup S_\ell)$ contains a long interval.
\end{enumerate}  
Step (ii) is the heart of the method and must be appropriately tailored to each application, drawing variously on the probabilistic method, on structural results from additive number theory and on estimates from analytic number theory. We will say more about our methods in Section~\ref{sec:overview}. For now, we will focus on describing our main results, along with several extensions, variations and applications, in more detail.

\subsection{Ramsey completeness and density completeness} 

We say that a sequence of positive integers $A$ is \emph{complete} if every sufficiently large positive integer is in $\Sigma(A)$ and \emph{entirely complete} if every positive integer is in $\Sigma(A)$. For example, the powers of two are entirely complete, while the powers of three are incomplete.
A far less simple example, due to Birch~\cite{B}, is that the sequence $\{p^i q^j : i, j \geq 0\}$ is complete whenever $p,q \geq 2$ are coprime integers. 
For more on the rich history of complete sequences (and some open problems), we refer the interested reader to~\cite{BEGL96, EG}.

Our starting point here is with the observation that the completeness property can be surprisingly fragile. Indeed, removing any element from the powers of two turns an entirely complete sequence into an incomplete one. For this reason, Burr and Erd\H{o}s~\cite{BE1, BE2} began the study of more robust notions of completeness. We will be concerned with two such notions here, namely, robustness under partitioning, known as Ramsey completeness in the literature, and robustness under taking subsets, a new concept which we refer to as density completeness.

\subsubsection{Ramsey completeness}

Following Burr and Erd\H{o}s~\cite{BE2}, we say that a sequence of positive integers $A$ is \emph{$r$-Ramsey complete} if, whenever the sequence is partitioned into $r$ classes $A_1, A_2, \dots, A_r$, every sufficiently large positive integer is in $\bigcup_{i=1}^r \Sigma(A_i)$ and \emph{entirely $r$-Ramsey complete} if every positive integer is in $\bigcup_{i=1}^r \Sigma(A_i)$. Equivalently, $A$ is entirely $r$-Ramsey complete if, for any coloring of $A$ using $r$ colors, every positive integer can be written as a monochromatic subset sum. 

In their paper introducing these concepts, Burr and Erd\H{o}s~\cite{BE2} constructed an entirely $2$-Ramsey complete sequence $A$ with the property that $|A \cap [n]| \leq C \log^3 n$ for all $n$, where $C$ is an absolute constant. In the other direction, they were able to show that there is a constant $c > 0$ for which there is no $2$-Ramsey complete sequence with $|A \cap [n]| \leq c \log^2 n$ for all sufficiently large $n$. They also asked whether it might be possible to narrow the gap between these two estimates and Erd\H{o}s~\cite{Er95} later offered \$100 for such an improvement. 

For $r \geq 3$, the results of Burr and Erd\H{o}s clearly imply that there is no $r$-Ramsey complete sequence with $|A \cap [n]| \leq c \log^2 n$ for all sufficiently large $n$. However, even for $r = 3$, they were unable to construct an $r$-Ramsey complete sequence with $|A \cap [n]| =n^{o(1)}$. Given the lack of progress on this problem, Erd\H{o}s~\cite{Er95} later offered \$250 for any non-trivial result. Our first theorem solves both this problem and that above at once, by determining the growth rate of the sparsest possible $r$-Ramsey complete sequence up to an absolute constant factor.

\begin{thm} \label{thm:Ramsey-complete}
There is a constant $C$ such that, for every integer $r\ge2$, there is an $r$-Ramsey complete
sequence $A$ with $|A\cap[n]|\le Cr\log^{2}n$ for all $n$. Furthermore, there is a constant
$c>0$ such that no sequence $A$ with $|A\cap[n]|\le cr\log^{2}n$
for all sufficiently large $n$ is $r$-Ramsey complete. 
\end{thm}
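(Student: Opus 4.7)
The two halves of Theorem~\ref{thm:Ramsey-complete} call for different techniques. For the lower bound I would adapt the Burr--Erd\H{o}s argument for $r=2$ via a random $r$-coloring and a first-moment calculation. For the upper bound I would construct $A$ probabilistically and then invoke the four-step subset-sum framework described in the introduction, reducing the Ramsey problem for $A$ to an ordinary completeness-type problem for a single (pigeonholed) color class restricted to a bounded window of dyadic intervals.

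\textbf{Lower bound.} Assuming $|A\cap[n]|\le cr\log^2 n$ for a small absolute constant $c$, I would color $A$ uniformly at random with $r$ colors and estimate the expected number of subsets $S\subseteq A\cap[n]$ with $\sum S = n$ that become monochromatic, which is $\sum_{S}r^{1-|S|}$. The Burr--Erd\H{o}s argument for $r=2$ controls this expectation by combining the sparsity of $A$ with an elementary count of the number of size-$k$ subsets summing to $n$; the same argument with $r^{1-|S|}$ in place of $2^{1-|S|}$ absorbs the extra factor of $r$ allowed in the density hypothesis. A Borel--Cantelli argument then produces a single $r$-coloring under which infinitely many $n$ fail to be monochromatic sums, contradicting $r$-Ramsey completeness.

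\textbf{Upper bound.} Build $A$ randomly by including each integer $m\ge2$ independently with probability $p_m:=\min(1,\,Cr(\log m)^2/m)$ for a large absolute constant $C$. Chernoff plus a union bound yields, with positive probability, both $|A\cap[n]|\le 2Cr\log^2 n$ for all $n$ and $|A\cap I_k|\ge (C/2)rk^2$ for all $k$ past a (non-random) threshold, where $I_k:=[2^{k-1},2^k)$. Given an $r$-coloring $A=A_1\cup\cdots\cup A_r$ and a large target $N$ with $K:=\lceil\log_2 N\rceil$, I would pick the color $j^*=j^*(N)$ by a double pigeonhole on the window $W:=[K-L,K]$ of length $L:=r^2$: each $k\in W$ has some color with $\ge (C/2)k^2$ elements of $A$ in $I_k$, and over $W$ some color $j^*$ achieves this for at least $r$ values of $k\in W$. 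Let $\mathcal K^*\subseteq W$ be this set of $k$'s; consecutive elements of $\mathcal K^*$ differ by at most $L$. I would now apply the four-step framework to $S_k:=A_{j^*}\cap I_k$ for $k\in\mathcal K^*$. Step~(ii) shows that for a typical $s\in S_k$ the subset sums of $S_k\setminus\{s\}$ equidistribute modulo $s$ with error $e^{-\Omega(k^2)}$; step~(iii) upgrades this to a long interval $J_k\subseteq\Sigma(S_k)$, of length comparable to $|S_k|\cdot 2^k$; step~(iv) glues the $J_k$ across $k\in\mathcal K^*$ by Minkowski addition, which succeeds once $k\ge k_0(r)\sim 2^{r^2/2}/\sqrt{C}$, at which point the length of $J_{k_i}$ exceeds $2^{k_{i+1}}$ so consecutive pieces overlap. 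Provided $N$ exceeds a fixed threshold $N_0(r)$ depending only on $r$, this yields $N\in\Sigma(A_{j^*})$, as required.

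\textbf{Main obstacle.} The technical core is step~(ii): controlling the distribution of $\Sigma(S_k)\bmod s$ for random $S_k$. The natural Fourier-analytic estimate bounds the bias at each nontrivial character $\chi_\xi$ by $\prod_{x\in S_k}|\cos(\pi\xi x/s)|$, and the main challenge is to rule out ``resonant'' frequencies $\xi$ for which $\xi x/s$ is atypically close to an integer for many $x\in S_k$ simultaneously. Handling all such $\xi$ uniformly in $s$ requires a delicate Diophantine/additive-combinatorial analysis, fed by the randomness of $A$ to control every relevant $\xi$ at once. I expect this to be the principal obstacle; the remaining steps should be routine or follow standard templates.
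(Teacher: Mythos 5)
There is a genuine gap in both halves. For the upper bound, your construction takes $A$ to be a random subset of \emph{all} integers and selects the color class $j^*$ purely by size, which ignores divisibility obstructions --- and these are the central difficulty. The coloring is chosen after $A$ is fixed, so the adversary may place every element of $A$ divisible by $2$ (or by any fixed small prime) into one class; your pigeonhole may well select exactly that class, and for it the key claim of your step (ii) is false: if $d$ divides every element of $S_k$, then $\Sigma(S_k\setminus\{s\})\bmod s$ lies in $d\mathbb{Z}_s$ and cannot equidistribute. Relatedly, the modular statement must hold simultaneously for \emph{every} subset of each block of size about $|S_k|/r$, so one needs a union bound over roughly $\binom{Cr\log x}{C\log x}$ subsets, forcing per-subset failure probabilities of shape $(\log x)^{-\Omega(\log x)}$; your sketch never addresses this, and you explicitly leave the modular step (your ``principal obstacle'') unresolved. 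The paper handles both points at once: the elements of each dyadic block are chosen uniformly from integers with no prime factor at most $(\log x)/2$, so no color class can be trapped modulo a small integer and $\mathbb{Z}_m$ has no large proper subgroup, and the equidistribution is proved not by Fourier analysis but by a combinatorial growth argument using almost-periods (Lemmas~\ref{lem:double-count} and~\ref{lem:stable-period}) and the Cauchy--Davenport-type Lemma~\ref{lem:Cauchy-Davenport}, which yields precisely the superexponentially small failure probability needed for the union bound (Lemma~\ref{lem:cover}). With that in hand (Lemma~\ref{lem:main-ramsey}) the per-block guarantee holds for every coloring and consecutive dyadic intervals already overlap, so your window/gluing scheme with threshold $k_0(r)\sim 2^{r^2/2}$ is unnecessary.

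For the lower bound, the random-coloring first-moment plan does not work as stated. Borel--Cantelli requires infinitely many $n$ for which the expected number of monochromatic subsets of $A$ summing to $n$ is small, for \emph{every} $A$ with $|A\cap[n]|\le cr\log^2 n$, and the ``elementary count'' you invoke is exactly the missing (and in general false) ingredient: sparsity alone gives only $N_k\le\binom{cr\log^2 n}{k-1}$, so $\sum_k N_k r^{1-k}$ can be as large as $e^{\Theta(\log^2 n)}$, and for interval-like sparse sequences (say $A\cap[2^{j-1},2^j)$ consisting of about $crj^2$ consecutive integers) the terms with $k\approx c\log^2 n$ make the expected count diverge for every large $n$, so a uniform random coloring is not expected to defeat such $A$ at all. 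Note also that completeness is not density-monotone, so the fact that each random color class is sparse gives nothing. The paper instead defeats every sparse sequence with a structured coloring: the product of a ``hue'' coloring (index mod $r/2$) with threshold colorings $C_j$, analysed via a cost/Abel-summation argument and the generating-function bound of Lemma~\ref{cool}, followed by a recursive choice of sparse ``weak'' scales; the factor $r$ in the density threshold comes from the $r/2$ interlacing hues, not from replacing $2^{1-|S|}$ by $r^{1-|S|}$, and this $r$-dependence is precisely the new content of the theorem, so it cannot be waved through as a routine adaptation.
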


Note that the lower bound, that is, the statement that there is a constant $c>0$ such that no sequence $A$ with $|A\cap[n]|\le cr\log^{2}n$
for all sufficiently large $n$ is $r$-Ramsey complete, already improves on Burr and Erd\H{o}s' result, which had no dependency on $r$. We note also that a standard compactness argument implies that if $A$ is an $r$-Ramsey complete sequence, then there is $n(A)$ such that, for every $r$-coloring of $A$, every positive integer at least $n(A)$ can be written as a sum of distinct monochromatic elements. We may therefore enlarge the $r$-Ramsey complete sequence $A$ constructed in Theorem~\ref{thm:Ramsey-complete} to an entirely $r$-Ramsey complete sequence by including all positive integers less than $n(A)$.

The key to proving Theorem~\ref{thm:Ramsey-complete} is a density-type result, Lemma~\ref{lem:main-ramsey}, saying that, with high probability, a random sequence of $C \epsilon^{-1} \log x$ elements chosen from those elements of the interval $[x, 2x)$ with no small prime factor has the property that any subset of size $C \log x$ contains a particular long interval in its set of subset sums. This density statement already improves a result of Spencer~\cite{Sp81} from 1981 by showing that, for any integers $r \geq 2$ and $n$ sufficiently large in terms of $r$, there is a set of integers $S$ of size $C r \log n$ with the property that any $r$-coloring of $S$ contains a monochromatic subset whose elements add to $n$. More to the point, by concatenating the sequences given by Lemma~\ref{lem:main-ramsey}, one for each dyadic interval $[x, 2x)$, it is easy to construct the sparse $r$-Ramsey complete sequence $A$ required by Theorem~\ref{thm:Ramsey-complete}.

We also study Ramsey completeness for polynomial sequences. The study of ordinary completeness for polynomial sequences has a long history, with important contributions by Sprague~\cite{Sprague}, Roth and Szekeres~\cite{RoSz54} and Cassels~\cite{Cassels}. These efforts culminated in a result of Graham~\cite{G}, who characterized all real polynomial sequences which are complete (where the definition of completeness extends to real-valued sequences without alteration). Graham first observed the well-known fact that every real polynomial of degree $k$ can be written as $P(x)=\sum_{i=0}^k \alpha_i \binom{x}{i}$, where $\binom{x}{i}$ is the polynomial $\frac{1}{i!}\prod_{j=0}^{i-1}(x-j)$ and $\alpha_i \in \mathbb{R}$ with $\alpha_k \not = 0$. He then showed that $(P(m))_{m \geq 1}$ is complete if and only if the following three properties hold: 
\begin{enumerate}
\item $\alpha_k>0$, 
\item $\alpha_i=p_i/q_i$ for each $i$, where $p_i$ and $q_i$ are relatively prime integers, and 
\item $\gcd(p_0,p_1,\ldots,p_k)=1$. 
\end{enumerate}

Given this body of work, it was a natural step for Burr and Erd\H{o}s \cite{BE2} to ask which polynomial sequences are Ramsey complete. According to Erd\H{o}s~\cite{Er95}, Burr subsequently proved that the sequence of $k^{\textrm{th}}$ powers is $r$-Ramsey complete for all $r \geq 2$, though this result was never published. Our next theorem subsumes this result, answering their question completely by showing that all complete polynomial sequences are $r$-Ramsey complete for all $r \geq 2$. In fact, it gives much more, extending the upper bound in Theorem~\ref{thm:Ramsey-complete}, which corresponds to the case $P(x) = x$, by showing that every complete polynomial sequence has a subsequence which is $r$-Ramsey complete and as sparse as possible. 
Note again that in this context we are allowing the sequence to be real-valued, rather than restricting to the integers. The definitions of completeness and Ramsey completeness should then be adjusted to facilitate this change. 

\begin{thm} \label{thm:Ramsey-complete-polynomial}
For any positive integer $k$, there is a constant $C(k)$ such that, for every polynomial $P$ of degree $k$ for which $(P(m))_{m\ge1}$ is complete and every $r \geq 2$, there is an $r$-Ramsey complete subsequence $A\subset(P(m))_{m\ge1}$ with $|A\cap[n]|\le C(k)r\log^{2}n$ for all $n$. 
\end{thm}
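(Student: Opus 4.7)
The plan is to emulate the dyadic construction underlying Theorem \ref{thm:Ramsey-complete}, setting $A = \bigsqcup_{t \ge t_0} A_{2^t}$ where each $A_{2^t}$ is a carefully chosen subset of $\{P(m) : P(m) \in [2^t, 2^{t+1})\}$ of size $O_k(r t)$, together with a fixed seed set of bounded size drawn from the small $P$-values. The key step is to prove a polynomial analog of Lemma \ref{lem:main-ramsey}: for $x$ large in terms of $k$ and $r$, a random subset $A_x$ of the $P$-values in $[x, 2x)$ of size $C(k) r \log x$, combined with the seed set, has with high probability the property that every subset of $A_x$ of size $C(k) \log x$ has subset sums containing an interval of length $\Omega(x)$. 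Since $\deg P = k$, the number of $m$ with $P(m) \in [x, 2x)$ is $\Theta(x^{1/k})$, giving ample room to perform a random selection.

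The main new difficulty, absent from the linear case, is that polynomial values cannot be filtered to avoid small prime factors: $P(\mathbb{N}) \bmod p$ is typically a proper subset of $\mathbb{Z}/p\mathbb{Z}$ and for small primes may be very sparse. The completeness hypothesis, via Graham's criterion $\gcd(p_0, p_1, \ldots, p_k) = 1$, supplies precisely the compensating tool: for every prime $p$, finitely many $P$-values have subset sums covering every residue modulo $p$. I would fix once and for all a seed set $F$ of size $O_k(1)$, drawn from early values of $P$, whose subset sums hit every residue modulo every prime up to some threshold $p_0(k)$. Appending $F$ to each $A_{2^t}$ costs only $O(1)$ per dyadic scale, which is absorbed by the constant $C(k)$.

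With $F$ addressing primes below $p_0(k)$, the rest of the argument mirrors the linear case. For primes $p > p_0(k)$, elementary equidistribution shows that $P(m) \bmod p$, for $m$ ranging over a long interval, is approximately uniform over the image $P(\mathbb{Z}/p\mathbb{Z})$, which has size at least $p/k$; so a random sample of $\Omega(\log p)$ many $P$-values from $[x, 2x)$ has subset sums covering all of $\mathbb{Z}/p\mathbb{Z}$ with overwhelming probability. A second-moment calculation across the relevant primes then ensures that any large $A_x' \subset A_x$, together with $F$, has subset sums well-distributed modulo every small prime simultaneously, after which the standard additive structure argument from the linear case (comparing the number of subset sums to the length of the interval they lie in) produces the required long interval in $\Sigma(A_x' \cup F)$. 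The principal obstacle is the uniform handling of modular behavior across primes where $P(\mathbb{Z}/p\mathbb{Z})$ is small or where $p$ divides one of the denominators $q_i$; this is exactly where Graham's criterion and the seed set $F$ are indispensable, compensating for the lost flexibility that ``no small prime factors'' afforded in the linear setting.
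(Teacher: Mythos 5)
Your proposal rests on a false premise and, more importantly, omits the step that is actually the heart of the polynomial case. First, the premise: it is not true that polynomial values cannot be filtered to avoid small prime factors. Completeness, via Graham's criterion $\gcd(p_0,\dots,p_k)=1$, guarantees that $P$ is nonzero modulo every prime, so $P$ has at most $k$ roots modulo each prime $q>k$ (and the primes $q\le k$ are handled by periodicity modulo $k!^2$); consequently the set $X$ of $y\in[x,(1+1/k)x)$ for which $P(y)$ has no prime factor at most $w=(\log x)^{1/2}$ still has density at least $c_k(\log w)^{-k}$, which is exactly Lemma \ref{lem:estimate-nondiv} and is ample for the sampling argument. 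Your seed set $F$ covering residues modulo primes up to $p_0(k)$ therefore solves a problem that does not need solving, and it does not address the real reason small prime factors matter: the moduli in the core argument are not small primes but the sequence elements $P(m)\approx x^k$ themselves (used through Lemma \ref{lem:modp}), and one wants $P(m)$ free of small prime factors so that every proper subgroup of $\mathbb{Z}_{P(m)}$ has small size, which is what lets Cauchy--Davenport-type growth (Lemma \ref{lem:Cauchy-Davenport}) be invoked.

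The genuine gap is that the ``standard additive structure argument from the linear case'' cannot be transplanted to these large moduli, and your equidistribution and second-moment statements over small primes say nothing about them. In the linear case, the almost-period set lives at the same scale as the modulus $m\in[x,2x)$, so once it is large it cannot sit in a coset of a proper subgroup and Cauchy--Davenport forces growth. In the polynomial case the available translates are the values $P(y)\bmod P(m)$ with $y$ ranging over an interval of length about $x/k$, hence occupying only an $O(x^{1-k})$ fraction of $\mathbb{Z}_{P(m)}$; they are nowhere near equidistributed modulo $P(m)$, and no amount of control modulo small primes substitutes for this. The paper's essential new ingredient, absent from your sketch, is Lemma \ref{lem:iterated-sum-growth}: for any subset $T\subseteq[x,2x)$ of density $\alpha$, the iterated sumset $2^{k-1}P(T)-2^{k-1}P(T)$ meets at least $\alpha^{C_k}P(m)$ residue classes modulo $P(m)$, proved by an iterated Hilbert-cube (PET-style) construction. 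Combined with Lemmas \ref{lem:stable-period} and \ref{lem:double-count}, this shows that at each step of the mod-$P(m)$ growth process (Lemma \ref{lem:cover-poly}) the set of bad choices of $y$ is sparse, which is what makes $|\Sigma_{P(m)}(P(S'))|$ reach a positive fraction of $P(m)$ and then feeds into Lemmas \ref{lem:modp}, \ref{lem:Lev} and \ref{lem:verysimple} to produce the long interval. Without a substitute for this lemma, your argument has no mechanism to establish the key lower bound, so the proposed proof does not go through.
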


\subsubsection{Density completeness}

We say that a sequence of positive integers $A$ is \textit{$\epsilon$-complete} if every subsequence
$A'$ of $A$ with the property that $|A'\cap[n]|\geq\epsilon|A\cap[n]|$ for all sufficiently
large $n$ is complete. This is the natural density analogue of Ramsey completeness, though it is not at all
obvious that such sequences actually exist. Indeed, since the even integers are not complete, the set of all positive integers is
not $(\frac{1}{2}-\delta)$-complete for any $\delta>0$, an observation which might suggest that no
$\epsilon$-complete sequences exist when $\epsilon$ is small.
However, by using the result of Szemer\'edi and Vu~\cite{SV}, which we will discuss in more detail in Section~\ref{sec:homog},
that there is a constant $C$ such that any subset of $[n]$ of size at least $C\sqrt{n}$ contains an arithmetic progression of length $n$ in its set of subset sums, one can show that any sequence of primes $A$ with $|A\cap[n]|\geq2C\epsilon^{-1}\sqrt{n}$ for all sufficiently large $n$ is $\epsilon$-complete.
Thus, the correct takeaway is that the property of being $\epsilon$-complete is not monotone. More concretely, as in the example above where we looked at all positive integers, an $\epsilon$-complete sequence cannot have an $\epsilon$-proportion of its elements sharing a common divisor. 

In keeping with our results about Ramsey completeness, our main result regarding this new notion of $\epsilon$-completeness is a determination of 
how sparse an $\epsilon$-complete sequence can be. To state this result, we need some notation.
Let $F=(f_{n})_{n\geq1}$ be any sequence
of positive integers for which $f_{n}=\sum_{i\leq\epsilon n}f_{i}$
for all sufficiently large $n$. It is easy to see that any two such sequences are comparable, growing within
a constant factor of each other which depends only on the initial terms. 
In Appendix \ref{appendix:est-density-ramsey}, we will show that any such $F$ satisfies 
\[
f_{n}=e^{\left(\frac{1}{2\log(1/\epsilon)}+o(1)\right)(\log n)^{2}}
\]
or, equivalently, 
\[
|F \cap [n]|=e^{\sqrt{\left(2\log(1/\epsilon)+o(1)\right)\log n}}.
\]
The promised result now says that the fastest-growing $\epsilon$-complete sequence 
grows on the same order as $F$.

\begin{thm} \label{epsilonclean} 
Let $F=(f_{n})_{n\geq1}$ be any sequence of
positive integers for which $f_{n}=\sum_{i\leq\epsilon n}f_{i}$ for
all sufficiently large $n$. Then every $\epsilon$-complete sequence $A=(a_{n})_{n\geq1}$
must satisfy $a_{n}=O(f_{n})$ and there is an $\epsilon$-complete
sequence with $a_{n}=\Theta(f_{n})$. 
\end{thm}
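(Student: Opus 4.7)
The plan is to establish the two halves by largely independent arguments. The $O(f_n)$ upper bound is a structural/recursive consequence of $\epsilon$-completeness extracted from a suitably chosen family of density-$\epsilon$ subsequences, while the matching construction follows the recipe used for Theorem~\ref{thm:Ramsey-complete}: concatenate random sparse sets on dyadic intervals and invoke a density-type analog of Lemma~\ref{lem:main-ramsey}. For the upper bound, suppose $A=(a_n)_{n\ge 1}$ is $\epsilon$-complete. For each large $n$, I would consider the subsequence
\[
A'_n \;=\; \{a_i : i\le \epsilon n\}\cup\{a_i : i>n\},
\]
obtained by deleting the middle block $\{a_{\epsilon n+1},\ldots,a_n\}$. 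A short check gives $|A'_n\cap[m]|\ge\epsilon\,|A\cap[m]|$ for every $m$, so each $A'_n$ is a valid density-$\epsilon$ subsequence and hence complete by assumption. Since the only elements of $A'_n$ strictly below $a_{n+1}$ are $a_1,\dots,a_{\epsilon n}$, every integer in $\bigl(\sum_{i\le\epsilon n}a_i,\,a_{n+1}\bigr)$ lies outside $\Sigma(A'_n)$; completeness of $A'_n$ forces this interval (and the analogous gaps produced by subset sums involving tail elements) to be absorbed into a bounded exceptional set, giving
\[
a_{n+1} \;\le\; 1 + \sum_{i\le\epsilon n}a_i
\]
for all sufficiently large $n$. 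This is precisely the recursion defining $F$, so a direct induction yields $a_n = O(f_n)$.

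For the construction, I would take $A=\bigcup_x B_x$, where $x$ ranges over the dyadic scales $2^k$ and $B_x$ is an independent random set of $\Theta(\log x/\log(1/\epsilon))$ integers from $[x,2x)$ with no small prime divisor. The size of $B_x$ is calibrated so that $|A\cap[n]| = e^{\Theta(\sqrt{\log n})}$, yielding $a_n = \Theta(f_n)$. The key probabilistic input is a density analogue of Lemma~\ref{lem:main-ramsey}: with failure probability $o(1/x)$, every subset $B'\subseteq B_x$ of size at least $\epsilon|B_x|$ has $\Sigma(B')$ containing an interval of length $\ge x$ inside $[x,2x)$. A single union bound over dyadic scales produces an $A$ such that, for every density-$\epsilon$ subsequence $A'$, each sufficiently large dyadic block contributes a long interval to $\Sigma(A')$; these intervals telescope in the standard way into a cofinite set of integers, so $A'$ is complete and $A$ is $\epsilon$-complete.

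The main technical obstacle is the density lemma itself. Adapting Step~(ii) from the paper's overview --- showing that subset sums of $B_x$ modulo a further element are large --- must now be carried out uniformly over the $\binom{|B_x|}{\epsilon|B_x|}$ possible sub-collections of $B_x$, rather than just over the $r$ color classes of the Ramsey setting, forcing a sharper tuning of the block size and a more delicate union bound than in the Ramsey case. A secondary subtlety on the upper-bound side is establishing that the bounded exceptional set for $A'_n$ truly falls below $a_{n+1}$ uniformly in $n$; this I would handle by bootstrapping from the bound already proved at smaller indices and using that $A'_n$ contains the dense tail of $A$. Once these two technical ingredients are in hand, the rest of the argument is routine.
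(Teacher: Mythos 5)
Both halves of your proposal have genuine gaps. For the upper bound, the step from ``each $A'_n$ is complete'' to $a_{n+1}\le 1+\sum_{i\le\epsilon n}a_i$ does not follow: completeness of $A'_n$ only says that every integer above some threshold $T_n$, which depends on the subsequence $A'_n$ and hence on $n$, lies in $\Sigma(A'_n)$. The gap $\bigl(\sum_{i\le\epsilon n}a_i,\,a_{n+1}\bigr)$ is a finite set and is perfectly compatible with completeness of $A'_n$ for each fixed $n$, since it may sit below $T_n$; so a single $n$ yields no information, and nothing in the hypothesis lets you ``bootstrap'' a threshold uniform in $n$ (note also that the true conclusion, Theorem~\ref{epsilonthm1}, is the weaker $a_n\le\sum_{i\le\epsilon n+C}a_i$, not your sharper recursion). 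The missing idea is to aggregate infinitely many violations into a \emph{single} subsequence: assuming the inequality fails with unbounded excess $g(n)$, one picks indices $n_1<n_2<\cdots$ with $g(n_j)>n_{j-1}$ and deletes all blocks $(\epsilon n_j+g(n_j),n_j]$ at once; the resulting $A'$ still has density at least $\epsilon$ but misses every $a_{n_j}$, contradicting its completeness. That is how the paper proves Theorem~\ref{epsilonthm1}.

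The construction is also flawed, on two counts. First, the calibration is inconsistent: $\Theta(\log x/\log(1/\epsilon))$ elements per dyadic block gives $|A\cap[n]|=\Theta\bigl((\log n)^2\bigr)$, not $e^{\Theta(\sqrt{\log n})}$; such a sequence has $a_n=e^{\Theta(\sqrt{n})}\gg f_n$, so it cannot meet the density target, and in fact it cannot be $\epsilon$-complete at all (a density-$\epsilon$ subsequence may take every element up to $x$ and then nothing until roughly $x^{1/\sqrt{\epsilon}}$, leaving all integers between $O(x\log x)$ and $x^{1/\sqrt{\epsilon}}$ unrepresentable). The correct construction must interlace a sequence of $F$-type growth, with about $e^{c\sqrt{k}}$ elements at scale $2^k$, as in Theorem~\ref{epsilonthm2}. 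Second, and more fundamentally, your blockwise ``telescoping'' fails even with the right calibration: unlike a colour class, a density-$\epsilon$ subsequence $A'$ need not meet every dyadic block in proportion $\epsilon$ --- it can omit long runs of blocks entirely while keeping global density --- so the claim that each sufficiently large block contributes a long interval to $\Sigma(A')$ is false. This is precisely why the paper's argument is global rather than blockwise: it locates a largest scale $i_1$ where $A'$ retains an $\epsilon/4$ proportion of $A_{i_1}$, builds one interval there via Lemma~\ref{lem:ramsey-density}, extends it with Graham's lemma (Lemma~\ref{lem:verysimple}) using \emph{all} remaining elements of $A'$, and, when this extension gets stuck, extracts a counting inequality and runs a second round at a higher scale, using condition (i) of friendly sequences to reach a contradiction. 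Some substitute for that mechanism is unavoidable; a union bound over the subsets of each block does not address it.
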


Like with Ramsey completeness, we may also prove a generalization regarding $\epsilon$-complete subsequences
of complete polynomial sequences, though in this case we omit the details of the argument, only pointing to how ideas from the proofs of Theorems~\ref{thm:Ramsey-complete-polynomial} and \ref{epsilonclean} can be combined to give the required conclusion.

\begin{thm} \label{thm:poly-density-ramsey}
Let $P$ be a polynomial for which
the sequence $(P(m))_{m\geq1}$ is complete. Then there is a subsequence
$A=(a_{n})_{n\ge1}$ of $(P(m))_{m\geq1}$ with $a_{n}=\Theta(f_{n})$
which is $\epsilon$-complete. That is, any complete polynomial sequence has
an $\epsilon$-complete subsequence which is as sparse as an $\epsilon$-complete
sequence can be. 
\end{thm}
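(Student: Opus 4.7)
The plan is to combine the dyadic sampling argument behind Theorem~\ref{thm:Ramsey-complete-polynomial} with the $\epsilon$-complete construction behind Theorem~\ref{epsilonclean}. Write $P$ in Graham's form $P(x)=\sum_{i=0}^k\alpha_i\binom{x}{i}$; completeness of $(P(m))_{m\ge 1}$ gives $\alpha_k>0$, each $\alpha_i=p_i/q_i$ rational, and $\gcd(p_0,\ldots,p_k)=1$. For each dyadic interval $I_j=[2^j,2^{j+1})$, the set of polynomial values in $I_j$ has size $\Theta(2^{j/k})$, which vastly exceeds the target $|F\cap I_j|=e^{\Theta(\sqrt{j})}$; this slack is what lets us fit a well-structured subset of polynomial values into $I_j$.

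Within each $I_j$, I would select $s_j:=\Theta(|F\cap I_j|)$ polynomial values $S_j\subseteq\{P(m):P(m)\in I_j\}$ following the construction behind Theorem~\ref{epsilonclean}, but sampling from a pre-sieved pool of $m$'s avoiding small prime factors as in the proof of Theorem~\ref{thm:Ramsey-complete-polynomial}. This sieve, together with Graham's coprimality condition, guarantees that for every small modulus $d$ the residues $P(m)\bmod d$ over the surviving $m$'s are distributed almost uniformly on the residues not obstructed at $d$. Setting $A=\bigcup_j S_j$ then gives $|A\cap[n]|=\Theta(f_n)$ by construction, settling the growth rate.

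The heart of the argument, and the main obstacle, is proving the polynomial analogue of the density lemma underlying Theorem~\ref{epsilonclean}: with high probability, every $T\subseteq S_j$ of size at least $\epsilon s_j$ has the property that $\Sigma(T)$ contains an interval of length $\ge 2^{j+1}$. Following the framework of steps (ii) and (iii) from Section~\ref{sec:overview}, this splits into a modular step, where one shows that $\Sigma(T)\bmod d$ covers most allowed residues using Weyl-type equidistribution of polynomial values in arithmetic progressions together with the sieve conditions, and a lifting step, where modular density is converted into a genuine long interval. Because the polynomial pool behaves so uniformly modulo small $d$, each modular density estimate used in the integer proof of Theorem~\ref{epsilonclean} survives for the polynomial values with a controlled loss, which is exactly what is meant by combining the ideas of the two parent theorems.

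Given the density lemma, $\epsilon$-completeness follows by the telescoping of step (iv): for any $A'\subseteq A$ with $|A'\cap[n]|\ge\epsilon|A\cap[n]|$, dyadic averaging ensures $|A'\cap I_j|\ge(\epsilon/2)s_j$ for infinitely many $j$, and the resulting long intervals in $\Sigma(A'\cap I_j)$ chain together, each extending past the right endpoint of the previous one, to cover $[N,\infty)\subseteq\Sigma(A')$ for some $N$. This establishes completeness of $A'$ and hence the theorem.
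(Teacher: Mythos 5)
Your construction and growth-rate bookkeeping follow the route the paper intends (it only sketches this proof: replace Lemma~\ref{lem:ramsey-density} in the proof of Theorem~\ref{epsilonthm2} by a polynomial analogue of Lemma~\ref{lem:main-poly-ramsey}), but your final step contains a genuine gap. Deducing $\epsilon$-completeness is not a matter of "chaining" the intervals produced inside the good dyadic blocks. Since $|A\cap[n]|=e^{\Theta(\sqrt{\log n})}$, a subsequence $A'$ with $|A'\cap[n]|\ge\epsilon|A\cap[n]|$ can have its good blocks (those with $|A'\cap I_j|\ge(\epsilon/2)|A\cap I_j|$) separated by gaps of length on the order of $\sqrt{j}$; the interval you extract from block $I_j$ has length only $\Theta(2^j j)$, while the interval from the next good block begins near $\Theta(2^{j'}j')$ with $2^{j'}$ larger by a factor of roughly $2^{\sqrt{j}}$, so the block intervals do not come close to overlapping. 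To bridge the gap you must feed in \emph{all} intermediate elements of $A'$ via Lemma~\ref{lem:verysimple}, and the whole difficulty is ruling out an element $a'_j>y+a'_1+\cdots+a'_{j-1}$ that stops the extension. That is exactly where the necessary growth condition of Theorem~\ref{epsilonthm1}, the friendly-sequence machinery (including Claim~\ref{doubleclaim}) and the two-stage contradiction argument in the proof of Theorem~\ref{epsilonthm2} enter; none of this is subsumed by "telescoping of step (iv)", and without it the proof is incomplete. Relatedly, the theorem asks for $a_n=\Theta(f_n)$ term by term (the sequence must interlace a friendly sequence $B$), not $|A\cap[n]|=\Theta(f_n)$; the interlacing is used repeatedly in the completeness argument, so it has to be built into the construction, not just the counting function.

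A secondary issue is your sketch of the key density lemma. The modular step is not about equidistribution of $P(m)$ modulo small $d$: the relevant moduli are the numbers $P(m)$ themselves, of the same size as the elements being summed, and what must be shown is that $\Sigma_{P(m)}(P(S'))$ occupies a positive proportion of $\mathbb{Z}_{P(m)}$. Weyl-type equidistribution modulo small moduli says nothing about this; the paper's mechanism is the iterated-sumset/Hilbert-cube growth of Lemma~\ref{lem:iterated-sum-growth}, combined with the almost-period counting of Lemma~\ref{lem:double-count}, Lemma~\ref{lem:stable-period} and the Cauchy--Davenport-type Lemma~\ref{lem:Cauchy-Davenport}, as executed in Lemma~\ref{lem:cover-poly}, together with the non-uniform-sampling adjustments needed to imitate Lemma~\ref{lem:ramsey-density} (Appendix~\ref{appendix:ramsey-density}). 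Your "controlled loss" assertion should be replaced by this machinery; as written it does not yield the required lower bound on the modular subset sums.
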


\subsection{Ensuring a given subset sum} 

So far, we have discussed problems and results on notions of completeness, where we require that all sufficiently large integers can be represented as subset sums. We now address the natural problem of ensuring that a particular integer is a subset sum, again looking at both a Ramsey variant and a density variant.

\subsubsection{Monochromatic subset sums} \label{subsect:sure}

Given a positive integer $n$, let $f(n)$ be the minimum integer $r$ for which there is an $r$-coloring of the positive integers less than $n$ with the property that $n$ cannot be written as a monochromatic sum of distinct integers. The problem of estimating $f(n)$ was raised by Erd\H{o}s many times~\cite{Er81, Er82, Er822}, culminating in a problem paper~\cite{Er952} where he stated that he could show $f(n)=o(n^{1/3})$ and asked whether $f(n)=n^{1/3-o(1)}$. 
Solving this problem, Alon and Erd\H{o}s \cite{AEr} showed that there are positive constants $c_{1}$ and $c_{2}$ such that 
\begin{equation}\label{AlonErdosbound}
\frac{c_{1}n^{1/3}}{\log^{4/3}n}\le f(n)\le\frac{c_{2}n^{1/3}(\log\log n)^{1/3}}{(\log n)^{1/3}},
\end{equation}
adding that they suspect the upper bound is closer to the truth. Using his result with Szemer\'edi~\cite{SV} on long arithmetic progressions in subset sums, 
Vu~\cite{V} later refined the lower bound, showing that $f(n)\ge\frac{c_{1}n^{1/3}}{\log n}$ for some positive $c_1$.

We improve these results further, determining $f(n)$ up to an absolute constant factor and thereby confirming Alon and Erd\H{o}s' conjecture that their upper bound is close to the true order of magnitude. As is customary, we write $\phi(n)$ for the Euler totient function, the number of positive integers less than $n$ which are coprime to $n$. 

\begin{thm} \label{thm:monochromatic-subset-sums-N}
For every positive integer $n$, the minimum number of colors $f(n)$ for which it is possible to color the positive integers less than $n$ so that $n$ cannot be written as a monochromatic sum of distinct integers satisfies 
\[
f(n)=\Theta\left(\frac{n^{1/3}(n/\phi(n))}{(\log n)^{1/3}(\log\log n)^{2/3}}\right).
\]
\end{thm}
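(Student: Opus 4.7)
Both bounds reduce to a single density lemma tailored to $n$: there exists an absolute constant $C>0$ such that any $A\subseteq[n-1]$ consisting of integers coprime to $n$ with
\[
|A|\;\ge\;C\,(\phi(n)/n)^{2}\,n^{2/3}(\log n)^{1/3}(\log\log n)^{2/3}
\]
satisfies $n\in\Sigma(A)$. Granting this lemma, the lower bound on $f(n)$ is immediate: among any $r$-colouring of $[n-1]$, the $\phi(n)$ integers coprime to $n$ yield a monochromatic class of size at least $\phi(n)/r$, and the lemma fires as soon as $r<c\,n^{1/3}(n/\phi(n))/((\log n)^{1/3}(\log\log n)^{2/3})$, forcing $n$ to appear as a monochromatic subset sum.

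To prove the lemma I would follow the four-step framework from Section~\ref{sec:overview}. First, partition $A$ into $\ell=\Theta\!\left((\log n)^{1/3}(\log\log n)^{2/3}\right)$ blocks of roughly equal size. Second, inside each block $A_{i}=A_{i}'\cup A_{i}''$, use a probabilistic argument to show that for every $s\in A_{i}''$, the set $\Sigma(A_{i}')\bmod s$ covers all but a polynomially small fraction of $\mathbb{Z}/s\mathbb{Z}$; coprimality with $n$ is essential here since it rules out the residue obstructions coming from small prime divisors of $n$, and this is where the $(\log\log n)^{2/3}$ factor is produced by a careful analytic count of small prime moduli that could cause trouble. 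Third, promote the modular coverage to a density statement for $\Sigma(A_{i})$ inside a long interval proportional to $\sum_{a\in A_{i}}a$. Fourth, paste the blocks together along the lines of the Szemer\'edi--Vu argument to conclude that $\Sigma(A)$ contains a full interval long enough to include $n$; the containment of $n$ is arranged by balancing block weights so that $\sum_{a\in A}a$ sits symmetrically about $n$.

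For the upper bound, I would construct an explicit colouring by combining two mechanisms. Integers $k<n$ with $\gcd(k,n)>1$ that are not proper divisors of $n$ admit a factor $d\mid k$ with $d\nmid n$: take $d$ to be a prime factor of $k$ not dividing $n$ if one exists, and otherwise take $d=p^{v_{p}(k)}$ for a prime $p\mid k$ with $v_{p}(k)>v_{p}(n)$. Colouring by such $d$ places every element of a colour class in a common multiple of $d\nmid n$, so $n$ cannot arise as a subset sum; the number of colours required here is controlled by the sieve structure of $n$ and introduces the $n/\phi(n)$ factor (the $n^{o(1)}$ proper divisors of $n$ are coloured individually and contribute negligibly). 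The integers in $[n-1]$ coprime to $n$ are then partitioned into $O\!\left(n^{1/3}(n/\phi(n))/((\log n)^{1/3}(\log\log n)^{2/3})\right)$ classes, each of size strictly below the density lemma's threshold and arranged along arithmetic progressions whose common difference blocks $n$ from being a subset sum.

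The principal obstacle is Step (ii) of the density lemma: a uniform equidistribution estimate for $\Sigma(A_{i}')\bmod s$ over all $s\in A_{i}''$, sharp enough to allow $\ell$ to be taken as small as $(\log n)^{1/3}(\log\log n)^{2/3}$. Coprimality with $n$ kills the obvious residue obstructions, but extracting the correct $\log\log n$ dependence requires integrating the probabilistic method with quantitative estimates on the small prime divisors of $n$, in the spirit of the techniques developed earlier in the paper for the Ramsey-completeness results.
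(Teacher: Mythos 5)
Your central density lemma is false, and with it both of your reductions collapse. Take $A=\{a\in(n/2,n):\gcd(a,n)=1\}$: it consists of integers coprime to $n$, has size about $\phi(n)/2$, which vastly exceeds your threshold $C(\phi(n)/n)^{2}n^{2/3}(\log n)^{1/3}(\log\log n)^{2/3}$, and yet $n\notin\Sigma(A)$, since every element is less than $n$ and any two distinct elements sum to more than $n$. Likewise, for odd $n$ the even integers below $n$ that are coprime to $n$ form a set of size about $\phi(n)/2$ all of whose subset sums are even. These are precisely the two obstructions the Alon--Erd\H{o}s colouring exploits (the interval classes $[n/(k+1),n/k)$ and the multiples of small primes $p\nmid n$), and coprimality to $n$ rules out neither: a class of coprime integers may consist entirely of elements comparable to $n$ in size, or entirely of elements sharing a divisor $d$ with $d\nmid n$. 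For the same reason the pigeonhole step does not yield the lower bound: the monochromatic coprime class of size $\phi(n)/r$ you extract may simply be one of these adversarial classes. Your upper-bound sketch has a parallel gap: colouring each $k$ with $\gcd(k,n)>1$ by a divisor $d\mid k$ with $d\nmid n$ requires as many colours as there are such $d$ (already $n^{1-o(1)}$ if one takes prime values of $d$), and keeping the coprime classes ``below the threshold'' of a false density lemma does not certify that they avoid $n$ -- each class needs an actual arithmetic obstruction.

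This is why the paper's argument is set up quite differently. For the lower bound it works inside a single window $[y,2y)$ with $y\asymp\sqrt{nr/((n/\phi(n))\tau(r,n))}\ll n$, so that elements are far smaller than $n$ and, once a long interval is produced in the subset sums, Lemma~\ref{lem:verysimple} stretches it past $n$; this is what defeats the interval obstruction, and no statement of the form ``any dense coprime subset of $[n-1]$ has $n$ as a subset sum'' can be true. Moreover, instead of the integers coprime to $n$ it uses the set $Y$ of integers $qu$ with $u\mid n$, $u\le y^{1/16}$ and $q$ coprime to $W(r)n$: the density $\Theta((n/\phi(n))\tau(r,n)y)$ of this set in the window (Lemma~\ref{lem:pseudoprimes-count}) is what produces the $n/\phi(n)$ gain, its roughness is what allows the Selberg sieve (Lemma~\ref{lem:selberg}) to show that an additively structured colour class cannot contain so many of its elements, and the possibility of a common divisor is handled by the diversity preprocessing and Lemma~\ref{lem:div-by-divisors} rather than assumed away. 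On the upper-bound side, the $\log\log n$ saving does not come from a sieve count of ``bad'' divisors as you suggest, but from an explicit third step added to the Alon--Erd\H{o}s construction: after the interval classes and the multiples of the first $r/4$ primes coprime to $n$, the remaining integers are split by residue classes modulo a carefully chosen $d$ coprime to $n$ built from small primes not dividing $n$, and within each residue class the same two-interval counting obstruction is applied. Without these ingredients neither bound in the theorem is reachable by the route you describe.
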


Standard estimates imply that $n/\phi(n) \in (1,2\log \log n)$ for $n$ sufficiently large, with $n/\phi(n)$ large if and only if $n$ is divisible by many small primes. As a result, $f(n)$ is surprisingly far from being monotone, exhibiting local multiplicative fluctuations on the order of $\log \log n$. Moreover, though $f(n)$ is indeed close to the upper bound proved by Alon and Erd\H{o}s, differing by at most a $\log \log n$ factor, their upper bound is only optimal up to a constant factor when $n$ is divisible by many small primes.

To give some sense of where our improvement comes from, let us briefly describe the coloring that Alon and Erd\H{o}s use for their upper bound, using $r$ colors in total. First, they use $r/2$ colors to color all integers in $[n-1]$ larger than $2n/r$, with all integers in $[n/(j+1),n/j)$ getting color $j$. Since any $j$ distinct integers of color $j$ have sum less than $n$ and any $j+1$ distinct integers of color $j$ have sum larger than $n$, we see that $n$ is not a sum of distinct elements from any of these color classes. Second, for each of the first $r/4$ primes $p$ that are coprime to $n$, they place all remaining multiples of $p$ 
in a color class. Since each sum of multiples of $p$ is itself a multiple of $p$ and each $p$ is coprime to $n$, we see that $n$ is again not a sum of elements from any of these color classes. To complete the construction, we group the few remaining uncolored integers into color classes so that the sum of the elements in any given color class is less than $n$. A careful analysis then shows that $r$ can be taken to be the upper bound in (\ref{AlonErdosbound}).

As in the Alon--Erd\H{o}s coloring, our coloring uses $r/2$ colors to color all integers in $[n-1]$ larger than $2n/r$ and then $r/4$ colors to color the multiples of each of the first $r/4$ primes which are coprime to $n$. However, we then add an additional third step, which makes use of 
the non-uniform distribution of the remaining elements in congruence classes modulo $d$ for an appropriate choice of $d$.  Indeed, let $d$ be as large as possible so that $d$ is coprime to $n$ and $\phi(d)<r/32$, noting that the prime factors of $d$ must be among the first $r/4$ primes coprime to $n$ and so the remaining uncolored integers are all coprime to $d$. For each congruence class $t \pmod d$ with $t$ coprime to $d$, let $x_t \in [d]$ be such that  $x_t \equiv t^{-1}n \pmod d$. If a sum of elements, each congruent to $t \pmod d$, is equal to $n$, then the sum must involve either $x_t$ terms, $d+x_t$ terms or more than $d+x_t$ terms. Therefore, arguing as for the first $r/2$ colors, neither the set of integers congruent to $t\pmod d$ which are at least $n/x_t$ nor the set of integers congruent to $t\pmod d$ which are at least $n/(d+x_t)$ and less than $n/x_t$ can contain a subset sum equal to $n$. Hence, using at most $2\phi(d)<r/8$ additional colors, we may color all integers in $[n-1]$ larger than $n/d$ in such a way that $n$ is not a monochromatic sum of distinct elements. To complete the coloring, we again group the remaining uncolored integers into color classes so that the sum of the elements in any given color class is less than $n$. Worked out carefully, this then returns the upper bound in Theorem~\ref{thm:monochromatic-subset-sums-N}. For a sketch of how we prove the matching lower bound, which is the more difficult aspect of the proof, 
we refer the reader to Section~\ref{subsec:monochromatic}.

In practice, since our methods allow it, we will prove a more general result. For the statement, we need some notation. For positive integers $\rho$ and $m$, writing $p_i$ for the $i^{\textrm{th}}$ prime, we let $W(\rho)=\prod_{i=1}^{\rho}p_i$ and $\tau(\rho,m)=\phi(W(\rho)m)/(W(\rho)m)$. For $m \in [n,\binom{n}{2}]$, we then let $\rho(n,m)$ be the smallest positive integer $\rho$ such that $\rho/\tau(\rho,m) \ge n^2/\phi(m)$. Our generalization of Theorem~\ref{thm:monochromatic-subset-sums-N} is now as follows.

\begin{thm} \label{thm:monochromatic-subset-sums}
For every positive integer $n$ and any $m\in [n,\binom{n}{2}]$, the minimum number of colors $f(n, m)$ for which it is possible to color the positive integers less than $n$ so that $m$ cannot be written as a monochromatic sum of distinct integers satisfies 
\[
f(n,m)=\Theta\left(\min\left( \frac{m^{1/3}(m/\phi(m))}{(\log n)^{1/3}(\log\log n)^{2/3}},\rho(n,m)\right)\right).
\]
\end{thm}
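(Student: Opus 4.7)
The plan is to prove the upper and lower bounds separately. For the upper bound, I would construct two colorings, each witnessing one of the two terms in the minimum, and then take whichever gives fewer colors. For the lower bound, I would apply the four-step framework described in the introduction, with the main challenge residing in a refined version of Step~(ii).

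For the first term $r_1 = \Theta\bigl(m^{1/3}(m/\phi(m))/((\log n)^{1/3}(\log\log n)^{2/3})\bigr)$, I would generalize the three-phase coloring sketched earlier for the special case $m=n$, shifting the dyadic anchor from $n$ to $m$ while restricting every class to $[n-1]$. Phase (a) assigns color $j \in [r_1/2]$ to the integers in $[m/(j{+}1),m/j)\cap[n-1]$, so that any class of size at most $j$ has sum below $m$ and any class of size at least $j{+}1$ has sum above $m$. Phase (b) assigns one color to the multiples of each of the first $r_1/4$ primes coprime to $m$, whose subset sums avoid $m$ by coprimality. Phase (c) applies the residue-class refinement modulo $d$, where $d$ is coprime to $m$ and chosen maximal under $\phi(d)<r_1/32$, using $2\phi(d)$ further colors as in the sketch. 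The remaining small uncolored integers are packed into sum-bounded classes. A Mertens-type accounting combined with $n/\phi(n) = O(\log\log n)$ yields the stated bound, essentially as in the $m=n$ case but with $m$ and $\phi(m)$ in place of $n$ and $\phi(n)$.

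For the second term, set $\rho = \rho(n,m)$ and assign one color to the multiples of each of the first $\rho$ primes that are coprime to $m$: such multiples sum to a nonzero multiple of a prime coprime to $m$ and so cannot equal $m$. Writing $V$ for the product of these primes and noting that primes of $W(\rho)$ are partitioned into those dividing $m$ and those coprime to $m$, a short computation gives $\phi(V)/V = \tau(\rho,m) \cdot (m/\phi(m))$. Thus the integers in $[n-1]$ uncolored after this step are coprime to $V$, with total sum of order $\tau(\rho,m)\, n^2\, m/\phi(m)$. The defining inequality $\rho/\tau(\rho,m) \ge n^2/\phi(m)$ then makes this total sum at most $O(\rho m)$, so one can pack the remaining integers into $O(\rho)$ further color classes each of total sum strictly less than $m$, giving the $O(\rho(n,m))$ bound overall.

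For the lower bound, suppose an $r$-coloring of $[n-1]$ is given with $r$ strictly smaller than both terms of the minimum. Pigeonhole on dyadic scales to extract a monochromatic class $S$ of integers in a suitable window, partition $S$ into $\ell$ parts $S_i = S_i' \cup S_i''$ of comparable sizes, and then apply Step~(ii) of the framework to show that, for a random such split, the set of subset sums of $S_i'$ modulo any $s\in S_i''$ is a large subset of $\mathbb{Z}/s\mathbb{Z}$. Steps~(iii) and~(iv) then assemble a long arithmetic progression inside $\Sigma(S)$ that necessarily contains $m$, producing the desired monochromatic representation. The main obstacle is the refined form of Step~(ii): the density of subset sums modulo $s$ degrades when $s$ shares small prime factors with $m$, and the factors $m/\phi(m)$ and $\tau(\rho,m)$ appearing in the theorem arise precisely from quantifying this degradation. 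Handling them requires combining the probabilistic construction in the proof of Theorem~\ref{thm:monochromatic-subset-sums-N} with analytic estimates tied to the Euler product of $m$ and the sieve density $\tau(\rho,m)$.
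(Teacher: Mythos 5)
Your upper bound is essentially the paper's: in the regime where $\psi(n,m)=\frac{m^{1/3}(m/\phi(m))}{(\log n)^{1/3}(\log\log n)^{2/3}}$ is the smaller term, the paper uses exactly your three-phase coloring (interval classes, multiples of the first $\Theta(r)$ primes coprime to $m$, a residue refinement modulo a carefully built $d$ with $d/\phi(d)\gtrsim\log\log n$, then sum-bounded packing), and in the regime where $\rho(n,m)$ is smaller it uses your prime-multiples-plus-packing construction. One detail you wave away: after removing multiples of the chosen primes, bounding the number (or total sum) of uncolored integers in $[n-1]$ coprime to $V$ by $O\bigl(n\,\phi(V)/V\bigr)$ is not "a short computation," since $V$ can be enormously larger than $n$; the paper needs an upper-bound sieve here (the Selberg sieve, Lemma~\ref{lem:selberg}), both for the packing step in each regime and again inside the lower bound. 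Your identity $\prod_{p\mid W(\rho),\,p\nmid m}(1-1/p)=\tau(\rho,m)\,m/\phi(m)$ and the packing into classes of sum less than $m$ are fine.

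The genuine gap is the lower bound, which is where all the work in the paper lies: your paragraph restates the four-step framework (which is the paper's own Section~2.3 outline) and explicitly defers "the main obstacle," but that obstacle \emph{is} the theorem. Concretely, the proposal is missing: (a) the choice of what to pigeonhole on --- not all integers in a dyadic window, but the set $Y$ of integers $qu$ in $[y,2y)$ with $u\mid m$, $u\le y^{1/16}$ and $q$ coprime to $W(r)m$, at a scale $y$ calibrated so that $m\asymp y^{2}(m/\phi(m))\tau(r,m)/r$ (Lemma~\ref{lem:lem-choosey} and Claim~\ref{claim:cond-for-y}); this is forced because any class dominated by a small prime not dividing $m$, or by short intervals, is killed by the upper-bound colorings, and it is exactly how the factors $m/\phi(m)$, $\tau(r,m)$ and hence $\rho(n,m)$ enter; (b) the diversity preprocessing: a monochromatic class may be concentrated on multiples of some $v$, so one iteratively divides out common factors to reach a $y^{1/4}$-diverse set and reduces to representing $m/v$, with Lemma~\ref{lem:diverse} keeping random pieces diverse; (c) the actual engine of Step~(ii), namely the iterative process over $\mathbb{Z}_t$ with growth/unsaturated/saturated phases (Lemma~\ref{lem:growth-}), the inverse structure result derived from Deshouillers--Freiman (Lemma~\ref{lem:structure}, plus Lemma~\ref{lem:ap-mod-t} to convert coset-progressions into genuine integer APs) showing that a non-expanding step forces $T_{j-1}$ into a small union of long progressions, and the Selberg-sieve count showing such a structured set cannot contain enough integers coprime to $W(r)/\gcd(W(r),m)$ --- the contradiction that drives the growth. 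Finally, leaning on "the probabilistic construction in the proof of Theorem~\ref{thm:monochromatic-subset-sums-N}" is circular in this paper: Theorem~\ref{thm:monochromatic-subset-sums-N} has no separate proof; it is the special case $m=n$ of the very statement you are proving, so the general-$m$ mechanism above cannot be inherited from it and must be supplied.
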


\subsubsection{The largest set avoiding a given subset sum} 

What is the maximum size $g(n, m)$ of a subset of $[n]$ which has no subset sum equal to $m$? Variants of this natural extremal problem, interesting for any positive integers $n < m \leq \binom{n+1}{2}$, were originally raised by Erd\H{o}s and Graham (see, for instance,~\cite[Page 59]{EG} and~\cite{Er89}), although, in the exact form mentioned here, the problem was first studied in detail by Alon~\cite{A}.

If we let $\textrm{snd}(m)$ be the smallest positive integer that does not divide $m$, an easy lower bound for $g(n, m)$ is $\lfloor \frac{n}{\textrm{snd}(m)}\rfloor$, since the set of all multiples of $\textrm{snd}(m)$ below $n$ does not have $n$ as a subset sum. This simple observation of Alon~\cite{A} was later refined by Alon and Freiman~\cite{AF}, who observed that $g(n, m) \geq s(n,m) := \lfloor \frac{n}{\textrm{snd}(m)}\rfloor +\textrm{snd}(m)-2$ by augmenting the example above with $\textrm{snd}(m)-2$ additional elements, each congruent to either $1$ or $-1$ modulo $\textrm{snd}(m)$. Another simple lower bound, better than that above when $m$ is close to $\binom{n+1}{2}$, is $g(n,m)\ge \lfloor \sqrt{2m}-1/2 \rfloor$, following from the fact that the sum of the first $\lfloor \sqrt{2m}-1/2 \rfloor$ positive integers is less than $m$.

For the upper bound, Alon~\cite{A} first showed that if $n^{1+\epsilon} \leq m \leq n^2/(\log n)^2$, then $g(n,m) = O(s(n,m))$, where the implicit constant depends on $\epsilon$. He also conjectured that $g(n,m)=(1+o(1))s(n,m)$ in roughly the same range. For $Cn(\log n)^6 \le m \leq n^{1.5-o(1)}$, this conjecture was proved soon after by Lipkin~\cite{Lip}. Remarkably, around the same time, Alon and Freiman~\cite{AF} determined the function exactly for $n^{5/3+o(1)} \leq m < \frac{n^{2}}{20(\log n)^{2}}$, establishing that $g(n,m)=s(n,m)$ in this range. More than twenty years then elapsed before Tran, Vu and Wood \cite{TVW} proved Alon's conjecture in full generality by showing that $g(n, m) = (1 + o(1))s(n,m)$ for $n(\log n)^{1+o(1)} \leq m \leq \frac{n^{2}}{9(\log n)^{2}}$. We improve these results, determining the function exactly for all $Cn\log n \leq m \leq \frac{n^{2}}{(8+o(1))(\log n)^{2}}$ and asymptotically for all $Cn\log n \leq m \leq \binom{n+1}{2}$.

\begin{thm} \label{thm:Alon-conjecture} 
There is a constant $C$ such that if $n$ and $m$ are positive integers and $g(n, m)$ is the maximum size of a subset of $[n]$ with no subset sum equal to $m$, then
$$g(n,m) = s(n,m) = \left\lfloor \frac{n}{\textrm{snd}(m)}\right\rfloor +\textrm{snd}(m)-2$$ 
for $m\in\left[Cn\log n,\frac{n^{2}}{12(\log n)^{2}}\right]$ and
$$g(n,m)=\max\left(s(n,m),(1+o(1))\sqrt{2m}\right)$$
for $m\in \left[\frac{n^{2}}{12(\log n)^{2}},\binom{n+1}{2}\right]$.
\end{thm}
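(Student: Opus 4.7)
The plan is to treat the lower bound as routine, since the constructions producing $s(n,m)$ and $(1+o(1))\sqrt{2m}$ are exactly those already described in the preamble. The heart of the matter is the matching upper bound: any $A\subseteq[n]$ whose size exceeds the claimed threshold must contain $m$ as a subset sum.

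I would begin with a structural dichotomy on $A$. Set $t=\mathrm{snd}(m)$ and, for each integer $d\geq 2$, let $A_d$ be the subset of elements of $A$ divisible by $d$. Say that $A$ is \emph{structured at} $d$ if $|A\setminus A_d|$ is small compared to $d$. If $A$ is structured at some $d$ with $d\nmid m$, then $\Sigma(A_d)\subseteq d\mathbb{Z}$ misses $m$, so any representation of $m$ must use a subset of $A\setminus A_d$ hitting the prescribed residue $m\bmod d$; a counting argument combined with $|A_d|\leq\lfloor n/d\rfloor$ then forces $|A|\leq s(n,m)$. If $A$ is structured at some $d\mid m$ with $d<t$, divide through by $d$ and recurse on a smaller instance. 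The remaining case is that $A$ is \emph{dispersed}, meaning it fails to be structured at every small $d$; in particular, for a suitable threshold $X$, a positive proportion of the elements of $A\cap[X,n]$ avoid all small prime factors.

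In the dispersed case I would apply the four-step framework to $A\cap[X,n]$. Partition these elements into $\ell$ roughly equal blocks $S_1,\ldots,S_\ell$, with $\ell$ chosen so that the iterated sumset $\Sigma(S_1)+\cdots+\Sigma(S_\ell)$ is calibrated to bracket $m$. Inside each $S_i$, split into $S_i'\cup S_i''$ and, via a probabilistic argument that exploits the dispersedness, show that the subset sums of $S_i'$ modulo any $s\in S_i''$ cover a positive proportion of the residues. Step (iii) then renders $\Sigma(S_i)$ dense in a long interval, and step (iv) concatenates across $i$ to yield a long interval $[L,R]\subseteq\Sigma(A\cap[X,n])$. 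Finally, the small elements $A\cap[1,X)$ serve as fine adjustments that translate this interval and hit $m$ exactly.

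The principal obstacle will be step (ii): showing, in the dispersed regime, that subset sums of a random subset of $S_i'$ are equidistributed up to a constant modulo any $s\in S_i''$. This requires a Fourier/character sum argument combined with structural results from additive combinatorics to rule out atypical $s$ with rich arithmetic structure, and it is here that the quantitative shape of $s(n,m)$---both the term $\lfloor n/t\rfloor$ and the additive correction $t-2$---must arise. A secondary difficulty is the transition at $m\approx n^{2}/(12\log^{2}n)$, where the $\sqrt{2m}$ construction begins to dominate; here one uses that $|A|>(1+o(1))\sqrt{2m}$ forces $\sum_{a\in A}a$ to exceed $m$ with enough slack that, even after discarding a few elements to apply the framework, the resulting long interval still contains $m$.
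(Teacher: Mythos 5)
Your dichotomy mishandles precisely the case where the theorem is hardest, namely when $A$ is ``structured at'' $d=\mathrm{snd}(m)$. You claim that if $A$ is structured at some $d\nmid m$ then a counting argument forces $|A|\le s(n,m)$, but this is false: the extremal example (all multiples of $\mathrm{snd}(m)$ in $[n]$ together with $\mathrm{snd}(m)-1$ elements congruent to $\pm1$) has size $s(n,m)+1$, is as structured as can be, and the theorem asserts that it \emph{does} contain $m$ as a subset sum. So the structured case cannot be closed by counting; one must prove $m\in\Sigma(A)$ there, and this is where the exact additive term $\mathrm{snd}(m)-2$ comes from. The paper does this by noting that $|A|\ge s(n,m)+1$ guarantees at least $\mathrm{snd}(m)-1$ elements not divisible by $d=\mathrm{snd}(m)$, and that every proper divisor of $\mathrm{snd}(m)$ divides $m$, so (Lemma~\ref{lem:full-mod-d}) the subset sums of these few elements modulo $d$ contain a subgroup $d'\mathbb{Z}_d$ with $d'\mid m$ and hence a sum $y\equiv m\pmod d$; one then needs the subset sums of the multiples of $d$ to contain an interval of multiples of $d$ reaching from below $m-y$ to above it. Your proposal contains neither the residue-completion step nor the divisibility fact that makes it work, and your suggestion that the shape of $s(n,m)$ should ``arise'' from Fourier equidistribution in step (ii) points in the wrong direction.

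The second gap is quantitative. The paper does not re-run the four-step framework with character sums in a ``dispersed'' case; it applies the already-proved short-sum homogeneous strengthening, Theorem~\ref{thm:Sze-Vu-2-strong}, to a sparse \emph{random} subset $A^*$ of $A$ (density $1/10$ when $m\le n^2/(12(\log n)^2)$, density $n^{-1/3}$ otherwise). The short-sum feature $\Sigma^{[k]}$ with $k\approx n/|A^*|$ is essential: it guarantees the interval produced starts at height roughly $kn/d=O(n\log n)/d$, i.e.\ below $m/d$ once $m\ge Cn\log n$, while the untouched elements of $A$ divisible by $d$ extend the interval via Lemma~\ref{lem:verysimple} up to about $|A|^2/2\ge n^2/(12(\log n)^2)\ge m$; this bracketing is exactly where the two thresholds in the statement come from, and also where $d>\mathrm{snd}(m)$ is ruled out by counting ($|A|\le 2^{50}(\log n)^3+n/(\mathrm{snd}(m)+1)<s(n,m)$). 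Your sketch asserts the interval can be made to ``bracket $m$'' and that small elements give ``fine adjustments,'' but without the short-sum control and the reserve of unused elements this calibration does not follow; likewise your recursion for small $d\mid m$ (dividing through and recursing) is both unnecessary in the paper's scheme and unjustified as stated, since the range hypotheses on $(n,m)$ are not preserved and the discarded non-multiples may be needed. In short, the lower bound and the broad ``find a long interval, then adjust'' philosophy are right, but the two ideas that actually prove the exact result --- residue completion at $d=\mathrm{snd}(m)$ via Lemma~\ref{lem:full-mod-d}, and the short-sum homogeneous progression of Theorem~\ref{thm:Sze-Vu-2-strong} applied to a random thinning with the attendant endpoint bookkeeping --- are missing.
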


Since $\textrm{snd}(m) \leq (1+o(1))\log m\le (2+o(1))\log n$, the theorem in fact implies that $g(n,m) = \max\left(s(n,m),(1+o(1))\sqrt{2m}\right) = s(n,m)$ for $Cn \log n \le m\le \frac{n^2}{(8+o(1))(\log n)^2}$, as promised above. On the other hand, once $m < c n \log n$ for $c$ sufficiently small, we do not generally have the bound $g(n,m) = (1+o(1))s(n,m)$. Indeed, for $8n < m < n(\log n)/8$, we can show that there is a subset of $[n]$ of size $h=\lfloor n^2/(2m) \rfloor$ with no subset sum equal to $m$, so if $\textrm{snd}(m) \ge (\log n)/2$, then $g(n,m) \ge h > (2-o(1))s(n,m)$. To show the existence of the required subset of size $h$, choose an integer $n' \in (h+3n/4,n]$ such that $m\in [n'n/(2h),n'n/(2h)+n/4]$. Note that $n' \in (h+3n/4,n]$ as $n'\le 2mh/n\le n$ and, since $n^2/(2m) \ge h \ge n^2/(2m)-1$, we can verify that, for $m\in (8n,n(\log n)/8)$ and $n$ sufficiently large, $n' \ge \frac{2h(m-n/4)}{n} > h+3n/4$. 
Observe now that the set of subset sums of the interval $[n'-h,n']$ does not contain any element from the interval $[n'n/(2h),n'n/(2h)+n/4]$, since any sum of at most $n/(2h)$ elements from the interval is strictly smaller than $n'n/(2h)$, while any sum of at least $n/(2h)+1$ elements from the interval is strictly larger than $(n'-h)(n/(2h)+1) = n'n/(2h)+n'-h-n/2 > n'n/(2h)+n/4$. Therefore, the interval $[n'-h,n']$ has size at least $h$ and does not contain $m$ as a subset sum, as required.

To say more about how we prove Theorem~\ref{thm:Alon-conjecture}, we must first discuss the main tool used in our proof, a strengthening of the subset sums result of Szemer\'edi and Vu~\cite{SV} which is itself of independent interest.

\subsection{Long homogeneous progressions in subset sums} \label{sec:homog}

We opened this paper by mentioning Szemer\'edi and Vu's proof~\cite{SV} of a longstanding conjecture of Erd\H{o}s~\cite{Er62}. As shown by Folkman~\cite{Folk}, this is itself a corollary of the statement that there is a constant $C$ such that if $A = (a_n)_{n=1}^\infty$ is an infinite increasing sequence of integers with $|A \cap [n]| \geq C \sqrt{n}$ for all sufficiently large $n$, then $\Sigma(A)$ contains an infinite arithmetic progression. In proving this latter statement, Szemer\'edi and Vu first proved the following finite analogue, which we have already mentioned several times. Note that this result is clearly best possible, as may be seen by considering the set of all positive integers up to $\lfloor \sqrt{2n} - 1/2\rfloor$. 

\begin{thm}[Szemer\'edi--Vu \cite{SV}]\label{SVthm}
There is a constant $C$ such that if $A \subset [n]$ with $|A| \geq C\sqrt{n}$, then $\Sigma(A)$ contains an arithmetic progression of length $n$.
\end{thm}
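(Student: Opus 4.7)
The plan is to execute the four-step framework outlined earlier in this introduction. After the standard reduction to $\gcd(A)=1$, in step (i) I would partition $A$ into $\ell:=\lceil c_1\sqrt{n}/\log n\rceil$ blocks $S_1,\dots,S_\ell$, each of size at least $c_2\log n$, chosen as consecutive elements in sorted order so that each $S_i$ lies in a subinterval of $[n]$ of length $O(n/\ell)$. In step (ii), within each block, further split $S_i=S_i'\cup S_i''$, where $S_i''$ is a small pool of pivots and $S_i'$ contains the rest, and prove that for every $s\in S_i''$ the residues of $\Sigma(S_i')$ modulo $s$ cover all of $\mathbb{Z}/s\mathbb{Z}$. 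Since $|S_i'|=\Theta(\log n)$ yields up to $2^{|S_i'|}\gtrsim n$ subset sums lying in an interval of length $O(n\log n)$, each residue modulo $s\le n$ is hit $\gtrsim \log n$ times on average, so coverage is plausible on counting grounds; a Fourier-analytic or inverse-theorem argument is needed to promote this average statement to a pointwise equality.

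For step (iii), I would take $S_i''$ to be a pool of several pivots of comparable size $s$, so that $\Sigma(S_i'')$ approximates the arithmetic progression $\{0,s,2s,\dots,ks\}$. Writing any target $m$ in the relevant range as $a+b$ with $b\in\Sigma(S_i'')$ and $a\in\Sigma(S_i')$ chosen to match the correct residue of $m$ modulo $s$ --- which is possible by the residue-covering from step (ii) --- yields an interval of length $\gtrsim n/\ell$ inside $\Sigma(S_i)$. In step (iv), the Minkowski sum of these $\ell$ intervals produces a single interval of length $\ge \ell\cdot(n/\ell)-(\ell-1)\gtrsim n$ in $\Sigma(A)$, which certainly contains an arithmetic progression of length $n$.

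The heart of the proof, and its main obstacle, is step (ii): equidistribution of $\Sigma(S_i')$ modulo a single pivot $s$. The $\sqrt{n}$ threshold is sharp precisely here, since with asymptotically fewer elements one cannot generate enough subset sums to cover $\Theta(n)$ residues. A natural approach splits into two regimes. In the generic case, a Fourier or moment bound controlling $\sum_{\chi\ne 0}|\widehat{\mathbf{1}_{\Sigma(S_i')}}(\chi)|^2$ on $\mathbb{Z}/s\mathbb{Z}$ yields equidistribution directly. In the structured case, an inverse subset-sum theorem of Freiman--S\'ark\"ozy type (as employed by Szemer\'edi and Vu in their original argument) forces $S_i'$ into a short sub-progression, which must then be absorbed into the effective gcd. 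Handling both regimes uniformly, with absolute constants across all blocks, is where I expect the main technical effort to lie.
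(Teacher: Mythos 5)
There is a genuine gap, and it sits exactly where you locate the ``heart'' of your argument, in step (ii), but also in your choice of parameters in step (i). With $\ell\approx\sqrt{n}/\log n$ blocks, each block $S_i$ has only $\Theta(\log n)$ elements, and then the claim that $\Sigma(S_i')$ covers \emph{all} of $\mathbb{Z}/s\mathbb{Z}$ for a pivot $s$ of size up to $n$ is simply false: if $S_i'$ consists of $\Theta(\log n)$ consecutive integers near $n/2$ (which is exactly what ``consecutive elements in sorted order'' produces for $A=\{\lfloor n/2\rfloor+1,\dots,\lfloor n/2\rfloor+C\sqrt{n}\}$), then the sums of $j$ elements all lie in an interval of length $O(j\log n)$ around $j\cdot n/2$, so $\Sigma(S_i')$ has only $O((\log n)^3)$ distinct values. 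It therefore cannot meet more than a polylogarithmic number of residues modulo $s\approx n$, and your counting heuristic (``each residue is hit $\gtrsim\log n$ times on average'') fails because the $2^{|S_i'|}$ subset sums collapse onto very few distinct values. For the same reason step (iii) collapses: no interval of length $\gtrsim n/\ell\approx\sqrt{n}\log n$ can live inside $\Sigma(S_i)$ when $\Sigma(S_i)$ itself has only polylogarithmic size, so step (iv) has nothing to add up. Note also that in this example every block is ``structured'' simultaneously, and there is no nontrivial common divisor to absorb into an effective gcd, so the structured/generic dichotomy you propose does not rescue the argument; yet $\Sigma(A)$ does contain an interval of length $n$ here, produced by sums of $\Theta(\sqrt{n})$ elements drawn from the whole set, not from any single small block.

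The paper's proof of the stronger Theorem~\ref{thm:Sze-Vu-2-strong} (which contains Theorem~\ref{SVthm}; the paper quotes Theorem~\ref{SVthm} itself from \cite{SV}) avoids both problems by a different balance of parameters and a weaker, attainable step (ii). It first preprocesses $A$ into a ``nice'' set (Lemma~\ref{lem:nice-set}): a common divisor $d$ is extracted and a diversity condition is enforced, so that no small $d'$ divides almost everything. It then splits into a \emph{constant} number $\ell=2^{15}$ of parts, each of size $\Theta(\sqrt{n})$, so each part has enough elements that its subset sums modulo a pivot $b$ can genuinely occupy a positive proportion of $\mathbb{Z}_b$. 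Crucially, the modular statement proved (Lemma~\ref{lem:structure-1}) is only that $|\Sigma_b^{[k]}(\cdot)|\ge\min(m^2/256,b/4)$ — largeness, not full coverage — established not by Fourier analysis but by an iterative growth process combined with the Deshouillers--Freiman structure theorem (Lemma~\ref{lem:Freiman-2.04}) and the diversity hypothesis. Largeness suffices because Lemma~\ref{lem:modp} then converts each of $\Theta(n/\sqrt{n})$ pivots into $\ge b/4$ new subset sums, making each $\Sigma^{[2k']}(A_i')$ a dense subset of an interval, and Lev's lemma (Lemma~\ref{lem:Lev}) applied to the constant number of dense parts produces the length-$n$ interval. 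If you want to salvage your outline, the essential corrections are: take $\ell$ constant and blocks of size $\Theta(\sqrt{n})$, replace ``covers all of $\mathbb{Z}/s\mathbb{Z}$'' by ``has size $\ge cs$'' (proved after a divisor-extraction/diversity preprocessing of the whole set, not block by block), and use Lemma~\ref{lem:modp} plus Lev's lemma instead of the pivot-progression tiling in your step (iii), which in any case breaks down when the pivots are only of comparable, not equal, size.
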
 

This theorem improved on an earlier result obtained independently by Freiman~\cite{F2} and S\'{a}rk\"{o}zy~\cite{Sar2}, who showed that there is a constant $C$ such that if $|A|\ge C\sqrt{n\log n}$, then $\Sigma (A)$ contains an arithmetic progression of length at least $n$. However, it also loses something, because the Freiman--S\'ark\"ozy result gives not only an arithmetic progression, but a {\it homogeneous progression}, an arithmetic progression $a, a + d, \dots, a + kd$ where the common difference $d$ divides $a$ and, hence, every other term in the progression. The natural question then, reiterated by several groups of authors, including Erd\H{o}s and S\'ark\"ozy~\cite{ES}, S\'ark\"ozy~\cite{Sar3} and Tran, Vu, and Wood \cite{TVW}, 
is whether there is a common strengthening of the Szemer\'edi--Vu and Freiman--S\'ark\"ozy theorems. We answer this question in the affirmative.

\begin{thm} \label{thm:Sze-Vu-2}
There is a constant $C$ such that if $A \subset [n]$ with $|A| \geq C\sqrt{n}$, then $\Sigma(A)$ contains a homogeneous progression of length $n$.
\end{thm}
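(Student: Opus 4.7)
The plan is to combine Szemer\'edi--Vu's theorem (Theorem~\ref{SVthm}) with a residue-shifting argument. Choosing the constant $C$ in our theorem to be at least twice the Szemer\'edi--Vu constant $C_0$, we split $A$ into two disjoint subsets $A = A_1 \sqcup A_2$ with $|A_1|, |A_2| \ge C_0 \sqrt{n}$. Applying Szemer\'edi--Vu to $A_1$ yields an arithmetic progression
\[
P = \{a,\, a+d,\, \ldots,\, a+(L-1)d\} \subseteq \Sigma(A_1)
\]
of length $L \ge n$ with some common difference $d \ge 1$. If $d = 1$, then $P$ is an interval, and any interval of length $\ge n$ is a homogeneous progression, so we are done. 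Otherwise, I would look for a shift $s \in \Sigma(A_2)$ with $s \equiv -a \pmod{d}$, so that
\[
P + s \subseteq \Sigma(A_1) + \Sigma(A_2) \subseteq \Sigma(A)
\]
is a homogeneous progression of length $L \ge n$, its starting point $a+s$ being divisible by $d$.

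The technical heart is to guarantee such a shift exists. When $d$ is much smaller than $|A_2|$, this follows from a simple greedy or pigeonhole argument: $\Sigma(A_2)$ covers every residue modulo $d$. The hard case is when $d$ is as large as $\Theta(\sqrt{n})$, the extremal value allowed by Szemer\'edi--Vu, since then $|A_2|$ and $d$ are comparable. Here I would exploit structural information from SV's proof: after dividing $A$ by $\gcd(A)$, the common difference $d$ arises from a subset $A_1^* \subseteq A_1$ concentrated in a single residue class $c \pmod{d}$, which constrains $a \equiv kc \pmod{d}$ for some $k$. Since $A_2$ (within the same primitive $A$) has elements in the same coset, $\Sigma(A_2) \bmod d$ generates the same cyclic subgroup of $\mathbb{Z}/d\mathbb{Z}$ as $\Sigma(A_1) \bmod d$, and this subgroup contains $-a$.

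An alternative route I would pursue in parallel is to apply the paper's framework directly (steps (i)--(iv) in the introduction), which aims to produce a long \emph{interval} in $\Sigma(A)$ rather than just an arithmetic progression. After reducing to the primitive case $\gcd(A) = 1$, the framework should yield an interval of length $\ge n$ in $\Sigma(A)$, the point being that primitivity forces enough diversity of residues for the framework's construction to produce a true interval rather than a sparse progression. Scaling back by $\gcd(A)$ then gives a homogeneous progression of length $n$ in the original $\Sigma(A)$, with common difference $\gcd(A)$. The main obstacle, in either approach, is that the hypothesis $|A| \ge C\sqrt{n}$ is tight, so the modest excess of elements beyond the SV threshold must be leveraged carefully to guarantee either the correct residue coverage of $\Sigma(A_2) \bmod d$ (first approach) or that the framework produces a genuine interval rather than merely a sparse progression (second approach).
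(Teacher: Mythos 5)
Your first route stalls exactly at the step you label the ``technical heart,'' and the justification you sketch for it does not work. The set $\Sigma(A_2)\bmod d$ is not a subgroup of $\mathbb{Z}_d$, so ``$\Sigma(A_2)\bmod d$ generates the same cyclic subgroup as $\Sigma(A_1)\bmod d$, and this subgroup contains $-a$'' does not give you an element $s\in\Sigma(A_2)$ with $s\equiv -a\pmod d$; generation and coverage are different things precisely in the critical regime $d=\Theta(\sqrt n)$. Moreover, the structural claim you want to import (that the SV progression comes from a subset concentrated in one residue class, constraining $a$) is not part of the statement of Theorem~\ref{SVthm}, and even granting it the shift can fail outright. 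Concretely, take $d\approx \sqrt n/(10C)$ and $A=\{d,2d,\dots\}\cup\{1\}$: here $|A|\ge 10C\sqrt n$, $\gcd(A)=1$, and $\Sigma(A)\bmod d\subseteq\{0,1\}$, so $\Sigma(A)$ contains no interval of length $3$; the theorem holds only via a homogeneous progression of difference $d$ built from the multiples of $d$. If the element $1$ lands in $A_1$, Szemer\'edi--Vu may hand you a progression of difference $d$ starting at $a\equiv 1\pmod d$ (you have no control over which progression it returns), while $\Sigma(A_2)\bmod d=\{0\}$, so no admissible shift exists. This example also kills your second route as stated: dividing by $\gcd(A)$ is not the right reduction, since $A$ can be primitive while almost all of its elements share a divisor $d$, and then no interval of length $n$ exists in $\Sigma(A)$ at all.

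What the paper actually does is different in both respects. Theorem~\ref{thm:Sze-Vu-2-strong} finds a $d$ dividing \emph{most} (not all) elements of $A$, via the iterative ``nice set'' preprocessing of Lemma~\ref{lem:nice-set}, discarding only $O((\log n)^3+n/m)$ exceptional elements, and then proves from scratch that $\Sigma^{[k]}(A')$ for $A'=\{x/d: x\in A,\ d\mid x\}$ contains an interval of length $n$ --- note $A'\subseteq[n/d]$, so this is quantitatively stronger than what Theorem~\ref{SVthm} could give even as a black box, and it rests on the new modular lemma (Lemma~\ref{lem:structure-1}, showing diverse subsets of $\mathbb{Z}_b$ have large $\Sigma_b^{[k]}$), combined with Lemma~\ref{lem:modp} and Lev's Lemma~\ref{lem:Lev}. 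So the missing content in your proposal is precisely this machinery: either a guaranteed residue hit for the shift (which can be impossible) or the correct ``divide by a near-common divisor and rebuild an interval of length $n$ in the shrunken range'' argument; neither is obtainable by citing Theorem~\ref{SVthm} plus pigeonhole, and the leverage does not come from taking $C$ moderately larger than the SV constant.
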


For the proof of Theorem~\ref{thm:Alon-conjecture}, we need a slightly stronger version of Theorem~\ref{thm:Sze-Vu-2}. This result, Theorem~\ref{thm:Sze-Vu-2-strong}, states that if $A \subset [n]$ with $|A| \geq C \sqrt{n}$, then there exists $d$ (which is typically just $1$) such that most elements in $A$ are divisible by $d$ and the set of subset sums formed from adding at most $2^{50} n/|A|$ elements of $A$ which are divisible by $d$ contains a homogeneous progression with length $n$ and common difference $d$. 

To prove Theorem~\ref{thm:Alon-conjecture}, suppose now that $A$ is a subset of $[n]$ with $s(n, m) + 1$ elements and we wish to show that $m \in \Sigma(A)$. Using Theorem~\ref{thm:Sze-Vu-2-strong}, we may conclude that $\Sigma(A)$ contains a homogeneous progression with length $n$ and common difference $d$, where $d$ divides most elements of $A$. Moreover, if $d|m$, this progression will contain $m$, so we may assume that $d \geq \textrm{snd}(m)$. A simple counting argument then implies that $d$ must in fact equal $\textrm{snd}(m)$, as otherwise there will not be enough elements in $A$. Since $s(n,m) + 1 = \lfloor \frac{n}{\textrm{snd}(m)}\rfloor +\textrm{snd}(m)-1$, there must also be at least $\textrm{snd}(m)-1$ elements in $A$ which are not divisible by $\textrm{snd}(m)$. We complete the proof by using these additional elements to show that $m \in \Sigma(A)$, as required.

As another corollary of Theorem \ref{thm:Sze-Vu-2}, we also obtain an improved bound on an old question of Straus~\cite{Straus} (see also~\cite{EStr}) regarding the maximum size of a non-averaging subset of $[n]$, where a subset $A$ of $[n]$ is said to be {\it non-averaging} if no $a\in A$ is the average of two or more other elements of $A$. If we write $h(n)$ for the maximum size of a non-averaging subset of $[n]$, an elegant construction of Bosznay~\cite{Bosz} shows that $h(n)=\Omega(n^{1/4})$. On the other hand, if we write $H(n)$ for the maximum integer for which there are two subsets of $[n]$ of size $H(n)$ whose sets of subset sums have no non-zero common element, then a result of Straus~\cite{Straus} says that $h(n) \leq 2 H(n) + 2$. Using the Freiman--S\'ark\"ozy result on homogeneous progressions, Erd\H{o}s and S\'{a}rk\"{o}zy \cite{ES} were able to show that $H(n) = O(\sqrt{n \log n})$, which, by Straus' observation, also yields a similar upper bound on $h(n)$. By following their method, but using Theorem~\ref{thm:Sze-Vu-2} instead of the Freiman--S\'ark\"ozy result, we improve their bound to $H(n) = O(\sqrt{n})$, which is tight up to the constant factor, as may be seen by considering the sets $[1,c\sqrt{n}]$ and $[n-c\sqrt{n},n]$ for any $c<\sqrt{2}$. By Straus' inequality, it also provides an improved upper bound $h(n) = O(\sqrt{n})$ for the size of the largest non-averaging subset of $[n]$.

\begin{cor} \label{cor:nonavg}
There is a constant $C$ such that $H(n) \leq C \sqrt{n}$, where $H(n)$ is the largest integer for which there are two subsets of $[n]$ of size $H(n)$ whose sets of subset sums have no non-zero common element, and $h(n) \leq C\sqrt{n}$, where $h(n)$ is the size of the largest non-averaging subset of $[n]$.
\end{cor}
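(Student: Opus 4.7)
By Straus' inequality $h(n) \le 2H(n)+2$ noted in the excerpt, it suffices to show $H(n) = O(\sqrt n)$; the bound $h(n) = O(\sqrt n)$ then follows immediately. The plan is to follow the Erd\H{o}s--S\'ark\"ozy proof from \cite{ES} that established the weaker estimate $H(n) = O(\sqrt{n\log n})$, substituting Theorem~\ref{thm:Sze-Vu-2} for the Freiman--S\'ark\"ozy result they used as input.

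Let $A, B \subseteq [n]$ with $|A| = |B| = m = H(n)$ and $\Sigma(A) \cap \Sigma(B) = \{0\}$. Any element of $A \cap B$ would be a common nonzero subset sum, so $A$ and $B$ are disjoint. Assume toward contradiction that $m \ge C\sqrt{n}$ for some sufficiently large absolute constant $C$. Applying Theorem~\ref{thm:Sze-Vu-2} to $A$ and to $B$ produces homogeneous progressions $HP_A \subseteq \Sigma(A)$ and $HP_B \subseteq \Sigma(B)$ of length $n$, with common differences $d_A$ and $d_B$; here $HP_A$ is the set of all multiples of $d_A$ in an interval of length roughly $nd_A$, and similarly for $HP_B$. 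Invoking the stronger Theorem~\ref{thm:Sze-Vu-2-strong}, one further knows that most elements of $A$ are divisible by $d_A$ and that $HP_A$ is realized by sums of at most $2^{50}n/m$ elements of $A \cap d_A\mathbb{Z}$. Since $A \cap d_A\mathbb{Z} \subseteq [n] \cap d_A\mathbb{Z}$ has at least $m/2$ elements, it follows that $d_A \le 2n/m$, and likewise $d_B \le 2n/m$, so $L := \mathrm{lcm}(d_A, d_B) \le 4n^2/m^2 \le 4n/C^2$.

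The crucial step is to upgrade the length-$n$ progression $HP_A$ into a much larger arithmetic structure inside $\Sigma(A)$. Translating $HP_A$ by arbitrary subset sums of the $\Omega(m)$ elements of $A \cap d_A\mathbb{Z}$ not used in constructing $HP_A$ produces further homogeneous progressions of common difference $d_A$ in $\Sigma(A)$; these translates should assemble to show that $\Sigma(A)$ contains nearly every multiple of $d_A$ in $[0, \sum_{a \in A} a]$, and the analogous statement holds for $B$. Since $\sum A \ge \binom{m+1}{2} \ge C^2 n/2$ is much larger than $L$ for $C$ sufficiently large, both $\Sigma(A)$ and $\Sigma(B)$ then contain nearly every multiple of $L$ in the common range $[0, \min(\sum A, \sum B)]$, yielding a nonzero common element of $\Sigma(A) \cap \Sigma(B)$ and contradicting the hypothesis.

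The main obstacle is precisely this final extension step: upgrading a single length-$n$ homogeneous progression to essentially full coverage of $d_A\mathbb{Z} \cap [0, \sum A]$ while controlling the multiples potentially missed near the endpoints of the range. For $C$ large the boundary losses are negligible compared to $\sum A \gtrsim C^2 n$, and the argument should close cleanly; with $H(n) \le C\sqrt{n}$ in hand, Straus' inequality then yields $h(n) \le 2C\sqrt n + 2$ as required.
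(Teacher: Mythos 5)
Your reduction to $H(n)=O(\sqrt n)$ via Straus' inequality is fine, and the bounds $d_A,d_B\le 2n/m$ and $L=\mathrm{lcm}(d_A,d_B)\le 4n^2/m^2$ are correct. The problem is exactly the step you flag as the ``crucial step'': it has a genuine quantitative gap in the main regime $m\asymp\sqrt n$, and the claim that ``the boundary losses are negligible'' is false there. What Theorem~\ref{thm:Sze-Vu-2-strong} gives you is an interval of length $n$ inside $\Sigma^{[k]}(A')$ with $k=2^{50}n/m$; the only control on \emph{where} this interval sits is that its elements are sums of at most $k$ elements of $A'$, so its minimum can be as large as about $k\cdot n/d_A$, i.e.\ the unscaled progression can start as high as $\Theta(n^2/m)=\Theta(n^{3/2}/C)$. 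Extending upward with the unused multiples of $d_A$ (Lemma~\ref{lem:verysimple}) then covers multiples of $d_A$ only in a window of length about $d_A m^2/32$ starting from that (uncontrolled) point; it does not give you ``nearly every multiple of $d_A$ in $[0,\sum A]$''. So for $A$ consisting of small numbers (say $d_A=1$, coverage contained in $[0,O(m^2)]=[0,O(C^2n)]$) and $B$ consisting of numbers close to $n$ (every nonzero subset sum of $B$ built from $j$ terms is about $jn$, so its progression and any upward extension sit at height $\Omega(n)$ per term used, potentially near $\Theta(n^2/m)\gg C^2 n$), the two coverage windows need not intersect at all: your computation only closes when $m^3\gtrsim n^2$, i.e.\ $m\gtrsim n^{2/3}$, not down to $m\asymp\sqrt n$.

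The paper closes this gap with an asymmetric argument rather than intersecting two long runs. Let $M$ be the median of $A\cup B$; WLOG the $m/2$ smallest elements $R_1$ of one set are all $\le M$ and the $m/2$ largest elements $R_2$ of the other are all $\ge M$. Applying Theorem~\ref{thm:Sze-Vu-2-strong} to $R_1$ gives a homogeneous progression $P\subseteq\Sigma^{[k]}(R_1)$ of difference $d\le 4M/m$ whose minimum is at most $kM$ (this is where the short-sum feature and the smallness of $R_1$'s elements are essential). Then one partitions $R_2$ greedily into blocks of at most $d$ elements whose sums are divisible by $d$ (pigeonhole on partial sums mod $d$): each block adds at most $dn$, the partial sums are multiples of $d$, and since the elements of $R_2$ are $\ge M$ the partial sums climb past $kM$; as $P$ contains all multiples of $d$ in a window of length $\ge 2nd$, some partial sum of $R_2$ lands in $P$, giving a nonzero common subset sum. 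If you want to salvage your two-sided approach you would need to import this median idea (or some other way to force one progression to start low relative to what the other set's elements can reach), since the theorem alone cannot rule out the high-starting-point scenario above.
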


\subsection*{Organization of the paper}

In the next section, we will elaborate on our methods by giving rough outlines of the proofs of some of our main results. We then proceed to the formal proofs, proving Theorems~\ref{thm:Ramsey-complete} and \ref{thm:Ramsey-complete-polynomial} on Ramsey completeness in Section~\ref{sec:Ramsey-completeness}, Theorem~\ref{epsilonclean} on density completeness in Section~\ref{sec:density-ramsey} and Theorem~\ref{thm:monochromatic-subset-sums} on monochromatic subset sums in Section~\ref{sec:Monochromatic-subset-sums}. We turn to the proof of Theorem~\ref{thm:Sze-Vu-2}, our homogeneous strengthening of the Szemer\'edi--Vu theorem, and its consequence Corollary~\ref{cor:nonavg} in~Section \ref{sec:Sze-Vu-proof} and conclude in Section \ref{sec:extremal-sum} by proving Theorem~\ref{thm:Alon-conjecture} on the largest set avoiding a particular subset sum. Several supplementary results are consigned to the appendices.

\subsection*{Notation}

For the sake of clarity of presentation, we omit floor and ceiling signs whenever they are not essential. We also maintain the convention that all logarithms are natural logarithms unless otherwise specified. 

\section{\label{sec:overview}Overview of the proofs of the main results}

The techniques used to prove Theorems~\ref{thm:Ramsey-complete}, \ref{thm:Ramsey-complete-polynomial}, \ref{thm:monochromatic-subset-sums} and \ref{thm:Sze-Vu-2} all share some similarities. In each case, we reduce a 
problem over $\mathbb{Z}$ to the corresponding problem over $\mathbb{Z}_{m}$.
In the cyclic setting, considering the structure of the ``almost
periods'', i.e., those elements whose inclusion does not significantly expand
the subset sum, allows us to transform our questions about subset sums
into problems about iterated sumsets. The literature on iterated sumsets is extensive,
allowing us to reach our desired conclusions by combining existing results on these 
sumsets with novel arguments from probabilistic combinatorics. In this section, we say more about the specific ideas
that go into the proofs of each of our main theorems.
The detailed proofs of these theorems and the other results described in the introduction are then in subsequent sections.  

\subsection{Some useful tools} \label{sec:tools}

We will repeatedly use the following simple lemma, allowing us to extend intervals in the set of subset sums by adding new elements. It is essentially Lemma 1 of Graham \cite{G}.

\begin{lem}[Graham~\cite{G}]\label{lem:verysimple}
Let $A$ be a set such that $\Sigma(A)$ contains all integers in the interval $[x,x+y)$.
\begin{enumerate}
\item If $a$ is a positive integer with $a \leq y$ and $a\notin A$, then $\Sigma(A \cup \{a\})$ contains all integers in the interval $[x,x+y+a)$. 
\item If $a_1,\ldots,a_s$ are positive integers such that $a_i \leq y+\sum_{j<i} a_j$ and $a_i \notin A$ for $i=1,\ldots,s$, then  
$\Sigma(A \cup \{a_1,a_2,\dots,a_s\})$ contains all integers in the interval $[x,x+y+\sum_{i=1}^s a_i)$.
\end{enumerate}
\end{lem}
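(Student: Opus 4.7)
The plan is a short, direct argument for (i) together with a trivial induction for (ii); no deep tools are needed, since both parts amount to the observation that adjoining a new element $a \le y$ to a set whose subset sums cover an interval of length $y$ yields subset sums covering an interval of length $y+a$.

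For part (i), I would fix an arbitrary integer $z \in [x, x+y+a)$ and split into two cases. If $z < x+y$, then $z \in \Sigma(A) \subseteq \Sigma(A \cup \{a\})$ and we are done. Otherwise $z \ge x+y$, and I would look at $z-a$. The hypothesis $a \le y$ gives $z - a \ge x+y-a \ge x$, while $z < x+y+a$ gives $z - a < x+y$, so $z - a \in [x, x+y)$ and therefore $z - a$ is the sum of the elements of some $S \subseteq A$. Because $a \notin A$, the element $a$ does not already lie in $S$, so $S \cup \{a\}$ is a valid subset of $A \cup \{a\}$ summing to $z$. This shows $z \in \Sigma(A \cup \{a\})$.

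For part (ii), I would argue by induction on $s$, the case $s=0$ being the hypothesis. Assuming the result for $s-1$, the set $A_{s-1} := A \cup \{a_1,\dots,a_{s-1}\}$ has $\Sigma(A_{s-1}) \supseteq [x, x+y+\sum_{j<s}a_j)$. Set $y' := y + \sum_{j<s} a_j$; the hypothesis $a_s \le y + \sum_{j<s} a_j = y'$ is exactly the length condition required by part~(i), and $a_s \notin A_{s-1}$ (the $a_i$ being implicitly distinct, and distinct from $A$) lets (i) apply with $A_{s-1}$ in place of $A$ and $y'$ in place of $y$. The conclusion is that $\Sigma(A_s)$ contains $[x, x+y'+a_s) = [x, x+y+\sum_{i=1}^s a_i)$, completing the induction.

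The only mild point to flag is distinctness: the statement in (ii) only asserts $a_i \notin A$, but if two of the $a_i$'s coincided then the claimed interval length $y + \sum_i a_i$ would be wrong. In every intended application (for example, when the $a_i$ are chosen to form an increasing sequence) this is automatic, so I would simply note this and proceed. I do not anticipate any real obstacle here — the lemma is essentially a bookkeeping statement, and the main payoff is in its repeated use later in the paper.
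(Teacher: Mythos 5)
Your proof is correct and follows essentially the same argument as the paper: the case split $z<x+y$ versus $z\in[x+y,x+y+a)$ for part (i), and induction on $s$ via part (i) for part (ii). Your remark about implicit distinctness of the $a_i$ is a fair (if minor) observation, but no further comment is needed.
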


\begin{proof}
For the proof of the first part, note that if $u \in [x,x+y)$, then $u \in \Sigma(A) \subset \Sigma(A \cup \{a\})$. If $u \in [x+y,x+y+a)$, then $u-a \in [x,x+y) \subset \Sigma(A)$, so $u=(u-a)+a \in \Sigma(A \cup \{a\})$. The second part follows from the first part by induction on $s$. 
\end{proof}

We will also make repeated use of the following result of Lev \cite{Lev}. The importance of this result is that it allows us to find long intervals in a set of subset sums
by first finding several dense subsets of long intervals and then summing these sets. Several weaker versions 
of this result appeared earlier in the literature, many of which would also suffice for our purposes.

\begin{lem}[Lev~\cite{Lev}] \label{lem:Lev}
Suppose $\ell,q\ge1$ and $n\ge3$ are integers with
$\ell\ge2\lceil(q-1)/(n-2)\rceil$. If $S_{1},\dots,S_{\ell}$ are integer
sets each having at least $n$ elements, each a subset of an interval
of at most $q+1$ integers and none a subset of an arithmetic
progression of common difference greater than one, then $S_{1}+\cdots+S_{\ell}$
contains an interval of length at least $\ell(n-1)+1$. 
\end{lem}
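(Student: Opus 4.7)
The plan is to reduce to a normalized setting and then combine a pair-wise structural lemma with an iterative interval extension.

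\emph{Step 1 (Normalization).} First translate each $S_i$ by $-\min(S_i)$ so that $0 \in S_i \subseteq [0,q_i]$ with $q_i \le q$. The hypothesis that $S_i$ is not a subset of an arithmetic progression of common difference greater than $1$, combined with $0 \in S_i$, is equivalent to $\gcd(S_i)=1$. Translation shifts $S_1+\cdots+S_\ell$ by a constant without affecting the length of the longest interval it contains, so it suffices to prove the result for the normalized sets.

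\emph{Step 2 (A structural lemma for pairs).} I would then prove that for any two normalized sets $S, T$ with $|S|,|T|\ge n$, $S,T\subseteq[0,q]$, $0\in S\cap T$, and $\gcd(S)=\gcd(T)=1$, the sumset $S+T$ contains an interval of length at least $2(n-1)+1$. The intuition is that the consecutive gaps among the sorted elements $0=s_0<s_1<\cdots<s_{n-1}\le q$ of $S$ sum to at most $q$, so the largest gap is at most $q-(n-2)$, and an analogous statement holds for $T$. Combined with the gcd hypotheses (which rule out pathological periodic configurations), the sumset $S+T$ packs tightly enough that either Freiman's $3k-4$ theorem or a direct combinatorial analysis in the spirit of Lemma~\ref{lem:verysimple} produces the desired interval.

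\emph{Step 3 (Iteration).} Partition $\{1,\ldots,\ell\}$ into $\lceil\ell/2\rceil$ pairs, and view the problem as summing the pair-sumsets $T_i:=S_{2i-1}+S_{2i}$. Starting with the interval of length at least $2(n-1)+1$ inside $T_1$ furnished by Step 2, absorb $T_2,T_3,\ldots$ one at a time via Lemma~\ref{lem:verysimple}: each pair contributes at least $2(n-1)$ new consecutive integers, so that after absorbing all $\lceil\ell/2\rceil$ pairs the resulting interval has length at least $\ell(n-1)+1$. The quantitative hypothesis $\ell\ge 2\lceil(q-1)/(n-2)\rceil$ is precisely what is needed to guarantee that Graham's precondition in Lemma~\ref{lem:verysimple} (new elements fitting within the current interval plus the running total of earlier additions) remains valid at every step.

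\emph{Main obstacle.} The hardest part is Step 2: extracting an honest interval, rather than merely a long arithmetic progression or a cardinality lower bound, of length exactly $2(n-1)+1$ from $S+T$. Classical Freiman-type theorems naturally produce arithmetic progressions of possibly large common difference, so the conditions $\gcd(S)=\gcd(T)=1$ must be leveraged carefully to force the resulting common difference to equal $1$. The factor of $2$ in the threshold $\ell\ge 2\lceil(q-1)/(n-2)\rceil$ is then explained by the pairing: a single $S_i$ need not yield an interval on its own, but any pair does. If tightness of the constant $\ell(n-1)+1$ turns out to fail for the particular pair lemma I prove in Step 2, a fallback is to iterate a Plünnecke-type growth estimate across pairs, losing a constant but retaining the linear-in-$\ell$ scaling; the tight version then requires the more delicate gap-by-gap accounting described above.
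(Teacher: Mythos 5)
This lemma is not proved in the paper at all: it is quoted directly from Lev's paper \cite{Lev}, so there is no internal proof to compare your argument against. Judged on its own terms, your proposal has a genuine gap at its foundation. The pair lemma in Step 2 is false when $q$ is large compared with $n$. Take $S=T=\{0\}\cup\{q-n+2,q-n+3,\dots,q\}$ with $q$ much larger than $n$ (say $q>3n$). Then $|S|=n$, $S\subseteq[0,q]$, $0\in S$, and $S$ contains two consecutive integers, so it is not contained in any arithmetic progression of common difference greater than one; yet $S+T=\{0\}\cup[q-n+2,\,q]\cup[2q-2n+4,\,2q]$, whose longest interval has only $2n-3<2(n-1)+1$ integers. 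The hypothesis $\ell\ge 2\lceil(q-1)/(n-2)\rceil$ is exactly the statement that no bounded number of summands suffices when $q/n$ is large: with $\ell=2$ the hypothesis forces $q\le n-1$, in which case each $S_i$ is a full interval and the conclusion is trivial, but your Step 2 claims the interval for arbitrary $q$, which cannot hold.

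Because Step 2 fails, Step 3 collapses as well, and for a second reason: the pair sumsets $T_i=S_{2i-1}+S_{2i}$ can contain gaps of size on the order of $q$ (as in the example above), while the interval you are trying to grow has length only on the order of $i\cdot n$ after $i$ absorption steps. The union $\bigcup_{t\in T_i}(I+t)$ is an interval only when consecutive elements of $T_i$ differ by at most $|I|$, so the early absorption steps are not justified; the quantitative hypothesis on $\ell$ guarantees enough total length only at the very end, not at each intermediate stage, and the order in which mass accumulates is precisely the delicate point that Lev's argument (working with all $\ell$ sets simultaneously rather than pairing them) has to control. The fallback via Pl\"unnecke-type estimates does not repair this, since such estimates control cardinalities of sumsets, not the presence of long runs of consecutive integers. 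Also note a small misstatement in Step 3: Lemma~\ref{lem:verysimple} concerns subset sums $\Sigma(A\cup\{a\})$, where one may omit $a$; in a sumset one must take an element from every $T_i$, so the correct substitute is the observation that $0\in T_i$ after normalization, which you use implicitly but should state.
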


In working with general cyclic groups, the following analogue of the Cauchy--Davenport
theorem, a consequence of Theorem 1.1 from \cite{COS}, will also be useful to us. Given subsets $A$ and $B$ of an abelian group $G$, we define $A+B=\{a+b:\,a\in A, b\in B\}$ and $A-B = \{a-b:\,a\in A, b\in B\}$. For $k\in \mathbb{N}$, we define the $k$-fold sumset $kA = \underbrace{A+A+\cdots+A}_{k\textrm{ times}}$. 

\begin{lem}[Cochrane, Ostergaard and Spencer~\cite{COS}]
\label{lem:Cauchy-Davenport} If $A$ is a subset of an abelian group $G$ which is not contained in a coset of a proper
subgroup of $G$ and $r,s$ are non-negative integers which are not both zero, then 
\[
|rA-sA|\ge\min\left\{ |G|,\frac{(r+s+1)|A|}{2}\right\}.
\]
\end{lem}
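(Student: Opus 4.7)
The plan is to deduce the lemma from Kneser's theorem on sumsets in abelian groups. Recall that the generalized Kneser inequality states that for subsets $A_1, \dots, A_k$ of an abelian group $G$ with $H = \mathrm{Stab}(A_1 + \cdots + A_k) = \{g \in G : g + \sum_i A_i = \sum_i A_i\}$, one has
\[
|A_1 + \cdots + A_k| \ge \sum_{i=1}^k |A_i + H| - (k-1)|H|.
\]
I would apply this to $rA - sA$ viewed as the sum of $k = r+s$ sets, namely $r$ copies of $A$ and $s$ copies of $-A$. Since $H$ is a subgroup (so $-H = H$), we have $|-A + H| = |A + H|$, which gives the clean bound
\[
|rA - sA| \ge (r+s)|A + H| - (r+s-1)|H|.
\]

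Next I would split into cases based on the stabilizer $H$. If $H = G$, then $rA - sA$ is $G$-invariant and nonempty, hence equals $G$, and the conclusion is immediate. Otherwise $H$ is a proper subgroup of $G$, so the non-coset hypothesis on $A$ implies that $A$ meets at least two cosets of $H$. Writing $t = |A + H|/|H|$, we thus have $t \ge 2$ and $|A| \le t|H|$, and the displayed inequality becomes
\[
|rA - sA| \ge \bigl((r+s)t - (r+s-1)\bigr)|H|.
\]

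Finally, one checks by elementary algebra that for $r+s \ge 2$ and $t \ge 2$ the inequality $(r+s)t - (r+s-1) \ge (r+s+1)t/2$ holds (it reduces to $((r+s)-1)(t-2) \ge 0$), and hence
\[
|rA - sA| \ge \tfrac{r+s+1}{2}\, t\, |H| \ge \tfrac{r+s+1}{2} |A|,
\]
as required. The corner case $r+s=1$ is trivial, and the possibility $|A|=1$ is excluded by the non-coset hypothesis whenever $G$ is nontrivial.

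The main (minor) obstacle is not any single step but the bookkeeping: one has to verify that the Kneser inequality applies uniformly across all $r+s$ summands (using $|-A+H| = |A+H|$), that the non-coset hypothesis transfers to the proper subgroup $H$ to force $t\ge 2$, and that the algebraic manipulation relating $((r+s)t - (r+s-1))|H|$ to $(r+s+1)|A|/2$ goes through for every admissible $(r,s,t)$. Once these points are pinned down, the proof is essentially a one-line application of Kneser.
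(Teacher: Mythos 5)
Your proof is correct, but it takes a different route from the paper: the paper gives no argument at all for this lemma, simply quoting it as a consequence of Theorem 1.1 of Cochrane--Ostergaard--Spencer \cite{COS}, whereas you derive it directly from the multi-summand form of Kneser's theorem. Your case analysis is sound: writing $k=r+s$ and $H$ for the stabilizer of $rA-sA$, the bound $|rA-sA|\ge (r+s)|A+H|-(r+s-1)|H|$ follows from Kneser (using $|-A+H|=|A+H|$ since $-H=H$); if $H=G$ the sumset is all of $G$; otherwise the hypothesis that $A$ lies in no coset of a proper subgroup forces $t=|A+H|/|H|\ge 2$, and the algebra $kt-(k-1)\ge \tfrac{k+1}{2}t$, equivalent to $(k-1)(t-2)\ge 0$, holds for all $k\ge 1$, which in fact also absorbs your ``corner case'' $r+s=1$. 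Together with $|A|\le |A+H|=t|H|$ this gives the stated bound, indeed without even needing the minimum with $|G|$ in the proper-stabilizer case. The only points to make explicit are that Kneser's theorem is being used in its standard generalized form for $k$ finite nonempty summands with $H$ the stabilizer of the full sumset (a routine induction from the two-set case after quotienting by $H$), and that $A$ is automatically nonempty under the hypothesis when $G$ is nontrivial. What your approach buys is a short, self-contained proof from a classical tool; what the paper's citation buys is brevity and a pointer to \cite{COS}, where a more precise result of the same Cauchy--Davenport flavour is established.
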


We will also make use of the following result of Deshouillers and Freiman \cite{DF}. The following corrected statement of the result appears in \cite{BP}, where it is also shown that the hypothesis $|A+A|\le2.04|A|$ can be weakened to $|A+A|\le2.1|A|$. 

\begin{lem}[Deshouillers and Freiman~\cite{DF}] \label{lem:Freiman-2.04}
There exists a positive constant $\xi$ such
that if $A$ is a subset of $\mathbb{Z}_{m}$ of size at most $\xi m$
with $|A+A|\le2.04|A|$, then there exists a proper
subgroup $H\subseteq\mathbb{Z}_{m}$ such that either
\begin{enumerate}
\item $A$ is a subset of an arithmetic progression of $H$-cosets of length
$\ell$ with $(\ell-1)|H|\le|A+A|-|A|$,
\item $A$ meets exactly three $H$-cosets and these three $H$-cosets are terms of an arithmetic progression of $H$-cosets of length
$\ell$ with $(\min(\ell,4)-1)|H|\le|A+A|-|A|$ or 
\item $A$ is a subset of an $H$-coset and $|A|\ge\xi|H|$.
\end{enumerate} 
Here an arithmetic progression of $H$-cosets of length $\ell$ is a set of the form $\bigcup_{i\in [\ell]}(x+id+H)$, where $x,d\in \mathbb{Z}_{m}$ and $d\notin H$.
\end{lem}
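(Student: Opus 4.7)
The plan is to deduce this structural result from Freiman's $3k-4$ theorem in $\mathbb{Z}$ via a rectification argument, combined with a careful analysis of the coset structure induced by $A+A$ inside $\mathbb{Z}_m$. The key feature of the hypothesis is that $|A|\le\xi m$ for a small absolute constant $\xi$, so $A$ is genuinely small inside $\mathbb{Z}_m$, which is precisely what makes lifting to $\mathbb{Z}$ feasible in the first place.

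First I would take $H := \mathrm{Stab}(A+A) = \{h\in\mathbb{Z}_m : h+(A+A)=A+A\}$ as the natural candidate subgroup, and let $\bar A\subset \mathbb{Z}_m/H$ be the projection of $A$. Since $A+A$ is then a union of $H$-cosets, the doubling bound descends cleanly: $|\bar A+\bar A|/|\bar A|\le 2.04$, and by construction $\bar A+\bar A$ has trivial stabilizer in $\mathbb{Z}_m/H$. Using the smallness $|A|\le\xi m$, I would then apply a Freiman rectification argument to produce a Freiman $2$-isomorphism from $\bar A$ onto an integer set $B\subset\mathbb{Z}$, transporting all additive relations of length up to $2$ faithfully and ensuring that no wrap-around occurs.

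Having reduced to $\mathbb{Z}$, I would invoke Freiman's $3k-4$ theorem: any finite $B\subset\mathbb{Z}$ with $|B+B|\le 3|B|-4$ is contained in an arithmetic progression of length at most $|B+B|-|B|+1$. Pulling the resulting AP back to $\mathbb{Z}_m/H$ and then to $\mathbb{Z}_m$ gives case (i) of the statement, with the length inequality $(\ell-1)|H|\le|A+A|-|A|$ coming directly from comparing $|B+B|-|B|$ with the count of occupied $H$-cosets. For very small $|\bar A|$, in particular when $\bar A$ meets exactly three $H$-cosets, three points in $\mathbb{Z}$ always sit inside some length-$3$ AP without necessarily being an AP themselves, and this edge case is what forces the separate statement in case (ii) with the modified bound $(\min(\ell,4)-1)|H|$. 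Finally, if $\bar A$ is a single point, then $A$ lies in one $H$-coset, and one verifies $|A|\ge\xi|H|$ directly: otherwise we could replace $H$ by a smaller subgroup and recurse, contradicting the minimality of our choice.

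The main obstacle will be executing the rectification at the tight threshold $2.04$. Freiman's $3k-4$ theorem in $\mathbb{Z}$ is sharp near doubling $3$, but the range $|A+A|\le 2.04|A|$ sits well below that and is where the subtle three-coset configurations live; ruling them out of case (i) and capturing them cleanly in case (ii) is delicate. A second technical point is the choice of $H$: taking the stabilizer of $A+A$ is the natural guess, but pinning down the exact constants in $(\ell-1)|H|\le|A+A|-|A|$ requires a careful coset-count relating $|A+A|$, the number of cosets that $A$ occupies and the length of the AP of cosets, in the spirit of the original $3k-4$ argument transplanted to the cyclic setting. It is plausible that a slightly different subgroup, for instance the period of $A+A-A+A$ rather than of $A+A$, is what actually makes the bookkeeping close; this fine-tuning, rather than any single conceptual step, is likely the heart of the proof.
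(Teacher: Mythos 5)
This lemma is not proved in the paper at all: it is quoted verbatim (in the corrected form from Balasubramanian--Pandey \cite{BP}) from Deshouillers and Freiman \cite{DF}, whose proof is a long and delicate argument combining Kneser's theorem with Freiman-type results in $\mathbb{Z}$ and $\mathbb{Z}_p$. So there is no in-paper proof to match your sketch against; judged on its own terms, your proposal has two genuine gaps.

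First, the candidate subgroup $H=\mathrm{Stab}(A+A)$ is wrong in general. Take $H$ a small subgroup of $\mathbb{Z}_m$, let $A$ consist of $\ell-1$ full $H$-cosets lying in an arithmetic progression of $H$-cosets together with an arithmetic progression of length $|H|/2$ (with difference a generator of $H$) inside the $\ell$-th coset. Then $|A+A|\le 2.04|A|$ and the correct conclusion is case (i) with this non-trivial $H$, but $A+A$ restricted to the last coset is an arithmetic progression of length $|H|-1$, so $\mathrm{Stab}(A+A)=\{0\}$ and your quotient step does nothing. You half-anticipate this by suggesting the period of $2A-2A$ instead, but that replacement is exactly the kind of non-trivial structural input the Deshouillers--Freiman argument has to supply, not a bookkeeping detail.

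Second, and more fundamentally, the rectification step is unjustified and essentially circular. The hypothesis $|A|\le\xi m$ gives smallness of $A$ in $\mathbb{Z}_m$, but after passing to $\mathbb{Z}_m/H$ (for whatever $H$ you choose) there is no a priori bound forcing $\bar A$ to occupy a small proportion of the quotient, which is itself a cyclic group of composite order; case (iii) of the statement exists precisely because the projected set can be as concentrated as a single point while $A$ is dense in an $H$-coset, and intermediate densities occur as well. Freiman rectification (Freiman's $2.4$-theorem in $\mathbb{Z}_p$, or Bilu--Lev--Ruzsa/Green--Ruzsa type results) requires either prime modulus with a strong density hypothesis or loses constants far too large to recover the exact inequalities $(\ell-1)|H|\le|A+A|-|A|$ and $(\min(\ell,4)-1)|H|\le|A+A|-|A|$. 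Proving a $3k-4$-type statement for subsets of a general cyclic group at constant density is, in substance, the theorem you are trying to prove (``a step beyond Kneser's theorem''), so invoking it via ``no wrap-around occurs'' begs the question. Two smaller points: three integers do \emph{not} always lie in a length-$3$ arithmetic progression, so your explanation of case (ii) needs repair; and the threshold $2.04$ is not incidental slack below $3$ --- it is what the Deshouillers--Freiman method yields (improved only to $2.1$ in \cite{BP}), so an argument in which $2.04$ plays no role should be treated as a warning sign rather than a convenience.
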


The following simple lemma is crucial in the proofs of most of our main results. 

\begin{lem}
\label{lem:modp}Let $m$ be an integer. Let $A$ be a set of integers such that $m\notin A$ and the size of $\Sigma(A)$ considered modulo $m$
is at least $h$, then
$|\Sigma(A\cup\{m\})|\ge|\Sigma(A)|+h$. 
\end{lem}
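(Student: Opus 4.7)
The plan is to exhibit at least $h$ elements of $\Sigma(A\cup\{m\})$ that do not already lie in $\Sigma(A)$. The starting point is the observation that, since $m\notin A$, every subset of $A\cup\{m\}$ either avoids $m$ or contains it, giving the decomposition
\[
\Sigma(A\cup\{m\})=\Sigma(A)\cup(\Sigma(A)+m),
\]
where $\Sigma(A)+m$ denotes the shift $\{\sigma+m:\sigma\in\Sigma(A)\}$. Consequently
\[
|\Sigma(A\cup\{m\})|=|\Sigma(A)|+|(\Sigma(A)+m)\setminus\Sigma(A)|,
\]
and since $x\mapsto x+m$ is a bijection, the latter quantity equals $|\Sigma(A)\setminus(\Sigma(A)+m)|$. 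So the lemma reduces to showing that $|\Sigma(A)\setminus(\Sigma(A)+m)|\ge h$.

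To produce $h$ such elements, I would take a minimum representative in each residue class modulo $m$ hit by $\Sigma(A)$. Let $R\subseteq\mathbb{Z}/m\mathbb{Z}$ denote the image of $\Sigma(A)$ under reduction modulo $m$, so $|R|\ge h$ by hypothesis. In the intended setting of positive integers, $\Sigma(A)\subseteq\mathbb{Z}_{\ge 0}$ is bounded below, so for each $r\in R$ there is a well-defined minimum element $s_r\in\Sigma(A)$ with $s_r\equiv r\pmod{m}$. The key point is that $s_r-m$ is congruent to $r$ modulo $m$ but strictly smaller than $s_r$, so by minimality $s_r-m\notin\Sigma(A)$, or equivalently $s_r\notin\Sigma(A)+m$. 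The $s_r$'s are pairwise distinct as they lie in distinct residue classes, and all belong to $\Sigma(A)\setminus(\Sigma(A)+m)$, producing the desired $|R|\ge h$ elements.

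Combining these two steps immediately yields $|\Sigma(A\cup\{m\})|\ge|\Sigma(A)|+h$. There is essentially no obstacle here: once the decomposition $\Sigma(A\cup\{m\})=\Sigma(A)\cup(\Sigma(A)+m)$ is written down, the claim follows from the minimality observation in a single line, so the full write-up should fit in a short paragraph.
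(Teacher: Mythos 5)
Your proof is correct and follows essentially the same route as the paper: the paper's one-line argument likewise counts one new element of $(\Sigma(A)+m)\setminus\Sigma(A)$ per residue class modulo $m$ hit by $\Sigma(A)$ (implicitly via extremal representatives in each class), which is exactly your counting argument, just phrased with minimal representatives and the bijection $x\mapsto x+m$. The implicit assumptions you flag (finiteness of $\Sigma(A)$ and $m>0$) are the same ones the paper's proof needs in its applications, so there is no gap.
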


\begin{proof}
The lemma follows since each modulo $m$ class containing an element of $\Sigma(A)$ contributes
at least one new element to $(\Sigma(A)+\{m\})\setminus\Sigma(A)$. 
\end{proof}

In showing that there are many subset sums over cyclic groups, we
use the following lemma, which shows that the set of new elements
whose inclusion do not expand the set of subset sums is small. 

\begin{lem} \label{lem:double-count}
Suppose $A\subset\mathbb{Z}_{m}$ with $d<|A|<m$ and let $G_{d}$
be the set of $x\in\mathbb{Z}_{m}$ such that $|(A+x)\cup A|\le|A|+d$.
Then $|G_{d}|\le\frac{|A|^{2}}{|A|-d}$. 
\end{lem}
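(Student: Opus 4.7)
The plan is to use a standard double counting argument based on representation counts in the difference set. The key observation is that the condition $|(A+x) \cup A| \le |A|+d$ controls how much $A+x$ and $A$ can differ, which by inclusion-exclusion forces their intersection to be large.

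More precisely, I would first note that for any $x \in \mathbb{Z}_m$, since $|A+x|=|A|$, inclusion-exclusion gives
\[
|A \cap (A+x)| = |A| + |A+x| - |A \cup (A+x)| = 2|A| - |A \cup (A+x)|.
\]
For $x \in G_d$ this is at least $2|A| - (|A|+d) = |A| - d$, so every element of $G_d$ has many representations as a difference $a-a'$ with $a,a'\in A$. Specifically, $|A \cap (A+x)|$ counts pairs $(a,a') \in A\times A$ with $a-a'=x$, because $a \in A \cap (A+x)$ iff $a\in A$ and $a-x \in A$.

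The proof then concludes by a simple double count: summing the representation function over $x \in G_d$ is at most the total number of difference pairs in $A \times A$, which is $|A|^2$. Combined with the pointwise lower bound of $|A|-d$, this yields
\[
(|A|-d)\,|G_d| \le \sum_{x \in G_d} |A \cap (A+x)| \le |A|^2,
\]
and dividing by $|A|-d > 0$ gives $|G_d| \le |A|^2/(|A|-d)$, as required. There is no genuine obstacle here; the only care needed is in verifying the inclusion-exclusion step and the identification of $|A \cap (A+x)|$ with the number of ordered representations of $x$ as a difference of elements of $A$, both of which are routine.
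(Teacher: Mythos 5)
Your proof is correct and is essentially the same double-counting argument as the paper's: the paper sums the complementary quantity $|(A+x)\cap(\mathbb{Z}_m\setminus A)| = |A| - |A\cap(A+x)|$ over all $x\in\mathbb{Z}_m$ and splits into $G_d$ and its complement, which after rearranging gives exactly your inequality $(|A|-d)|G_d|\le |A|^2$. Your phrasing via the difference representation function $|A\cap(A+x)|$, summed only over $G_d$, is a clean dual formulation of the same count.
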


\begin{proof}
For each $x\in\mathbb{Z}_{m}$, $|(A+x)\cap(\mathbb{Z}_{m}\setminus A)|\le|A+x|=|A|$,
while if $x\in G_{d}$, $|(A+x)\cap(\mathbb{Z}_{m}\setminus A)|\le d$
by definition. Furthermore, 
\[
\sum_{x\in\mathbb{Z}_{m}}|(A+x)\cap(\mathbb{Z}_{m}\setminus A)|=\sum_{a\in A}|\{x\in\mathbb{Z}_{m}:a+x\in\mathbb{Z}_{m}\setminus A\}|=\sum_{a\in A}(m-|A|)=|A|(m-|A|),
\]
where the second equality follows since, for each $a\in A$, $a+x$ is
an element of $\mathbb{Z}_{m}\setminus A$ for exactly
$|\mathbb{Z}_{m}\setminus A|=m-|A|$ values of $x$. Thus, 
\[
|A|(m-|A|)\le|G_{d}|\cdot d+(m-|G_{d}|)|A|,
\]
from which we get the desired inequality by rearranging.
\end{proof}

We will often use the lemma above in combination with the following simple result.

\begin{lem} \label{lem:stable-period} 
If $A \subset \mathbb{Z}_{m}$ and $x_1,\dots,x_k \in \mathbb{Z}_m$ satisfy $|(A+x_i)\cup A|\le |A|+d$ for all $i\in [k]$, then $|(A+x_1+\cdots+x_k)\cup A|\le |A|+kd$.
\end{lem}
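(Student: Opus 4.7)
The plan is to rephrase the hypothesis in terms of ``new elements'' and then induct on $k$. The inequality $|(A+x)\cup A|\le |A|+d$ is equivalent to $|(A+x)\setminus A|\le d$, since $|(A+x)\cup A| = |A|+|(A+x)\setminus A|$. So the goal becomes: show that $|(A+y)\setminus A|\le kd$ where $y = x_1+\cdots+x_k$.

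The key observation is the elementary containment
\[
(A+y)\setminus A \;\subseteq\; \bigl((A+y)\setminus (A+s)\bigr)\,\cup\,\bigl((A+s)\setminus A\bigr),
\]
valid for any $s\in\mathbb{Z}_m$: indeed, if an element of $A+y$ lies outside $A$, then either it lies outside $A+s$ (landing in the first set on the right) or it lies in $A+s\setminus A$ (landing in the second). Taking $s = x_1+\cdots+x_{k-1}$ and $y = s+x_k$, the first set on the right is the translate $((A+s)+x_k)\setminus (A+s)$, which by translation invariance of cardinality has the same size as $(A+x_k)\setminus A$, hence size at most $d$ by hypothesis.

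This sets up an easy induction on $k$. The base case $k=1$ is the hypothesis. Assuming the bound $|(A+x_1+\cdots+x_{k-1})\setminus A|\le (k-1)d$, the display above gives
\[
|(A+x_1+\cdots+x_k)\setminus A|\;\le\; d + (k-1)d \;=\; kd,
\]
which is exactly what we want. Translating back via $|(A+y)\cup A| = |A|+|(A+y)\setminus A|$ yields the stated conclusion.

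There is no real obstacle here; the only thing to get right is the direction of the translation argument (writing $(A+y)\setminus(A+s)$ as a translate of $(A+x_k)\setminus A$ rather than the other way around), after which the induction is immediate.
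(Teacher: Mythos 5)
Your proof is correct and follows essentially the same route as the paper's: induction on $k$ combined with the triangle-type decomposition $|(A+x_1+\cdots+x_k)\setminus A|\le |(A+x_1+\cdots+x_k)\setminus (A+s)|+|(A+s)\setminus A|$ and translation invariance. The only cosmetic difference is the choice of intermediate translate (you take $s=x_1+\cdots+x_{k-1}$, so the hypothesis bounds the first term and induction the second, while the paper takes $s=x_k$ and swaps the roles), which is an entirely equivalent bookkeeping choice.
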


\begin{proof}
We will show, by induction on $i$, that $|(A+x_1+\cdots+x_i)\cup A|\le |A|+id$ for $1\le i\le k$. This is clearly true for $i=1$. For the induction step, assume that $|(A+x_1+\cdots+x_{i-1})\cup A|\le |A|+(i-1)d$. Then
\begin{align*}
|(A+x_1+\cdots+x_i)\cup A|-|A| &= |(A+x_1+\cdots+x_i)\setminus A| \\
&\le |(A+x_1+\cdots+x_i)\setminus (A+x_i)|+|(A+x_i)\setminus A|\\
&=|(A+x_1+\cdots+x_{i-1})\setminus A|+|(A+x_i)\setminus A|\\
&\le (i-1)d+d = id.
\end{align*}
Thus, $|(A+x_1+\cdots+x_i)\cup A|\le |A|+id$ for $1\le i\le k$.
\end{proof}

\subsection{Outline of the proof of the upper bounds in Theorems~\ref{thm:Ramsey-complete}
and \ref{thm:Ramsey-complete-polynomial}}\label{subsec:Ramsey-complete}

The upper bound in Theorem~\ref{thm:Ramsey-complete} states that there exists a constant $C$ such that, for every $r \geq 2$, there is an $r$-Ramsey complete sequence $A$ with $|A\cap [n]| \leq C r \log^2 n$ for all $n$. The following density-type result is the key to the proof of this statement. 

\begin{lem} \label{lem:main-ramsey}
Let $C=3840$ and $\epsilon\in (0,1/2]$. Let $x$ be a positive integer. Let $X$ be the set
of integers in $[x,2x)$ with no prime divisor at most $(\log x)/2$. If
a sequence $S$ of $C\epsilon^{-1}\log x$ elements in $X$ is chosen independently
and uniformly at random, then, with high probability (as $x \to \infty$), $S$ has distinct terms and, for any
subsequence $S'$ of $S$ of size $\epsilon|S|=C\log x$, the set $\Sigma(S')$
contains all integers in the interval $[\frac{Cx\log x}{4},\frac{7Cx\log x}{8}]$. 
\end{lem}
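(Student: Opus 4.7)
The plan is to follow the four-step framework of the introduction. For setup, standard sieve estimates give $|X| = \Theta(x/\log\log x)$, so a birthday argument guarantees that the random sequence $S$ of size $N := C\epsilon^{-1}\log x$ has distinct entries with probability $1 - O(x^{-1/2})$. The key structural property of $X$ is that every $s \in X$ has no prime factor below $(\log x)/2$, so every proper subgroup of $\mathbb{Z}_s$ has index at least $(\log x)/2$; consequently, each coset of such a subgroup meets $X$ in relative density $O(1/\log x)$. Writing $k := C\log x$, the number of candidate subsequences is $\binom{N}{k} \le (e/\epsilon)^k = \exp(O(k\log(1/\epsilon)))$, so by union bound it suffices to show, for each fixed random $S' \subset X$ of size $k$, that $\Sigma(S') \supset [Cx\log x/4, 7Cx\log x/8]$ except with probability $\exp(-\omega(k\log(1/\epsilon)))$.

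For a fixed $S'$, I would partition it into $\ell$ blocks $B_1, \ldots, B_\ell$ of size $m = k/\ell$, with $m$ chosen slightly larger than $\log x$. Inside each block, designate one element $s_i \in B_i$ and analyze $\Sigma(B_i \setminus \{s_i\}) \pmod{s_i}$ via Fourier analysis on $\mathbb{Z}_{s_i}$. The nonzero Fourier coefficients have magnitude $2^{m-1}\prod_{a \in B_i \setminus \{s_i\}} |\cos(\pi r a/s_i)|$, and by the no-small-prime-factor property combined with Lemma~\ref{lem:Cauchy-Davenport} (which rules out concentration of $X \bmod s_i$ on a subgroup coset) and a Chernoff-type bound, each such coefficient is exponentially small except with probability $\exp(-\Omega(m\log\log x))$. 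Parseval and Cauchy--Schwarz then give $|\Sigma(B_i \setminus \{s_i\}) \pmod{s_i}| = s_i - o(s_i)$, and Lemma~\ref{lem:modp} transfers this to $|\Sigma(B_i)| = \Omega(s_i)$ inside an interval of length $O(mx)$. Lemma~\ref{lem:double-count} and Lemma~\ref{lem:stable-period} are useful bookkeeping tools for tracking the ``stable'' elements here.

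To promote these block-level densities to a genuine interval, I would apply Lev's lemma (Lemma~\ref{lem:Lev}) to $\Sigma(B_1), \ldots, \Sigma(B_\ell)$. The non-AP hypothesis is automatic because each $B_i$ contains two coprime elements with high probability (the only way two elements of $X$ can fail to be coprime is to share a common prime factor exceeding $(\log x)/2$, which is a low-probability event for random draws from $X$). Lev's lemma then yields an interval of length $\Omega(\ell x) = \Omega(kx/m)$ inside $\Sigma(S') = \Sigma(B_1) + \cdots + \Sigma(B_\ell)$; tuning $m$ so that this interval has length at least $Cx\log x$, applying a suitable shift to place the interval appropriately within the support of $\Sigma(S')$, and using Lemma~\ref{lem:verysimple} (Graham) to extend by any leftover elements of $S'$ delivers the required interval $[Cx\log x/4, 7Cx\log x/8]$.

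The main obstacle is balancing the two competing union bounds: the per-subsequence failure probability $\exp(-\Omega(m\log\log x))$ must dominate the $(e/\epsilon)^k = \exp(O(k\log(1/\epsilon)))$ number of subsequences, while at the same time the block size $m$ must be large enough for the Fourier-density statement to give a useful modular covering. The $\log\log x$ gain in the Fourier exponent---which is exactly what matches the $\log(1/\epsilon)$ loss in the union bound and makes the proof go through uniformly for all $\epsilon \in (0, 1/2]$---comes entirely from the no-small-prime-factor hypothesis on $X$; without it, the adversarial case of $S'$ lying in a small-subgroup coset of $\mathbb{Z}_s$ for some $s \in S'$ would be unavoidable and the lemma would fail.
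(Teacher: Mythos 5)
Your sketch has two genuine gaps, and they sit exactly at the load-bearing points. First, the claimed per-block failure probability $\exp(-\Omega(m\log\log x))$ for the Fourier step is unsupported. For a nonzero frequency $r$ and a random $a\in X\subseteq[x,2x)$, the factor $|\cos(\pi r a/s_i)|$ is, on average, only a constant bounded away from $1$; nothing about the absence of small prime factors makes it $o(1)$, so a Chernoff/first-moment analysis of $\prod_a|\cos(\pi ra/s_i)|$ over the $\approx s_i$ frequencies yields a failure probability of the form $\exp(-O(m))=x^{-O(1)}$, with the exponent tied to the block-size constant and hence to $C=3840$, not to $\epsilon$. Since the union bound over subsequences costs $(e/\epsilon)^{C\log x}=x^{C\log(e/\epsilon)}$, a polynomial bound cannot cover all fixed $\epsilon\in(0,1/2]$ with a universal $C$; you correctly identify that a superpolynomial gain is indispensable, but your mechanism does not produce it. The paper obtains the superpolynomial bound $(\log x)^{-(c-5)\log x}$ in Lemma~\ref{lem:cover} by a different route: exposing the elements of $P_1$ one at a time and showing that the conditional probability that a step fails to grow $\Sigma_m$ (by a factor $3/2$, resp.\ $1+\log\log x/\log x$) is only $O((\log\log x)^2/\log x)$, because the set of ``almost periods'' is small (Lemmas~\ref{lem:double-count}, \ref{lem:stable-period} and \ref{lem:Cauchy-Davenport}, using that $m$ has no prime factor below $(\log x)/2$) while $|X|=\Omega(x/\log\log x)$; failure then forces $\Omega(\log x)$ bad steps, each unlikely, and this is what survives the union bound. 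Note also that with block size only ``slightly larger than $\log x$'' and one element reserved, $2^{m-1}$ can be smaller than $s_i\approx x$, so the assertion $|\Sigma(B_i\setminus\{s_i\})\bmod s_i|=s_i-o(s_i)$ is not even possible; one needs $m$ to be a healthy constant multiple of $\log x$, and a constant fraction of $s_i$ (as in the paper) is all one can reasonably aim for.

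Second, the assembly step does not meet the hypotheses of Lemma~\ref{lem:Lev} and does not control where the interval lands. Designating a single modulus $s_i$ per block and applying Lemma~\ref{lem:modp} once gives $|\Sigma(B_i)|=\Omega(x)$, but $\Sigma(B_i)$ lies in an interval of length $\Theta(mx)=\Theta(x\log x)$, so its relative density is $O(1/\log x)$ and Lev's lemma would require $\Omega(\log x)$ summands, whereas you have only $\ell=k/m=O(1)$ blocks; shrinking the blocks to manufacture more summands destroys the modular covering, since $2^{|B_i|}$ must exceed $x$. The paper's fix is to use half of each block as moduli: applying Lemma~\ref{lem:modp} once for each of the $\Theta(\log x)$ elements of $P_2$ gives $|\Sigma(S''_j)|\ge (x/4)\cdot\Theta(\log x)$, a constant fraction of its containing interval, so Lev applies with $\ell=40$ blocks and yields an interval of length $>2x$. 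Finally, Lev's lemma gives an interval of uncontrolled position, and there is no ``suitable shift'' available inside a fixed set of subset sums; the paper pins down the target $[\frac{Cx\log x}{4},\frac{7Cx\log x}{8}]$ by running the block argument only on the smallest $|S'|/8$ elements and then extending with the reserved $7|S'|/8$ elements via Lemma~\ref{lem:verysimple}, whereas in your setup all of $S'$ is consumed by the blocks, leaving nothing for that extension.
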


The upper bound in Theorem \ref{thm:Ramsey-complete} can be easily deduced from Lemma \ref{lem:main-ramsey} as follows. 

\vspace{2mm}

\noindent {\it Proof of the upper bound in Theorem \ref{thm:Ramsey-complete}.} Let $\epsilon=1/r$ and let $x_0$ be large enough that the conclusion of Lemma \ref{lem:main-ramsey} holds with positive probability for this choice of $\epsilon$ and $x\ge x_0$. Let $x_i=2^ix_0$ and $y_i=Cx_i \log x_i$. By Lemma \ref{lem:main-ramsey}, for each dyadic interval $[x_{i},x_{i+1})$ with $i\ge 0$ we can pick a sequence $S_i$ of $Cr \log x_i$ distinct elements in this interval such that the set of subset sums of any subset of $S_i$ of size at least $|S_i|/r$ contains the integers in $I_i:=[y_i/4,7y_i/8]$. Note that every $r$-coloring of $S_i$ has a color class of size at least $|S_i|/r$ and so the set of monochromatic subset sums of $S_i$ contains the integers in $I_i$. We pick the sequence $A$ to be the concatenation of the sequences $S_i$ for $i\ge 0$. Observe that, for all $n$, we have $A(n) \leq \sum_{i : n \leq x_{i+1}} |S_i| \leq Cr(\log n)^2$. Moreover, since $y_{i+1}/4<7y_i/8$, the intervals $I_i$ cover all integers at least $y_0/4$. Thus, for every $r$-coloring of $A$, every sufficiently large integer can be represented as a monochromatic subset sum. That is, the sequence $A$ is $r$-Ramsey complete. \qed

\vspace{2mm}

We now give an informal sketch of the proof of Lemma~\ref{lem:main-ramsey}, showing how it follows from an appropriate combination of the results of Section~\ref{sec:tools} with some further ideas. To begin, we observe that for any fixed set $I$ of $C\log x$ indices in $[C\epsilon^{-1} \log x]$, the elements of the subsequence $(s_i : i\in I)$ of $S$ of size $\epsilon |S|$ are independently and uniformly distributed in $X$. By taking a union bound, it will therefore suffice to show that if $S'$ is a sequence of $C\log x$ elements chosen independently and uniformly from $X$, then the probability that $\Sigma(S')$ does not contain all integers in the interval $[\frac{Cx\log x}{4},\frac{7Cx\log x}{8}]$ is sufficiently small. 

For this, for some fixed $\ell$, we take $\ell$ disjoint random subsets $S''_1, \dots, S''_\ell$ of $S'$, each of size $|S'|/(8\ell)$, with the aim being to show that, with appropriately high probability, the set of subset sums $\Sigma(S_j'')$ is a dense subset of a long interval and is not contained in an arithmetic progression with common difference larger than $1$. Lemma~\ref{lem:Lev} then allows us to conclude that $S'' = S''_1 \cup \dots \cup S''_\ell$ is such that $\Sigma(S'') = \Sigma(S''_1) + \dots + \Sigma(S''_\ell)$ contains a long interval. Note, moreover, that $S''$ only has size $|S'|/8$, so there are at least $7|S'|/8$ elements still remaining in $S'$. Using Graham's lemma, Lemma~\ref{lem:verysimple}, we can use these elements to extend the long interval in $\Sigma(S'')$ to a significantly longer interval containing all of the required elements.

It only remains to show that $\Sigma(S_j'')$ is a dense subset of a long interval with appropriately high probability (showing that it is also not contained in an arithmetic progression with common difference larger than $1$ is reasonably straightforward). For this, we split $S''_j$ randomly into two disjoint pieces $P_1$ and $P_2$. The key remaining component is to show that for every $m \in X$, the set of integers in $[x, 2x)$ with no prime factor at most $(\log x)/2$, the mod $m$ set of subset sums $\Sigma_m(P_1)$ is large with very high probability. Very roughly, this follows by exposing the elements of $P_1$ one at a time and showing that most elements expand the mod $m$ set of subset sums significantly. Though we will not give a more detailed description here, we note that this key step again relies on several results from the previous section, including the Cauchy--Davenport-type statement, Lemma~\ref{lem:Cauchy-Davenport}, as well as Lemma~\ref{lem:double-count}, which bounds the number of almost periods, those $x$ for which $(A+x)\setminus A$ is small.
Finally, once we know that $|\Sigma_m(P_1)|$ is, with high probability, large for each $m \in X$, we can apply Lemma~\ref{lem:modp} repeatedly to conclude that $|\Sigma(S''_j)| \geq \sum_{m \in P_2} |\Sigma_m(P_1)|$, which yields the required lower bound for $|\Sigma(S''_j)|$.

The proof of Theorem \ref{thm:Ramsey-complete-polynomial} follows
a similar scheme. Let $P$ be a complete polynomial. By the characterization due to Graham~\cite{G}
discussed in the introduction, we can write $P(x)=\sum_{i=0}^{k}\alpha_{i}\binom{x}{i}$
with $\alpha_{k}>0$ and $\alpha_{i}=\frac{p_{i}}{q_{i}}$, where $p_{i}$ and $q_{i}$
are relatively prime integers, $q_{i}>0$ and $\gcd(p_{0},\dots,p_{k})=1$.
If $L=\textrm{lcm}(q_{0},\dots,q_{k})$, then the polynomial $L\cdot P$ has integer
coefficients in its binomial representation and satisfies Graham's
condition, so it is also complete. Furthermore, if $((L\cdot P)(a_{n}))_{n=1}^{\infty}$
is $r$-Ramsey complete, then $(P(a_{n}))_{n=1}^{\infty}$ is $r$-Ramsey complete,
so it suffices to work with complete polynomials which have
integer coefficients in their binomial representations. From now on, we will assume that $P$ is such a polynomial. 

To prove Theorem \ref{thm:Ramsey-complete-polynomial}, we prove the following polynomial analogue of Lemma \ref{lem:main-ramsey}. For a polynomial $P$ and a sequence $T$ of integers, let $P(T)$ be the sequence where we replace each term $t$ in $T$ by $P(t)$. 

\begin{lem} \label{lem:main-poly-ramsey}
Let $P$ be a complete polynomial of degree $k$ with integer coefficients in its binomial representation and let $C(k)= k2^{k+15}$. Let $\epsilon\in (0,1/2]$. Let $x$ be a positive integer. Let $X$ be the set of
elements $y$ in $[x,(1+1/k)x)$ such that $P(y)$ has no prime divisor
at most $(\log x)^{1/2}$. If a sequence $S$ of $C(k)\epsilon^{-1} \log x$
elements in $X$ is chosen independently and uniformly at random, then,
with high probability (as $x \to \infty$), $S$ has distinct terms and, for any subsequence $S'$ of $S$ of size
$\epsilon|S|$, the set $\Sigma(P(S'))$ contains all integers in
the interval $[\frac{e}{9}P(x)|S'|,\frac{8}{9}P(x)|S'|]$. 
\end{lem}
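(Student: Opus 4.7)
The plan is to follow the four-step framework used for Lemma \ref{lem:main-ramsey}, adapted to the polynomial setting with two principal modifications: the values $P(y)$ for $y \in X$ have size $\Theta(P(x)) = \Theta(x^k)$ rather than $\Theta(x)$, and the sieve condition is imposed on $P(y)$ rather than on $y$ itself. As a preliminary, I would verify that $|X| = \Theta(x/k)$. Since $P$ has degree $k$ and integer coefficients in its binomial representation, for each prime $p \le (\log x)^{1/2}$ the congruence $P(y) \equiv 0 \pmod{p}$ has at most $k$ solutions per period of $p$, so a uniformly random $y \in [x,(1+1/k)x)$ satisfies $p \mid P(y)$ with probability $O(k/p)$; summing over primes up to $(\log x)^{1/2}$ and applying Mertens' theorem shows that a positive constant fraction of $[x,(1+1/k)x)$ lies in $X$, so the sampling is well-defined.

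Next, by a union bound over the $\binom{|S|}{\epsilon|S|}$ subsequences $S' \subseteq S$ of size $\epsilon|S|$, it suffices to show that for a single sequence $S'$ of $|S'| = C(k)\log x$ elements drawn i.i.d.\ uniformly from $X$, the probability that $\Sigma(P(S'))$ fails to cover the target interval is at most $\exp(-\Omega(C(k)\log x))$. I would then partition $S'$ randomly into $\ell$ disjoint subsequences $S''_1,\ldots,S''_\ell$ of size $|S'|/(8\ell)$, together with a reserve of $7|S'|/8$ elements, for an appropriate constant $\ell$. For each $j$, the goal is to show that, with very high probability, $\Sigma(P(S''_j))$ is a dense subset of a long interval and is not contained in a nontrivial arithmetic progression; Lemma \ref{lem:Lev} then yields a long interval in $\Sigma(P(S''_1 \cup \cdots \cup S''_\ell))$, and Lemma \ref{lem:verysimple} extends this interval, using the reserve elements, to one containing $[\frac{e}{9}P(x)|S'|,\frac{8}{9}P(x)|S'|]$. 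Note that the appearance of $e$ in the upper endpoint is natural here, since $P(y) \le (1+1/k)^k P(x) \le e\,P(x)$ for every $y \in [x,(1+1/k)x)$.

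The heart of the argument, and the main obstacle, is the density claim for each $\Sigma(P(S''_j))$. As in the linear case, split $S''_j$ randomly into two pieces $T_1,T_2$ and show that for every $m \in X$, the mod-$P(m)$ set $\Sigma_{P(m)}(P(T_1))$ covers a large fraction of $\mathbb{Z}_{P(m)}$ with very high probability. The sieve condition that $P(m)$ has no prime factor at most $(\log x)^{1/2}$ is crucial: it guarantees that every proper subgroup of $\mathbb{Z}_{P(m)}$ has index at least $(\log x)^{1/2}$, which lets one iterate Lemma \ref{lem:Cauchy-Davenport}, together with the almost-period bound of Lemma \ref{lem:double-count} and its additivity Lemma \ref{lem:stable-period}, while revealing the elements of $T_1$ one at a time, to show that $|\Sigma_{P(m)}(P(T_1))|$ grows substantially at each step until it saturates. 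Because $|\mathbb{Z}_{P(m)}| \approx x^k$, this iteration must continue roughly $k$ times longer than in the linear case; this, together with the doubling inefficiency inherent in Lemma \ref{lem:Cauchy-Davenport}, accounts for the factor $k\cdot 2^k$ in $C(k) = k \cdot 2^{k+15}$, and is the main obstacle to overcome. Once $|\Sigma_{P(m)}(P(T_1))|$ is known to be large for every $m \in X$, repeated application of Lemma \ref{lem:modp} to the elements $P(m)$ with $m \in T_2$ yields $|\Sigma(P(S''_j))| \ge \sum_{m \in T_2}|\Sigma_{P(m)}(P(T_1))|$, giving the required density. The non-arithmetic-progression condition then follows routinely, since the elements of $P(S''_j)$ are pairwise comparable in size and, by the sieve condition, cannot all be congruent modulo any nontrivial small integer.
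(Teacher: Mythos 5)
There is a genuine gap at the step you yourself identify as the heart of the argument. In the linear case (Lemma \ref{lem:cover}), the conditional probability that a newly revealed element is ``bad'' is controlled by bounding the almost-period set $G_d\subseteq\mathbb{Z}_m$ via Lemmas \ref{lem:stable-period}, \ref{lem:double-count} and \ref{lem:Cauchy-Davenport}, and then dividing by $|X|$; this works because the random element is (nearly) uniform on a positive-density subset of $[x,2x)$ and the modulus $m$ is itself of size $\Theta(x)$. Your plan is to run the same iteration modulo $P(m)$, but there the modulus has size $\Theta(x^k)$ while the random increment $P(s_i)\bmod P(m)$ is supported on the set $P(X)$, which has only $O(x)$ elements -- a vanishing proportion of $\mathbb{Z}_{P(m)}$. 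Showing that the bad set in $\mathbb{Z}_{P(m)}$ has size, say, at most $P(m)/w$ therefore says nothing about how many of the actual values $P(t)$, $t\in X$, are bad: all of $P(X)$ could sit inside a bad set of that size. The paper closes exactly this hole with Lemma \ref{lem:iterated-sum-growth}: if the set $V\subseteq[x,(1+1/k)x)$ of bad parameters had size $>\alpha x$, then the iterated sumset $2^{k-1}P(V)-2^{k-1}P(V)$ occupies more than $\alpha^{C_k}P(m)$ residues modulo $P(m)$ (proved by building Hilbert cubes inside $V$, forming the finite differences $P_j$, and showing the resulting values $F(t)$ are distinct mod $P(m)$ via a lexicographic-ordering argument), and then Lemma \ref{lem:stable-period} together with Lemma \ref{lem:double-count} gives a contradiction. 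Your proposal contains no mechanism for converting a dense set of bad integers $t$ into a dense set of bad residues modulo $P(m)$, and the heuristic that the iteration merely ``must continue roughly $k$ times longer'' does not address this; without an ingredient playing the role of Lemma \ref{lem:iterated-sum-growth}, the probabilistic step does not close.

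A secondary error: your preliminary claim that a positive constant fraction of $[x,(1+1/k)x)$ lies in $X$ is false, and the union-bound-plus-Mertens reasoning does not give it, since $\sum_{p\le(\log x)^{1/2}}k/p\to\infty$. The correct estimate (Lemma \ref{lem:estimate-nondiv}) is $|X|\ge c_k(\log\log x)^{-k}x$, obtained by counting roots of $P$ modulo each prime, handling primes $p\le k$ through the periodicity of the binomial coefficients, and combining via the Chinese Remainder Theorem; this weaker but correct bound still suffices for the union bounds, but it must be proved, and the constant-fraction claim as stated would propagate incorrect constants through the rest of your argument.
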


We now show how Theorem \ref{thm:Ramsey-complete-polynomial}
follows from Lemma \ref{lem:main-poly-ramsey}, just as the upper bound in Theorem \ref{thm:Ramsey-complete} follows from Lemma \ref{lem:main-ramsey}.

\vspace{2mm}

\noindent {\it Proof of Theorem \ref{thm:Ramsey-complete-polynomial}.}  Let $\epsilon=1/r$. For each positive integer $i$, let $x_i=(1+1/k)^i$, $y_i=C(k)P(x_i)\log x_i$ and $I_i=[ey_i/9,8y_i/9]$. For $i$ sufficiently large in terms of $P$ and $r$, Lemma \ref{lem:main-poly-ramsey} implies that we can pick a subsequence $S_i$ of $C(k)r\log x_i$ distinct terms in $[x_i,(1+1/k)x_i)$ such that any subsequence $S'$ of $S_i$ with $|S_i|/r$ terms has the property that $\Sigma(P(S'))$ contains all integers in the interval $I_i$. Therefore, since every $r$-coloring of $S_i$ has a color class of size at least $|S_i|/r$, the set of monochromatic subset sums of $P(S_i)$ contains the integers in $I_i$. 
We pick the sequence $A$ to be the concatenation of the sequences $P(S_i)$ with $i$ sufficiently large. Then, for all $n$, we have $A(n) \leq \sum_{i : P(x_i) \leq n} |S_i| = O_k\left((\log n)^2\right)$. Moreover, as $x_i$ is sufficiently large, $P(x)$ is increasing for  $x \geq x_i$ and $P(x_{i+1}) = P((1+1/k)x_i) \leq eP(x_i)$. It follows that, for $i$ sufficiently large, $ey_{i+1}/9 \leq 8y_i/9$ and the intervals $I_i$ and $I_{i+1}$ are overlapping. Hence, the intervals $I_i$ cover all sufficiently large integers. Thus, for every $r$-coloring of $A$, every sufficiently large integer can be represented as a monochromatic subset sum. That is, the sequence $A$ is $r$-Ramsey complete. \qed

\vspace{2mm}
 
The proof of Lemma~\ref{lem:main-poly-ramsey} itself follows along broadly similar lines to the proof of Lemma~\ref{lem:main-ramsey}. The key additional input, arising in the analogue of the step where we showed that $|\Sigma_m(P_1)|$ is large with high probability for each $m \in X$, is the following result on iterated sumsets of a set of polynomial values, proved through a form of PET induction (see, for example,~\cite{BL96}). For further details, we refer the reader to Section~\ref{sec:Ramsey-completeness}, where the proofs of Lemmas~\ref{lem:main-ramsey}~and~\ref{lem:main-poly-ramsey} are given in full.
 
\begin{lem} \label{lem:iterated-sum-growth}
There exists a constant $C_k$, depending only on $k$, such that if $P$ is a complete polynomial of degree $k$ with integer coefficients in its binomial representation, $x$ is sufficiently large depending on $P$, $m$ is an integer in $[x,2x)$, $(\log x)^{-1}<\alpha<1/2$ and $T$ is a subset of $[x,2x)$ of size at least $\alpha x$, then the iterated sumset $2^{k-1}P(T)-2^{k-1}P(T)$ contains more than $\alpha^{C_k}P(m)$ residue classes modulo $P(m)$.
\end{lem}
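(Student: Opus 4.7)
The strategy is PET induction on the degree $k$, combined with a reduction to an integer statement. First observe that $2^{k-1}P(T)-2^{k-1}P(T)$ is contained in the interval $[-2^{k}P(2x),2^{k}P(2x)]$ of length $O_k(P(m))$ (since $P(2x)=O_k(a_k x^k)=O_k(P(m))$), so it suffices to lower bound the integer size of this set by $\alpha^{C_k}P(m)/O_k(1)$; reduction modulo $P(m)$ then wraps only $O_k(1)$ times, delivering the required number of residues.

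For the base case $k=1$, writing $P(y)=\alpha_1 y+\alpha_0$ with $\gcd(\alpha_0,\alpha_1)=1$ (from Graham's characterization of completeness), we have $P(T)-P(T)=\alpha_1(T-T)$ with $|T-T|\geq 2|T|-1\geq \alpha x=\Omega_P(\alpha P(m))$, so $C_1=1$ suffices. For the inductive step ($k\geq 2$), the plan is to perform one PET reduction. By averaging $\sum_{h\neq 0}|T\cap(T-h)|=|T|^2-|T|$ over $h\in[1,x-1]$, combined with a pigeonhole argument, I find $h=\Theta(x)$ with $|T_h|\geq \alpha^2 x/O(1)$, where $T_h:=T\cap(T-h)$. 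Define $Q_h(y):=P(y+h)-P(y)$, a polynomial of degree $k-1$ with leading binomial coefficient $\alpha_k h$. For each $y\in T_h$, $Q_h(y)=P(y+h)-P(y)\in P(T)-P(T)$, so
\[
2^{k-2}Q_h(T_h)-2^{k-2}Q_h(T_h)\;\subseteq\;2^{k-1}P(T)-2^{k-1}P(T).
\]
Applying the induction hypothesis (in integer form) to $Q_h$ and $T_h\subseteq[x,2x)$ of density $\geq \alpha^2/O(1)$, and using $h=\Theta(x)$ so that $Q_h(m)=\Theta_k(P(m))$, yields $|2^{k-1}P(T)-2^{k-1}P(T)|\geq \alpha^{2C_{k-1}+O(1)}P(m)/O_k(1)$, completing the induction with $C_k=O(2^k)$.

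The main obstacles will be twofold. First, $Q_h$ must inherit the hypotheses of the lemma: integer coefficients in the binomial basis are automatic via the Vandermonde identity $\binom{y+h}{i}-\binom{y}{i}=\sum_{j=1}^{i}\binom{h}{j}\binom{y}{i-j}$, but completeness of $Q_h$ (i.e., that the gcd of its binomial coefficients equals $1$) is not. I plan to handle this by restricting $h$ to a positive-density subset of $[x/2,x)$ on which $Q_h$ is complete, or by factoring out the gcd and applying induction to the rescaled polynomial, absorbing the $O_k(1)$ loss into $C_k$. Second, the constraint $(\log x)^{-1}<\alpha$ degrades under the PET step ($\alpha\to\alpha^2/O(1)$), so the inductive statement must be established for a broader range $\alpha>(\log x)^{-c_k}$ with $c_k=O(2^k)$; the base case functions trivially for any $\alpha>1/x$ with a correspondingly weaker conclusion, and the stated range follows by monotonicity, with the condition that $x$ be sufficiently large depending on $P$ absorbing the polylogarithmic losses. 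Finally, ensuring that the pigeonhole yields $h=\Theta(x)$ rather than a small $h$ requires a dichotomy: if no such $h$ exists, then $T$ has very large additive energy concentrated on small differences, forcing $T$ to be essentially contained in a short subinterval of $[x,2x)$, in which case a rescaled application of the same argument applies with higher density.
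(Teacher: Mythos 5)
Your opening reduction (counting distinct integers in an interval of length $O_k(P(m))$ and then reducing mod $P(m)$) and your base case are fine, but the inductive step fails quantitatively: one PET step with a difference $h=\Theta(x)$ certifies far too few elements. Already for $k=2$, writing $P(y)=\alpha_2\binom{y}{2}+\alpha_1y+\alpha_0$, you get $Q_h(y)=\alpha_2hy+\alpha_2\binom{h}{2}+\alpha_1h$, so the set you hand to the base case is $Q_h(T_h)-Q_h(T_h)=\alpha_2h\,(T_h-T_h)$, which has at most $2x-1$ distinct values since $T_h-T_h\subseteq(-x,x)$; the lemma, however, demands more than $\alpha^{C_2}P(m)\ge c_P\,x^2(\log x)^{-C_2}$ residues. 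Put differently, the ``integer-form'' induction hypothesis you invoke for $Q_h$ --- a lower bound proportional to $Q_h$'s own scale $\Theta(hx^{k-1})=\Theta(\alpha x^k)$ --- is simply false for degree-$(k-1)$ polynomials whose leading binomial coefficient is of size $\Theta(x)$ (the degree-one case above is the counterexample), and your fallback of factoring out the gcd of $Q_h$'s coefficients loses a factor of order $h\approx x$ per level, not $O_k(1)$, so it cannot be absorbed into $\alpha^{C_k}$. There is also a uniformity obstruction: ``$x$ sufficiently large depending on the polynomial'' cannot be invoked for $Q_h$, which itself varies with $x$ and $h$. More structurally, any recursion that at each level keeps only the reduced polynomial and the reduced set bottoms out with a linear polynomial evaluated on a subset of $[x,2x)$, hence certifies at most $O(x)$ values, while the target is about $x^k$; no choice of $h$ (large or small) fixes this, and your dichotomy about small versus large popular differences does not address it.

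The paper's proof avoids this by doing the opposite of your reduction in two respects. First, the popular differences $y_1,\dots,y_{k-1}$ it extracts are \emph{small} (each $y_j\le 2x/\ell_{j-1}$, hence polylogarithmic --- this is exactly where the hypothesis $\alpha>(\log x)^{-1}$ enters), so the derived polynomials $P_j(y)=P_{j-1}(y+y_j)-P_{j-1}(y)$ have coefficients of polylogarithmic size. Second, and crucially, it does not discard the earlier levels: it builds nested subsequences realizing all these differences (a Hilbert-cube structure) and counts the sums $F(t)=\sum_{j=0}^{k-1}P_j(x_{j,i_j})$ over a product $I_0\times\cdots\times I_{k-1}$ of well-separated index sets, each $F(t)$ lying in $2^{k-1}P(T)-2^{k-1}P(T)$. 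Because moving the $j$-th coordinate changes $F(t)$ by more than the total possible contribution of all later coordinates, the values are lexicographically ordered, and since they all lie in an interval of length less than $P(m)$ they are distinct modulo $P(m)$; the count is the \emph{product} $\prod_j|I_j|\gtrsim\alpha^{2^{k+1}}x^k$, which is what reaches $\alpha^{C_k}P(m)$. If you try to repair your induction so that it carries the unused levels along (inducting on cubes rather than on a single reduced polynomial), you will essentially be forced to reconstruct this argument.
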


\subsection{Outline of the proof of the lower bound in Theorems \ref{thm:monochromatic-subset-sums-N} and \ref{thm:monochromatic-subset-sums}}\label{subsec:monochromatic}

Recall that, for any $n \leq m \leq {n \choose 2}$, $f(n,m)$ is defined as the minimum $r$ for which there is an $r$-coloring of $[n-1]$ such that $m$ cannot be written as a sum of distinct monochromatic elements. In this section, we sketch the main ideas behind the lower bound in Theorem~\ref{thm:monochromatic-subset-sums}, which asymptotically determines the value of $f(n, m)$. For simplicity, we will focus on the case $m=n$ corresponding to Theorem \ref{thm:monochromatic-subset-sums-N}, where we wish to show that $f(n) = f(n, n) = \Theta\left(\frac{n^{1/3}(n/\phi(n))}{(\log n)^{1/3}(\log\log n)^{2/3}}\right)$. Theorem~\ref{thm:monochromatic-subset-sums} follows from an appropriate elaboration of these ideas.

We begin by sketching Vu's argument~\cite{V} (itself building on an argument used by Alon and Erd\H{o}s~\cite{AEr}), which yields the bound $f(n) \geq c_{1} \frac{n^{1/3}}{\log n}$ for some positive constant $c_1$. To this end, consider an arbitrary $r$-coloring of $[n]$ for some $r < c_{1} \frac{n^{1/3}}{\log n}$. We restrict our attention to the interval $[n^{2/3},2n^{2/3})$ and focus on the color class containing the largest number of primes from this interval. Let $Q$ be the set of primes in this color class, noting that $r < c_{1} \frac{n^{1/3}}{\log n}$ implies that $|Q|\ge Cn^{1/3}$ for a positive constant $C$ (which can be made arbitrarily large by taking $c_1$ to be sufficiently small). Partition $Q$ into three subsets $Q_{1}$, $Q_{2}$ and $Q_{3}$ of roughly equal size. Since $|Q_1| \geq \frac{C}{3} n^{1/3}$, we can apply the Szemer\'edi--Vu theorem, Theorem~\ref{SVthm}, to $Q_1$ to obtain an arithmetic progression of length at least $2n^{2/3}$ in $\Sigma(Q_{1})$. We can then complete this arithmetic progression of common difference $d$, say, to a long interval by building a complete modulo $d$ class using $Q_{2}$. Provided the parameters have been chosen appropriately, this interval will have length at least $2n^{2/3}$ and the minimum number in the interval will be smaller than $n$. Therefore, by Lemma~\ref{lem:verysimple}, adding each element of $Q_{3}$ in turn will expand the interval and, since adding all elements in $Q_{3}$ would exceed $n$, the resulting interval in $\Sigma(Q_{1}\cup Q_{2}\cup Q_{3})$ must contain $n$. 

To go further, we make two observations about this argument. First, note that we passed immediately to a subset of the primes. This was in order to avoid the situation where a color class consists entirely of numbers with a given divisor, as, otherwise, it would be impossible to write any $n$ which is not a multiple of this divisor as a sum of elements from the color class. Second, the key tool in the proof, Theorem~\ref{SVthm}, is tight up to the constant, since the set of subset sums of the set consisting of the first $\lfloor \sqrt{2n} - 1/2\rfloor$ positive integers has size less than $n$. However, this naive application of Theorem~\ref{SVthm} makes no use of the fact that our set consists entirely of primes. It is here that we are able to gain.

To illustrate the main ideas in our argument, we first restrict to the case where $n$ is prime. Suppose then that there is an $r$-coloring of $[n-1]$, where $r$ satisfies $n=\lambda r^{3}(\log r)(\log\log r)^{2}$ for a sufficiently large constant $\lambda$. If we let $y=\Theta(r^{2}(\log r)(\log\log r))$, the number of primes in the interval $[y, 2y)$ is $\Theta(r^{2} \log\log r)$, so, by the pigeonhole principle, there is a monochromatic subset $Q$ of the primes in $[y,2y)$ with $|Q| = \Omega(r \log \log r)$. As in Vu's argument, the plan from this point is to use a subset $V$ of $Q$ of size $O(r \log \log r)$ to build a large interval and then to apply Lemma \ref{lem:verysimple} to expand this interval using the remaining elements. To show that $\Sigma(V)$ contains the required interval, we partition $V$ into a bounded number of sets $V_{1},V_{2},\dots,V_{\ell}$ of roughly equal size and show that, for each $i$, $\Sigma(V_{i})$ contains a dense subset of an interval. Given this crucial input, Lemma~\ref{lem:Lev} then implies that $\Sigma(V)$ contains a long interval.

Quantitatively, for this argument to go through, we need $\Sigma(V)$ to contain an interval of length $\Omega(r^{2}(\log r)(\log\log r))$. For this to follow from Lemma~\ref{lem:Lev}, we need to have $|\Sigma(V_{i})| = \Omega(r^{2}(\log r)(\log\log r))$ for each $V_i$, themselves satisfying $|V_i| = O(r\log\log r)$. Thus, we need to show that $|\Sigma(V_{i})|/|V_{i}|\ge r\log r$, say. 
For this, we prove an inverse result, that if $|\Sigma(V_{i})|/|V_{i}| < r\log r$, then a large subset of $V_i$ must be additively structured, in the sense that this subset is contained in a set of size $O(|V_i|(\log r)/(\log \log r))$ 
which can be written as a union of long arithmetic progressions. We then use the Selberg sieve to show that, since $V_i$ consists of primes, it is impossible for a large subset of $V_i$ to have this structure. 

In practice, as in the proofs of Theorems~\ref{thm:Ramsey-complete}~and~\ref{thm:Ramsey-complete-polynomial}, we do much of our work over cyclic groups. Indeed, to show that $\Sigma(V_i)$ is large, we partition $V_{i}$ into two sets $A_{i,1}$ and $A_{i,2}$ and show that, for each $a_{2}\in A_{i,2}$, $|\Sigma(A_{i,1})\pmod {a_{2}}|$ is large. Lemma~\ref{lem:modp} then allows us to conclude that $\Sigma(V_i)=\Sigma(A_{i,1} \cup A_{i,2})$ is large.

To show that $|\Sigma(A_{i,1})\pmod {a_{2}}|$ is large, we consider an iterative building process which grows the set of subset sums modulo $a_2$ by picking elements in $A_{i,1}$ one at a time. We begin with $T_{0} = A_{i,1}$ and $\Sigma(0) = \{0\}\subseteq \mathbb{Z}_{a_2}$. In step $j\ge 1$, we choose an element $x_j$ from $T_{j-1}$ which maximizes $|(\Sigma(j-1)+x_j)\setminus \Sigma(j-1)|$, where the set $\Sigma(j-1)$ is viewed as a subset of $\mathbb{Z}_{a_2}$, and then set $\Sigma(j) = (\Sigma(j-1)+x_j)\cup \Sigma(j-1)$ and $T_j = T_{j-1}\setminus \{x_j\}$. If, for each $j \le |A_{i,1}|/2$, there is a choice of $x_j$ such that $|(\Sigma(j-1)+x_j)\setminus \Sigma(j-1)|$ is large, then $|\Sigma(A_{i,1})\pmod {a_{2}}|$ will be large, as required. If, instead, there is a step $j$ such that $|(\Sigma(j-1)+x)\setminus \Sigma(j-1)|$ is small for all $x \in T_{j-1}$, then, using Lemma~\ref{lem:Freiman-2.04} (or, rather, its corollary, Lemma~\ref{lem:structure}), we can show that $T_{j-1}$ is additively structured, in the sense that it is contained in a small set which is a union of long arithmetic progressions. By a version of the Selberg sieve, $T_{j-1}$ cannot then contain too many primes, contradicting the fact that, as a subset of $Q$, $T_{j-1}$ consists entirely of primes.

Several additional ideas are needed to handle the case where $n$ is not prime. For instance, in the prime case, we could build the required sum $n$ using only primes, but now we must use integers of the form $qu$, where $u$ is a small divisor of $n$ and $q$ is coprime to the first $r$ primes.
As before, our first step is to pass to a large monochromatic subset $Q_0$ of this set, the goal being to show that $n$ is contained in the set of subset sums of $Q_0$. In the prime case, we took a subset $V$ of $Q = Q_0$, partitioned it into sets $V_i$ and then partitioned each $V_i$ into sets $A_{i,1}$ and $A_{i,2}$, before showing that $|\Sigma(A_{i,1})\pmod {a_2}|$ is large for each $a_2\in A_{i,2}$.
However, this argument may not go through in the general case, because, when $a_2$ is not prime, we could have that $A_{i,1}$, and hence $\Sigma(A_{i,1})$, is contained in a small proper subgroup of $\mathbb{Z}_{a_{2}}$. 
 
To overcome this issue, we first apply a preprocessing step to the set $Q_0$, our aim being to find a closely related set $Q$ which is \emph{$k$-diverse}, by which we mean that, for any $d\ge 2$, there are at least $k$ elements of $Q$ which are not divisible by $d$. We obtain such a set through a simple iteration. Indeed, if we have a set which is not $k$-diverse, then there is some $d$ dividing all but $k$ elements of the set, so we can remove these elements from the set and divide the remaining elements by $d$ to form a new set. Repeating this procedure with an appropriate value of $k$, we eventually arrive at a large $k$-diverse set $Q$ such that $\{vx:x\in Q\}\subseteq Q_0$ for some $v|n$. Thus, in order to conclude that $n$ is a sum of elements in $Q_0$, we only need to show that $n/v$ is a sum of elements in $Q$. 

A crucial property of diverse sets is that random subsets of a diverse set are themselves diverse with high probability. Thus, by taking a random subset $V$ of $Q$, randomly partitioning $V$ into parts $V_i$ and then randomly partitioning each $V_i$ into $A_{i,1}$ and $A_{i,2}$, we have that, with high probability, all of the sets $A_{i,1}$ are diverse. We can also show that any common divisor of a large subset of $A_{i,1}$ must be a small divisor of $n$. Proceeding now along the same lines as the prime case, this reduces our task to showing that $|\Sigma(A_{i,1}) \pmod {a_2}|$ is large for any diverse subset $A_{i,1}$ of $A$ with the additional property that any common divisor of a large subset of $A_{i,1}$ is small. 

To show that $|\Sigma(A_{i,1}) \pmod {a_2}|$ is large, we consider a more refined version of the iterative building process used in the prime case. The details of this key step are contained in Lemma~\ref{lem:growth-}. We again begin with $T_{0} = A_{i,1}$ and $\Sigma(0) = \{0\}\subseteq \mathbb{Z}_{a_2}$ and, in step $j\ge 1$, we again choose an element $x_j$ from $T_{j-1}$ and set $\Sigma(j) = (\Sigma(j-1)+x_j)\cup \Sigma(j-1)$ and $T_j = T_{j-1}\setminus \{x_j\}$, but the process for choosing $x_j$ is more complex. To describe it, we let $d_j$ be the greatest common divisor of the elements in $T_{j-1}$. The choice of $x_j$ depends on the sets $S_u = \Sigma(j-1) \cap (u+d_j\mathbb{Z}_{a_2})$ with $u \in \mathbb{Z}_{a_2}/d_j\mathbb{Z}_{a_2}$. We refer to step $j$ as a {\it growth phase}, an {\it unsaturated phase} or a {\it saturated phase}, depending on whether there exists $u$ such that $S_u$ is non-empty and small, no non-empty $S_u$ is small and at least one is of intermediate size or all non-empty $S_u$ are large, respectively. If $j$ is a growth phase, we choose $x_j$ from $T_{j-1}$ so as to maximize $|\Sigma(d_j,j)|-|\Sigma(d_j,j-1)|$, where $\Sigma(d_j,t) =\{\sum_{h \in H} x_h \pmod{a_2} : H\subseteq [t] \cap \{h: d_j|x_h\}\}$. If $j$ is an unsaturated or saturated phase, we choose $x_j$ from $T_{j-1}$ so as to maximize $|(\Sigma(j-1)+x_j)\setminus \Sigma(j-1)|$. 

If now there is a saturated phase $j$ among the first $|A_{i,1}|/2$ steps, we can show that $|\Sigma(A_{i,1})\pmod {a_2}| \ge |\Sigma(j-1)|$ is large, as required. On the other hand, we can also show that there are only a small number of growth phases among the first $|A_{i,1}|/2$ steps. Hence, we can assume that there are many unsaturated phases. Our aim now is to show that $|\Sigma(j)|-|\Sigma(j-1)|$ is large for any unsaturated phase, since, together with the fact that there are many unsaturated phases, this will imply that $|\Sigma(A_{i,1})\pmod{a_2}|$ is large, as required. As in the prime case, this final step proceeds by first showing that if $|\Sigma(j)|-|\Sigma(j-1)|$ is not large, then $T_{j-1}$ must be additively structured, again that it is contained in a small set which is a union of long arithmetic progressions, and then using the Selberg sieve to derive a contradiction, in this case that $T_{j-1}$ cannot contain many elements of the form $qu$, where $u$ is a small divisor of $n$ and $q$ is coprime to the first $r$ primes. 

\subsection{Outline of the proof of Theorem~\ref{thm:Sze-Vu-2}}

To prove Theorem \ref{thm:Sze-Vu-2}, that there exists a constant $C$ such that any $A \subset [n]$ with $|A| \geq C\sqrt{n}$ has a homogeneous progression of length $n$ in $\Sigma(A)$, we use a variant of the ideas discussed in Subsection~\ref{subsec:monochromatic}. 
As in that subsection, we apply a preprocessing step to the set $A$ to find a set $A'$ of size comparable to $A$ which is $k$-diverse for an appropriate $k$ and for which there exists an integer $d$ such that $\{dx: x\in A'\} \subseteq A$. We also maintain a further property, that $A'$ intersects each dyadic interval in either the empty set or a large set. Having obtained the required set $A'$, we replace $A$ with this set and consider a random partition of the set into parts $X_1,Y_1,\dots,X_\ell,Y_\ell$. 

The key step in the proof is Lemma~\ref{lem:structure-1}, which roughly says that if $X_i$ satisfies an appropriate diversity condition, then $|\Sigma(X_i) \pmod b|$ is large for all $b\in Y_i$. But since $X_i$ is part of a random partition of the diverse set $A$, we can, with high probability, guarantee that $X_i$ is also diverse and, therefore, by Lemma~\ref{lem:structure-1}, that $|\Sigma(X_i) \pmod b|$ is large for all $b\in Y_i$. Then, as in the previous outlines, we apply Lemma~\ref{lem:modp}, in this case together with what we know about the distribution of $A$ in dyadic intervals, to show that $\Sigma(X_i\cup Y_i)$ is large, followed by Lemma~\ref{lem:Lev} to conclude that $\Sigma(A)$ contains a long interval. Unwinding the preprocessing step, we see that this interval corresponds to a long homogeneous arithmetic progression in the set of subset sums of the original set, as required.

At first glance, Lemma~\ref{lem:structure-1} seems to bear close resemblance to one of the key steps in the proofs of Theorems~\ref{thm:monochromatic-subset-sums-N} and \ref{thm:monochromatic-subset-sums} described in the previous subsection (and formally encapsulated in Lemma~\ref{lem:growth-}). In both cases, we wish to show that if $X$ is a sufficiently diverse set, then $|\Sigma(X) \pmod b|$ is large for all $b$ in a certain set $Y$. The difference lies in the fact that the sets $X$ considered in Theorems~\ref{thm:monochromatic-subset-sums-N} and \ref{thm:monochromatic-subset-sums} are carefully chosen so that we can hope for a stronger guarantee on the size of $\Sigma(X) \pmod b$ than in the typical case, whereas here we are concerned precisely with that typical case. The proof of Lemma~\ref{lem:structure-1} follows from a similar iterative building process to that used in the proof of Lemma~\ref{lem:growth-}, as described at the end of the last subsection. 

Because we need it for the proof of Theorem~\ref{thm:Alon-conjecture}, our result on the largest subset of $[n]$ avoiding a particular subset sum, we will actually prove a strengthening of Theorem~\ref{thm:Sze-Vu-2}, saying that we can build the required homogeneous progression using short sums, that is, sums with only a small number of terms. This strengthening requires a somewhat more careful analysis than that described above. In particular, we must start with $T_0$ equal to a large random subset of $X_i$ and $\Sigma(0) = X_i\setminus T_0 \pmod b$. 

\section{\label{sec:Ramsey-completeness}Ramsey completeness}

\subsection{Proof of the upper bound in Theorem \ref{thm:Ramsey-complete}}

The goal of this section is to prove the upper bound in Theorem \ref{thm:Ramsey-complete}, that there exists a constant $C$ such that, for every $r \geq 2$, there is an $r$-Ramsey complete sequence $A$ with $|A\cap [n]| \leq C r \log^2 n$ for all $n$. As shown in Section \ref{subsec:Ramsey-complete}, this theorem follows from another statement, Lemma \ref{lem:main-ramsey}, whose proof will occupy us in this subsection.

The next lemma, a mod $m$ analogue of Lemma~\ref{lem:main-ramsey}, is the key step in proving that lemma. Let $\Sigma_{m}(S)$ be the set of subset sums of $S$ taken modulo $m$.

\begin{lem}
\label{lem:cover} Fix $c \geq 6$ and assume that $x$ is sufficiently large. Let
$w=(\log x)/2$ and let $X$ be the set of integers in $[x,2x)$ with no
prime divisor at most $w$. Let $m \in X$. If a sequence $S$ of $c\log x$
integers is chosen uniformly and independently at random from $X$ and viewed
as a sequence of elements in $\mathbb{Z}_{m}$, then $|\Sigma_{m}(S)|<\frac{x}{4}$ with probability less than $\left(\log x\right)^{-(c-5)\log x}$.
\end{lem}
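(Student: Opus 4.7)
The plan is to reveal the random elements $s_1, \ldots, s_{c\log x}$ of $S$ one at a time and track the nested sets $A_j := \Sigma_m(\{s_1, \ldots, s_j\}) \subseteq \mathbb{Z}_m$, showing that at each step either $|A_{j-1}| \geq x/4$ already or the new element multiplies $|A_{j-1}|$ by at least $3/2$ with conditional failure probability $O(\log\log x/\log x)$. Combined with a union bound, this will yield the desired concentration.

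First I would record two density estimates. By the standard sieve (or Mertens), $|X| = \Theta(x/\log\log x)$. Moreover, since $m \in X$ has no prime factor at most $w = (\log x)/2$, every proper subgroup $H$ of $\mathbb{Z}_m$ has $|H| = m/d$ with $d > w$ a divisor of $m$, hence $|H| \leq 2x/\log x$. Since each element of $\mathbb{Z}_m$ has at most two preimages in $[x, 2x)$, it follows that $|X \cap H| \leq 4x/\log x$ for any such proper subgroup, a fraction $O(\log\log x/\log x)$ of $|X|$.

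The per-step growth argument proceeds as follows. Condition on $A_{j-1}$ with $|A_{j-1}| < x/4$, and define the bad set
\[
G := \{y \in \mathbb{Z}_m : |(A_{j-1} + y) \cup A_{j-1}| \leq \tfrac{3}{2}|A_{j-1}|\}.
\]
Lemma \ref{lem:double-count} applied with $d = |A_{j-1}|/2$ gives $|G| \leq 2|A_{j-1}| < x/2$. The heart of the proof is to show that $G$ is forced into a proper subgroup of $\mathbb{Z}_m$, so that $|X \cap G| \leq 4x/\log x$ from the preliminary density estimate. One observes first that $G$ is symmetric and $\operatorname{Stab}(A_{j-1})$-periodic; passing to the quotient by this stabilizer and iterating Lemma \ref{lem:stable-period} shows that the subgroup $\langle G \rangle$ is contained in an enlarged bad set $G_{kd}$ at controlled scales. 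Applying Lemma \ref{lem:double-count} at these larger scales, and using $|A_{j-1}| < x/4 < m/2$, forces $\langle G \rangle \neq \mathbb{Z}_m$. In the marginal regime where this iterative bookkeeping becomes tight, Lemma \ref{lem:Freiman-2.04} supplies the needed rigidity by placing $G$ inside a short arithmetic progression of cosets of a subgroup. In all cases, $\Pr[s_j \in G \mid A_{j-1}] = O(\log\log x/\log x)$, and on the complementary event $|A_j| \geq \tfrac{3}{2}|A_{j-1}|$.

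To conclude, reaching $|A_j| \geq x/4$ from $|A_0| = 1$ needs only $\log_{3/2}(x/4) = O(\log x)$ successful growth steps, so among the $c\log x$ trials we can afford up to $(c-5)\log x$ failures. A union bound over failing steps gives
\[
\binom{c\log x}{(c-5)\log x}\left(C\,\frac{\log\log x}{\log x}\right)^{(c-5)\log x} \leq (\log x)^{-(c-5)\log x}
\]
for $x$ sufficiently large, completing the argument. The main obstacle will be the structural claim that $G$ lies in a proper subgroup: the naive size bound $|G| \leq 2|A_{j-1}|$ is immediate but far from enough in the regime where $|A_{j-1}|$ is comparable to $x$, and the group-theoretic structure of $G$ must be extracted via a careful iteration of Lemma \ref{lem:stable-period} and Lemma \ref{lem:double-count}, potentially refined by Lemma \ref{lem:Freiman-2.04}, to rule out the possibility that $G$ generates all of $\mathbb{Z}_m$.
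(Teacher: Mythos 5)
Your overall skeleton (expose the elements one at a time, bound the conditional probability that a step fails to expand $\Sigma_m$, then union bound over the set of failing steps) is the same as the paper's, but the single-regime growth claim at its heart has a genuine gap. You assert that whenever $|A_{j-1}|<x/4$, the bad set $G=\{y:|(A_{j-1}+y)\cup A_{j-1}|\le\tfrac32|A_{j-1}|\}$ can be forced into a proper subgroup of $\mathbb{Z}_m$, so that $\Pr[s_j\in G\mid A_{j-1}]=O(\log\log x/\log x)$. This is false as a statement about general $A_{j-1}\subseteq\mathbb{Z}_m$, and such configurations genuinely arise: if $A_{j-1}$ is (a dense subset of) an interval of length $L=\Theta(x)$ in $\mathbb{Z}_m$ --- exactly the kind of structure the surrounding proof of Lemma \ref{lem:main-ramsey} wants $\Sigma_m$ to acquire --- then $G$ contains all residues $t$ with $|t|\le L/2$, an interval of length $\Theta(x)$ around $0$. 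Since every proper subgroup of $\mathbb{Z}_m$ has size at most $m/w\le 4x/\log x$, this $G$ lies in no proper subgroup, and for typical $m\in X$ a constant proportion of $X$ reduces mod $m$ into that interval, so the conditional failure probability for factor-$\tfrac32$ growth is a constant, not $O(\log\log x/\log x)$. Lemma \ref{lem:Freiman-2.04} cannot rescue this: its conclusion (containment in a short progression of $H$-cosets) is exactly what the interval example satisfies, and it does not make $G$ small in measure within $X$. The subgroup dichotomy plus Lemma \ref{lem:Cauchy-Davenport} only yields a useful size bound on the almost-period set when the demanded expansion per step is tiny, not a constant factor.

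The paper handles this by splitting into two regimes with different growth demands. While $|\Sigma_m(S_{i-1})|\le x/\log x$, it asks for factor-$\tfrac32$ growth, and here Lemma \ref{lem:double-count} alone bounds the bad set by $2|\Sigma_m(S_{i-1})|\le 2x/\log x$, giving failure probability $O(\log\log x/\log x)$ with no structural input. Once $x/\log x<|\Sigma_m(S_{i-1})|<x/4$, it only asks for growth by a factor $1+\delta$ with $\delta=\log\log x/\log x$; setting $d=\lfloor\delta|\Sigma_m(S_{i-1})|\rfloor$ and $k=\lfloor 1/(2\delta)\rfloor$, it uses $kG_d\subseteq G_{kd}$ (Lemma \ref{lem:stable-period}) and Lemma \ref{lem:double-count} to get $|kG_d|<x/2$, and then either $|G_d|\le m/w$ directly, or $G_d$ is not in a coset of a proper subgroup (since $m$ has no prime factor $\le w$) and Lemma \ref{lem:Cauchy-Davenport} gives $|G_d|\le 4\delta|kG_d|\le 2\delta x$ --- a size bound, not subgroup containment. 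The bookkeeping still closes because only $\log_{3/2}x$ steps of the first kind and $\delta^{-1}\log_2\log x=\log_2 x$ steps of the second kind are needed to reach $x/4$, so at most $4\log x$ steps are ``not bad'' and the union bound goes through with the claimed exponent. To repair your argument you would need to adopt this weakened growth requirement (or some equivalent) in the large-$|\Sigma_m|$ regime; as written, the key structural step would fail.
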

\begin{proof}
Let $W =\prod_{p\le w}p$, where the product is taken over primes, and $\tau = \phi(W)/W$. The prime number theorem implies that $W=e^{(1+o(1))w}=x^{1/2+o(1)}$. In any interval of length $W$, there are exactly $\tau W$ integers with no prime divisor at most $w$. By Merten's third theorem, $\tau=\left(e^{-\gamma}+o(1)\right)/\log w$, where $\gamma$ is the Euler--Mascheroni constant. It follows that $$|X|\ge \tau (x-W) \ge \frac{x}{2\log \log x}.$$ 

Let $q=c \log x$. Let $S=(s_{1},s_{2},\dots,s_{q})$ be a sequence of $q$ random elements of $X$. Let $S_i=(s_1,\ldots,s_i)$ denote the sequence consisting of the first $i$ elements of $S$. Let $\delta=\log\log x/\log x$. Call $i \in [2,q]$ {\it bad} if 
\begin{itemize} 
\item $|\Sigma_{m}(S_i)|\leq \frac{3}{2}|\Sigma_{m}(S_{i-1})|$ and $|\Sigma_{m}(S_{i-1})|\leq x/\log x$ or 
\item $|\Sigma_{m}(S_i)|\leq (1+\delta)|\Sigma_{m}(S_{i-1})|$ and $x/\log x < |\Sigma_{m}(S_{i-1})|<x/4$. 
\end{itemize}
The following two claims allow us to quickly complete the proof.

\vspace{2mm}

\noindent {\bf Claim 1.} The probability that $i$ is bad conditioned on the choice of $S_{i-1}$  is at most $p:=\frac{4(\log \log x)^2}{\log x}$.

\vspace{2mm}

\noindent {\bf Claim 2.} If $|\Sigma_m(S)|<x/4$, then the number of integers in $[2,q]$ which are not bad is less than $4\log x$. 

\vspace{2mm}

Assuming Claim 1, for any $B \subset [2,q]$, the probability that all elements in $B$ are bad is at most $p^{|B|}$. 
From Claim 2, if $|\Sigma_m(S)|<x/4$, then there is a set $B$ of $q-4\log x$ integers $i \in [2,q]$ which are bad. Taking a union bound over all such choices of $B$, the probability that $|\Sigma_m(S)|<x/4$ is at most 
\[{q \choose q-4\log x}p^{|B|}={q \choose 4\log x}p^{|B|}<c^{4\log x}\left(\frac{4(\log \log x)^2}{\log x}\right)^{(c-4)\log x}<(\log x)^{-(c-5)\log x}. \qedhere\]
\end{proof}

To complete the proof, it remains to verify Claims 1 and 2. 

\vspace{2mm}

\noindent {\it Proof of Claim 1.} Fix $S_{i-1}=(s_1,\ldots,s_{i-1})$. Conditioned on this choice of $S_{i-1}$, we bound the probability that $i$ is bad. If $|\Sigma_{m}(S_{i-1})| \geq x/4$, then $i$ cannot be bad (so the event that $i$ is bad has probability zero). We may therefore restrict attention to the two cases $|\Sigma_{m}(S_{i-1})| \leq x/\log x$ and $x/\log x <|\Sigma_{m}(S_{i-1})| < x/4$. 

For the first case, note, by Lemma \ref{lem:double-count}, that the number of $s$ with $|\Sigma_{m}(S_{i-1}\cup\{s\})|\le\frac{3}{2}|\Sigma_{m}(S_{i-1})|$ is at most $\frac{|\Sigma_{m}(S_{i-1})|^{2}}{|\Sigma_{m}(S_{i-1})|/2}=2|\Sigma_{m}(S_{i-1})|$. Therefore, if $|\Sigma_{m}(S_{i-1})| \leq x/\log x$, the probability that $i$ is bad conditioned on $S_{i-1}$ is at most $\frac{2|\Sigma_{m}(S_{i-1})|}{|X|}\le \frac{2x}{|X|\log x} \leq \frac{4\log \log x}{\log x}<p$.

Suppose now that $x/\log x <|\Sigma_{m}(S_{i-1})| < x/4$. For a positive integer $D$, let $G_{D}$ be the set of $s$ such that $|\Sigma_{m}(S_{i-1}\cup\{s\}) |\le |\Sigma_{m}(S_{i-1})|+D$. Let $d = \lfloor \delta |\Sigma_{m}(S_{i-1})|\rfloor$, so $i$ is bad in this case if and only if $s_i \in G_d$. Let $k=\lfloor \frac{1}{2\delta}\rfloor$, so $kd \leq |\Sigma_{m}(S_{i-1})|/2$. By Lemma~\ref{lem:stable-period}, $kG_{d}\subseteq G_{kd}$, so 
$\left|kG_{d}\right|\le|G_{kd}| \le 2|\Sigma_{m}(S_{i-1})| < \frac{x}{2}$, where the middle inequality is again by the consequence of Lemma \ref{lem:double-count} noted above. 

If  $|G_{d}|\le\frac{m}{w}$, then $|G_{d}|\le \frac{m}{w} \leq \frac{2x}{(\log x)/2} < 2\delta x$. 
Otherwise, $|G_{d}|>\frac{m}{w}$. In this case, since $m$ has no prime divisor
at most $w$, no subgroup of $\mathbb{Z}_{m}$ has size larger than
$\frac{m}{w}$. Thus, $G_{d}$ cannot be contained in a coset
of a non-trivial subgroup. By Lemma \ref{lem:Cauchy-Davenport}, since $\left|kG_{d}\right| \le \frac{x}{2} < m$, we must have 
$\left|kG_{d}\right|\ge (k+1)|G_{d}|/2 \geq |G_d|/(4\delta)$. Hence, $|G_{d}| \leq 4\delta|kG_d| \leq 4\delta x/2 =2\delta x$. Thus, in either case, conditioned on the choice of $S_{i-1}$, the probability that $i$ is bad, which is the same as the probability that $s_i \in G_d$, is at
most $\frac{|G_d|}{|X|} \leq \frac{2\delta x}{|X|} \le 4\delta \log \log x = p$. 
\qed  

\vspace{2mm}

\noindent {\it Proof of Claim 2.} As $S_{i-1}\subset S_i$ for $i\in[2,q]$, $\Sigma_m(S_{i-1}) \subset \Sigma_m(S_{i})$ and, hence, $1 \leq |\Sigma_m(S_1)| \leq \cdots \leq |\Sigma_m(S_q)|=|\Sigma_m(S)| < \frac{x}{4}$. Therefore, the number of $i$ which are not bad with $|\Sigma_m(S_{i-1})| \leq x/\log x$ and $|\Sigma_m(S_{i})| \geq \frac{3}{2}|\Sigma_m(S_{i-1})|$ is at most $\log_{3/2} x$, as we get a factor of $3/2$ for each such $i$. Moreover, since $(1+\delta)^{\delta^{-1} \log_2\log x} \geq 2^{\log_2 \log x} =\log x$, the number of elements $i$ which are not bad with $x/4>|\Sigma_m(S_{i-1})| >  x/\log x$ and $|\Sigma_m(S_i)| \geq (1+\delta)|\Sigma_m(S_{i-1})|$ is at most $\delta^{-1}\log_2 \log x = \log_2 x$, as we get a factor of $1+\delta$ for each such $i$. Therefore, the number of $i \in [2,q]$ which are not bad is at most $\log_{3/2} x+\log_2 x < 4\log x$. \qed

\vspace{2mm}

We next prove Lemma \ref{lem:main-ramsey} using Lemma \ref{lem:cover}. Let $C = 3840$, $\epsilon\in (0,1/2]$ and $X$ be the set
of integers in $[x,2x)$ with no prime divisor at most $(\log x)/2$, as in Lemma~\ref{lem:cover}. We wish to show that if a sequence $S$ of $C\epsilon^{-1}\log x$ elements in $X$ is chosen independently
and uniformly at random, then, with high probability, $S$ has distinct terms and, for any
subsequence $S'$ of $S$ of size $\epsilon|S|=C\log x$, the set $\Sigma(S')$
contains all integers in the interval $[\frac{Cx\log x}{4},\frac{7Cx\log x}{8}]$. 

\begin{proof}[Proof of Lemma \ref{lem:main-ramsey}]
By the birthday paradox, as $|S|=o(\sqrt{|X|})$, $S$ has distinct terms with high probability. Fix a choice of subset $I'$ of $[C\epsilon^{-1}\log x]$ of size $C \log x$ and consider the subsequence $S'$ of $S$ given by $(s_i)_{i\in I'}$. Let $I''$ be the smallest $|S'|/8$ elements in $I'$ and let $S''$ be given by $(s_i)_{i\in I''}$. Let
$\ell=40$. Arrange $I''$ in increasing order and partition $I''$ into $\ell$ sets $I''_1,\dots,I''_\ell$ of consecutive terms so that each set $I''_j$ for $j\in [\ell]$ has size $|I''|/\ell$. This gives a partition of $S''$ into $\ell$ subsequences $S''_{1},\dots,S''_{\ell}$, where $S''_j = (s_i)_{i\in I''_j}$. Note that $|S_{j}''|=\frac{C\log x}{8\ell}$
and $\Sigma(S''_{j})\subseteq[0,2x\frac{C\log x}{8\ell}]$. We shall
prove below that, with high probability, the sequence $S$ has the property that, for all possible choices of $I'$ and $j$, $|\Sigma(S''_{j})|\ge Cx\log x/64\ell$.
Assuming this, we can show that $\Sigma(S_{j}'')$ is not contained in
an arithmetic progression with common difference larger than $1$.
Indeed, if $\Sigma(S_{j}'')$ is contained in an arithmetic
progression with common difference $d>1$, then $d\le\frac{(2Cx\log x)/(8\ell)}{(Cx\log x)/(64\ell)-1} \leq 17$. 
Moreover, if $\Sigma(S_{j}'')$ is contained in an arithmetic progression
with common difference $d>1$, then all elements in $\Sigma(S_{j}'')$
are congruent modulo $d$, from which it follows that all elements
of $S_{j}''$ are divisible by $d$. This contradicts the fact that
no element of $S_{j}''$ has a prime factor at most $(\log x)/2 > 17
$. Hence, for each $j$, $\Sigma(S_{j}'')$ is not contained in an arithmetic progression with common difference larger than $1$. Therefore, by Lemma \ref{lem:Lev}, as $\Sigma(S'')=\Sigma(S''_{1})+\cdots+\Sigma(S''_{\ell})$, the set $\Sigma(S'')$ contains the integers in an interval of length at least $\ell\left(\frac{Cx\log x}{64\ell}-1\right)+1>2x$.
Finally, by Lemma \ref{lem:verysimple}, $\Sigma(S') = \Sigma(S''\cup(S'\setminus S''))$ contains all integers in the interval $[\frac{Cx\log x}{4},\frac{7Cx\log x}{8}]$, where we used that all elements of $S'$ are at most $2x$, the elements of $\Sigma(S'')$ are at most $2x|S''|=\frac{Cx\log x}{4}$ and the sum of the elements in $S'\setminus S''$ is at least $\frac{7}{8}|S'|x = \frac{7Cx\log x}{8}$.

It remains to show that, with high probability, the sequence
$S$ has the property that, for all possible choices of $I'$ and $j$, $|\Sigma(S''_{j})|\ge Cx\log x/64\ell$. Fix
an index $1\le j\le \ell$ and partition the index set $I''_{j}$ of $S''_{j}$ into two consecutive blocks $J_1$ and $J_2$ of equal size. Let $P_1 = (s_i)_{i\in J_1}$ and $P_2 = (s_i)_{i\in J_2}$, so $|P_{1}|=|P_{2}|=\frac{|S_{j}''|}{2}=\frac{C\log x}{640}=c\log x$
for $c=\frac{C}{640}=6$. Recall that $X$ is the
set of integers in $[x,2x)$ with no prime divisor at most $(\log x)/2$.
Consider $m\in X$. We note that when we fix the subset of indices $I'$ of $[C\epsilon^{-1}\log x]$ of size $C\log x$ and the index $j$, then $J_{1}$ is determined as a particular subsequence of $I'$.
Moreover, each element in $P_{1}$ is uniformly and independently 
distributed in $X$. 
Taking a union bound over all $x \ell \binom{C \epsilon^{-1} \log x}{C\log x}$ choices of $I'$, $j \in [\ell]$ and $m \in X$, Lemma \ref{lem:cover} implies that the probability $|\Sigma_{m}(P_{1})|<\frac{x}{4}$ for some $I'$, $j \in [\ell]$ and $m\in X$ is at most
\[x\ell\binom{C \epsilon^{-1} \log x}{C\log x}\cdot\left(\frac{1}{\log x}\right)^{(c-5)(\log x)} \le 50x(e/\epsilon)^{3840\log x}\cdot\left(\frac{1}{\log x}\right)^{\log x}=o_x(1),
\]
where $o_x(1)$ tends to $0$ as $x$ tends to infinity. Thus, with high probability, 
the sequence $S$ is such that $|\Sigma_{m}(P_{1})|\ge\frac{x}{4}$ for all choices of $I'$,
$j\in
[\ell]$ and $m\in X$. In this case, by repeated application of Lemma \ref{lem:modp}, for all $j \in [\ell]
$,
\[
|\Sigma(S_{j}'')|\ge\sum_{m\in P_{2}}\frac{x}{4} = \frac{x}{4} \cdot \frac{|S_{j}''|}{2}=\frac{Cx\log x}{64\ell}.
\]
Therefore, with high probability, the sequence
$S$ is such that $|\Sigma(S''_{j})|\ge Cx\log x/64\ell$ for all possible choices of $I'$ and
$j$, as required. 
\end{proof}

\begin{comment}
By applying Lemma \ref{lem:main-ramsey} over dyadic intervals, we
obtain the upper bound in Theorem \ref{thm:Ramsey-complete}.
\begin{proof}[Proof of the upper bound in Theorem \ref{thm:Ramsey-complete}]
Let $x_{0}$ be large enough so that Lemma \ref{lem:main-ramsey}
can be applied. We pick the $r$-Ramsey complete sequence to be all
elements at most $\frac{Cx_{0}\log x_{0}}{4}$, and, for each $i\ge0$,
a sequence $S_{i}$ of size $Cr\log(2^{i}x_{0})$ such that for any
subsequences $S_{i}'$ of $S_{i}$ of size $C\log(2^{i}x_{0})$, $\Sigma(S_{i}')$
contains the integers in the interval $[\frac{C2^{i}x_{0}\log2^{i}x_{0}}{4},\frac{7C2^{i}x_{0}\log2^{i}x_{0}}{8}]$.
Such a sequence exists for each $i$ by Lemma \ref{lem:main-ramsey}.
Then each element at most $\frac{Cx_{0}\log x_{0}}{4}$ can clearly
be written as a monochromatic subset sum (using singletons). Noticing
that $\frac{C2^{i+1}x_{0}\log2^{i+1}x_{0}}{4}<\frac{7C2^{i}x_{0}\log2^{i}x_{0}}{8}$,
the intervals $[\frac{C2^{i}x_{0}\log2^{i}x_{0}}{4},\frac{7C2^{i}x_{0}\log2^{i}x_{0}}{8}]$
cover all integers at least $\frac{Cx_{0}\log x_{0}}{4}$, so every
integer can be represented as a monochromatic subset sum. Hence, the
sequence we constructed is entirely $r$-Ramsey complete, and it is
easy to check that it satisfies the claimed bound on density. 
\end{proof}
\end{comment}

\subsection{Proof of Theorem \ref{thm:Ramsey-complete-polynomial}}

Our aim in this section is to prove Theorem \ref{thm:Ramsey-complete-polynomial}, our main result on the Ramsey completeness of complete polynomial sequences $(P(m))_{m \geq 1}$, saying that there exists a constant $C$, depending only on the degree of $P$, such that, for every $r\ge2$, there is an $r$-Ramsey complete sequence $A\subset(P(m))_{m\ge1}$ with $|A\cap[n]|\le C r\log^{2}n$ for all $n$. As remarked in Section \ref{subsec:Ramsey-complete}, we can and will assume that $P$ is a complete polynomial which has integer coefficients in its binomial representation. That is, we can write $P(x)=\sum_{i=0}^{k}\alpha_{i}\binom{x}{i}$,
with $\alpha_{k}>0$, each $\alpha_{i}$ an integer and $\gcd(\alpha_{0},\dots,\alpha_{k})=1$.

Our first goal will be to prove Lemma~\ref{lem:iterated-sum-growth}. To recall the statement, suppose that $P$ is a complete polynomial of degree $k$ with integer coefficients in its binomial representation, $m$ is an integer in $[x,2x)$, $(\log x)^{-1} < \alpha < 1/2$ and $T$ is a subset of $[x,2x)$ of size at least $\alpha x$. Then Lemma~\ref{lem:iterated-sum-growth} asserts that there is a constant $C_k$ depending only on $k$ such that, for $x$ sufficiently large, the iterated sumset $2^{k-1}P(T)-2^{k-1}P(T)$ contains more than $\alpha^{C_k}P(m)$ residue classes modulo $P(m)$. Once this lemma is in place, we will follow a scheme similar to that of the previous subsection to complete the proof. 

\begin{proof}[Proof of Lemma \ref{lem:iterated-sum-growth}]

Let $T=\{x_{0},x_{1},\dots,x_{\ell-1}\}$, where $x\le x_{0}<x_{1}<\dots<x_{\ell-1} < 2x$
and $\ell\ge \alpha x$. Let $x_{0,i}=x_{i}$ and $\ell_0=\ell$. Let $\ell_j=\ell_{j-1}(\ell_{j-1}-1)
/4x$ for $j =1, \dots, k$. For each $j \in [k]$, we recursively construct a subsequence $x_{j, 0} < x_{j,1} < \dots < x_{j, \ell_j - 1}$  of $T$ with $\ell_j$ terms, as follows.
For each $j\in [k]$, note that at least $\left(\ell_{j-1}-1\right)/2$ of the indices $0 \le i\le \ell_{j-1}-2$
satisfy $x_{j-1,i+1}-x_{j-1,i}\le2x/\ell_{j-1}$. Thus, by the pigeonhole principle, there is $y_j \in [2x/\ell_{j-1}]$ such that at 
least $\left((\ell_{j-1}-1)/2)\right/\left(2x/\ell_{j-1}\right)=\ell_{j}$ indices $0 \le i\le \ell_{j-1}-2$ satisfy $x_{j-1,i+1}-x_{j-1,i}=y_{j}$.
Let $x_{j,i}=x_{j-1,t_{i}}$ for $\ell_{j}$ increasing indices $t_0, t_1, \dots, t_{\ell_j - 1}$ such that $x_{j-1,t_{i}+1}-x_{j-1,t_{i}}=y_{j}$.
As $x/\left(\ell_{j}+1\right) \le \left(2x/\left(\ell_{j-1}+1\right)\right)^{2}$, by iterating we get 
\begin{equation}\label{ljinequal}
\frac{x}{\ell_{j}+1} \leq
 \left(\frac{x}{\ell_{0}+1}\right)^{2^{j}}2^{2(1+2+\dots+2^{j-1})}\le \alpha^{-2^{j}}2^{2^{j+1}}.
\end{equation}
In particular, by (\ref{ljinequal}) and the assumption $\alpha \ge (\log x)^{-1}$, we obtain that, for $1\le j\le k$, $y_j$ is bounded above by a polynomial function of $\log x$ depending on $k$. 

Let $P_{0}=P$ and recursively define 
\[
P_{j}(x)=P_{j-1}(x+y_{j})-P_{j-1}(x),
\]
which is a polynomial in $x$ of degree $k-j$ whose coefficients are polynomials in $y_1,\dots,y_j$. Let $z_j:=\prod_{i=1}^j y_i$. Then $z_j$ and the coefficients of $P_{j}$ are bounded in absolute value by a polynomial function of $\log x$ which depends on $k$ and the coefficients of $P$. This observation brings the following simple claim into play. 

\vspace{2mm}
\noindent {\bf Claim.} Let $Q(x) = \sum_{i=0}^k \beta_i x^i$ and $\tilde{Q}(x) = \sum_{i=0}^k \tilde{\beta}_i x^i$, where $\beta_i$ and $\tilde \beta_i$ are allowed to depend on $x$. If the $\beta_i$ and $\tilde \beta_i$ are at most a fixed polynomial function of $\log x$ in absolute value and $\beta_k=\tilde \beta_k$ is bounded below in absolute value by some positive constant depending only on $k$, then $\lim_{x\to \infty} \frac{Q(x)}{\tilde{Q}(x)}=1$. 

\vspace{2mm}

Recall that $P$ is a complete polynomial with integer coefficients in its binomial representation $P(x)=\sum_{i=0}^k \alpha_i {x \choose i}$ and the leading coefficient $\alpha_k$ is a positive integer. The coefficient of $x^{k-j}$ in $P_j(x)$ is the same as that in $\alpha_k z_j {x \choose k-j}$. To see this, note, by induction, that the coefficient of $x^{k-j}$ in $P_j(x) = P_{j-1}(x+y_{j})-P_{j-1}(x)$ is the same as the coefficient of $x^{k-j}$ in $\alpha_k z_{j-1} \left({x+y_j \choose k-j +1}-{x \choose k-j+1}\right)$ and, hence, of $\alpha_k z_{j-1} y_j {x \choose k-j} = \alpha_k z_j {x \choose k-j}$. It follows from the claim that the polynomial $P_{j}(x)$ is asymptotically equal to $\alpha_k z_j {x \choose k-j}$. 

Let $c=1/(k2^{k+2})$ and $w_{k-1}=1$. For $0 \leq j \leq k-2$, let $w_j=2^{k-j}y_{j+1}$. We choose (not necessarily disjoint) sets $I_{0},I_{1},\dots,I_{k-1}$ of indices such that $I_j \subseteq [\ell_{j}]$ and any two distinct indices in $I_j$ differ by at least $w_j$. By partitioning $[x,2x)
$ into $1/c$ intervals of length $cx$ each, we can further guarantee that $\{x_{0,i_0}:i_0\in I_0\}$ is a subset of an interval $[x',x'+cx)$ of length $cx$ that is a subinterval of $[x,2x)$. By greedily picking the elements, we can guarantee that $|I_0| \geq c\ell_0/w_0$ and $|I_j| \geq \ell_j/w_j$ for $j>0$. 

For a $k$-tuple $t=(i_{0},\dots,i_{k-1})\in I_{0}\times \cdots\times I_{k-1}$, let 
\[
F(t) = \sum_{j=0}^{k-1}P_{j}(x_{j,i_{j}}). 
\]
We claim that these numbers are distinct modulo $P(m)$. This follows from showing that (as integers) these numbers lie in an interval of length less than $P(m)$ and that they are ordered lexicographically. That is, if $t=(i_j)$ and $t'=(i'_j)$ are distinct $k$-tuples, $j_0$ is the smallest index such that $i_{j_0} \ne i'_{j_0}$ and $i_{j_0}>i'_{j_0}$, then $F(t)>F(t')$. 

We first show that the numbers $F(t)$ with $t\in I_{0}\times\cdots\times I_{k-1}$ lie in an interval of length less than $P(m)$.  
As $x$ is sufficiently large, each $P_j$ is positive and increasing in $[x,2x)$. It follows that  
\begin{equation}
P(x')+\sum_{1\le j\le k-1}P_{j}(x) \leq F(t) \leq P(x'+cx)+\sum_{1\le j\le k-1}P_{j}(2x).\label{eq:bound-F}
\end{equation} We have that 
\begin{align*}
P(x'+cx)-P(x') &\leq \alpha_k \left(\binom{x'+cx}{k}-\binom{x'}{k}\right) + \sum_{j<k} |\alpha_j| (x'+cx)^j \\
& \le  \alpha_k \left(\binom{2x}{k}-\binom{2x-cx}{k}\right) + \sum_{j<k} |\alpha_j| (2x)^j \\
&= (2^k-(2-c)^k)P(x) + R(x),
\end{align*}where $R$ is a polynomial with degree at most $k-1$ depending only on $P$. 
Thus, the difference between the upper and lower bounds for $F(t)$ in (\ref{eq:bound-F}) is, for $x$ sufficiently large, at most
\begin{align*}
&P(x'+cx)-P(x')+\sum_{1\le j\le k-1}P_{j}(2x) \\
&\leq (2^k-(2-c)^k)P(x) + R(x) + \sum_{1\le j\le k-1}P_{j}(2x) \\
&\leq ck2^{k-1}P(x) \leq \frac{1}{2}P(x) \leq \frac{1}{2}P(m),
\end{align*}
where, in the second inequality, we used that $2^k-(2-c)^k < ck2^{k-1}$, as well as the claim and the fact that $R(x) + \sum_{1\le j\le k-1}P_{j}(2x)$ is a polynomial of degree at most $k-1$ in $x$ whose coefficients are polynomials (depending only on $P$) in $y_1,\dots,y_{k-1}$, where $y_1,\dots,y_{k-1}$ are themselves bounded in absolute value by a polynomial function of $\log x$. Hence, the integers $F(t)$ all lie in an interval of length at most $P(m)/2 < P(m)$, as desired. 

We next show that the integers $F(t)$ with $t\in I_{0}\times\cdots\times I_{k-1}$ are lexicographically ordered. Indeed, suppose $t=(i_j)$ and $t'=(i'_j)$ are distinct $k$-tuples, $j_0$ is the smallest index such that $i_{j_0} \ne i'_{j_0}$ and $i_{j_0}>i'_{j_0}$. Then  
\begin{equation}\label{insum} F(t)-F(t')= \sum_{j=j_0}^{k-1}P_{j}(x_{j,i_j})-P_{j}(x_{j,i'_j}).\end{equation}
Since $x_{j,i_j}-x_{j,i'_j} \geq i_j-i'_j \geq w_j$, the first summand in (\ref{insum}), when $j=j_0$, is asymptotically at least 
$\alpha_k z_{j_0} w_{j_0} {x \choose k-j_0-1}$. If $j_0=k-1$, the rest of the sum is $0$. Otherwise, $j_0 \leq k-2$ and, since $x\leq x_{j,i_j},x'_{j,i'_j} < 2x$ and $P_j(x)$ is increasing for $x$ sufficiently large, the rest of the sum in (\ref{insum}) is at least 
\[
\sum_{j=j_0+1}^{k-1}P_{j}(x)-P_{j}(2x).
\]
By the claim, this sum is asymptotic to its first summand (when $j=j_0+1$). Therefore, this sum is asymptotically $-\alpha_k z_{j_0+1}(2^{k-j_0-1}-1) {x \choose k-j_0-1}$. As $z_{j_0+1}=y_{j_0+1}z_{j_0}$, we have $z_{j_0}w_{j_0} > 2z_{j_0+1}(2^{k-j_0-1}-1)$. Hence, as $x$ is sufficiently large, the first term in the sum in (\ref{insum}) is more than the absolute value of the sum of the other terms, so we conclude that $F(t)>F(t')$, as desired. 

As the integers $F(t)$ with $t \in I_0 \times \cdots \times I_{k-1}$ are distinct modulo $P(m)$, the number of distinct residue classes $F(t) \pmod{P(m)}$ is at least 
$$\prod_{j=0}^{k-1} |I_j| \geq c\prod_{j=0}^{k-1} \ell_j/w_j \geq c_kx^k \prod_{j=0}^{k-1} \alpha^{2^{j+1}} \geq c_k\alpha^{2^{k+1}}x^k,$$
where $c_k>0$ depends only on $k$. Here we used $y_j \leq 2x/\ell_{j-1}$, $w_j = 2^{k-j}y_{j+1}$ by the definition of $w_j$ and the bound (\ref{ljinequal}) on $\ell_j$. 

Note now that $P_0(x_{0,i})=P(x_i) \in P(T)$. We will show, inductively, that for $j\ge 1$ we have $P_{j}(x_{j,i}) \in 2^{j-1}P(T)-2^{j-1}P(T)$ for all $0 \le i \le \ell_{j}-1$. Indeed, 
\[
P_{j}(x_{j,i}) = P_{j-1}(x_{j,i}+y_{j})-P_{j-1}(x_{j,i}) = P_
{j-1}(x_{j-1,t_i+1})-P_{j-1}(x_{j-1,t_i}) \in 2^{j-1}P(T)-2^{j-1}P(T),
\] recalling that there exist indices $t_i$ such that $x_{j,i}=x_{j-1,t_i}$ and $x_{j,i}+y_{j} = x_{j-1,t_i+1}$. As each $F(t)$ is the sum of $k$ terms in which the $j^{\textrm{th}}$ term is of the form $P_j(x_{j,i})$, we have that each $F(t)$ is in the set $P(T)+\sum_{j=1}^{k-1} (2^{j-1}P(T)-2^{j-1}P(T))=2^{k-1}P(T)-(2^{k-1}-1)P(T)$. The set $2^{k-1}P(T)- 2^{k-1}P(T) = - P(T) + 2^{k-1}P(T)-(2^{k-1}-1)P(T)$ is the union of $|-P(T)|$ translates of $2^{k-1}P(T)-(2^{k-1}-1)P(T)$. Hence,
\[
\left|2^{k-1}P(T)-2^{k-1}P(T)
\right|\ge c_k\alpha^{2^{k+1}}x^{k} > \alpha^{C_k}P(m)
\]
for an appropriate constant $C_k$ depending only on $k$, completing the proof. 
\end{proof}

\noindent {\it Remark.} A Hilbert cube of dimension $k$ (or simply a $k$-cube) is a set $H(a_0,e_1,\dots,e_k)$ of the form $\{a_0+\sum_{i \in I} e_i: I \subseteq [k]\}$ with $a_0$ an integer and $e_1,\ldots,e_k$ positive integers (see~\cite{GuRo98} for more on the long history of these objects). The first step in the proof of Lemma~\ref{lem:iterated-sum-growth} was to iteratively build many Hilbert cubes of dimension $k$ consisting of elements of $T$, all with $e_j=y_j$ and where we can take $a_0$ to be any $x_{k,i}$. 
An alternative approach to this step is to build many $k$-cubes in $T$ with small $e_1,\dots,e_k$ and then to use the pigeonhole principle to show that one can pick out many such $k$-cubes with the same $e_1,\dots,e_k$. 

\vspace{3mm}

As in the previous subsection, we will deduce Lemma \ref{lem:main-poly-ramsey} from a modular analogue, which we now state.
Recall that $\Sigma_{m}(S)$ is the set of subset sums modulo $m$. 

\begin{lem}
\label{lem:cover-poly}Let $P$ be a complete
polynomial of degree $k$ with integer coefficients in its binomial
representation. Fix $c\geq k2^{k+4}$ and assume $x$ is sufficiently large (depending on $P$). Let $w=(\log x)^{1/2}$ and let $X$ be the set of $y \in [x,(1+1/k)x)$ such that $P(y)$ has no prime divisor at most $w$. Let $m\in X$. If $S=(s_1,\dots,s_{q})$ is a sequence of $q =c\log x$ elements 
chosen uniformly and independently at random from $X$ and the sequence $P(S)=(P(s_1),\dots,P(s_{q}))$ is viewed as
a sequence of elements in $\mathbb{Z}_{P(m)}$, then $|\Sigma_{P(m)}(P(S))|< P(m)/4$ with probability
at most $\left(\log x\right)^{-(c-k2^{k+3})(\log x)/(8C_k)}$, where $C_k$ is the constant defined in Lemma \ref{lem:iterated-sum-growth}. 
\end{lem}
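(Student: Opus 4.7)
\textbf{Proof proposal for Lemma~\ref{lem:cover-poly}.} The plan is to follow the scheme of the proof of Lemma~\ref{lem:cover}, replacing the Cauchy--Davenport-based argument in the intermediate regime with an application of Lemma~\ref{lem:iterated-sum-growth}. I would expose $s_1,\dots,s_q$ sequentially, track $\Sigma_i=\Sigma_{P(m)}(P(s_1),\dots,P(s_i))\subseteq\mathbb{Z}_{P(m)}$, and declare $i\in[2,q]$ \emph{bad} if $|\Sigma_i|$ fails to grow over $|\Sigma_{i-1}|$ by a prescribed multiplicative factor. As in Lemma~\ref{lem:cover}, the analysis would split into two regimes: a low regime where $|\Sigma_{i-1}|\le P(m)/(\log x)^{1/8}$ with bad condition $|\Sigma_i|\le(1+2^{-(k+1)})|\Sigma_{i-1}|$, and an intermediate regime where $P(m)/(\log x)^{1/8}<|\Sigma_{i-1}|<P(m)/4$ with bad condition $|\Sigma_i|\le\bigl(1+2^{-(k+1)}(\log x)^{-1/8}\bigr)|\Sigma_{i-1}|$. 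I would aim to show (a) each step is bad with conditional probability at most $(\log x)^{-1/(8C_k)}$ up to $(\log x)^{o(1)}$ factors, and (b) at most $k2^{k+3}\log x$ indices are non-bad whenever $|\Sigma_q|<P(m)/4$. Given these, the desired bound $\binom{q}{N}p^{q-N}\le(\log x)^{-(c-k2^{k+3})(\log x)/(8C_k)}$ follows from a union bound over the set of bad indices.

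Being bad at step $i$ is equivalent to $P(s_i)\in G_D$, where $D=\epsilon_r|\Sigma_{i-1}|$ is the growth threshold and $G_D\subseteq\mathbb{Z}_{P(m)}$ is the near-period set of Lemma~\ref{lem:double-count}. Since $s_i$ is uniform on $X$, the conditional probability equals $|T|/|X|$ with $T=\{s\in X:P(s)\in G_D\}$. By Lemma~\ref{lem:iterated-sum-growth}, $|T|\ge\alpha x$ implies $|2^{k-1}P(T)-2^{k-1}P(T)\pmod{P(m)}|\ge\alpha^{C_k}P(m)$, while $P(T)\subseteq G_D$ combined with the symmetry of $G_D$ and Lemma~\ref{lem:stable-period} yields the containment $2^{k-1}P(T)-2^{k-1}P(T)\subseteq G_{2^kD}$. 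Thus any upper bound $|G_{2^kD}|\le B$ forces $|T|<\alpha x$ for $\alpha=(B/P(m))^{1/C_k}$, and the standard Mertens-type sieve estimate $|X|\ge cx/(\log\log x)^k$ then bounds the bad probability by $(B/P(m))^{1/C_k}\cdot O((\log\log x)^k)$. In the low regime, the choice $\epsilon_1=2^{-(k+1)}$ makes $2^kD=|\Sigma_{i-1}|/2$, so Lemma~\ref{lem:double-count} directly gives $|G_{2^kD}|\le 2|\Sigma_{i-1}|\le 2P(m)/(\log x)^{1/8}$, and hence the desired per-step bound.

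In the intermediate regime, this direct argument gives only $\alpha=O(1)$, so I would further sharpen the bound on $|G_{2^kD}|$ using Lemma~\ref{lem:Cauchy-Davenport}. Either $G_{2^kD}$ lies in a coset of a proper subgroup of $\mathbb{Z}_{P(m)}$, in which case $|G_{2^kD}|\le P(m)/w=P(m)(\log x)^{-1/2}$ since every prime factor of $P(m)$ exceeds $w=(\log x)^{1/2}$; or else, taking $r=\lfloor(\log x)^{1/8}\rfloor$, Lemma~\ref{lem:Cauchy-Davenport} yields $|rG_{2^kD}|\ge(r+1)|G_{2^kD}|/2$ while Lemma~\ref{lem:stable-period} together with Lemma~\ref{lem:double-count} gives $|rG_{2^kD}|\le|G_{r\cdot 2^kD}|\le 2|\Sigma_{i-1}|$. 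The key point is that the choice $\epsilon_2=2^{-(k+1)}(\log x)^{-1/8}$ ensures $r\cdot 2^kD\le|\Sigma_{i-1}|/2$, so Lemma~\ref{lem:double-count} applies. In both cases $|G_{2^kD}|\le O(P(m)/(\log x)^{1/8})$, giving again $\alpha\le(\log x)^{-1/(8C_k)}$ up to constants.

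For the non-bad count, each non-bad step multiplies $|\Sigma_i|$ by at least $1+\epsilon_r$, so traversing the low regime requires at most $\log P(m)/\log(1+\epsilon_1)=O(k2^{k+2}\log x)$ non-bad steps, while traversing the intermediate regime requires at most $\log((\log x)^{1/8})/\log(1+\epsilon_2)=O\bigl(2^k(\log x)^{1/8}\log\log x\bigr)=o(\log x)$ non-bad steps, summing to at most $k2^{k+3}\log x$ for $x$ sufficiently large. Plugging this together with the per-step probability bound into the union bound gives the conclusion, with the $(\log x)^{o(1)}$ overhead and the combinatorial factor $\binom{q}{N}\le C^N=e^{O(\log x)}$ both absorbed by the dominant $e^{-\Theta(\log x\log\log x)}$ decay of $p^{q-N}$. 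The main obstacle is the intermediate regime, where Lemma~\ref{lem:iterated-sum-growth} alone is insufficient because it only yields a constant bound on $\alpha$ when $|\Sigma_{i-1}|$ is a constant fraction of $P(m)$; the essential new input is pairing it with Lemma~\ref{lem:Cauchy-Davenport}, using the hypothesis that $P(m)$ has no prime factor at most $w$ to rule out $G_{2^kD}$ being trapped in a small coset.
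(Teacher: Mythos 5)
Your scheme is essentially the paper's: sequential exposure, a two-regime notion of bad steps, the reformulation of badness as $P(s_i)$ lying in a near-period set $G_D$, the use of Lemma~\ref{lem:iterated-sum-growth} together with the symmetry of $G_D$ and Lemma~\ref{lem:stable-period} to place $2^{k-1}P(T)-2^{k-1}P(T)$ inside $G_{2^kD}$, Lemma~\ref{lem:double-count} and Lemma~\ref{lem:Cauchy-Davenport} (with the no-small-prime-factor hypothesis on $P(m)$) to bound $|G_{2^kD}|$ in the intermediate regime, Lemma~\ref{lem:estimate-nondiv} for $|X|$, the non-bad-step count, and the union bound over bad index sets. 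Applying Cauchy--Davenport directly to $G_{2^kD}$ rather than to the iterated sumset $2^{k-1}wP(V)-2^{k-1}wP(V)$, as the paper does, is a harmless variation (it mirrors what the paper itself does in Lemma~\ref{lem:cover}).

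The one genuine problem is your choice of cutoff $(\log x)^{1/8}$, which leaves no slack for the final bookkeeping. With that cutoff your per-step bad probability is only $(\log x)^{-1/(8C_k)}$ \emph{times} factors of size $(\log\log x)^{\Theta(k)}$ (coming from $|X|\ge c_k(\log w)^{-k}x$ and the constant in $|G_{2^kD}|\le 2|\Sigma_{i-1}|$ or $\le 4|\Sigma_{i-1}|/(r+1)$). Raised to the power $(c-k2^{k+3})\log x$ and multiplied by $\binom{q}{N}=e^{O(\log x)}$, these contribute an extra factor $e^{\Theta(\log x\,\log\log\log x)}$ \emph{on top of} $(\log x)^{-(c-k2^{k+3})(\log x)/(8C_k)}$; your claim that this overhead is ``absorbed by the dominant decay of $p^{q-N}$'' is false, because $p^{q-N}$ is already only as small as the target bound, so you end up proving the statement with $8C_k$ replaced by $(8+o(1))C_k$ rather than the stated inequality. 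The repair is exactly the paper's parameterization: take the regime cutoff at $P(m)/(2w)$ with $w=(\log x)^{1/2}$ and intermediate growth increment $1/(2^{k+1}w)$. Then the raw per-step bound is $(\log x)^{-1/(2C_k)+o(1)}$, which you can cleanly state as $p=(\log x)^{-1/(4C_k)}$ after swallowing the $(\log\log x)^{O(k)}$ factors, and the remaining factor of two between $1/(4C_k)$ and $1/(8C_k)$ absorbs the binomial coefficient. The non-bad count is unaffected: the low regime still gives at most about $k2^{k+2}\log x$ steps, and the intermediate regime contributes only $O(2^k w\log w)=o(\log x)$, so the total stays below $k2^{k+3}\log x$.
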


We will need the following estimate for the proof of Lemma \ref{lem:cover-poly}. 

\begin{lem} \label{lem:estimate-nondiv}
For each positive integer $k$, there is $c_k>0$ such that the following holds. Suppose $P$ is a complete polynomial of degree $k$ with integer coefficients in its binomial representation. If $x$ is sufficiently large and $1<w<(\log x)/2$ is an integer, then the set $X$ of $y \in [x,(1+1/k)x)$ such that $P(y)$ has
no prime divisor at most $w$ satisfies $|X|\ge c_{k}(\log w)^{-k}x$.
\end{lem}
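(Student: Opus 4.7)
The plan is to prove Lemma~\ref{lem:estimate-nondiv} by a standard sieve argument, where the key structural input is the completeness of $P$.

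First, I would set up the local densities. Let $\nu_p = |\{y \in \mathbb{Z}/p\mathbb{Z} : P(y) \equiv 0 \pmod p\}|$ for each prime $p$. For primes $p$ not dividing the leading coefficient $\alpha_k$, we have $\nu_p \le k$ since $P$ has degree $k$ modulo $p$. For the finitely many primes $p \mid \alpha_k$ we only have the trivial bound $\nu_p \le p$, but here the completeness of $P$ is crucial: by Graham's characterization and the fact that $P$ has integer binomial coefficients with $\gcd(\alpha_0,\ldots,\alpha_k)=1$, we have $\gcd_{y \in \mathbb{Z}} P(y) = 1$, which means $P$ does not vanish identically modulo any prime $p$, so $\nu_p \le p - 1 < p$ for every prime $p$.

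Second, I would estimate the sieve density
\[
V(w) := \prod_{p \le w}\left(1 - \frac{\nu_p}{p}\right).
\]
Splitting into primes dividing $\alpha_k$ and primes that do not, the first (finite) product contributes a positive constant depending only on $P$, while for the remaining primes, using $1 - k/p \ge (1-1/p)^{k}(1 - O_k(1/p^2))$ for $p > k$ together with Mertens' third theorem gives
\[
\prod_{p \le w,\, p \nmid \alpha_k}\left(1 - \frac{\nu_p}{p}\right) \ge c_k' \prod_{p \le w}\left(1 - \frac{1}{p}\right)^{k} \ge c_k'' (\log w)^{-k}.
\]
Thus $V(w) \ge c_k (\log w)^{-k}$ with $c_k > 0$ depending on $k$ and $P$.

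Third, I would invoke the fundamental lemma of sieve theory (in the form of Brun's sieve; see, e.g., Iwaniec--Kowalski) applied to the multiset $\{P(y) : y \in [x,(1+1/k)x)\}$, sieving by the primes up to $w$. Since $w < (\log x)/2$, the prime number theorem gives $\prod_{p \le w} p \le e^{(1+o(1))w} \le x^{1/2 + o(1)}$, so for any squarefree $d$ supported on primes $\le w$ we have $|\mathcal{A}_d| = (x/k)(\nu_d/d) + O(\nu_d)$ with $\nu_d = \prod_{p \mid d} \nu_p \le k^{\omega(d)}$, and $\sum_{d \le x^{1/2+o(1)}} \nu_d$ is easily $o(x/(\log x)^{k+1})$. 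The fundamental lemma then yields, for $x$ sufficiently large,
\[
|X| \ge \tfrac{1}{2}\cdot \tfrac{x}{k}\cdot V(w) \ge c_k(\log w)^{-k} x,
\]
absorbing constants into $c_k$.

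The only mild obstacle is checking that $w$ up to $(\log x)/2$ is still within the level of distribution needed for Brun's sieve; this holds because the sifting level is $e^{(1+o(1))w} \le x^{1/2+o(1)}$, which is polynomially smaller than the interval length $x/k$. For $w$ bounded by a constant (say $w \le k$), the lemma instead follows directly from the Chinese remainder theorem, since the $y \pmod{\prod_{p \le w} p}$ for which $\gcd(P(y), \prod_{p \le w} p) = 1$ form a positive proportion of the residues, namely $V(w) \ge c_k$.
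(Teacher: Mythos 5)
There is a genuine gap, and it sits exactly where the paper has to work hardest: your sieve setup presumes that for every prime $p$ the event ``$p \mid P(y)$'' is a congruence condition on $y$ modulo $p$, i.e., that $P$ induces a well-defined map $\mathbb{Z}/p\mathbb{Z}\to\mathbb{Z}/p\mathbb{Z}$. This is false for primes $p\le k$, because $P(x)=\sum_i \alpha_i\binom{x}{i}$ has integer \emph{binomial} coefficients but rational monomial coefficients with denominators dividing $k!$. For instance, for $P(x)=\binom{x}{2}$ and $p=2$ we have $P(0)=0$ and $P(2)=1$, so $P(y)\bmod 2$ is not a function of $y \bmod 2$ (its true period is $4$). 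Consequently your local density $\nu_p$ is not well defined for such primes, the bound ``$\nu_p\le k$ since $P$ has degree $k$ modulo $p$'' has no meaning there, the deduction ``$\gcd_y P(y)=1 \Rightarrow \nu_p\le p-1$'' does not parse, and the constant-$w$ fallback via CRT modulo $\prod_{p\le w}p$ fails for the same reason. (Your identification of the problematic primes as those dividing $\alpha_k$ is also off: the issue is the primes $p\le k$, dividing the denominators $i!$, irrespective of $\alpha_k$.) This is precisely the content of the first half of the paper's proof: one shows $\binom{x}{i}\bmod q^{v_q}$ is periodic with period $q^{2v_q}$ (where $q^{v_q}\,\|\,k!$), so $P(y)\bmod q$ is periodic with period $q^{2v_q}$ for $q\le k$, and completeness plus CRT then produces a residue class modulo $k!^2$ on which $P$ is coprime to all primes $\le k$; for $q>k$ the naive reduction is valid and $P$ is a nonzero polynomial of degree at most $k$ mod $q$, so it has at most $k$ roots.

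Once the moduli are corrected in this way, the rest of your plan does go through, but the sieve machinery is unnecessary: the condition ``$P(y)$ has no prime factor $\le w$'' is periodic in $y$ with period $R_{k,w}=k!^2\prod_{k<q\le w}q\le x^{1/2+o(1)}$ (using $w\le(\log x)/2$), which is much smaller than the interval length $x/k$, so one can count exact periods and apply Mertens to $\prod_{k<q\le w}(1-m_{P,q}/q)$ with $m_{P,q}\le k$; this is the paper's route and it also yields a constant $c_k\ge c_k'/k!^2$ depending only on $k$, whereas your finite product over small primes makes $c_k$ depend on $P$, contrary to the lemma's statement (a minor point, but worth fixing alongside the main one). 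Your remark about the level of distribution ($e^{(1+o(1))w}\le x^{1/2+o(1)}$) is correct and is exactly why no genuine sieve, nor even a truncated inclusion--exclusion, is needed here.
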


\begin{proof}
For each prime $q\le k$, let $v_q$ be the largest integer $v$ such that $q^v \mid k!$. For $i\le k$, we have $\prod_{j=0}^{i-1}(q^{2v_q}+(x-j)) = \prod_{j=0}^{i-1}(x-j) + q^{2v_q}z$ for some integer $z$, so 
\[
\binom{x+q^{2v_q}}{i}-\binom{x}{i}=\frac{1}{i!}\prod_{j=0}^{i-1}(q^{2v_q}+(x-j)) - \frac{1}{i!}\prod_{j=0}^{i-1}(x-j) = \frac{q^{2v_q}z}{i!}.
\]
Letting $v_{q,i}$ and $r_i$ be integers such that $i! = q^{v_{q,i}} r_i$ and $\gcd(r_i,q)=1$, we have, since $\binom{x+q^{2v_q}}{i}-\binom{x}{i}$ is an integer, that $r_i \mid z$. Moreover, $v_{q,i} \le v_q$, since $i! \mid k!$. Hence, 
$$\binom{x+q^{2v_q}}{i}-\binom{x}{i}=\frac{q^{2v_q}z}{i!} = q^{v_q} \cdot q^{v_q-v_{q,i}} \frac{z}{r_i} \equiv 0 \pmod{q^{v_q}}.$$
That is, $\binom{x}{i}\pmod{q^{v_q}}$ is periodic every $q^{2v_q}$ and, therefore, $P(x) \pmod{q}$ is periodic every $q^{2v_q}$. Since $P$ is complete, for each prime $q\le k$, there exists an integer $x \in [1,q^{2v_q}]$ such that $P(x)$ is coprime to $q$. Using that $\prod_{q \le k, \textrm{ $q$ prime}}q^{2v_q}=k!^2$, we have, by the Chinese Remainder Theorem, that there exists an integer $y \in [1,k!^2]$ such that $P(y)$ is coprime to all primes $q\le k$. We also have that $P(x) \pmod{q}$ is periodic every $k!^2$ for all primes $q\le k$. Moreover, for each prime $q>k \geq i$, $\binom{x}{i} \pmod{q}$ is periodic every $q$. Therefore, letting
$R_{k,w}=k!^{2}\prod_{k<q\le w,\,q\textrm{ prime}}q$, we have that $P(x) \pmod{\prod_{q\le w,\,q\textrm{ prime}}q}$ is periodic every $R_{k,w}$.

Let $W$ be the set of positive integers $y$ at most $R_{k,w}$ such that
$P(y)$ is coprime to all primes at most $w$. Let $m_{P,q}$ be the
number of roots of $P(x)\pmod q$, which is at most $k$ for each prime $q$,
where we used that $P$ has degree $k$ and is nonzero modulo $q$ by completeness. By the Chinese Remainder Theorem, the fraction of $y\in[\prod_{k<q\le w,\,q\textrm{ prime}}q]$
such that $P(y)$ is coprime to $\prod_{k<q\le w,\,q\textrm{ prime}}q$ is then
\[
\prod_{k<q\le w,\,q\textrm{ prime}}\left(1-\frac{m_{P,q}}{q}\right)\ge c'_k(\log w)^{-k}
\]
for some constant $c'_k>0$, where we used the bound $1-\varepsilon>e^{-\varepsilon-\varepsilon^2}$ for $0<\varepsilon<1/2$ and Merten's second theorem, which implies that $\sum_{q\le w,\,q\textrm{ prime}} 1/q = \log \log w +O(1)$. Furthermore, as shown above, there exists $y \in [k!^2]$ such that $P(y)$ is coprime to $\prod_{q\le k,\,q\textrm{ prime}}q$. Since $P(y) \pmod{\prod_{q\le k,\,q\textrm{ prime}}q}$ is periodic every $k!^2$ and $\gcd(k!^2,\prod_{k<q\le w,\,q\textrm{ prime}}q)=1$, the Chinese Remainder Theorem implies that the fraction of $y\in[R_{k,w}]$ such that $P(y)$ is coprime to $R_{k,w}$ is
at least $(c'_k/k!^2)(\log w)^{-k}$. Hence, $|W|\ge (c'_k/k!^2)(\log w)^{-k}R_{k,w}$. Since the integers $y$ for which $P(y)$ has no prime factor at most $w$ are periodic every $R_{k,w}$ and $R_{k,w} \leq x^{1/2 + o(1)}$ by the assumption $w \leq (\log x)/2$, we have that $|X| \geq (|W|/R_{k,w})(x/k)-|W| \geq  c_{k}(\log w)^{-k} x$ for an appropriate $c_k>0$ depending only on $k$, as required. 
\end{proof}

The proof of Lemma \ref{lem:cover-poly} now proceeds along broadly similar lines to the proof of Lemma \ref{lem:cover}.

\begin{proof}[Proof of Lemma \ref{lem:cover-poly}]
Let $S_i=(s_1,\ldots,s_i)$ denote the sequence consisting of the first $i$ terms of $S$ and let $T_{i}=(P(s_{1}),P(s_{2}),\dots,P(s_{i}))$. We also write $T$ as a shorthand for $T_q = P(S)$. Call $i\in [2,q]$ {\it bad} if 
\begin{itemize}
\item $|\Sigma_{P(m)}(T_i)|\le (1+2^{-k-1})|\Sigma_{P(m)}(T_{i-1})|$ and $|\Sigma_{P(m)}(T_{i-1})| < \frac{P(m)}{2w}$ 
or 
\item $|\Sigma_{P(m)}(T_i)|\le \left(1+\frac{1}{2^{k+1}w}\right)|\Sigma_{P(m)}(T_{i-1})|$ and $\frac{P(m)}{2w}\le |\Sigma_{P(m)}(T_{i-1})| < \frac{P(m)}{4}$.
\end{itemize} 

The following claims are the key components in the proof. Here $C_k$ is the constant from Lemma \ref{lem:iterated-sum-growth}. 

\vspace{2mm}

\noindent {\bf Claim 1.} The probability that $i$ is bad conditioned on the choice of $S_{i-1}$ is at most $p:=(\log x)^{-1/(4C_k)}$. 

\vspace{2mm}

\noindent {\bf Claim 2.} If $|\Sigma_{P(m)}(T)| < P(m)/4$, then the number of integers in $[2,q]$ which are not bad is less than $k2^{k+3}\log x$. 
 
 \vspace{2mm}
 
By Claim 1, for any $B\subset [2,q]$, the probability that all elements in $B$ are bad is at most $p^{|B|}$. By Claim~2, if $|\Sigma_{P(m)}(T)| < P(m)/4$, then there is a set $B$ of $q-k2^{k+3}\log x = (c-k2^{k+3})\log x$ integers $i\in [2,q]$ which are bad. Taking a union bound over all choices of $B$, the probability that $|\Sigma_{P(m)}(T)|<P(m)/4$ is at most 
\begin{align*}
\binom{q}{q-k2^{k+3}\log x}p^{|B|} & = \binom{q}{k2^{k+3}\log x}p^{|B|} \\ & \le (ec)^{k2^{k+3}\log x} (\log x)^{-(c-k2^{k+3})(\log x)/(4C_k)} \\ & < (\log x)^{-(c-k2^{k+3})(\log x)/(8C_k)}. \qedhere
\end{align*}
\end{proof}

Therefore, in order to complete the proof of the lemma, it suffices to prove Claims 1 and 2. It is here, in the proof of Claim 1, that Lemma~\ref{lem:iterated-sum-growth} comes into play.

\vspace{2mm}

\noindent {\it Proof of Claim 1.} Fix $S_{i-1}=(s_1,\dots,s_{i-1})$. Conditioned on this choice of $S_{i-1}$, we bound the probability that $i$ is bad. If $|\Sigma_{P(m)}(T_{i-1})|\ge P(m)/4$, then $i$ cannot be bad (so the probability that $i$ is bad is zero). The proof now splits into two cases, when $|\Sigma_{P(m)}(T_{i-1})|< \frac{P(m)}{2w}$ and when $\frac{P(m)}{2w}\le |\Sigma_{P(m)}(T_{i-1})|<P(m)/4$. 

\vspace{2mm}

\noindent {\bf Case 1.} $|\Sigma_{P(m)}(T_{i-1})|< \frac{P(m)}{2w}$. 

\vspace{2mm}

Let 
\[
V=\left\{ t\in[x,(1+1/k)x):\,|\Sigma_{P(m)}(T_{i-1}\cup\{P(t)\})|\le\left(1+\frac{1}{2^{k+1}}\right)|\Sigma_{P(m)}(T_{i-1})|\right\} .
\] Observe that $i$ is bad conditioned on $S_{i-1}$ if and only if $s_i \in V$. We will show that $|V| \leq \alpha x$, where $\alpha = w^{-1/C_k}$ and $C_k$ is again the constant from Lemma~\ref{lem:iterated-sum-growth}. 

Suppose, for the sake of contradiction, that $|V| > \alpha x$. Lemma \ref{lem:iterated-sum-growth} then implies that 
$$|2^{k-1} P(V)-2^{k-1} P(V)| > \alpha^{C_k} P(m) = P(m)/w,$$ 
where $P(V)=\{P(v):v\in V\}$. Note now that if $U\subseteq \mathbb{Z}_{P(m)}$ and $u\in \mathbb{Z}_{P(m)}$, then $|\Sigma_{P(m)}(U\cup \{u\})| =|\Sigma_{P(m)}(U\cup \{-u\})|
$. Thus, for each $z\in P(V)\cup (-P(V))$, we have 
$$|\Sigma_{P(m)}(T_{i-1}\cup\{z\})| \leq\left(1+\frac{1}{2^{k+1}}\right)|\Sigma_{P(m)}(T_{i-1})|$$
and Lemma \ref{lem:stable-period} implies that, for each $y\in 2^{k-1} P(V)-2^{k-1} P(V)$, 
\begin{equation}
|\Sigma_{P(m)}(T_{i-1}\cup\{y\})|\le \left(1+\frac{2^{k}}{2^{k+1}}\right)|\Sigma_{P(m)}(T_{i-1})| = \frac{3}{2} |\Sigma_{P(m)}(T_{i-1})|.\label{eq:boundTi-1y}
\end{equation} 
However, by Lemma~\ref{lem:double-count}, the number of $y \in \mathbb{Z}_{P(m)}$
satisfying (\ref{eq:boundTi-1y})
is at most $2|\Sigma_{P(m)}(T_{i-1})| < P(m)/w$. But this contradicts the bound $|2^{k-1}P(V)-2^{k-1}P(V)| > P(m)/w$, so we must indeed have that $|V| \leq \alpha x$. 

\vspace{2mm}

\noindent {\bf Case 2.} $P(m)/2w \le |\Sigma_{P(m)}(T_{i-1})|<P(m)/4$. 

\vspace{2mm}

Let 
\[
V =\left\{ t\in[x,(1+1/k)
x):|\Sigma_{P(m)}(T_{i-1}\cup\{P(t)\})|\le\left(1+\frac{1}{2^{k+1}w}\right)|\Sigma_{P(m)}(T_{i-1})|\right\} .
\] 
Observe again that $i$ is bad conditioned on $S_{i-1}$ if and only if $s_i \in V$.
As in Case 1, we will show that $|V| \leq \alpha x$. Indeed,
suppose, for the sake of contradiction, that $|V| > \alpha x$. Then,
by Lemma \ref{lem:iterated-sum-growth}, we again have that
$|2^{k-1}P(V)-2^{k-1}P(V)| > P(m)/w$.
By our assumption that $P(m)$ has no prime divisor at most $w$,
$2^{k-1}P(V)-2^{k-1}P(V)$ cannot be contained in a coset of a proper subgroup of
$\mathbb{Z}_{P(m)}$. Hence, by Lemma \ref{lem:Cauchy-Davenport},
\[
|2^{k-1}wP(V)-2^{k-1}wP(V)| > P(m)/2.
\]
However, again using Lemma \ref{lem:stable-period}, for all elements $y \in 2^{k-1}wP(V)-2^{k-1}wP(V)$, we have 
\[
|\Sigma_{P(m)}(T_{i-1}\cup\{y\})|\le \left(1+\frac{2^{k}w}{2^{k+1}w}\right)|\Sigma_{P(m)}(T_{i-1})|=\frac{3}{2}|\Sigma_{P(m)}(T_{i-1})|.
\]
But, by Lemma \ref{lem:double-count}, the number of such elements is at
most $2|\Sigma_{P(m)}(T_{i-1})|<P(m)/2$, a contradiction. 

\vspace{2mm}

Therefore, in either case, the set $V$ of bad choices satisfies $|V| \leq \alpha x$. By using Lemma \ref{lem:estimate-nondiv}, which says that 
$|X|\ge c_{k}(\log w)^{-k}x$ for an appropriate $c_k>0$,
this implies that the probability $i$ is bad conditioned on the choice of $S_{i-1}$ is at most 
$$|V|/|X| \leq \alpha x/\left(c_k(\log w)^{-k}x\right)= \alpha c_k^{-1}(\log w)^{k} = c_k^{-1} (\log x)^{-1/(2C_k)} \left(\frac{1}{2}\log \log x\right)^k <(\log x)^{-1/(4C_k)}=p,$$
as required.
\qed

\vspace{2mm}

\noindent {\it Proof of Claim 2.} 
As $S_{i-1}\subset S_i$ for $i\in [2,q]$, $\Sigma_{P(m)}(T_{i-1})\subseteq \Sigma_{P(m)}(T_i)$ and, hence, $1 \leq |\Sigma_{P(m)}(T_1)|\le \cdots \le |\Sigma_{P(m)}(T_q)|=|\Sigma_{P(m)}(T)| < P(m)/4$. Therefore, the number of $i$ which are not bad with $|\Sigma_{P(m)}(T_{i-1})|<P(m)/2w$ and $|\Sigma_{P(m)}(T_i)|\ge (1+2^{-k-1})|\Sigma_{P(m)}(T_{i-1})|$ is at most $\frac{\log(P(2x)/2w)}{\log(1+2^{-k-1})} \le k2^{k+2}\log x$. Moreover, the number of $i$ which are not bad with $P(m)/2w \le |\Sigma_{P(m)}(T_{i-1})|<P(m)/4$ and $|\Sigma_{P(m)}(T_i)|\ge \left(1+\frac{1}{2^{k+1}w}\right)|\Sigma_{P(m)}(T_{i-1})|$ is at most $\frac{\log(2w)}{\log(1+2^{-k-1}w^{-1})}\le \log x$, where we used that $w=(\log x)^{1/2}$. Therefore, the number of $i\in [2,q]$ which are not bad is at most $k2^{k+2}\log x+\log x<k2^{k+3}\log x$. \qed 

\vspace{2mm}

We conclude this subsection and the proof of Theorem~\ref{thm:Ramsey-complete-polynomial} by using Lemma \ref{lem:cover-poly} to prove Lemma~\ref{lem:main-poly-ramsey}. To this end, suppose that $P$ is a complete polynomial of degree $k$ with integer coefficients in its binomial representation, $C(k)= k 2^{k+15}$, $\epsilon\in (0,1/2]$ and $X$ is the set of $y \in [x,(1+1/k)x)$ such that $P(y)$ has no prime divisor at most $(\log x)^{1/2}$. Our aim is to show that if a sequence $S$ of $C(k)\epsilon^{-1} \log x$ elements in $X$ is chosen independently and uniformly at random, then, with high probability, $S$ has distinct terms and, for any subsequence $S'$ of $S$ of size $\epsilon|S|$, the set $\Sigma(P(S'))$ contains all integers in the interval $[\frac{e}{9}P(x)|S'|,\frac{8}{9}P(x)|S'|]$. 

\begin{proof}[Proof of Lemma \ref{lem:main-poly-ramsey}]
As $P$ is a complete polynomial, its leading coefficient is positive. Hence, for $x$ sufficiently large, $P$ will be positive and strictly increasing on the interval $[x,(1+1/k)x]$. We may therefore assume that $P$ is injective on the interval $[x,(1+1/k)x)$ and, for any $y$ in this interval, $P(y) \in [P(x),P((1+1/k)x)) \subset [P(x),eP(x))$.  

By the birthday paradox, as $|S|=o(\sqrt{|X|})$, $P(S)$ has distinct terms with high probability. 
Fix a choice of subset $I'$ of $[C(k)\epsilon^{-1}\log x]$ of size $C(k) \log x$ and consider the subsequence $S'$ of $S$ given by $(s_i)_{i\in I'}$. Let $I''$ be the smallest $|S'|/9$ elements in $I'$ and let $S''$ be given by $(s_i)_{i\in I''}$. Let $\ell = 64$. Arrange $I''$ in increasing order and partition $I''$ into $\ell$ sets $I''_1,\dots,I''_\ell$ of consecutive terms so that each set $I''_j$ for $j\in [\ell]$ has size $|I''|/\ell$. This gives a partition of $S''$ into $\ell$ subsequences $S''_{1},\dots,S''_{\ell}$,  where $S''_j = (s_i)_{i\in I''_j}$. Note that each element of $\Sigma(P(S''_{j}))$ is nonnegative and at most $P((1+1/k)x)|S''_j|<eP(x)|S''_j|$. We shall prove below that, with high probability, the sequence $S$ 
has the property that, for all possible choices of $I'$ and 
$j$, $|\Sigma(P(S''_{j}))|\ge P(x)|S''_j|/8$. Assuming
this, we can show that $\Sigma(P(S_{j}''))$ is not contained in an arithmetic
progression with common difference larger than $1$. Indeed, if $\Sigma(P(S_{j}''))$ is contained in an arithmetic progression
with common difference $d>1$, then $d\le\frac{P((1+1/k)x)|S''_j|}{P(x)|S''_j|/8 -1} \leq 9e$.
Moreover, if $\Sigma(P(S_{j}''))$ is contained in an arithmetic progression
with common difference $d>1$, then all elements in $\Sigma(P(S_{j}''))$
are congruent modulo $d$, from which it follows that all elements
of $P(S_{j}'')$ are divisible by $d$. This contradicts the fact that
no element of $P(S_{j}'')$ has a prime factor at most $(\log x)^{1/2}$.
Hence, for each $j$, $\Sigma(P(S_{j}''))$ is not contained in an arithmetic
progression with common difference larger than $1$. Therefore, by Lemma~\ref{lem:Lev}, as $\Sigma(S'')=\Sigma(S''_{1})+\cdots+\Sigma(S''_{\ell})$, the set $\Sigma(S'')$ contains the integers in an interval of length at least $\ell \left(P(x)|S''_j|/8 - 1\right)+1>P((1+1/k)x)$. Finally, by Lemma \ref{lem:verysimple}, $\Sigma(P(S'')\cup P(S'\setminus S''))$ contains all integers in the interval $[eP(x)|S'|/9,8P(x)|S'|/9]$, where we used that all elements of $S'$ are at most $P((1+1/k)x)$, the elements of $\Sigma(S'')$ are at most $P((1+1/k)x)|S''|=P((1+1/k)x)|S'|/9<eP(x)|S'|/9$ and the sum of the elements in $S'$ is at least $P(x)|S' \setminus S''| = 8P(x)|S'|/9$.

It remains to show that, with high probability, the sequence
$S$ has the property that, for all possible choices of $I'$ and 
$j$, $|\Sigma(P(S''_{j}))| \ge P(x) |S''_j|/8$. Fix
an index $1\le j\le \ell$ and partition the index set $I''_{j}$ of $S''_{j}$ into two consecutive blocks $J_1$ and $J_2$ of equal size. Let $Q_1 = (s_i)_{i\in J_1}$ and $Q_2 = (s_i)_{i\in J_2}$, so $|Q_{1}|=|Q_{2}|=\frac{|S_{j}''|}{2}=\frac{C(k)\log x}{18\ell}>k2^{k+4}\log x$. Recall that $X$ is the set of integers in $[x,(1+1/k)x)$ such that $P(x)$
has no prime divisor at most $(\log x)^{1/2}$. Consider $m\in X$.
We note that when we fix the subset of indices $I'$ of $[C(k)\epsilon^{-1}\log x]$ of size $C(k)\log x$ 
and the index $j$, then $J_{1}$ is determined as a particular subsequence of $I'$.
Moreover, each element in $Q_{1}$ is uniformly
and independently distributed in $X$. Taking a union bound over all 
$x \ell \binom{C(k) \epsilon^{-1} \log x}{C(k)\log x}$ choices of $I'$, $j \in [\ell]$ and $m \in X$, Lemma~\ref{lem:cover-poly} implies that 
the probability $|\Sigma_{P(m)}(P(Q_{1}))|<\frac{P(m)}{4}$
for some choice of $I'$, $j \in [\ell]$ and $m\in X$ is at most
\begin{align*}
x\ell\binom{C(k)\epsilon^{-1} \log x}{C(k)\log x}\cdot \left(\log x\right)^{-k2^{k}C_k^{-1}\log x}&\le x \ell (e/\epsilon)^{C(k)\log x}\cdot\left(\log x\right)^{-k2^{k}C_k^{-1}\log x}=o_x(1),
\end{align*}
where $o_x(1)$ tends to $0$ as $x$ tends to infinity. Thus, with high probability, the sequence $S$ is such that $|\Sigma_{P(m)}(P(Q_{1}))|\ge\frac{P(m)}{4}$ for all possible choices of $I'$, $j\in[\ell]$ and $m\in X$. In this case, by repeated application of Lemma \ref{lem:modp}, for all $j \in [\ell]$,
\[
|\Sigma(P(S_{j}''))|\ge\sum_{m\in Q_{2}}P(m)/4 = |Q_2|P(m)/4 = |S''_j|P(m)/8 \geq P(x)|S''_j|/8.
\]
Therefore, with high probability, the sequence $S$ is such that $|\Sigma(P(S''_{j}))|\ge P(x)|S''_j|/8$ for all possible choices of $I'$ and $j$, as required. 
\end{proof}

\subsection{Proof of the lower bound in Theorem \ref{thm:Ramsey-complete}}

We first prove a useful lemma. 

\begin{lem}\label{cool}
Let $S$ be a sequence of positive integers and $m$ and $q$ be positive integers. Then 
$$\left |\Sigma(S) \cap [m]\right | \leq 2^{m/q}\prod_{a \in S \cap [m]}\left(1+2^{-a/q}\right) \leq 2^{m/q}\exp\left(\sum_{a \in S \cap [m]}2^{-a/q}\right).$$
\end{lem}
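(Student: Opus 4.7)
The proof will be a short weighted-counting argument based on the generating function identity $\sum_{T \subseteq A} \prod_{a \in T} x_a = \prod_{a \in A}(1+x_a)$. The second inequality is immediate from $1+x \le e^x$, so essentially everything is in the first inequality.

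First I would note that every element of $\Sigma(S) \cap [m]$ is realized by at least one subset $T \subseteq S$ with $\sum T \in [m]$, so
\[
|\Sigma(S) \cap [m]| \;\le\; \bigl|\{T \subseteq S : \sum_{t \in T} t \in [m]\}\bigr|.
\]
Moreover, since elements of $S$ are positive, any $T$ contributing to the right-hand side must satisfy $T \subseteq S \cap [m]$, since any single element exceeding $m$ would already make $\sum T > m$.

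The next step is the key weighting trick. For each such $T$, use $\sum_{t \in T} t \le m$ to write
\[
1 \;=\; 2^{-m/q} \cdot 2^{m/q} \;\le\; 2^{-m/q} \cdot 2^{(m - \sum_{t \in T} t)/q} \cdot 2^{\sum_{t \in T} t /q}
\]
— or more directly, $1 \le 2^{(m-\sum T)/q}$. Summing over all $T \subseteq S \cap [m]$ with $\sum T \in [m]$ gives
\[
\bigl|\{T \subseteq S \cap [m]: \sum T \in [m]\}\bigr| \;\le\; 2^{m/q}\sum_{T \subseteq S\cap[m]} 2^{-\sum_{t\in T} t/q},
\]
where on the right I have dropped the constraint $\sum T \in [m]$, at the cost of only enlarging the sum.

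Finally, the unrestricted sum factors via the generating-function identity:
\[
\sum_{T \subseteq S\cap[m]} 2^{-\sum_{t\in T} t/q} \;=\; \prod_{a \in S \cap [m]}\bigl(1 + 2^{-a/q}\bigr).
\]
Combining these gives the first inequality of the lemma, and the second follows from $1+x \le e^x$ applied factor by factor. There is no real obstacle here — the proof is essentially a one-line computation once one sets up the weighted count.
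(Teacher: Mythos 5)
Your proof is correct and is essentially the paper's own argument: the paper expands the product $2^{m/q}\prod_{a\in S\cap[m]}(1+2^{-a/q})$ and observes that each $s\in\Sigma(S)\cap[m]$ contributes at least $r_S(s)\cdot 2^{m/q}2^{-s/q}\ge 1$, which is exactly your weighted count over subsets $T$ with $\sum T\le m$, followed by $1+z\le e^z$. No substantive difference.
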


\begin{proof}
Let $r_S(s)$ denote the number of ways of representing $s$ as a sum of distinct elements from $S$. So if $s \in \Sigma(S)$, then $r_S(s) \geq 1$, while $r_S(s)=0$ otherwise. For each $s \in \Sigma(S)\cap [m]$, we get a contribution of one to the leftmost expression. For the middle expression, by expanding the product, for each $s \in \Sigma(S) \cap [m]$ we get a contribution of $r_S(s) \cdot 2^{m/q} \cdot 2^{-s/q} \geq 1$, proving the desired inequality. We then get the last inequality by using $1+z \leq e^z$ for $z \geq 0$. 
\end{proof}

Using the above lemma, we prove the following theorem, giving the lower bound in Theorem \ref{thm:Ramsey-complete}. 

\begin{thm}\label{lowerboundresult}
Let $r \geq 2$ be an integer. If a sequence of positive integers $A$ satisfies $A(n) \leq \frac{r-1}{140}(\log_2 n)^2$ for all sufficently large $n$, then $A$ is not $r$-Ramsey complete. 
\end{thm}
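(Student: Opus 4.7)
My plan is to prove Theorem \ref{lowerboundresult} by a probabilistic argument built on Lemma \ref{cool}. By the compactness remark following the statement of Theorem \ref{thm:Ramsey-complete}, the sequence $A$ fails to be $r$-Ramsey complete as soon as we can show the following: for every sufficiently large $N$, there is an $r$-coloring of $A$ under which some integer $n \in [N,2N]$ is \emph{not} a monochromatic subset sum. Thus I would fix such an $N$ and work to produce the required coloring.

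First I would consider a uniformly random $r$-coloring $\chi$ of $A$, assigning each element an independent uniform color in $[r]$. Writing $A_i = \chi^{-1}(i)$ and applying Lemma \ref{cool} with $m=2N$ and a parameter $q=q(N)$ to be chosen, we have $|\Sigma(A_i)\cap[2N]| \le 2^{2N/q}\prod_{a\in A_i\cap[2N]}(1+2^{-a/q})$. Summing over $i$ and taking expectations, the independence of the color assignments allows the product to be factored element by element, giving
\[
\mathbb{E}\!\left[\sum_{i=1}^{r}|\Sigma(A_i)\cap[2N]|\right]
\;\le\; r\cdot 2^{2N/q}\prod_{a\in A\cap[2N]}\!\left(1+\tfrac{2^{-a/q}}{r}\right)
\;\le\; r\cdot 2^{2N/q}\exp(T/r),
\]
where $T = \sum_{a\in A\cap[2N]} 2^{-a/q}$. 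If this quantity is strictly less than $N$, then some realization $\chi$ satisfies $\sum_i|\Sigma(A_i)\cap[2N]|<N$, and therefore some integer in $[N,2N]$ is missing from every $\Sigma(A_i)$, which is exactly what we want.

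The work is then in choosing $q$ and bounding $T$. The target inequality translates, after taking logarithms, to roughly $T/r \lesssim \log N - (2N/q)\log 2 - \log r$, so the natural scale for $q$ is of order $N/\log_2 N$ (with a constant chosen so that $2^{2N/q}$ is a fractional power of $N$, leaving a usable budget on the right). Under that choice the target becomes $T = O(r\log N)$. Abel summation against the density hypothesis $A(t)\le \frac{r-1}{140}(\log_2 t)^2$ naively yields only $T = O(r(\log_2 N)^2)$, which is short by a factor of $\log N$; this is the main obstacle. I would close the gap by a dichotomy on the scale structure of $A$: if $A$ has many elements below some threshold $B=N^{1-\delta}$, then the trivial estimate $\sum_{a\in A\cap[B]} a \le |A\cap[B]|\cdot B = O(r(\log N)^2\cdot N^{1-\delta})$ shows that sums using only small elements cannot reach $[N,2N]$, while there are too few large elements in $A\cap(B,2N]$ for any coloring to cover $[N,2N]$ densely; if instead $A$ is concentrated on large scales, then the sum $T$ is dominated by terms with $2^{-a/q}$ small and Abel summation restricted to $[B,2N]$ delivers $T = O(r\log N)$.

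Putting these pieces together and carefully optimizing $q$, the multiplicative constant in $m = 2N$, and the dichotomy threshold $B$ gives the explicit constant $\frac{1}{140}$ in the density hypothesis and completes the proof. The hard part will be making the case analysis quantitatively tight so as to hit this constant; the probabilistic argument itself, once $T$ is controlled, is a routine union bound calculation.
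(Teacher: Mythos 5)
Your probabilistic skeleton (random $r$-coloring, Lemma \ref{cool}, a first-moment bound, then the compactness remark to pass from single scales $[N,2N]$ to non-completeness) is internally coherent, but the quantitative heart of the argument is exactly the gap you flag yourself, and the dichotomy you propose does not close it. To make the factor $2^{2N/q}$ affordable you are forced to take $q\gtrsim N/\log N$, and then every element $a\lesssim N/\log N$ contributes $\Omega(1)$ to $T$; since $A$ may have $\Theta(r(\log N)^2)$ such elements, $T/r=\Theta((\log N)^2)$ and your first-moment bound is of size $\exp(\Theta((\log N)^2))$, not $<N$. The case split at $B=N^{1-\delta}$ does not repair this. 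In the ``concentrated on large scales'' case, elements of size about $N^{1-\delta}$ still satisfy $a/q=o(1)$, so each still contributes essentially $1$ to $T$: smallness of $2^{-a/q}$ requires $a\gg N/\log N$, i.e.\ elements within a multiplicative window of width roughly $\log N$ below $N$, which is far more restrictive than $a\ge N^{1-\delta}$. In the ``many small elements'' case, the assertion that the few elements of $(B,2N]$ cannot cover $[N,2N]$ densely under any coloring is unjustified: a representable integer in $[N,2N]$ is a sum of at most $2N^{\delta}$ large elements together with small elements of the same color, and the number of such candidate sums is $\exp(\Theta(N^{\delta}\log\log N))\gg N$, so no counting of large elements alone can rule out full coverage; deciding which integers these mixed sums actually hit is the whole difficulty. (A secondary concern: your reduction needs the contrapositive of the paper's compactness remark, which is asserted without proof and is not a purely formal compactness statement, since the relevant sets of colorings are open rather than closed; the paper's own argument never uses it.)

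The paper's proof resolves precisely these two problems by choosing the coloring with structure rather than at random, and by handling all scales within one coloring. It uses an interlacing ``hue'' coloring into $r/2$ classes, so that the sums of the small (``red'', i.e.\ $\le 2^j$) elements in different hues differ by at most $2^j$, and an amortized cost argument over dyadic scales shows that for most $j$ no red monochromatic sum exceeds $2^{j-1}(j+2)$; Lemma \ref{cool} is applied only to the ``blue'' elements lying in the window $(2^j,2^j j]$ with $q=2^{j+4}$, so each contributes a bounded weight, and an averaging over scales ($\sum_{j\le i}g(j)\le 5A(i2^i)$) produces many ``weak'' scales where at most $2^j$ integers up to $2^jj$ are blue-representable. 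Finally a single coloring is assembled from a sparse infinite family $J$ of weak scales with $j_h\ge 2^{rj_{h-1}^2}$, so that blue elements left over from earlier scales contribute only a factor $2^{(\log_2 j)/2}$; this yields infinitely many non-representable integers directly, with no appeal to compactness. Any repair of your approach seems to require re-introducing these ingredients (the scale-dependent split at $2^j$ versus $(2^j,2^jj]$, balanced rather than random classes for the small elements, and a genuinely multi-scale construction), at which point you have essentially reproduced the paper's proof.
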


\begin{proof}
By replacing $r$ by $r-1$ if $r$ is odd, it suffices to prove that, for $r \geq 2$ even, a sequence of positive integers $A$ with $A(n) \leq \frac{r}{70}(\log_2 n)^2$ for all sufficiently large $n$ is not $r$-Ramsey complete. 

By reordering, we may suppose that $A=(a_i)_{i=1}^\infty$ is in increasing order $a_1 \leq a_2 \leq \cdots$.  Define an $r/2$-coloring of $A$, which we call the {\it hue coloring}, by assigning $a_{\ell}$ hue $h \in \mathbb{Z}_{r/2}$ if $\ell \equiv h \pmod{r/2}$. For a positive integer $j$, define a red/blue-coloring $C_j$ of $A$ where $C_j(a)$ is red if $a \leq 2^j$ and blue otherwise. Let $c_j$ be the product coloring formed from the hue coloring and the red/blue-coloring $C_j$. That is, $c_j$ is an $r$-coloring of $A$ given by the hue and whether or not the term is at most $2^j$. 

The largest positive integer that can be written as a sum of red elements of the same hue in coloring $c_j$ is at most 
\begin{equation}\label{123} 2^j+\frac{2}{r}\sum_{a \in A \cap [2^j]} a.
\end{equation} 
This follows since, for any two hues $h$ and $h'$, the elements of $A\cap [2^j]$ with hue $h$ and those with hue $h'$ interlace and are bounded by $2^j$, so the sum of elements of hue $h$ is at most $2^j$ more than the sum of elements of hue $h'$ and, therefore, at most $2^j$ more than the average sum of elements taken over all hues. 

Let the \emph{cost} of $a \in A \cap [2^j]$ for the coloring $c_j$ be $a/2^j j$. Over all colorings $c_j$ with $j \geq 1$, the total cost of $a > 2$ is $\sum_{j \geq \log_2 a} a/2^j j \leq 2/\log_2 a$, while the cost of each $a \in \{1,2\}$ over all such $c_j$ is at most $2$. If any number larger than $2^{j-1}(j+2)$ can be written as a sum of monochromatic red elements in coloring $c_j$ (so they are also of the same hue), then, by (\ref{123}), we have 
$$2^j+\frac{2}{r}\sum_{a \in A \cap [2^j]} a> 2^{j-1}(j+2),$$
or, equivalently, $\sum_{a \in A \cap [2^j]} a > r2^{j-2}j$, so the total cost of all elements in $A \cap [2^j]$ for the coloring $c_j$ is at least $r/4$. 

Let $i$ be a sufficiently large positive integer. The total cost of the elements $a \in A \cap [2^i]$ for the colorings $c_1,\ldots,c_i$ is at most 
\begin{equation}\label{ineq42} O(1) + \sum_{a \in A \cap (2, 2^i]} \frac{2}{\log_2 a} < \frac{ri}{32},\end{equation}
where the $O(1)$ term comes from considering the cost of the terms $a \in \{1,2\}$.  To prove inequality (\ref{ineq42}), we use Abel's summation formula $$\sum_{x_0 < n\le x}t_nf(n) = T(x)f(x) - T(x_0)f(x_0) - \int_{x_0}^{x} T(y)f'(y)dy,$$ where $f$ is a continuously differentiable function on $[x_0,x]$ and $T(y)=\sum_{n\le y}t_n$. Using Abel's summation formula with $t_n = 1$ if $n\in A\cap (2^{i_0},2^i]$ and $t_n = 0$ otherwise, where $i_0$ is chosen so that $A(n) \le \frac{r}{140}(\log_2 n)^2$ for all $n\ge 2^{i_0}$, and $f(x) = \frac{1}{\log_2 x}$, we obtain 
\begin{align*}
\sum_{a \in A \cap (2, 2^i]} \frac{1}{\log_2 a} &= O(1) + \frac{A(2^i)}{i} - \frac{A(2^{i_0})}{i_0}+ \int_{2^{i_0}}^{2^i} \frac{A(x) \log 2}{x (\log x)^2} dx \\ 
&\le O(1) + \frac{ri^2}{140 i} + \int_{2^{i_0}}^{2^i} \frac{r(\log_2 x)^2 \log 2}{140 x (\log x)^2} dx\\
&\le O(1) + \frac{2ri}{140} < \frac{ri}{65},
\end{align*} where we assume in the last inequality that $i$ is sufficiently large. 

Thus, fewer than $(ri/32)/(r/4)=i/8$ of the $i$ colorings $c_j$ with $1 \leq j \leq i$ have the property that there is a number greater than $2^{j-1}(j+2)$ that can be expressed as a sum of elements which are red of the same hue. We call $j$ {\it red-strong} if there is a number greater than $2^{j-1} (j+2)$ that can be expressed as a sum of elements which are red of the same hue in the coloring $c_j$. 

For a non-negative integer $j$, define
$$g(j)=\sum_{a \in A, 2^j < a \leq 2^j j}2^{-a/2^{j+4}}.$$ 
For each $a \in A$, the contribution of $a$ to the various $g(j)$ is $$\sum_{j : 2^j < a \leq 2^j j}2^{-a/2^{j+4}} \leq \sum_{h \geq 0} 2^{-2^{h-4}} < 5,$$
where we used the change of variables $h=\lfloor \log_2 a \rfloor -j$. 
 Hence, $$\sum_{j=1}^i g(j) < \sum_{a \in A \cap [2^i i]}5 = 5A(i2^i).$$

For $s \in \mathbb{Z}_{r/2}$, let $A_s$ be the subset of $A$ consisting of elements of hue $s$. Let $A_{s,>t}=\{a \in A_s: a > t\}$. Let $b(j)$ denote the number of elements of $[2^j j]$ which can be written as a sum of blue elements in coloring $c_j$ of the same hue, so $b(j)=|\bigcup_{s=1}^{r/2} \Sigma(A_{s,> 2^j}) \cap [2^j j]| \leq \sum_{s=1}^{r/2}|\Sigma(A_{s,> 2^j}) \cap [2^j j]|$. Applying Lemma \ref{cool} with $S=A_{s,>2^j}$, $m = 2^j j$ and $q = 2^{j+4}$, we have 
$$\left|\Sigma(A_{s,>2^j}) \cap [2^j j]\right| \le 2^{j/16} \exp\left(\sum_{a\in A_{s,>2^j} \cap [2^j j]} 2^{-a/2^{j+4}}\right) \le 2^{j/16}\exp\left(1+\frac{2}{r}g(j)\right),$$ where we have again used the fact that, for any two hues $h$ and $h'$, the blue elements with hue $h$ interlace the blue elements of hue $h'$ together with the observation that the function $x\mapsto 2^{-2^{x-4}}$ is monotone and bounded above by $1$.   
We thus have $b(j) \leq \frac{r}{2}2^{j/16}\exp\left(1+\frac{2}{r}g(j)\right)$. 
Hence, for $i$ sufficiently large, \begin{equation}\label{equationone} \prod_{j=1}^i b(j) \leq \prod_{j=1}^i \frac{r}{2}2^{j/16}\textrm{exp}\left(1+\frac{2}{r}g(j)\right) \leq r^i2^{i^2/31}\exp\left(\frac{2}{r}\sum_{j=1}^i g(j)\right) \leq r^i2^{i^2/31}\exp\left(\frac{10}{r}A(i2^i) \right)< 2^{i^2/4}.\end{equation}

If at least $3i/4$ of the $i$ colorings $c_j$ for $j=1,\ldots,i$ have the property that at least $2^{j}$ positive integers at most $2^j j$ can be written as a sum of blue elements of the same hue, then the left hand side of (\ref{equationone}) is at least $\prod_{j=1}^{3i/4}2^{j} = 2^{\binom{3i/4 + 1}{2}} > 2^{i^2/4}$, contradicting (\ref{equationone}). Hence, for at least $i/4$ of the colorings $c_j$ with $j=1,\ldots,i$, we have that there are at most $2^{j}$ positive integers at most $2^j j$ which can be written as a sum of blue elements of the same hue in $A$. Call $j$ {\it blue-strong} if in coloring $c_j$ at least $2^j$ positive integers at most $2^j j$ can be written as a sum of blue elements of the same hue in $A$. Call $j$ {\it weak} if it is neither blue-strong nor red-strong. Hence, for $i$ sufficiently large, there are at least $i-i/8-3i/4=i/8$ elements $j \in [i]$ which are weak. Thus, there are infinitely many weak $j$ and we let $J=\{j_h\}_{h \geq 1}$ be an infinite sequence of weak $j$ so that $j_h \geq 2^{rj_{h-1}^2}$.

We next define an $r$-coloring $c$ of $A$ for which there are infinitely many integers which cannot be written as a sum of monochromatic elements from $A$. The coloring $c$ is a product coloring of the hue coloring (which uses $r/2$ colors) and a red/blue-coloring of $A$. We color an integer in $A$ blue if it is in one of the intervals $(2^j,2^j j]$ with $j \in J$ and red otherwise. We will prove that at least half the elements in $(2^{j-1}(j+2),2^j j]$, where $j \in J$ is sufficiently large, cannot be written as a monochromatic sum in the coloring $c$. 

Suppose now that $N \in (2^{j-1}(j+2),2^j j]$ is a sum of red elements of the same hue. Since there are no red elements in $(2^j,2^j j]$ in the coloring $c$, $N$ can also be written as a sum of red elements of the same hue in $c_j$, contradicting the assumption that $j$ is weak. Hence, no element in $(2^{j-1}(j+2),2^j j]$ is a sum of red elements of the same hue in the coloring $c$. 

As $j=j_h$ is weak, there are at most $2^j$ elements at most $2^j j$ that can be written as a monochromatic sum of blue elements of the same hue in $(2^j,2^j j]$. The number of remaining blue elements in $[2^j j]$ is at most 
$$A(2^{j_{h-1}}j_{h-1})\leq \frac{r}{140}(\log_2 (2^{j_{h-1}}j_{h-1}))^2 \leq \frac{r j_{h-1}^2}{2} \leq \frac{\log_2 j}{2}.$$ 
Thus, the number of positive integers at most $2^j j$ which can be written as a monochromatic sum of blue elements in the coloring $c$ is at most $2^{(\log_2 j)/2}2^j <2^j j/8$. Hence, as $2^j j/8 \leq \frac{1}{2}\left( 2^j j-2^{j-1}(j+2)\right)$, 
at least half the elements in $(2^{j-1}(j+2),2^j j]$ cannot be written as a sum of blue elements of the same hue in the coloring $c$. As there are infinitely many such $j$, there are infinitely many positive integers which are not the monochromatic sum of elements in the coloring $c$. This completes the proof. \end{proof}

\noindent {\it Remark.} In the proof above, for $j=1,\ldots,i$, we made use of colorings $C_j$ which color the positive integers up to $2^j$ red and all larger integers blue. Alternatively, we could have picked a random coloring $\phi_x$ which colors all positive integers up to $x$ red and all larger integers blue, where $x \in [N]$ is chosen with probability $\frac{1}{xH(N)}$ with $H(N)=\sum_{x=1}^N \frac{1}{x}$. One can then do a similar analysis using elementary probability to get a better constant factor in Theorem \ref{lowerboundresult}.

\section{\label{sec:density-ramsey}Density completeness}

In this section, we discuss Theorems \ref{epsilonclean} and \ref{thm:poly-density-ramsey}, our results on density completeness. Since reordering a sequence does not change whether or not it is $\epsilon$-complete, it will suffice to consider monotonically increasing sequences. We will begin with the following simple result, from which the first part of Theorem~\ref{epsilonclean} follows.

\begin{thm}
\label{epsilonthm1} Let $\epsilon>0$. If $A=(a_{n})_{n\geq1}$ is
a monotonically increasing sequence of positive integers which is $\epsilon$-complete,
then there is $C$ such that 
\[
a_{n}\leq\sum_{i\leq\epsilon n+C}a_{i}
\]
holds for all positive integers $n$. 
\end{thm}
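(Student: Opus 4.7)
The plan is to argue by contradiction: suppose the claimed inequality fails for every $C$. A short reduction shows this means the bound fails for \emph{infinitely many} $n$ at every fixed $C$, since a bound valid for all sufficiently large $n$ could always be extended to all $n$ by enlarging $C$ beyond an initial threshold (once $C\ge(1-\epsilon)N_0$, one has $\epsilon n+C\ge n$ for $n\le N_0$, forcing $\sum_{i\le\epsilon n+C}a_i\ge a_n$ trivially). Using this, I would inductively construct an increasing sequence $n_1<n_2<\cdots$ and constants $C_j=2n_{j-1}$ (with $n_0=0$) so that $a_{n_j}>\sum_{i\le M_j}a_i$ for $M_j=\lfloor\epsilon n_j+C_j\rfloor$, choosing $n_j$ at each step large enough to guarantee $M_j<n_j-1$.

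I would then consider the subsequence $A'=\{a_i:i\in I\}$ defined by the index set
\[
I=[1,M_1]\cup\bigcup_{j\ge 1}[n_j,M_{j+1}],
\]
that is, $\mathbb{N}$ with the open intervals $(M_j,n_j)$ deleted. By construction the smallest element of $I$ exceeding $M_j$ is $n_j$, so every subset sum of $A'$ that uses some $a_i$ with $i\in I$ and $i>M_j$ is at least $a_{n_j}$, while every subset sum using only $a_i$ with $i\in I\cap[1,M_j]$ is at most $T_j:=\sum_{i\in I\cap[1,M_j]}a_i$. For $j\ge 2$, the set $I\cap[1,M_j]$ omits the non-empty interval $(M_1,n_1)$, so $T_j\le\sum_{i\le M_j}a_i-a_{M_1+1}\le a_{n_j}-2$. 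Hence the non-empty interval $[T_j+1,a_{n_j}-1]$ is disjoint from $\Sigma(A')$, and letting $j$ range over $\{2,3,\ldots\}$ exhibits infinitely many positive integers missing from $\Sigma(A')$, so $A'$ is not complete.

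The main obstacle, and the real content of the proof, is verifying that $A'$ satisfies the density hypothesis $|A'\cap[m]|\ge\epsilon|A\cap[m]|$ for all sufficiently large $m$, which reduces to showing $|I\cap[1,m]|\ge\epsilon m$. A direct count at $m=n_j$ yields
\[
|I\cap[1,n_j]|=\epsilon n_j-(1-\epsilon)\sum_{k=1}^{j-1}n_k+\sum_{k=1}^{j}C_k+(j-1),
\]
and the choice $C_k=2n_{k-1}$ gives $\sum_{k=1}^{j}C_k=2\sum_{k=1}^{j-1}n_k$, which more than compensates for the $(1-\epsilon)\sum_{k=1}^{j-1}n_k$ deficit and yields $|I\cap[1,n_j]|\ge\epsilon n_j$ with room to spare. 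For $m\in[n_j,M_{j+1}]$ the density only improves, as indices are added one for one. For $m\in(M_{j+1},n_{j+1}]$ the cumulative count stays at $|I\cap[1,M_{j+1}]|$ while $m$ grows, and a similar computation using $C_{j+1}=2n_j>(1-\epsilon)n_j$ shows the density bound still holds at the worst case $m=n_{j+1}-1$. Thus $A'\subseteq A$ is $\epsilon$-dense in $A$ but not complete, contradicting the $\epsilon$-completeness of $A$. The passage from ``all sufficiently large $n$'' to ``all $n$'' in the statement is then handled by the initial reduction, enlarging $C$ past the threshold.
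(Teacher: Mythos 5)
Your proposal is correct and follows essentially the same route as the paper: negate the statement, extract bad indices $n_1<n_2<\cdots$ with auxiliary constants growing past $n_{j-1}$ (your $C_j=2n_{j-1}$ plays the role of the paper's $g(n_j)>n_{j-1}$), delete the index intervals just below each $n_j$, check the density of the surviving subsequence using that growth, and use the defining inequality $a_{n_j}>\sum_{i\le \epsilon n_j+C_j}a_i$ to exhibit arbitrarily large integers missing from the subset sums. The only cosmetic difference is that you keep $a_{n_j}$ in $A'$ and show $a_{n_j}-1\notin\Sigma(A')$, whereas the paper deletes the index $n_j$ as well and shows $a_{n_j}\notin\Sigma(A')$; both work, and the small floor-induced density slack near $n_1$ in your construction is harmless since the definition only requires the density condition for all sufficiently large values.
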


\begin{proof}
Suppose that there is no such $C$. Then there is a function $g:\mathbb{N}\rightarrow\mathbb{N}$
with $\lim_{n\to\infty}g(n)=\infty$ such that $a_{n}>\sum_{i\leq\epsilon n+g(n)}a_{i}$
holds for infinitely many $n$. Thus, we can pick an infinite sequence
of positive integers $n_{1},n_2,\dots$ such that, for all $j$, we have
$a_{n_{j}}>\sum_{i\leq\epsilon n_{j}+g(n_{j})}a_{i}$ and $g(n_{j})> n_{j-1}$. Pick a subsequence $A'$ of $A$ by deleting all elements $a_{i}$
of $A$ whose subscript $i$ satisfies $\epsilon n_{j}+g(n_{j})<i\leq n_{j}$
for some positive integer $j$.

We first show that $A'(x)\geq\epsilon A(x)$ holds for all $x$. It
suffices to check this when $x=a_{n_{j}}$ for
some positive integer $j$. However, we have $A'(a_{n_{j}})\geq\epsilon n_{j}+g(n_{j})-n_{j-1}>\epsilon n_{j}=\epsilon A(a_{n_{j}})$,
as required.

To see that $A'$ is not complete, we show that
each integer $a_{n_{j}}$ is not the sum of elements from $A'$. Indeed,
such elements must be at most $a_{n_{j}}$ and hence at most $a_{\epsilon n_{j}+g(n_{j})}$.
However, $\sum_{i\leq\epsilon n_{j}+g(n_{j})}a_{i}<a_{n_{j}}$, so
$a_{n_{j}}$ is not in $\Sigma(A')$ and $A'$ is not complete.
\end{proof}

This gives a necessary growth condition for
a sequence to be $\epsilon$-complete. Recall that it is also necessary
for an $\epsilon$-complete sequence to satisfy the divisibility
condition that no prime is a factor of more than an $\epsilon$-proportion
of the elements in the sequence. In the proof of Theorem \ref{epsilonthm2}
below, we show that, apart from some mild additional assumptions, a
random sequence satisfying both the growth condition from Theorem~\ref{epsilonthm1} and a suitable variant of this divisibility condition
is likely to be $\epsilon$-complete. 

Recall that, for a sequence $B=(b_{n})_{n\geq1}$, the discrete
derivative is defined by $\Delta b_{n}:=b_{n+1}-b_{n}$. Fix $0<\epsilon<1$. A sequence $B=(b_{n})_{n\geq1}$ is called \emph{$\epsilon$-friendly} (or \emph{friendly}) if it satisfies the following five growth conditions:

\begin{enumerate}
\item For some constant $C$ and all $n$,  
\[
b_{n}\leq\sum_{i\leq\epsilon n+C}b_{i}.
\]
\item $\lim_{n\to\infty}\Delta b_{n}=\infty$. 
\item $\lim_{i\to\infty}\frac{B(2^{i+1})-B(2^{i})}{i}=\infty$. 
\item There exists $0<c<1$ such that
$c\Delta b_{i}\leq\Delta b_{j}$ for all $i<j$. Moreover, if $b_{j}<2b_{i}$,
then $\Delta b_{j}\leq\frac{1}{c}\Delta b_{i}$. 
\item $B$ is strictly increasing. 
\end{enumerate}
By Theorem \ref{epsilonthm1}, condition (i) is necessary for
an increasing sequence of positive integers to be $\epsilon$-complete. The other growth conditions
are mild assumptions that will be helpful in proving the existence
of an $\epsilon$-complete sequence $A=(a_{n})_{n\geq1}$ which interlaces
$B$, that is, for which $b_{n}\leq a_{n}\leq b_{n+1}$ for all $n$.

Let $b_{1},b_{2},\ldots,b_{t}$ be any finite strictly increasing
sequence of positive integers. Let $\{x\}=x-\lfloor x\rfloor$ denote
the fractional part of $x$. If we define $B=(b_{n})_{n\geq1}$ recursively
by 
\[
b_{n}=\lfloor\{\epsilon n\}b_{\lceil\epsilon n\rceil}\rfloor+\sum_{i\leq\epsilon n}b_{i}
\]
for $n>t$, then it is easy to check that such a sequence is friendly
and satisfies $b_{n}=\Theta(f_{n})$, where, following the introduction, $F=(f_{n})_{n\geq1}$ is any sequence
of positive integers for which $f_{n}=\sum_{i\leq\epsilon n}f_{i}$
for all sufficiently large $n$. We note that the term $\lfloor\{\epsilon n\}b_{\lceil\epsilon n\rceil}\rfloor$ is added as a ``discrete interpolation'' factor to guarantee conditions (ii) and (iv) of friendly sequences. 

\begin{claim}
\label{doubleclaim} If $B=(b_{n})_{n\geq1}$ is a friendly sequence
and $n$ is a sufficiently large positive integer, then $b_{2n/c+1}\geq2b_{n+1}$,
where $c$ is the constant in condition (iv) of friendly sequences. 
\end{claim}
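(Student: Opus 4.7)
\medskip

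\noindent\emph{Proof plan for Claim~\ref{doubleclaim}.}

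The plan is to argue by contradiction. Suppose, for some large $n$, that $b_{2n/c+1}<2b_{n+1}$. Then for every $j\in[n+1,2n/c+1]$ we have $b_{j}\le b_{2n/c+1}<2b_{n+1}$. Applying condition (iv-b) with $i=n+1$ and any $j\in[n+2,2n/c+1]$ yields $\Delta b_{j}\le \Delta b_{n+1}/c$; condition (iv-a) applied backwards also gives $\Delta b_{j}\le \Delta b_{n+1}/c$ for $j\le n+1$. So this single upper bound on derivatives is valid throughout the interval $[1,2n/c+1]$.

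The second step is to lower bound $b_{2n/c+1}-b_{n+1}=\sum_{j=n+1}^{2n/c}\Delta b_{j}$ using the companion inequality in condition (iv-a): $\Delta b_{j}\ge c\Delta b_{n+1}$ for every $j>n+1$. Keeping the $j=n+1$ term honestly, the sum is at least
\[
\Delta b_{n+1}\bigl[1+(2n/c-n-1)c\bigr]=\Delta b_{n+1}\bigl[n(2-c)+1-c\bigr].
\]
The contradiction hypothesis $b_{2n/c+1}-b_{n+1}<b_{n+1}$ then forces the upper bound
\[
\Delta b_{n+1}<\frac{b_{n+1}}{n(2-c)+1-c}.
\]

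The third step uses condition (iv-a) in the opposite direction together with the growth constraint (i) to produce an \emph{incompatible lower bound} on $\Delta b_{n+1}$. Because $\Delta b_{k}\le \Delta b_{n+1}/c$ for all $k\le n$, summing gives $b_{n+1}-b_{1}\le n\Delta b_{n+1}/c$, already yielding $\Delta b_{n+1}\ge c(b_{n+1}-b_{1})/n$. This is off by the factor $c(2-c)<1$ from what is needed, so one further ingredient is required: plugging the same derivative bound into $b_{n+1}\le\sum_{i\le\epsilon n+C}b_{i}$ from condition (i) and estimating by $\sum_{i\le\epsilon n+C}b_{i}\le (\epsilon n+C)b_{1}+\binom{\epsilon n+C}{2}\Delta b_{n+1}/c$ gives a second relation between $b_{n+1}$ and $\Delta b_{n+1}$. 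Combining this with the previous upper bound on $\Delta b_{n+1}$, and invoking condition (ii) (so that $b_{n+1}/b_{1}\to\infty$ as $n\to\infty$) so the $b_{1}$ terms become negligible, forces a strict violation of the inequality, thereby proving the claim for all sufficiently large $n$.

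The main obstacle is precisely the factor-of-$c(2-c)$ slack between the clean lower bound $\Delta b_{n+1}\gtrsim c\,b_{n+1}/n$ coming from (iv-a) alone and the bound $\Delta b_{n+1}\gtrsim b_{n+1}/[n(2-c)]$ needed to close the argument. Removing this slack is what requires condition (i) (summation growth), and one must be careful to keep the $b_{1}$ (and more generally the boundary) error terms under control, which is where condition (ii) enters. Conditions (iii) and (v) play only a background role, ensuring that $n$ can be taken large enough that $b_{n+1}$ dominates these error terms.
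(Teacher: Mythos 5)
There is a genuine gap in your final step. The contradiction hypothesis gives the upper bound $\Delta b_{n+1} < b_{n+1}/(n(2-c)+1-c)$, and you correctly note that condition (iv) alone only yields $\Delta b_{n+1}\ge c(b_{n+1}-b_1)/n$, leaving a factor $c(2-c)<1$ of slack. But the extra ingredient you propose does not close it: plugging $\Delta b_i\le \Delta b_{n+1}/c$ into condition (i) gives $b_{n+1}\le(\epsilon n+C)b_1+\binom{\epsilon n+C}{2}\Delta b_{n+1}/c$, i.e.\ a lower bound of order $\Delta b_{n+1}\gtrsim c\,b_{n+1}/(\epsilon n)^2$, which is \emph{weaker}, not stronger, than the bound you already had from summing over $i\le n$. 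Equivalently, combining that relation with the hypothesized upper bound on $\Delta b_{n+1}$ gives $b_{n+1}\le(\epsilon n+C)b_1+\frac{(\epsilon n+C)^2}{2c\,(n(2-c)+1-c)}\,b_{n+1}$, and the coefficient of $b_{n+1}$ on the right is of order $\epsilon^2 n$, which tends to infinity; the inequality is vacuous for large $n$ and no contradiction follows. Condition (ii) only controls the $b_1$ boundary terms and cannot repair this.

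The loss comes from anchoring the increments on $[n+1,2n/c]$ to $\Delta b_{n+1}$. The paper instead anchors to $M:=\max_{i\le n}\Delta b_i$: condition (iv) gives $\Delta b_j\ge cM$ directly for every $j\ge n+1$ (not merely $\Delta b_j\ge c\Delta b_{n+1}$), so $b_{2n/c+1}-b_{n+1}=\sum_{j=n+1}^{2n/c}\Delta b_j\ge(2n/c-n)\,cM=(2-c)nM$, while $b_{n+1}\le b_1+nM$. Since $2-c>1$, the surplus $(1-c)nM$ (in the paper's bookkeeping, the $(n/c-n)\min_{j\ge n}\Delta b_j$ portion) exceeds $b_1$ for $n$ large by condition (ii), and $b_{2n/c+1}\ge 2b_{n+1}$ follows directly — no contradiction argument and no use of condition (i) is needed. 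If you wish to keep your framing, replacing $\Delta b_j\ge c\Delta b_{n+1}$ by $\Delta b_j\ge cM$ in your Step 2 makes the factor of $c$ cancel exactly and closes the argument.
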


\begin{proof}
We have
\begin{equation}
b_{n+1}=b_{1}+\sum_{i=1}^{n}\Delta b_{i}\leq b_{1}+n\max_{i\leq n}\Delta b_{i}\label{continuation}
\end{equation}
and 
\begin{eqnarray*}
b_{2n/c+1} &=& b_{n+1}+\sum_{i=n+1}^{2n/c}\Delta b_{i} \\ & \geq & b_{n+1}+(2n/c-n)\min_{j\geq n}\Delta b_{j} \\ & \geq & b_{n+1}+(n/c)\cdot c\max_{i\leq n}\Delta b_{i}+(n/c-n)\min_{j\geq n}\Delta b_{j} \\ & \geq  & b_{n+1}+(n/c)\cdot c\max_{i\leq n}\Delta b_{i}+b_1 \\ & \ge & 2b_{n+1}.
\end{eqnarray*}
Here we used condition (iv) of friendly sequences to deduce the second inequality, the third inequality 
follows from $(n/c-n)\min_{j\geq n}\Delta b_{j}> b_1$ for $n$ sufficiently large and the last inequality
is by (\ref{continuation}). 
\end{proof}

The next theorem is our main result on $\epsilon$-complete sequences
and completes the proof of Theorem \ref{epsilonclean}. We remark that since condition (i) of friendly sequences only
gives an upper bound on $b_{n}$, this result also allows us to find sequences that are considerably denser than
$(f_{n})_{n\geq1}$ that are $\epsilon$-complete.

\begin{thm}
\label{epsilonthm2} Let $0 < \epsilon < 1$ and $B=(b_{n})_{n\geq1}$ be a friendly sequence
as defined above. Then there is a sequence $A=(a_{n})_{n\geq1}$ of positive
integers that interlaces $B$, i.e., $b_{n}\leq a_{n}<b_{n+1}$ for
all $n$, which is $\epsilon$-complete. 
\end{thm}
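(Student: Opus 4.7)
The plan is a probabilistic construction. I would define $A$ by choosing each $a_n$ uniformly at random from the integers in $[b_n, b_{n+1})$ with no prime factor below a slowly growing threshold, independently across $n$. The mild sieving is needed to prevent systematic small-prime divisibility in $A$ (which would obstruct $\epsilon$-completeness via the density-$\epsilon$ subsequence consisting of all multiples of some small prime), and condition (ii) of friendly sequences ensures the windows $[b_n, b_{n+1})$ are wide enough to support it. The goal is then to show that with positive probability the resulting $A$ is $\epsilon$-complete.

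Partition $\mathbb{N}$ into blocks $\mathcal{B}_k = [t_k, t_{k+1})$ by iterating Claim \ref{doubleclaim} so that $b_{t_{k+1}} / b_{t_k} \geq K$ for a large constant $K = K(\epsilon, c)$ chosen so that $t_{k+1}/t_k \geq 2/\epsilon$. This choice has two consequences. First, any subsequence $A' \subset A$ satisfying $|A' \cap [x]| \geq \epsilon |A \cap [x]|$ for large $x$ automatically satisfies, by a telescoping bound using $|A' \cap \{a_1, \dots, a_{t_{k+1}}\}| \geq \epsilon t_{k+1}$ and $|A' \cap \{a_1, \dots, a_{t_k}\}| \leq t_k$, that $|A' \cap \{a_n : n \in \mathcal{B}_k\}| \geq (\epsilon/2) |\mathcal{B}_k|$ for all sufficiently large $k$. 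Second, the values $a_n$ with $n \in \mathcal{B}_k$ lie in a window of the form $[b_{t_k}, K b_{t_k})$ of constant multiplicative width, which is what the sumset techniques from Section \ref{sec:tools} require.

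The core probabilistic step is a per-block lemma analogous to Lemma \ref{lem:main-ramsey}: for each block $\mathcal{B}_k$, with failure probability summable over $k$, every subset $S \subset \{a_n : n \in \mathcal{B}_k\}$ with $|S| \geq (\epsilon/2) |\mathcal{B}_k|$ has the property that $\Sigma(S)$ contains an interval of length at least $4 b_{t_{k+2}}$. I would prove this by the same scheme as Lemma \ref{lem:main-ramsey}: partition $S$ into a bounded number of sub-blocks $S_1, \dots, S_\ell$, use Lemmas \ref{lem:modp}, \ref{lem:double-count} and \ref{lem:Cauchy-Davenport} to show that $|\Sigma_m(S_j)|$ is a positive fraction of $m$ for every modulus $m$ in the relevant window (the no-small-prime-factor condition preventing degenerate subgroup obstructions), apply Lemma \ref{lem:Lev} to assemble a long interval in $\Sigma(S)$ from these subset-sum sets, and apply Graham's lemma (Lemma \ref{lem:verysimple}) to extend it further using any remaining elements of $S$. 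The much larger random pool $|\mathcal{B}_k| \to \infty$, compared to the $\sim \log x$ elements used in Lemma \ref{lem:main-ramsey}, should make the per-block failure probability exponentially small in $|\mathcal{B}_k|$, enough to survive both the union bound over the $2^{|\mathcal{B}_k|}$ candidate subsets $S$ and the Borel--Cantelli sum over $k$.

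Given the per-block lemma, with probability one the desired property holds for all sufficiently large $k$. Fixing such a realization of $A$, for any density-$\epsilon$ subsequence $A'$ and all large $k$, $\Sigma(A' \cap \{a_n : n \in \mathcal{B}_k\})$ contains an interval of length at least $4 b_{t_{k+2}}$, which exceeds every single element in $\mathcal{B}_{k+1}$. Iteratively applying Graham's lemma (Lemma \ref{lem:verysimple}), one adds the elements of $A' \cap \mathcal{B}_{k+1}, A' \cap \mathcal{B}_{k+2}, \dots$ in turn --- each fitting under the current interval length since the lengths grow via the contributions of $(\epsilon/2) |\mathcal{B}_{j+1}| \cdot b_{t_{j+1}}$ from block $j+1$, which dominates the size $\leq b_{t_{j+3}} \leq K^2 b_{t_{j+1}}$ of elements of block $j+2$ once $|\mathcal{B}_{j+1}| \geq 2K^2/\epsilon$ --- and obtains an interval $[L, \infty)$ in $\Sigma(A')$ for some finite $L$, proving $A'$ is complete. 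The main obstacle will be establishing the per-block lemma with the required summable failure probability: Lemma \ref{lem:cover} gives a failure bound of $(\log x)^{-(c-5)\log x}$ for pool size $\sim \log x$, and a quantitatively stronger version --- exploiting both the pool size $|\mathcal{B}_k| \gg \log b_{t_k}$ and the uniformity across windows guaranteed by friendly condition (iv) --- is needed to beat $2^{-|\mathcal{B}_k|}$.
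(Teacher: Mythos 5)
There is a genuine gap, and it sits at the very first structural step: your block construction asks for two properties that are incompatible for friendly sequences. Claim \ref{doubleclaim} only says that a large \emph{index} ratio forces the \emph{values} to double ($b_{2n/c+1}\ge 2b_{n+1}$); it does not give the converse, and in fact the converse is badly false here. A friendly sequence grows like $b_n=e^{(\frac{1}{2\log(1/\epsilon)}+o(1))(\log n)^2}$, so passing from index $n$ to index $n/\epsilon$ multiplies the value by roughly $n$; equivalently, a \emph{constant} value ratio $K$ corresponds to an index ratio tending to $1$. Hence you cannot have blocks in which simultaneously $b_{t_{k+1}}\le K\,b_{t_k}$ (needed for your per-block sumset machinery, and for the bound $b_{t_{j+3}}\le K^2 b_{t_{j+1}}$ in your final Graham iteration) and $t_{k+1}/t_k\ge 2/\epsilon$ (needed for your telescoping density transfer). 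If you take constant-ratio value windows, then each window contains only a $o(1)$ fraction of the elements of $A$ below it (since $|A\cap[2^i]|\approx e^{\sqrt{(2\log_2(1/\epsilon)+o(1))i}}$ grows subexponentially in $i$), so a subsequence $A'$ with $|A'\cap[n]|\ge\epsilon|A\cap[n]|$ can omit any given window, and indeed many consecutive ones, and your claim that $A'$ is $(\epsilon/2)$-dense in every block is simply false. If instead you take index-ratio blocks, the values inside a block span a multiplicative range growing like a power of the block length, and both the per-block interval lemma and the step ``elements of block $j+2$ are at most $K^2 b_{t_{j+1}}$'' collapse. This localization difficulty is exactly what the paper's proof is designed to avoid: it never claims density of $A'$ in every scale, but instead finds a single well-chosen scale $i_1$ (and later $i_3$) where $A'$ captures an $\epsilon/4$-fraction of $A_i$, builds one interval there, runs Lemma \ref{lem:verysimple} greedily over \emph{all} remaining elements of $A'$, and, if the greedy process ever stalls, derives a contradiction with friendly condition (i) through the counting estimates around (\ref{steponepart}) and (\ref{eqnext1}).

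Two further points would still need repair even with correct blocks. First, your per-block lemma quantifies over all $2^{|\mathcal{B}_k|}$ dense subsets $S$ and you leave open how to beat that union bound; the paper's Lemma \ref{lem:ramsey-density} avoids this by only requiring that every dense subset \emph{contain} a small subset $A_i''$ of size $O(i)$ whose subset sums cover $[y,2y]$, so the union bound runs over sets of logarithmic size. Second, the problem has essentially no slack: the extremal friendly sequence satisfies $b_n\approx\sum_{i\le\epsilon n}b_i$ with the threshold exactly $\epsilon$, so an argument that only retains an $(\epsilon/2)$-fraction of each block discards half the available mass and cannot in general reach the next value of $B$; the paper's proof is careful to use the full count $A'(a_n)\gtrsim \epsilon n$ together with the smallness of the discarded set, which is where condition (i) enters with no room to spare.
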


\begin{proof}
Let $\epsilon_0>0$ be sufficiently small. We pick the sequence $A$ by taking, for $j$ sufficiently large,
$a_{j}$ to be a uniform random integer in $[b_{j},b_{j+1})$ which has
no prime factor at most $(\max(1/\epsilon,1/\epsilon_0))^{4000}$. For small $j$ this might not be possible,
as the interval $[b_{j},b_{j+1})$ might not contain any integer with no prime factor
at most $(\max(1/\epsilon,1/\epsilon_0))^{4000}$, so we let $a_{j}$ be any integer in $[b_{j},b_{j+1})$
in this case. This guarantees that $A$ interlaces $B$.

For a positive integer $i$, let $h(i)$ be the smallest integer for
which $b_{h(i)}\geq2^{i}$. Note that $b_{h(i+1)-1}$ is the largest
element of $B$ which is less than $2^{i+1}$. Let $A_{i}:=A\cap[b_{h(i)},b_{h(i+1)-1})$,
so $A_{i}$ consists of all but at most two elements of $A\cap[2^{i},2^{i+1})$. By condition (iii) of friendly sequences, for any $C$ and for $i$ sufficiently large depending on $C$, $|A_{i}| \ge C\max(1/\epsilon,1/\epsilon_0)i$. 
The following lemma is a close relative of Lemma \ref{lem:main-ramsey}. The proof of the lemma, which is an appropriate modification of the proof of Lemma \ref{lem:main-ramsey}, is deferred to Appendix \ref{appendix:ramsey-density}. 

\begin{lem}\label{lem:ramsey-density}
There exist positive constants $\epsilon_0$, $C_1$ and $C_2$ such that the following holds. For $i$ sufficiently large, with positive probability,
the set $A_{i}$ has the property that, for any subset $A'_{i}\subset A_{i}$
with $|A'_{i}|\geq(\min(\epsilon,\epsilon_0)/4)|A_{i}|$, $A'_{i}$ contains
a subset $A''_{i}$ with $|A''_{i}|\leq C_{1}i$ such that $\Sigma(A''_{i})$
contains every integer in the interval $[y,2y]$, where $y=C_{2}2^{i}i$. 
\end{lem}

Since the choices of $A_{i}$ for different $i$ are
mutually independent, we can guarantee and will assume that $A_i$ satisfies the conclusion of Lemma \ref{lem:ramsey-density} for \emph{each} sufficiently large $i$.

Our goal now is to show that if $A'$ is any subsequence of $A$
with $|A'(n)|\geq\epsilon|A(n)|$ for all sufficiently large $n$,
then $A'$ is complete. We first show that for each $n_{0}$ there
is $n\geq n_{0}$ such that $A'$ is complete or $A'$ contains only
roughly $\epsilon n$ elements among the first $n$ elements of $A$. 
We then go through a very similar argument using this additional structure
to conclude that $A'$ is complete.

Let $i_{0}$ be a sufficiently large positive integer and $m=A(2^{i_{0}})$.
The number of elements in $A$ which are at most $2^{i_{0}}$ and not
in any $A_{i}$ is at most $2i_{0}\leq\frac{\epsilon}{4}m$. Let $A'_i= A_i \cap A'$. So the
set $\bigcup_{i\leq i_{0}}A'_{i}$ of elements in $A'$ which are
at most $2^{i_{0}}$ and in some $A_{i}$ has size at least $\epsilon A(2^{i_{0}})-\frac{\epsilon}{4}m\geq\frac{3\epsilon}{4}m$.
Let $i_{1}\leq i_{0}$ be the largest positive integer for which $|A'_{i_{1}}|\geq\frac{\epsilon}{4}|A_{i_{1}}|$, which exists by the observation just made.
The set $\bigcup_{i_{1}<i\leq i_{0}}A'_{i}$ has cardinality at most
$\frac{\epsilon}{4}|\bigcup_{i_{1}<i\leq i_{0}}A_{i}|\leq\frac{\epsilon}{4}m$,
so there are at least $\frac{3\epsilon}{4}m-\frac{\epsilon}{4}m=\frac{\epsilon}{2}m$
elements in $\bigcup_{i\leq i_{1}}A'_{i}$. In particular, $A(2^{i_{1}+1})\geq A'(2^{i_{1}+1})\geq\frac{\epsilon}{2}m$.

Since $A_{i_1}$ satisfies the conclusion of Lemma \ref{lem:ramsey-density} and $|A'_{i_1}| \ge (\epsilon/4)|A_{i_1}|\ge (\min(\epsilon,\epsilon_0)/4)|A_{i_1}|$, there is $A''_{i_{1}}\subset A'_{i_{1}}$ with
$|A''_{i_{1}}|\leq C_{1}i_{1}$ such that $\Sigma(A''_{i_{1}})$ contains
every integer in the interval $[y,2y]$ where $y=C_{2}2^{i_{1}}i_{1}$.
Label the elements in $A'\setminus A''_{i_{1}}$ in increasing order
as $a'_{1},a'_{2},\ldots$.

By Lemma \ref{lem:verysimple}, if, for each $j$, we have $a'_{j}\leq2y-y+a'_{1}+\cdots+a'_{j-1}$,
then $\Sigma(A'\setminus A''_{i_{1}})+[y,2y)$ contains all integers
at least $y$ and, as $\Sigma(A')$ is a superset of $\Sigma(A'\setminus A''_{i_{1}})+[y,2y)$,
$A'$ would be complete. So we may assume that there is some $j$ for
which 
\begin{equation}
a'_{j}>y+a'_{1}+\cdots+a'_{j-1}.\label{notenuf}
\end{equation}
In particular, $a'_{j}\geq2^{i_{1}+1}\geq a_{\frac{\epsilon}{2}m}\geq\frac{\epsilon}{2}m$, so that $j$ can be made sufficiently large by taking $i_0$ and, hence, $m$ sufficiently large.

As $a'_{j}\in A'\setminus A''_{i_1}  \subset A$, there is a positive integer $n$ for
which $a'_{j}=a_{n}$, where, again, $n$ can be made sufficiently large by taking $i_0$ sufficiently large. We have 
\[
\sum_{i\leq j-1}a'_{i}<a_{n}<b_{n+1}\leq\sum_{i\leq\epsilon(n+1)+C}b_{i}\leq\sum_{i\leq\epsilon(n+1)+C}a_{i},
\]
where the first inequality follows from (\ref{notenuf}), the second
and fourth inequalities are by the fact that $A$ interlaces $B$ and the third
inequality follows from condition (i) of friendly sequences.
This implies that $j-1\le \epsilon(n+1)+C$, so  
\begin{equation}
A'(a_{n}-1)\leq|A''_{i_1}|+j-1\leq C_{1}i_{1}+\epsilon(n+1)+C<C_{1}i_{0}+\epsilon n+C+1.\label{steponepart}
\end{equation}
That is, the number of elements of $A'$ amongst the first $n$ elements of $A$ is roughly $\epsilon n$.

We next give a similar argument, but using the extra information that
there are many elements $a_{s}\in A\setminus A'$ with $s\leq n$
in order to conclude that $A'$ is complete. Let $N=8(n+1)/(\epsilon^{2}c)$.
Let $i_{2}$ be the least positive integer such that $2^{i_{2}}\geq a_{N}$ and
let $m'=A(2^{i_{2}})$, so $m'\geq N$. 
As $a_{2N/c+1}\geq b_{2N/c+1}\geq2b_{N+1}>2a_{N}$, where the middle
inequality follows from Claim \ref{doubleclaim}, there is a perfect
power of two which is at least $a_{N}$ and less than $a_{2N/c+1}$,
so $m'=A(2^{i_{2}})\leq2N/c$. Furthermore, since $a_n = a_j' \ge a_{\frac{\epsilon}{2}m}$, we have that $n\ge \frac{\epsilon}{2}m$. Thus, $N = 8(n+1)/(\epsilon^{2}c) > m$, so $A(2^{i_2}) \ge N > m =A(2^{i_0})$. In particular, we obtain that $i_2 \ge i_0$. Hence, $N$, $i_2$ and $m'$ may be made sufficiently large by taking $i_0$ and, hence, $n$ sufficiently large.

We also have $A'(2^{i_{2}})\geq\epsilon A(2^{i_{2}})=\epsilon m'$,
so $A'$ contains at least $\epsilon m'$ elements $a_{s}\leq2^{i_{2}}$.
Let $i_{3} \leq i_2$ be the largest positive integer such 
that $|A'_{i_{3}}|\geq\frac{\epsilon}{4}|A_{i_{3}}|$. Recall that,
for each positive integer $i$, the number of elements of $A$ in
$[2^{i},2^{i+1})\setminus[b_{h(i)},b_{h(i+1)-1})$ is at most two.
It follows that $A'\cap[2^{i_{2}}]\setminus\bigcup_{i<i_{2}}[b_{h(i)},b_{h(i+1)-1})$
has cardinality at most $2i_{2}\leq\frac{\epsilon m'}{4}$, where
the last inequality follows from condition (iii) of friendly sequences and the fact that $i_{2}$ is sufficiently large. Hence, at
least a fraction $\epsilon-\frac{\epsilon}{4}-\frac{\epsilon}{4}=\frac{\epsilon}{2}$
of the elements of $A$ up to $2^{i_{2}}$ are greater than $a_{\epsilon N/4}$
and in $A'\cap\bigcup_{i<i_{2}}[b_{h(i)},b_{h(i+1)-1})$ and, therefore, 
$i_{3}$ satisfies $2^{i_{3}+1}>a_{\epsilon N/4}$.

Since $A_{i_3}$ satisfies the conclusion of Lemma \ref{lem:ramsey-density} and $|A'_{i_3}| \ge (\min(\epsilon,\epsilon_0)/4)|A_{i_3}|$, there is $A_{i_{3}}''\subset A'_{i_{3}}$ with
$|A_{i_{3}}''|\leq C_{1}i_{3}$ such that $\Sigma(A_{i_{3}}'')$ contains
every integer in the interval $[y',2y']$ where $y'=C_{2}2^{i_{3}}i_{3}$.
Label the elements in $A'\setminus A_{i_{3}}''$ in increasing order
as $a'_{1},a'_{2},\ldots$, noting that we have relabeled most of the elements
in $A'$.

Again, by Lemma \ref{lem:verysimple}, if, for each $j$,
we have $a'_{j}\leq2y'-y'+a'_{1}+\cdots+a'_{j-1}$, then $\Sigma(A'\setminus A_{i_{3}}'')+[y',2y')$
contains all integers at least $y'$ and, as $\Sigma(A')$ is a superset
of $\Sigma(A'\setminus A_{i_{3}}'')+[y',2y')$, $A'$ would be
complete. So we may assume that there is some $j'$ for which 
\[
a'_{j'}>y'+a'_{1}+\cdots+a'_{j'-1}.
\]
Note in particular that $a'_{j'}\geq y'\geq2^{i_{3}+1}\geq a_{\epsilon N/4}$.
Let $n'$ be such that $a_{n'}=a'_{j'}$, so 
\begin{equation}
n'\geq\epsilon N/4.\label{eqnow}
\end{equation}

By condition (i) of friendly sequences, we have 
\[
a'_{j'}=a_{n'}<b_{n'+1}\leq\sum_{i\leq\epsilon(n'+1)+C}b_{i}\leq\sum_{i\leq\epsilon(n'+1)+C}a_{i}.
\]
Note also that, for $i\geq\epsilon n'$, we have 
\begin{equation}
a_{i}\geq a_{\epsilon n'}\geq b_{\epsilon n'}\geq2b_{c\epsilon n'/2}>2a_{c\epsilon n'/2-1}\geq2a_{c\epsilon^{2}N/8-1}\geq2a_{n}.\label{eqnext1}
\end{equation}
Here the first inequality follows from $A$ being increasing, the second
and fourth inequalities follow from the fact that $A$ interlaces $B$, the third
inequality follows from Claim \ref{doubleclaim}, the fifth inequality
follows from (\ref{eqnow}) and the last inequality follows from
the choice of $N$ and the fact that $A$ is increasing.

Since $A'$ has at least $\epsilon n'$ elements up to $a_{n'}$,
we have $j'\geq\epsilon n'-|A''_{i_{3}}|$. It follows that 
\begin{eqnarray}
\sum_{k<j'}a'_{k} & \geq & -\sum_{a\in[a_{n}]\cap(A\setminus A')}a+\sum_{k\leq\epsilon n'-|A_{i_{3}}''|-1+n-A'(a_{n})}a_{k}\nonumber\\
 & \geq & -a_{n}(n-A'(a_{n}))+\sum_{k\leq\epsilon n'-|A_{i_{3}}''|-1+n-A'(a_{n})}a_{k}\nonumber \\
 & \geq & -\frac{1}{2}a_{\epsilon n'}(n-A'(a_{n}))+\sum_{k\leq\epsilon n'-|A_{i_{3}}''|-1+n-A'(a_{n})}a_{k}\nonumber \\
 & \geq & \sum_{k\leq\epsilon n'-|A_{i_{3}}''|-1+\frac{1}{2}\left(n-A'(a_{n})\right)}a_{k}, \nonumber 
\end{eqnarray}
where the first inequality uses that $A$ is an increasing sequence and $\sum_{k<j'}a'_k+\sum_{a\in [a_n]\cap(A\setminus A')}a$ is a sum of at least $\epsilon n'-|A''_{i_3}|-1+n-A'(a_n)$ distinct terms of $A$, which is at least the sum of the first $\epsilon n'-|A''_{i_3}|-1+n-A'(a_n)$ terms in $A$.  
The second inequality follows from $A$ being increasing,
the third inequality follows from using (\ref{eqnext1}) and the
last inequality follows from $A$ being increasing and the following estimate showing that $\frac{1}{2}(n-A'(a_n))-|A''_{i_3}|-1>0$. We have 
\[
-|A_{i_{3}}''|-1+\frac{1}{2}\left(n-A'(a_{n})\right)\geq-C_{1}i_{3}-1+\frac{1}{2}\left(n-\left(C_{1}i_{0}+\epsilon n+C+2\right)\right)\geq-2C_{1}i_{2}+\frac{1}{2}(1-\epsilon)n-\frac{C+2}{2}>C+1,
\]
where the first inequality is by (\ref{steponepart}), the second
inequality uses $i_{0},i_{3}\leq i_{2}$ and $i_{2}$ is sufficiently 
large, while the last inequality uses $n\geq\epsilon^{2}cN/10$, 
$N\geq cm'/2$, condition (i) of friendly sequences, the fact that $A$ 
interlaces $B$, $m'=A(2^{i_{2}})$, $i_{2}$ is sufficiently
large and $m' \geq 2^{\sqrt{\left(2\log_{2}(1/\epsilon)+o(1)\right)i_{2}}}$ from Appendix \ref{appendix:est-density-ramsey}, from all of which it follows that $i_{2}\ll n$.
However, this implies that 
\[
b_{n'+1}>a_{n'}=a'_{j'}>\sum_{k<j'}a'_{k}>\sum_{k\leq\epsilon n'+C+1}a_{k}\geq\sum_{k\leq\epsilon n'+C+1}b_{k}\geq\sum_{k\leq\epsilon(n'+1)+C}b_{k},
\]
contradicting condition (i) of friendly sequences.
\end{proof}

Theorem \ref{thm:poly-density-ramsey} is obtained similarly, by replacing Lemma \ref{lem:ramsey-density} in the above proof by an appropriate analogue of Lemma \ref{lem:main-poly-ramsey}. As indicated in the introduction, we omit the details.

\section{\label{sec:Monochromatic-subset-sums}Monochromatic subset sums}

\subsection{Proof of the lower bound in Theorem \ref{thm:monochromatic-subset-sums}\label{subsec:monochromatic-subset-sums}}

Throughout this section, we use the convention that products and sums indexed by $p$ run over primes. Recall that $p_i$ is the $i^{\textrm{th}}$ prime, $W(\rho)=\prod_{i=1}^{\rho}p_i$ and $\tau(\rho,m)=\phi(W(\rho)m)/(W(\rho)m)=\prod_{p|W(\rho)m}(1-1/p)$. We recall from the introduction that, for positive integers $n$ and $m$ with $m\in [n,\binom{n}{2}]$, we define $\rho(n,m)$ to be the smallest positive integer $\rho$ such that $\rho/\tau(\rho,m) \ge n^2/\phi(m)$. Let $\psi(n,m) = \frac{m^{1/3}(m/\phi(m))}{(\log n)^{1/3}(\log\log n)^{2/3}}$ and ${\cal R}(n,m)=\min\left(\psi(n,m),\rho(n,m)\right)$. By Claim \ref{claim:orderrho} in Appendix \ref{appendix:monochromatic}, we note that ${\cal R}(n,m) = \Theta\left(\psi(n,m)\right)$ when $m = O\left( \frac{n^{3/2}(\log\log n)^{1/2}}{(\log n)^{1/2}}\right)$ and ${\cal R}(m,n) = \Theta(\rho(n,m))$ otherwise. 

We aim to prove that $f(n,m)$, the minimum $r$ such that there exists an $r$-coloring of $[n-1]$ where $m$ cannot be written as a sum of distinct monochromatic elements, is bounded below by ${\cal R}(n,m)$ up to a constant factor, giving the lower bound in Theorem \ref{thm:monochromatic-subset-sums}. The main result of this subsection is the following lemma, from which the required lower bound easily follows. 

\begin{lem}
\label{lem:lem-choosey}There exist positive constants $c$ and $C$
such that the following holds. Let $n$ be sufficiently large and $m\in[n,\binom{n}{2}]$ be such that $r=c{\cal R}(n,m)$ is at least $C$.
Let $y<n/2$ be such that $$m\in\left[\frac{y^{2}(m/\phi(m))\tau(r,m)}{25r},\frac{y^{2}(m/\phi(m))\tau(r,m)}{15r}\right]$$
and let $Y$ be the set of integers in $[y,2y)$ of the form $qu$, where $u|m$,
$u\le y^{1/16}$ and $q$ is coprime to $W(r)m$.
Then, in any $r$-coloring of $Y$, there exists a monochromatic subset sum which equals $m$. 
\end{lem}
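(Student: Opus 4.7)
The plan is to follow the framework described in Section~\ref{subsec:monochromatic}, adapted to the restricted set $Y$. First, I estimate $|Y|$ via a standard sieve: for each small divisor $u$ of $m$, the number of integers $q \in [y/u, 2y/u)$ coprime to $W(r)m$ is $\Theta(\tau(r,m) y/u)$, and summing over $u \mid m$ with $u \le y^{1/16}$ (using that the representation $qu$ is unique because $u = \gcd(qu, m)$ when $\gcd(q,m)=1$) yields $|Y| = \Omega(y\tau(r,m)(m/\phi(m)))$. The hypothesis on $y$ rearranges to $m/y \asymp y(m/\phi(m))\tau(r,m)/r$, so an $r$-coloring of $Y$ produces via pigeonhole a monochromatic $Q_0 \subseteq Y$ of size $|Q_0| \ge |Y|/r$, comfortably exceeding the roughly $m/y$ summands of size $\asymp y$ one needs to reach $m$.

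Next I run the diversification procedure sketched in Section~\ref{subsec:monochromatic}: while the current set is not $k$-diverse for a suitable $k$, there is $d \ge 2$ dividing all but at most $k$ elements; remove those exceptional elements and divide the rest by $d$. Iterating $O(\log y)$ times yields a $k$-diverse set $Q$ and a positive integer $v$ with $\{vx : x \in Q\} \subseteq Q_0$. Because each element of $Q_0 \subseteq Y$ has the form $qu$ with $u \mid m$ and $\gcd(q, W(r)m) = 1$, one checks inductively that every extracted $d$ divides $m$, whence $v \mid m$ and the task reduces to showing $m/v \in \Sigma(Q)$. The elements of $Q$ retain the structural property that their ``$q$-parts'' remain coprime to $W(r)(m/v)$ and their ``$u$-parts'' are small divisors of $m/v$.

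Now take a uniformly random subset $V \subseteq Q$ of appropriate size, randomly partition $V$ into $\ell$ blocks $V_1, \ldots, V_\ell$, and further split each $V_i$ randomly into $A_{i,1}$ and $A_{i,2}$. With high probability each $A_{i,1}$ inherits enough diversity that any large subset has only very small common divisors. The key step, and the main obstacle, is showing that for every $a_2 \in A_{i,2}$ the set $\Sigma(A_{i,1})$ taken modulo $a_2$ is large. This is exactly the content of the ``growth phase / unsaturated phase / saturated phase'' argument in Lemma~\ref{lem:growth-}: a saturated phase within the first $|A_{i,1}|/2$ steps directly gives the desired lower bound, while the number of growth phases is bounded. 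The remaining case is an unsaturated step whose best choice still fails to expand the current sumset significantly; then Lemma~\ref{lem:Freiman-2.04} (via its corollary Lemma~\ref{lem:structure}) forces the unused elements to be covered by a small union of long arithmetic progressions, and the Selberg sieve shows that such a union cannot contain enough integers of the form $qu$ with $u$ a small divisor of $m$ and $q$ coprime to $W(r)m$, contradicting the diversity of $A_{i,1}$.

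Once the modular subset-sum bound is in place, iterated application of Lemma~\ref{lem:modp} gives that $|\Sigma(V_i)|$ is a dense subset of some long interval, and diversity of $V_i$ rules out $\Sigma(V_i)$ being contained in a nontrivial arithmetic progression. Lemma~\ref{lem:Lev} then delivers a long interval $I \subseteq \Sigma(V_1) + \cdots + \Sigma(V_\ell) \subseteq \Sigma(V)$. Finally, because $V$ occupies only a fraction of $Q$, the remaining elements of $Q$ can be inserted one at a time in increasing order via Lemma~\ref{lem:verysimple}, extending $I$ into an interval whose right endpoint exceeds $m/v$ (the total sum of $Q$ exceeds $m/v$ by the relation between $y$ and $m$) and whose left endpoint lies below $m/v$. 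Hence $m/v \in \Sigma(Q)$, so $m \in \Sigma(vQ) \subseteq \Sigma(Q_0)$ is a monochromatic subset sum, completing the proof.
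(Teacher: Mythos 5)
Your proposal is correct and follows essentially the same route as the paper: sieve estimate for $|Y|$ and pigeonhole to get a large monochromatic $Q_0$, the iterative diversification yielding $v\mid m$ and a diverse $Q\subseteq Y_v$, a random partition into blocks whose mod-$a_2$ subset sums are shown large via the growth/unsaturated/saturated phase argument (Lemma~\ref{lem:growth-}, using Lemma~\ref{lem:structure} and the Selberg sieve), followed by Lemma~\ref{lem:modp}, Lemma~\ref{lem:Lev} and Lemma~\ref{lem:verysimple} with the same endpoint comparison to capture $m/v$. This matches the paper's proof in both decomposition and key lemmas, differing only in minor presentational details.
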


By Claim \ref{claim:cond-for-y} in Appendix \ref{appendix:monochromatic}, for any $m\in [n,\binom{n}{2}]$, there exists a choice of $y \in [\max(r^2,n^{3/5}),n/2)$ satisfying the required condition. We may therefore apply the lemma to conclude that if ${\cal R}(n,m) \ge C/c$, then $f(n,m)\ge c{\cal R}(n,m)$. That is, the lower bound in Theorem~\ref{thm:monochromatic-subset-sums} holds in this case. On the other hand, if ${\cal R}(n,m)<C/c$, we have the trivial bound $f(n,m)\ge 1 \ge C^{-1}c{\cal R}(n,m)$, so the lower bound in Theorem~\ref{thm:monochromatic-subset-sums} also holds in this case. For the same reason, we can and will assume throughout that $r $ is sufficiently large.

We will build towards the proof of Lemma \ref{lem:lem-choosey} through a series of reductions and intermediate results. For convenience, we will often use objects and notation in the lemma statements without repeating their definitions from earlier. We begin with the following number-theoretic estimate, whose proof may be found in Appendix~\ref{sec:number-theoretic}.

\begin{lem}
\label{lem:pseudoprimes-count}
Let $r$, $n$ and $m$ be positive integers such that $m\in [n,\binom{n}{2}]$, $r\le n$ and $r$ is sufficiently large. For any interval $I=[x,2x)$ with $x\ge n^{1/4}$, there are at most $8(m/\phi(m))\tau(r,m)x$ integers in $I$ of the form $qu$, where $u|m$, $u\le x^{1/16}$ and $q$ is coprime to $W(r)m$. If also $x \ge r^2$, then there are at least $\frac{1}{8}(m/\phi(m))\tau(r,m)x$  integers in $I$ of this form. 
\end{lem}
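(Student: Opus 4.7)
The plan is to parameterize each integer $n \in [x, 2x)$ of the stated form by its ``$m$-part'' $u(n) := \prod_{p \mid m} p^{v_p(n)}$. The condition $\gcd(q, W(r)m) = 1$ forces $\gcd(q, m) = 1$, so the decomposition $n = u(n)\, q$ is unique, and the desired count equals
\[
T(x) \;=\; \sum_{u \mid m,\ u \le x^{1/16}} N(u), \qquad N(u) \;:=\; \bigl|\{q \in [x/u, 2x/u) : \gcd(q, W(r)m) = 1\}\bigr|.
\]
This reduces the task to sieve estimates for $N(u)$ combined with the divisor-sum identity $\sum_{u \mid m} 1/u = \sigma(m)/m$, which is comparable to $m/\phi(m)$.

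For the upper bound, one drops the constraint $u \le x^{1/16}$ to get $T(x) \le \sum_{u \mid m} N(u)$ and applies a standard upper-bound sieve (for instance, Brun's fundamental lemma sieving the primes of $W(r)m$ up to $z_0 = (x/u)^{1/10}$). Mertens' theorem converts the resulting density to $\tau(r,m)$ up to an absolute multiplicative constant arising from the primes in $(z_0, p_r]$, yielding $N(u) \le C_1 (x/u)\tau(r,m) + O((x/u)^{1/10})$. Summing over $u \mid m$ and noting that the total error is $O(x^{1/10} n^{o(1)})$ (negligible once $x \ge n^{1/4}$) produces $T(x) \le C_1 (\sigma(m)/m)\tau(r,m) x \le 8(m/\phi(m))\tau(r,m)x$ after tuning constants.

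For the lower bound, the extra hypothesis $x \ge r^2$ gives $p_r \le r\log r \le \sqrt{x}\log x$, so the sieve has ample room in intervals of length $\ge x^{15/16}$. Applying Brun's fundamental lemma (lower-bound version) sieving by the primes of $W(r)$ yields $|\{q \in [x/u, 2x/u) : \gcd(q, W(r)) = 1\}| \ge c_1 (x/u)\tau_r$, where $\tau_r = \phi(W(r))/W(r)$. Contributions from primes of $m$ that exceed $p_r$ are then removed by truncated Bonferroni inclusion-exclusion: since there are only $\omega(m) = O(\log n/\log\log n)$ such primes and their reciprocal sum is bounded, this adjustment converts $\tau_r$ into $\tau(r,m)$ up to an absolute constant, giving $N(u) \ge c_2 (x/u)\tau(r,m)$. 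Finally, the tail $\sum_{u \mid m,\ u > x^{1/16}} 1/u \le x^{-1/16} d(m) = o(1)$ (using $d(m) = n^{o(1)}$ and $x \ge n^{1/4}$), combined with $\sigma(m)/m \ge (6/\pi^2)(m/\phi(m))$, shows that $\sum_{u \mid m,\ u \le x^{1/16}} 1/u$ still captures a constant fraction of $m/\phi(m)$, yielding $T(x) \ge (1/8)(m/\phi(m))\tau(r,m)x$.

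The hard part will be ensuring the sieve remains effective when $m$ has prime factors far exceeding the sieve length $x/u \le 2x$, which happens when $m$ is highly composite. The resolution relies on two facts: only primes $\le 2x$ can actually divide integers in $[x/u, 2x/u)$, and the at most $O(\log n/\log\log n)$ primes of $m$ lying in $(p_r, 2x]$ can be peeled off by Bonferroni inclusion-exclusion layered on top of the Brun sieve, with the slow growth of $\sum_{p \mid m} 1/p = O(\log\log\log n)$ keeping the correction factor controlled.
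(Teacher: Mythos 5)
Your combinatorial reduction is the same as the paper's: the decomposition $n=qu$ with $u\mid m$, $\gcd(q,W(r)m)=1$ is unique, so the count is $\sum_{u\mid m,\,u\le x^{1/16}}N(u)$ with $N(u)$ the number of integers in $[x/u,2x/u)$ coprime to $W(r)m$, and one finishes with $\sum_{u\mid m}1/u\asymp m/\phi(m)$ (Lemma \ref{lem:sum-div}) and the tail bound $d(m)x^{-1/16}$. The genuine gap is in how you estimate $N(u)$ from below. Under the stated hypotheses $r$ may be as large as $\sqrt{x}$, so $p_r\asymp r\log r$ can be of size $\sqrt{x}\log x$, while the interval has length $x/u\in[x^{15/16},x]$. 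Then the sieve parameter $s=\log(x/u)/\log p_r$ is below $2$ (about $15/8$ for the largest admissible $u$, and still $<2$ even for $u=1$), and in that range Brun's fundamental lemma and the linear sieve give no positive lower bound: there is no ``ample room'', since a lower-bound sieve needs $p_r\le (x/u)^{1/2-\delta}$. Indeed, in this regime $p_r^2>2x/u$, so the integers you are counting are exactly the primes in $[x/u,2x/u)$ (minus the few dividing $m$), and the claimed bound $N(u)\ge c_1(x/u)\tau_r$ is a prime-number-theorem statement that a pure sieve cannot deliver (parity problem). The paper gets this step from \cite[Theorem 7.11]{MV}, i.e.\ the sharp count of integers in a dyadic interval free of prime factors at most $p_r$, which is PNT-strength input; some such input is unavoidable here.

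The second lower-bound step also fails in an allowed regime. The lemma only assumes $r$ is large in an absolute sense, so one may have $r\ll\log n$ while $m$ is divisible by every prime in $(p_r,(2-\epsilon)\log n]$; then $\sum_{p\mid m,\,p>p_r}1/p$ is of order $\log\log\log n$ (not bounded, as you assert), $\tau(r,m)$ is smaller than $\tau_r$ by a factor tending to infinity with $n$, and the additive Bonferroni correction $(x/u)\tau_r\sum_{p\mid m,\,p>p_r}1/p$ exceeds the main term, leaving a trivial (negative) lower bound; no bounded-depth truncation converts the density $\tau_r$ into $\tau(r,m)$ when this reciprocal sum is unbounded. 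The missing idea is the paper's case split in Lemma \ref{31appendixextra}: when $r\ge(\log m)/100$ the primes of $m$ above $p_r$ really are negligible and can be subtracted crudely, while when $r<(\log m)/100$ the total number of primes dividing $W(r)m$ is at most $(\log n)/10$, so one performs exact inclusion--exclusion over all of them, the $O(2^M)$ error being harmless because $x/u\ge n^{15/64}>n^{1/6}$. Finally, even where your upper bound goes through, sieving only the primes up to $z_0=(x/u)^{1/10}$ loses a factor about $\log p_r/\log z_0$, which over the allowed range ($x$ as small as $n^{1/4}$, $r$ up to $n$) is an absolute constant of size roughly $40$; no tuning recovers the stated constants $8$ and $1/8$, which the paper obtains from the sharp Montgomery--Vaughan count and which are used with these explicit values downstream. (Also, having dropped the restriction $u\le x^{1/16}$ in the upper bound, the uniform estimate $N(u)\le C_1(x/u)\tau(r,m)+O((x/u)^{1/10})$ is meaningless for the very large divisors and must be replaced there by trivial bounds.)
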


By Lemma \ref{lem:pseudoprimes-count}, the set $Y$ defined in Lemma \ref{lem:lem-choosey} satisfies $$|Y|\ge \frac{1}{8}(m/\phi(m))\tau(r,m)y.$$ By the pigeonhole principle, in any $r$-coloring of $Y$, there is 
one color class whose size is at least $\frac{1}{8}(m/\phi(m))\tau(r,m)\frac{y}{r}$. Let $Q_0$ be the elements of $Y$ in this color class. We will prove that $m\in\Sigma(Q_0)$.

Call a set $X$ of integers \textit{$k$-diverse} if, for each $v\ge2$,
there are at least $k$ elements of $X$ which are not divisible by
$v$. If $Q_0$ is not $y^{1/4}$-diverse, there exists $v_0\ge 2$ such that at most $y^{1/4}$ elements of $Q_0$ are not divisible by $v_0$. We replace $Q_0$ by
$Q_{1}=\{a/v_0:a\in Q_0,v_0|a\}\subseteq[y/v_0,2y/v_0)$. We then iterate this process. For $i\ge 1$, if $Q_i$ is not $y^{1/4}$-diverse, we can remove at most $y^{1/4}$ elements of $Q_i$ so that the remaining elements are divisible by some $v_i\ge 2$. We then let $Q_{i+1} = \{x/v_i:x\in Q_i,v_i|x\}$. We stop the process once we reach a set $Q_s$ which is $y^{1/4}$-diverse. Note that there can be at most $\log_2 n$ iterations, so there must be at least $\frac{1}{8}(m/\phi(m))\tau(r,m)\frac{y}{r}-y^{1/4}\log_2 n$
elements in $Q_{s}$. 

By the process defining $Q_s$, there exists $v$ such that $Q_s=\{x/v:x\in Q_0,v|x\}$.
Let $Q=Q_{s}$. Then $Q$ is a subset of $[y/v,2y/v)$ of
size at least $\frac{1}{8}(m/\phi(m))\tau(r,m)\frac{y}{r}-y^{1/4}\log_2 n$ which is $y^{1/4}$-diverse. 
Let $z=\frac{1}{8}(m/\phi(m))\tau(r,m)\frac{y}{r}-y^{1/4}\log_2 n$. Note that 
\begin{equation} 
\frac{1}{8}(m/\phi(m))\tau(r,m)\frac{y}{r}\ge z\ge\frac{1}{10}(m/\phi(m))\tau(r,m)\frac{y}{r} \ge \frac{1}{10}\tau(r,m)\sqrt{y}>y^{1/3},\label{eq:z}
\end{equation} 
where we used that $y\ge \max(r^2,n^{3/5})$, which is inequality (\ref{eq:ycond}) of Claim \ref{claim:cond-for-y} in Appendix \ref{appendix:monochromatic}, and $\tau(r,m) \ge 1/(8\log n\log \log n)$ by inequality (\ref{eq:ineq-tau}) in Appendix \ref{appendix:monochromatic}. In particular, for $r$ sufficiently large, $$|Q| = z \ge \frac{1}{10}(m/\phi(m))\tau(r,m)\frac{y}{r} > 64(m/\phi(m))\tau(r,m)\frac{y}{r\log r}.$$ The next lemma shows that $v|m$. 

\begin{lem} \label{lem:div-by-divisors}
If there exist at least $64(m/\phi(m))\tau(r,m)\frac{y}{r\log r}$
elements in $Y$ which are divisible by $v$, then $v|m$ and $v\le y^{1/16}$. Furthermore, all elements of $Y$ which are divisible by $v$ have the form $qvu$, where $(vu)|m$, $vu \le y^{1/16}$ and $\gcd(q,W(r)m) = 1$. 
\end{lem}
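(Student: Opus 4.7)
The plan is to reduce the statement to showing that $v$ has no prime factor coprime to $m$. Write $v = v_m v_q$, where $v_m$ is the product of the prime-power factors of $v$ at primes dividing $m$ and $v_q$ is the remaining part, so that $\gcd(v_q, m) = 1$. For any $q_0 u_0 \in Y$ with $v \mid q_0 u_0$, since $\gcd(q_0, m) = 1$ (as $q_0$ is coprime to $W(r)m$) and $u_0 \mid m$, one necessarily has $v_q \mid q_0$ and $v_m \mid u_0$. If one can show $v_q = 1$, then $v = v_m$ has all prime factors in $m$, so $v \mid u_0 \mid m$, $v \le u_0 \le y^{1/16}$, and writing $u_0 = vu'$ displays the element as $q_0 v u'$ with $(vu') \mid m$, $vu' \le y^{1/16}$, and $\gcd(q_0, W(r)m) = 1$, yielding both the main assertion and the ``furthermore'' clause. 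Thus everything reduces to proving $v_q = 1$.

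Suppose for contradiction that $v_q > 1$. The observation above shows $v_q$ is coprime to $W(r)m$, so every prime factor of $v_q$ exceeds $p_r$; by Rosser's inequality, for $r$ sufficiently large, $v_q \ge p_{r+1} > r\log r$. The hypothesis that at least $K := 64(m/\phi(m))\tau(r,m)y/(r\log r)$ elements of $Y$ are divisible by $v$, combined with $v_q \mid v$, gives $|Y \cap v_q \mathbb{Z}| \ge K$.

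To bound $|Y \cap v_q \mathbb{Z}|$, observe that via the substitution $q = v_q q'$ each element $v_q z \in Y$ corresponds bijectively to a pair $(q', u)$ with $q'u \in [y/v_q, 2y/v_q)$, $u \mid m$, $u \le y^{1/16}$, and $\gcd(q', W(r)m) = 1$. Split the count over $u$ at the threshold $(y/v_q)^{1/16}$. For $u \le (y/v_q)^{1/16}$, apply Lemma~\ref{lem:pseudoprimes-count} to the interval $[y/v_q, 2y/v_q)$, contributing at most $8(m/\phi(m))\tau(r,m)\,y/v_q$. For the remaining $u$, the inequality $1/u < (v_q/y)^{1/16}$ together with the fact that there are at most $\tau(m) = n^{o(1)}$ such divisors of $m$ bounds the contribution by $\tau(m)(y/v_q)^{15/16} + \tau(m)$. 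One of these three terms must be at least $K/3$: if the first, then $v_q \le 3r\log r/8$, contradicting $v_q > r\log r$; the second and third are ruled out using $\tau(m) = n^{o(1)}$, the lower bound $y \ge \sqrt{mr/\log r}$ obtained from the hypothesized constraint on $m$, and the bound $y \ge n^{3/5}$ from Claim~\ref{claim:cond-for-y}. Hence $v_q = 1$, completing the argument.

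The main obstacle is verifying the applicability of Lemma~\ref{lem:pseudoprimes-count} to the rescaled interval $[y/v_q, 2y/v_q)$, namely the condition $y/v_q \ge n^{1/4}$. This follows from the trivial bound $v_q \le 2y/K$ (necessary for $K$ multiples to fit in $[y, 2y)$) combined with the lower bound $y \ge \max(r^2, n^{3/5})$ from Claim~\ref{claim:cond-for-y} and Mertens' estimate $(m/\phi(m))\tau(r,m) \ge c/\log r$. In the corner regime where this bound becomes tight, one must either refine the count using the structure of $Y$ at finer scales or replace Lemma~\ref{lem:pseudoprimes-count} with a Brun-type sieve estimate of the same order; carefully tracking constants here is the bulk of the bookkeeping in the proof.
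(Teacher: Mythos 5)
Your proposal is correct and follows essentially the same route as the paper: extract from $v$ a factor coprime to $W(r)m$ (the paper isolates a single prime $p\ge p_r>r\log r/8$, you use the whole part $v_q$ of $v$ coprime to $m$, which your observation $v_q\mid q_0$ shows is automatically coprime to $W(r)m$), and then contradict the hypothesis by counting the multiples in $Y$ via Lemma~\ref{lem:pseudoprimes-count} on the rescaled interval $[y/v_q,2y/v_q)$. The ``corner regime'' you worry about at the end needs no Brun-type refinement or finer-scale analysis: the paper simply splits into cases, observing that if $y/p<n^{1/4}$ then the number of multiples is trivially at most $y/p<n^{1/4}<64(m/\phi(m))\tau(r,m)\frac{y}{r\log r}$ by (\ref{eq:ycond2}) of Claim~\ref{claim:cond-for-y}, already a contradiction, so Lemma~\ref{lem:pseudoprimes-count} is only ever invoked when its hypothesis $y/p\ge n^{1/4}$ holds. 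Your explicit splitting of the divisor sum at the threshold $(y/v_q)^{1/16}$ is in fact slightly more careful than the paper, which applies Lemma~\ref{lem:pseudoprimes-count} directly despite the condition in $Y$ being $u\le y^{1/16}$ rather than $u\le (y/v_q)^{1/16}$.
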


\begin{proof}
Note that if $\gcd(q,W(r)m)=1$ and $q\ne1$, then any prime factor of $q$ is at least $p_r>r \log r/8$ for sufficiently large $r$. Recall that elements of $Y$ have the form $qu$, where $u|m$, $u\le y^{1/16}$ and $\gcd(q,W(r)m)=1$. Assume that there exists $v$ such that either $v\nmid m$ or $v>y^{1/16}$ and at least $64(m/\phi(m))\tau(r,m)\frac{y}{r\log r}$ elements in $Y$ are divisible by $v$. We claim that $v$ must have a prime factor $p$ which is coprime to $W(r)m$. Indeed, if this were not the case, then $v$ only has prime factors which are divisors of $W(r)m$, so $\gcd(v,q)=1$ for any $q$ coprime to $W(r)m$. Thus, if an element of the form $qu$ with $u|m$, $u\le y^{1/16}$ and $\gcd(q,W(r)m)=1$ is divisible by $v$, then $v|u$, so $v|m$ and $v\le y^{1/16}$, contradicting our assumption. Thus, $v$ has a prime factor $p$ which is coprime to $W(r)m$. In particular, $p \ge p_r>r(\log r)/8$. 

We have that at least $64(m/\phi(m))\tau(r,m)\frac{y}{r\log r}$ elements
of $Y$ are divisible by $p$. For each element $qu$ of $Y$ which is divisible by $p$, since $p$ is coprime to $W(r)m$, we must have $p|q$, so $q=q'p$ for $q'$ coprime to $W(r)m$. Hence, elements of $Y$ which are divisible by $p$ have the form $pq'u$ where $u|m$, $u\le y^{1/16}$ and $\gcd(q',W(r)m)=1$. If $y/p \geq n^{1/4}$, Lemma \ref{lem:pseudoprimes-count} implies that the number of such elements is at most $8(m/\phi(m))\tau(r,m)\frac{y}{p}<64(m/\phi(m))\tau(r,m)\frac{y}{r\log r}$. If $y/p < n^{1/4}$, then the number of such elements is at most $y/p < n^{1/4} < 64(m/\phi(m))\tau(r,m)\frac{y}{r\log r}$, where the second inequality is verified as inequality (\ref{eq:ycond2}) of Claim \ref{claim:cond-for-y} in Appendix \ref{appendix:monochromatic}. In either case, we have a contradiction, so we must have that $v|m$ and $v\le y^{1/16}$. 

Since $v|m$, we have $\gcd(v,W(r)m)=v$, so each element of the form $qu$ where $u|m$, $u\le y^{1/16}$ and $\gcd(q,W(r)m)=1$ which is divisible by $v$ must have $v|u$. Hence, $qu=qvu'$ where $(vu')|m$, $vu'\le y^{1/16}$ and $\gcd(q,W(r)m)=1$, establishing the second claim in the lemma. 
\end{proof}

Since $|Q| > 64(m/\phi(m))\tau(r,m)\frac{y}{r\log r}$ and $\{vx:x\in Q\}$ is a subset of $Y$, Lemma \ref{lem:div-by-divisors} implies that each element of $Q$ can be written in the form $qu$, where $(vu)|m$, $vu\le y^{1/16}$ and $\gcd(q,W(r)m)=1$. Let 
\[
Y_{v}=\{t\in[y/v,2y/v):t=qu,u|(m/v),u\le y^{1/16}/v,\gcd(q,W(r)m)=1\}.
\]
We have that $vt \in Y$ for all $t\in Y_{v}$ and $Q\subseteq Y_{v}$.

Let $V$ be a random subset of $Q$ of size $z/8$. The next lemma implies that $V$ is $y^{1/4}/16$-diverse with probability at least $1/2$. From now, we fix $V$ to be a subset of $Q$ of size $z/8$ which is $y^{1/4}/16$-diverse.

\begin{lem}
\label{lem:diverse}Let $k$ and $h$ be positive integers with $h\neq 1$ and $N=\exp(k/16h)$.
Let $A$ be a set of $t$ integers in $[N]$ which is $k$-diverse. Let $B$ be a uniformly random subset of $A$ of
size $t/h$. Then $B$ is $k/(2h)$-diverse with probability at least $1-1/N$.
\end{lem}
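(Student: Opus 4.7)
The plan is to apply a union bound over all possible divisors $v$, combined with a Chernoff-type concentration inequality for hypergeometric sampling applied to each fixed $v$. For each integer $v \geq 2$, let $A_v = \{a \in A : v \nmid a\}$. Because $A$ is $k$-diverse, we have $|A_v| \geq k$ for every $v \geq 2$. The goal is to show that, outside an event of probability at most $1/N$, we have $|B \cap A_v| \geq k/(2h)$ for every $v \geq 2$ simultaneously, since this is exactly what it means for $B$ to be $k/(2h)$-diverse.

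First I would dispose of the easy range $v > N$. Since every element of $A$ lies in $[N]$ and is therefore strictly less than $v$, no such element can be divisible by $v$, so $A_v = A$ and $|B \cap A_v| = |B| = t/h$. The $k$-diversity of $A$ trivially implies $t = |A| \geq k$, so $|B \cap A_v| \geq k/h \geq k/(2h)$ automatically. These divisors contribute nothing to the failure probability.

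For the main range $2 \leq v \leq N$, the random variable $X_v := |B \cap A_v|$ follows a hypergeometric distribution with mean $\mu_v = (t/h) \cdot |A_v|/t = |A_v|/h \geq k/h$. Since $k/(2h) \leq \mu_v/2$, the standard multiplicative Chernoff bound for hypergeometric sampling (which yields the same tail inequalities as in the binomial case, by a classical result of Hoeffding) gives
\[
\Pr\bigl[X_v < k/(2h)\bigr] \;\leq\; \Pr\bigl[X_v < \mu_v/2\bigr] \;\leq\; \exp(-\mu_v/8) \;\leq\; \exp\bigl(-k/(8h)\bigr) \;=\; N^{-2},
\]
where the final equality uses $N = \exp(k/(16h))$, so that $\exp(-k/(8h)) = N^{-2}$. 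Taking a union bound over the fewer than $N$ values of $v$ in $\{2, 3, \dots, N\}$ then bounds the failure probability by $N \cdot N^{-2} = 1/N$, as desired.

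There is no genuine obstacle here: the argument is essentially a single union bound, and the numerical exponents $k/(16h)$ in the definition of $N$ and $k/(2h)$ in the diversity conclusion have been chosen precisely so that the Chernoff estimate $\exp(-k/(8h))$ and the count $N$ of divisors to be handled combine to give exactly the target bound $1/N$. The only mildly nontrivial input is the Hoeffding/Chernoff tail bound for the hypergeometric distribution, which is standard.
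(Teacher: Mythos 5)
Your proof is correct and follows essentially the same route as the paper: a Chernoff bound for the hypergeometric distribution (valid since it is at least as concentrated as the corresponding binomial, by Hoeffding), giving failure probability at most $\exp(-k/(8h))=N^{-2}$ for each fixed divisor, followed by a union bound over the at most $N$ divisors in $[2,N]$. Your explicit treatment of divisors $v>N$ is a detail the paper leaves implicit, but otherwise the arguments coincide.
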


\begin{proof}
For each $d \in [N]$ with $d>1$, let $X_d$ be the set of elements in $A$ which are not divisible by $d$. By our assumption, $|X_d|\ge k$ for each $d$. The number of elements in $B\cap X_d$ follows a hypergeometric distribution. As the hypergeometric distribution is at least as concentrated as the corresponding binomial distribution (for a proof, see Section 6 of \cite{Hoeff}), we can apply the Chernoff bound to obtain that the probability that $|B\cap X_d|<|X_d|/(2h)$ is at most $\exp(-|X_d|/8h) \le \exp(-k/8h)=N^{-2}$. By taking a union bound over all $d \in [N]$ with $d>1$, we conclude that the probability $B$ is not $k/(2h)$-diverse is at most $N \cdot N^{-2} = N^{-1}$. 
\end{proof}

The following lemma is the key to proving Lemma \ref{lem:lem-choosey}. 

\begin{lem}
\label{lem:build-prime-class}Let $\ell =\lceil32/\xi\rceil$, where $\xi$ is the constant in Lemma \ref{lem:Freiman-2.04}. Let $A$ be a subset of $Y_{v}$ of
size $z/(8\ell)$ which is $y^{1/4}/(32\ell)$-diverse. Then $|\Sigma(A)|\ge yz/(\ell^{2}v)$
and $\Sigma(A)$ is not a subset of an arithmetic progression with
common difference greater than $1$. 
\end{lem}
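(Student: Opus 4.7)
The plan is to apply Lemma~\ref{lem:modp} to reduce the problem to bounding modular subset sums. I will split $A$ into two halves $A_1, A_2$ of size $z/(16\ell)$ via a random partition; using Lemma~\ref{lem:diverse} twice (once to choose the split, once since $A$ is already guaranteed to be diverse), I can ensure with positive probability that $A_1$ inherits diversity of order $y^{1/4}/(64\ell)$. Adding the elements of $A_2$ one by one and iterating Lemma~\ref{lem:modp} (noting distinctness of elements of $A$) gives
\[
|\Sigma(A)| \;\ge\; \sum_{a_2 \in A_2} \bigl|\Sigma(A_1) \!\!\pmod{a_2}\bigr|.
\]
It therefore suffices to prove $|\Sigma(A_1) \pmod{a_2}| \ge 16y/(\ell v)$ for every $a_2 \in A_2$; combined with $|A_2| = z/(16\ell)$ this yields the claimed $|\Sigma(A)| \ge yz/(\ell^2 v)$.

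To control $|\Sigma(A_1) \pmod{a_2}|$, I will run an iterative process in $\mathbb{Z}_{a_2}$: set $T_0 = A_1$, $\Sigma(0) = \{0\}$, and at step $j$ choose $x_j \in T_{j-1}$ maximizing $|(\Sigma(j-1)+x_j)\setminus \Sigma(j-1)|$, then put $\Sigma(j) = \Sigma(j-1) \cup (\Sigma(j-1)+x_j)$ and $T_j = T_{j-1}\setminus\{x_j\}$. (A refined version of this process, following Lemma~\ref{lem:growth-} in Section~\ref{subsec:monochromatic}, is needed to deal with the possibility that the greatest common divisor of $T_{j-1}$ is non-trivial; diversity of $A_1$ keeps such a gcd small.) Either $|\Sigma(j)|$ reaches the target size of $16y/(\ell v)$ within $|A_1|/2$ steps, or there is a step at which every $x \in T_{j-1}$ expands $\Sigma(j-1)$ by at most some small $d$. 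Combining Lemma~\ref{lem:double-count} with the Deshouillers--Freiman theorem, Lemma~\ref{lem:Freiman-2.04}, forces $T_{j-1}$ to lie in a small union of long arithmetic progressions inside a coset of a proper subgroup of $\mathbb{Z}_{a_2}$. The calibration $\ell = \lceil 32/\xi\rceil$ is exactly what is needed to keep $|\Sigma(j-1)| \le \xi a_2$ so that Lemma~\ref{lem:Freiman-2.04} applies.

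The main obstacle is ruling out this structured configuration. Since $T_{j-1} \subset A \subset Y_v$, each of its elements has the form $qu$ with $(vu)\mid m$, $vu \le y^{1/16}$ and $\gcd(q, W(r)m) = 1$. I will use a Selberg-sieve upper bound (in the spirit of Lemma~\ref{lem:pseudoprimes-count} but applied to elements constrained to lie in an arithmetic progression) to show that the number of such integers inside the structured set produced by Lemma~\ref{lem:Freiman-2.04} is strictly less than $|T_{j-1}|$. The diversity hypothesis is critical here: it prevents the elements of $T_{j-1}$ from sharing a common divisor $d \ge 2$ arising from the structured coset, ruling out the trivial obstruction and allowing the sieve bound to bite. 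This sieve calculation, together with the bound $y \ge r^2$ coming from inequality~\eqref{eq:ycond}, will yield the required contradiction and hence the desired growth.

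Finally, the claim that $\Sigma(A)$ is not contained in any arithmetic progression of common difference $d \ge 2$ is immediate from diversity: if $\Sigma(A) \subseteq a_0 + d\mathbb{Z}$, comparing the subset sums corresponding to $\{a\}$ and $\emptyset$ forces every $a \in A$ to satisfy $d \mid a$, whereas $y^{1/4}/(32\ell)$-diversity of $A$ guarantees at least one element of $A$ not divisible by $d$, a contradiction.
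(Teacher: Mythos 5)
Your proposal follows essentially the same route as the paper: split $A$ into halves $A_1,A_2$ of size $z/(16\ell)$ via Lemma~\ref{lem:diverse}, show that $|\Sigma_{a_2}(A_1)|$ is large for every $a_2\in A_2$ (which is precisely the content of Lemma~\ref{lem:growth-}, established by the same iterative process together with Lemma~\ref{lem:structure} and the Selberg sieve), then iterate Lemma~\ref{lem:modp} and deduce the arithmetic-progression claim from diversity exactly as the paper does. The only quibble is a harmless constant: Lemma~\ref{lem:diverse} with $h=2$ gives $y^{1/4}/(128\ell)$-diversity for $A_1$ (not $y^{1/4}/(64\ell)$), which is the hypothesis Lemma~\ref{lem:growth-} actually requires.
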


Before moving on to the proof of Lemma \ref{lem:build-prime-class}, we show how Lemma \ref{lem:lem-choosey} follows from it. 

\begin{proof}[Proof of Lemma \ref{lem:lem-choosey} assuming Lemma \ref{lem:build-prime-class}]
Recall that we have fixed a subset $V$ of $Q$ of size $z/8$ which is $y^{1/4}/16$-diverse. We will prove that $\Sigma(V)$ contains an interval $J=[a,b]$ of length
at least $2y/v$. To see why this suffices, first note that
\[
vb\le v\cdot \max \Sigma(V) <2y\cdot\frac{z}{8}\le2y\cdot\frac{(m/\phi(m))\tau(r,m)y}{8\cdot 8r}<m,
\] 
where the third inequality follows from (\ref{eq:z}) and the last inequality follows since $y^2 \le \frac{25rm}{(m/\phi(m))\tau(r,m)}$ by the choice of $y$. 
If now we can find the required interval $J$, Lemma \ref{lem:lem-choosey} follows
since each element of $Q$ is at most $2y/v$ and, hence, by Lemma \ref{lem:verysimple}, $\Sigma(Q)=\Sigma(V\cup(Q\setminus V))$
contains an interval whose smallest element is $a<m/v$ by the inequality above and whose largest element is
\[
b+\sum_{i\in Q\setminus V}i>\frac{y}{v}\cdot z(1-1/8)\ge\frac{y}{v}\cdot\frac{7(m/\phi(m))\tau(r,m)y}{8\cdot10r}>\frac{m}{v},
\] where the last inequality follows since $y^2 \ge \frac{15rm}{(m/\phi(m))\tau(r,m)}$ by the choice of $y$. 
Hence, $\Sigma(Q_0)$ contains the progression $\{va,v(a+1),\dots,v(m/v)\}$, which contains $m$. 

We partition $V$ randomly into $\ell$ sets $V_{1},V_{2},\dots,V_{\ell}$ of size $z/(8\ell)$. By Lemma \ref{lem:diverse} and the union
bound, the probability that $V_{i}$ is $y^{1/4}/(32\ell)$-diverse for all $i\in[\ell]$ is at least $1/2$. Hence, we can fix
a partition of $V$ into $\ell$ sets $V_{1},V_{2},\dots,V_{\ell}$ of size $z/(8\ell)$, where $V_{i}$ is $y^{1/4}/(32\ell)$-diverse for each $i\in[\ell]$. 

For each $i\in[\ell]$, $\Sigma(V_{i})$ is a subset of the interval
$[0,z/(8\ell)\cdot2y/v]=[0,yz/(4\ell v)]$. By Lemma \ref{lem:build-prime-class},
$|\Sigma(V_{i})| \ge yz/(\ell^{2}v)$
and $\Sigma(V_{i})$ is not a subset of an arithmetic progression
with common difference greater than $1$. Therefore, by Lemma \ref{lem:Lev},
$\Sigma(V_{1})+\cdots+\Sigma(V_{\ell})$ contains an interval of length
at least $yz/(2\ell v)>2y/v$ for $n$ sufficiently large, as required. 
\end{proof}

We have therefore reduced the task of proving the lower bound in Theorem \ref{thm:monochromatic-subset-sums} to Lemma \ref{lem:build-prime-class}. The strategy for proving Lemma~\ref{lem:build-prime-class}
is now as follows. We partition $A$ into two subsets $A_{1}$ and $A_{2}$
of size $z/(16\ell)$, observing that we can choose $A_{1}$ and $A_{2}$
to be $y^{1/4}/(128\ell)$-diverse by Lemma \ref{lem:diverse}. 
We then show that $\Sigma(A_{1})$ contains elements
in many different congruence classes modulo $t$ for all $t$ in $A_{2}$, allowing
us to apply Lemma \ref{lem:modp} repeatedly (as in the proofs of our results on completeness) 
to conclude that each element of $A_{2}$ introduces many new elements to the set of subset sums. 

The next lemma is the main step in the proof of
Lemma \ref{lem:build-prime-class}. Recall that 
\[
\Sigma_{t}(A)=\left\{ \sum_{x\in S}x\bmod{t}: S\subseteq A\right\}
\]
and $\xi$ is the absolute constant defined in Lemma \ref{lem:Freiman-2.04}.

\begin{lem}
\label{lem:growth-}Let $t\in[y/v,2y/v)$. Let $A$ be a subset of
$Y_{v}$ of size $z/(16\ell)$ which is $y^{1/4}/(128\ell)$-diverse. Then
$|\Sigma_{t}(A)|\ge\min(\xi,32/\ell)t$. 
\end{lem}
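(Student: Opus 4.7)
The plan is to run the iterative subset-sum construction sketched in Section~\ref{subsec:monochromatic}. Initialize $T_0 = A$ and $\Sigma(0) = \{0\} \subseteq \mathbb{Z}_t$; at each step $j \ge 1$, pick an element $x_j \in T_{j-1}$, set $\Sigma(j) = (\Sigma(j-1) + x_j) \cup \Sigma(j-1)$ and $T_j = T_{j-1} \setminus \{x_j\}$. Let $d_j$ be the gcd of the elements of $T_{j-1}$ and let $S_u = \Sigma(j-1) \cap (u + d_j\mathbb{Z}_t)$ for $u \in \mathbb{Z}_t/d_j\mathbb{Z}_t$. Classify step $j$ as a growth, unsaturated, or saturated phase according to whether some non-empty $S_u$ is very small, no non-empty $S_u$ is small but at least one is of intermediate size, or every non-empty $S_u$ is already large, respectively. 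In a growth phase, choose $x_j$ to maximize $|\Sigma(d_j, j)| - |\Sigma(d_j, j-1)|$ in the notation of the outline; otherwise choose $x_j$ to maximize $|(\Sigma(j-1) + x_j) \setminus \Sigma(j-1)|$. The thresholds should be set so that any saturated phase occurring within the first $|A|/2 = z/(32\ell)$ steps immediately yields $|\Sigma_t(A)| \ge |\Sigma(j-1)| \ge \min(\xi, 32/\ell)t$.

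Next, I would bound the growth phases. A growth phase must populate a previously sparsely occupied $d_j$-coset of $\Sigma(j-1)$, and one can show this forces the gcd of the surviving elements of $T_{j-1}$ to drop to a proper divisor of $d_j$ in subsequent iterations. Since every such gcd divides some fixed integer bounded by $t < 2y$, there are at most $\log_2(2y)$ growth phases, negligible compared to $|A|/2 \ge y^{1/3}/(32\ell)$ by~(\ref{eq:z}). It therefore suffices to show that every unsaturated phase grows $\Sigma$ by a definite amount, as the cumulative effect over $|A|/2 - O(\log y)$ unsaturated phases would eventually push $|\Sigma|$ past $\min(\xi, 32/\ell)t$ and force a saturated phase to occur.

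The heart of the argument, and the main obstacle, is showing that unsaturated phases grow $\Sigma$ substantially. Suppose, toward a contradiction, that at some unsaturated phase $j$ the optimal $x_j$ yields growth of only $D$, with $D$ too small. Then every $x \in T_{j-1}$ lies in the near-stabilizer $G_D = \{x \in \mathbb{Z}_t : |(\Sigma(j-1) + x) \cup \Sigma(j-1)| \le |\Sigma(j-1)| + D\}$. Lemma~\ref{lem:double-count} bounds $|G_D|$ in terms of $|\Sigma(j-1)|$, Lemma~\ref{lem:stable-period} gives $kG_D \subseteq G_{kD}$, and Lemma~\ref{lem:Cauchy-Davenport} forces $|kG_D|$ to grow linearly in $k$ unless $G_D$ is trapped in a coset of a proper subgroup $H \le \mathbb{Z}_t$. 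Combined with an appropriate corollary of Lemma~\ref{lem:Freiman-2.04}, this trapping shows that $T_{j-1} \subseteq G_D$ lies in a union of boundedly many long arithmetic progressions in $\mathbb{Z}$ whose common difference equals $t/|H|$. The finishing move is a Selberg sieve estimate in the spirit of Lemma~\ref{lem:pseudoprimes-count}, bounding the number of integers of the form $qu$ with $u \mid (m/v)$ small and $\gcd(q, W(r)m) = 1$ lying in any short arithmetic progression; together with the fact that the $y^{1/4}/(128\ell)$-diversity of $A$ persists in $T_{j-1}$ (since only $|A|/2$ elements have been removed and element choices are not adversarial), this prevents $T_{j-1}$ from concentrating in any single coset of $H$ and contradicts $|T_{j-1}| \ge |A|/2$. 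The delicate part will be calibrating $D$, the diversity constants, and the phase thresholds so that the sieve bound contradicts the structural conclusion exactly when a single unsaturated phase fails to grow $\Sigma$ by the required amount.
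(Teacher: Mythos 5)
Your skeleton matches the paper's (it is essentially the outline already given in Section~\ref{subsec:monochromatic}), but the three places where the actual work happens all have gaps or errors. First, the saturated-phase step is not automatic: if step $j$ is saturated, you only know that every \emph{non-empty} $S_u$ has size at least $\xi t/d_j$, so $|\Sigma(j-1)|\ge (\xi t/d_j)\cdot\#\{\text{occupied cosets}\}$, which can be far below $\xi t$ when few cosets of $d_j\mathbb{Z}_t$ are occupied. The paper closes this by combining the $y^{1/4}/(128\ell)$-diversity of $A$ with Lemma~\ref{lem:full-mod-d} and the observation that all elements of $A_{j-1}$ are divisible by $d_j$, so $\Sigma_{d_j}(\{a_1,\dots,a_{j-1}\})=\Sigma_{d_j}(A)=\mathbb{Z}_{d_j}$ and \emph{every} coset is occupied; no choice of thresholds substitutes for this argument. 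Second, your count of growth phases rests on the claim that a growth phase forces $\gcd(T_{j-1})$ to drop; that is false (removing one element need not change the gcd, and consecutive growth phases with the same $d_j$ occur). The correct accounting — and the reason the selection rule in a growth phase maximizes $|\Sigma(d_j,j)|-|\Sigma(d_j,j-1)|$ at all — is that within a period of constant $d_j$ this auxiliary quantity grows by a factor $3/2$ while it is below $|A_{j-1}|/2$ and by at least $|A_{j-1}|/8$ afterwards (via Lemmas~\ref{lem:double-count}, \ref{lem:stable-period}, \ref{lem:Cauchy-Davenport}, and Lemma~\ref{lem:Su} to link the phase definition to $|\Sigma(d_j,j-1)|\le y^{3/4}$), giving roughly $256\ell y^{3/4}/z+\log_{3/2}t$ growth phases per period and $\log_2 y^{1/16}+1$ periods — a larger but still negligible bound.

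Third, in the unsaturated-phase analysis you apply the near-stabilizer machinery to the whole set $\Sigma(j-1)$ inside $\mathbb{Z}_t$. This fails when $d_j>1$: all of $T_{j-1}$ lies in the proper subgroup $d_j\mathbb{Z}_t$, so the alternative ``$G_D$ is contained in a (coset of a) proper subgroup'' in Lemma~\ref{lem:structure} cannot be excluded, and moreover $|\Sigma(j-1)|$ may already exceed $\xi t$, violating the hypothesis of that lemma. The paper instead works with a single intermediate piece $S_u$ with $y^{3/4}<|S_u|<\xi t/d_j$, viewed inside $\mathbb{Z}_{t/d_j}$, where $A_{j-1}$ has gcd $1$ by the definition of $d_j$; the growth of $\Sigma(j)$ dominates the growth of $S_u$ because translation by elements of $d_j\mathbb{Z}_t$ preserves cosets. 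Relatedly, your appeal to ``diversity persisting in $T_{j-1}$'' is both unjustified (the diversity parameter $y^{1/4}/(128\ell)$ is much smaller than the $|A|/2$ removed elements, so all remaining elements may share a divisor) and not what is needed: the proper-subgroup alternative is killed by the gcd-$1$ property in the quotient, not by diversity, and the arithmetic-progression alternative is killed by the Selberg sieve (Lemma~\ref{lem:selberg}) applied after Lemma~\ref{lem:ap-mod-t}. Finally, the quantitative calibration you defer — that growth $D=2^{12}y/z$ per unsaturated phase is forced, i.e., that $256\cdot 2^{22}(y/z)\log\log n/\log r < z/(32\ell)$ under the definitions of $y$, $z$, $r$ (split into the cases $r\ge 10\log m/\log\log m$ and $r<10\log m/\log\log m$) — is the numerical crux of the lemma and cannot be waved away as calibration.
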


To show that the set of mod $t$
subset sums is large, we prove the following structural lemma, stating that the set of
elements whose inclusion does not expand the set of mod $t$ subset
sums must either be small or additively structured. 
We will then use this additive structure
to show that the corresponding set in $\mathbb{Z}$ must contain a
small number of integers of the form $qu$, which we will see is impossible. 

\begin{lem}
\label{lem:structure}Let $t$ be an integer. Let $A\subseteq\mathbb{Z}_{t}$
be such that $8d<|A|<\xi t$. Let $G_{d}\subseteq\mathbb{Z}_{t}$
be the set of $x$ such that $|(A+x)\cup A|\le|A|+d$. Then either
$G_{d}$ is contained in a proper subgroup of $\mathbb{Z}_{t}$, $|G_{d}|\le\frac{20|A|}{(|A|/2d)^{1.02}}$
or there is a subgroup $H$ of $\mathbb{Z}_{t}$ such that $G_{d}$
is contained in a set of size at most $128d$ which is an arithmetic progression of $H$-cosets.
\end{lem}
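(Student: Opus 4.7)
The starting observation is that $G_d$ is symmetric and contains $0$: indeed, if $x\in G_d$ then $|A\cup(A-x)|=|(A+x)\cup A|\le|A|+d$, so $-x\in G_d$. By Lemma~\ref{lem:stable-period}, the iterated sumset satisfies $kG_d\subseteq G_{kd}$ for every $k\ge1$, and Lemma~\ref{lem:double-count} bounds $|G_{kd}|\le|A|^2/(|A|-kd)$ whenever $kd<|A|$. Writing $s:=|A|/(2d)>4$, this gives $|G_{2^jd}|\le2|A|$ for every $j$ with $2^jd\le|A|/2$, and moreover $|G_{2d}|-|G_d|$ is controlled in the regime where $|G_d|$ is close to $|A|$.

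The proof is organized as a dichotomy driven by the doubling ratio of $G_d$. Suppose first that $|G_d+G_d|\le2.04|G_d|$. Then, after passing to a subset of $G_d$ of size at most $\xi t$ if necessary, I apply Lemma~\ref{lem:Freiman-2.04} directly to $G_d$. Case~(iii) of that lemma places $G_d$ in a single coset of a proper subgroup $H$, and since $0\in G_d$ we actually get $G_d\subseteq H$, which is Conclusion~1. Cases~(i) and~(ii) place $G_d$ in an arithmetic progression of $\ell$ $H$-cosets with $(\ell-1)|H|\le|G_d+G_d|-|G_d|\le|G_{2d}|-|G_d|$. Using the refined form of Lemma~\ref{lem:double-count}, which in the relevant regime where $|G_d|$ is not already small gives $|G_{2d}|-|G_d|=O(d)$, I obtain that the total cardinality of the progression is at most $128d$, which is Conclusion~3.

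Suppose instead that $|G_d+G_d|>2.04|G_d|$. Then, combining $|G_d+G_d|\le|G_{2d}|$ with this inequality yields $|G_d|<|G_{2d}|/2.04$. I iterate this same dichotomy at scales $2^jd$ for $j=1,2,\dots,J$, where $J:=\lfloor\log_2 s\rfloor-1$ is the largest index with $2^{J+1}d\le|A|$. So long as the doubling at each scale exceeds $2.04$, the chained inequalities telescope to
\[
|G_d|<\frac{|G_{2^Jd}|}{2.04^J}\le\frac{2|A|}{2.04^J},
\]
and since $\log_2 2.04>1.02$ (as $1.02\log 2\approx 0.7070<0.7129\approx\log 2.04$), this bound simplifies to $|G_d|\le 20|A|/s^{1.02}$, which is Conclusion~2. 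If instead the iteration halts at some intermediate scale $2^{i^*}d$ because the doubling there drops to at most $2.04$, I apply Lemma~\ref{lem:Freiman-2.04} at that scale: case~(iii) gives $G_d\subseteq G_{2^{i^*}d}\subseteq H$ and hence Conclusion~1, while cases~(i) and~(ii) combined with the telescoped inequality $|G_d|<|G_{2^{i^*}d}|/2.04^{i^*}$ provide whichever of Conclusion~2 (when $i^*$ is large) or Conclusion~3 (when $i^*$ is small and the progression for $G_{2^{i^*}d}$ passes to a short one for $G_d$) is appropriate.

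The chief technical obstacle is pinning down the precise cardinality bound $128d$ in Conclusion~3, and handling the intermediate-Freiman sub-case in a uniform way. The bare Freiman bound $(\ell-1)|H|\le|G_d+G_d|-|G_d|$ controls only the \emph{growth} of the progression, so converting it into a genuine bound on $\ell|H|$ requires taking $H$ to be the stabilizer of $G_d+G_d$ and invoking Kneser's theorem together with the doubling hypothesis to lower-bound $|H|$ relative to $d$; the interplay between this lower bound, the refined estimate on $|G_{2d}|-|G_d|$, and the propagation of structure through the iterative scales is what forces the specific constants. The numerical constants $128$, $20$, and the slightly suboptimal exponent $1.02$ (in place of the tight $\log_2 2.04\approx 1.029$) are all chosen to absorb the accumulated multiplicative losses in these estimates.
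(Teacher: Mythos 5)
Your Conclusion~2 branch is essentially the paper's: iterate the doubling dichotomy, use $G_{d}+G_{d}\subseteq G_{2d}$ (or, as the paper does, work with the iterated sumsets $2^{j}G_{d}\subseteq G_{2^{j}d}$), telescope $2.04^{J}$ against the bound $|G_{2^{J}d}|\le 2|A|$ from Lemma~\ref{lem:double-count}, and note $\log_{2}2.04>1.02$. But the branch that actually produces the $128d$ progression has a genuine gap. When $|G_{d}+G_{d}|\le 2.04|G_{d}|$, applying Lemma~\ref{lem:Freiman-2.04} directly to $G_{d}$ only gives an arithmetic progression of $H$-cosets of total size roughly $\frac{\ell}{\ell-1}\cdot 1.04\,|G_d+G_d|\approx 2.08|G_{d}|$, and $|G_{d}|$ can be of order $|A|$, far larger than $128d$. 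Your attempt to rescue this via $(\ell-1)|H|\le |G_{2d}|-|G_{d}|=O(d)$ is unsupported: no lemma in the paper controls the difference $|G_{2d}|-|G_{d}|$ (Lemma~\ref{lem:double-count} bounds each set separately, and $G_{2d}$ can be much larger than $G_{d}$), and even granting it you would still only control $(\ell-1)|H|$, not $\ell|H|$; the appeal to Kneser's theorem and the stabilizer of $G_{d}+G_{d}$ is left entirely unexecuted, and the same vagueness appears at the intermediate scales, where you simply assert that the progression for $G_{2^{i^*}d}$ ``passes to a short one for $G_{d}$.''

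What is missing is precisely the paper's mechanism for the $128d$ bound, which has two ingredients you never use. First, after reducing to the case that $G_{d}$ is not contained in a proper subgroup, the Cauchy--Davenport-type Lemma~\ref{lem:Cauchy-Davenport} gives the exponential bound $|2^{j}G_{d}|\le 2^{1-i+j}|2^{i}G_{d}|\le 2^{2-i+j}|A|$ with $i=\lfloor\log_{2}(|A|/2d)\rfloor$, so the set to which Lemma~\ref{lem:Freiman-2.04} is applied (the first scale $j\in[2,i)$ with doubling at most $2.04$) is already of size comparable to $2^{j}d$, not $|A|$; this step also forces the case analysis that yields Conclusion~1 (the three-coset case of Lemma~\ref{lem:Freiman-2.04}, which you do not treat, is eliminated by showing it would trap $G_d$ in a subgroup of size at most $3|H|<t$). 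Second, one must pull the progression structure back from $2^{j}G_{d}$ to $G_{d}$: since $2^{h}(G_{d}/H)$ sits inside nested intervals around $0$ with $2^{j-h}P_{h}\subseteq P_{j}$ and $|P_j|\le m/2$, an interval-halving argument shows $G_{d}/H$ lies in an interval of length at most $|P_{j}|/2^{j-1}$, and only the combination $|H||P_{j}|/2^{j-1}\le 2.08\,|2^{j}G_{d}|\cdot 2^{1-j}\le 2.08\cdot 2^{3-i}|A|\le 128d$ produces the stated constant. Without the growth estimate and this pullback, your argument cannot conclude anything better than a progression of size on the order of $|A|$, so the proof as proposed does not establish the lemma.
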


\begin{proof}
By Lemma \ref{lem:stable-period}, $kG_{d}\subseteq G_{kd}$, where $kG_{d}=\{x_{1}+x_{2}+\cdots+x_{k}:x_{1},x_{2},\dots,x_{k}\in G_{d}\}$. Let $i=\lfloor\log_{2}(|A|/2d)\rfloor$ and let $k=2^{i}$, noting that $kd \leq |A|/2$. Therefore, applying
Lemma \ref{lem:double-count} to $G_{kd}$, we get 
\begin{equation}
|kG_{d}|\le|G_{kd}|\le\frac{|A|^{2}}{|A|-kd}\le2|A|.\label{eq:boundkG}
\end{equation}

Assume that $G_{d}$ is not contained in a proper subgroup of $\mathbb{Z}_{t}$. Let $j$ be such that $0\le j<i$. Since $0\in G_{d}$ by definition, we have $G_d \subset 2^jG_d$, so $0\in 2^j G_d$ and $2^j G_d$ is not contained in a proper subgroup of $\mathbb{Z}_t$. Thus, $2^{j}G_{d}$ is not contained in a coset of a proper subgroup of $\mathbb{Z}_{t}$. By Lemma \ref{lem:Cauchy-Davenport},
$|2^{i}G_{d}|\ge\min\{t,2^{i-j-1}|2^{j}G_{d}|\}=2^{i-j-1}|2^{j}G_{d}|$,
where we used that $|2^{i}G_{d}|\le2|A|<2\xi t<t$ from (\ref{eq:boundkG}).
Thus, 
\begin{equation}
|2^{j}G_{d}|\le2^{1-i+j}|2^i G_d| < 2\xi t. \label{eq:2j} 
\end{equation}

Assume now that $|2^{j+1}G_{d}|=|2^{j}G_{d}+2^{j}G_{d}|\le2.04|2^{j}G_{d}|$
for some $2\le j<i$. By Lemma \ref{lem:Freiman-2.04}, there exists a proper subgroup $H$ of $\mathbb{Z}_{t}$ such that one of the following holds:
\begin{enumerate}
\item $2^{j}G_{d}$ is contained in a set of size at most $\frac{\ell}{\ell-1}\cdot 1.04|2^{j}G_{d}|$ which is an arithmetic progression of $H$-cosets of length $\ell \ge 2$,
\item $2^{j}G_{d}$ meets exactly three $H$-cosets which are terms of an arithmetic progression of $H$-cosets of length $\ell$ and $(\min(\ell,4)-1)|H|\le 1.04|2^{j}G_{d}|$ or 
\item $2^{j}G_{d}$ is contained in one $H$-coset. 
\end{enumerate}

We have already seen that the third case cannot happen, that is, that $2^{j}G_{d}$ is not contained in a coset of a proper subgroup of $\mathbb{Z}_{t}$. 

Suppose that we are in the second case. Then $2^{j}G_{d}$ is contained in a union of three $H$-cosets, so $4G_d$ is contained in a union of three $H$-cosets. Since $0\in G_d$ and $G_d$ is not contained in an $H$-coset, the image of $G_d$ in $\mathbb{Z}_{t}/H$ is a subset $S$ of $\mathbb{Z}_{t}/H$ of size at least $2$ such that $0\in S$ and $4S$ has size at most $3$. This can only happen if $S$ is contained in a subgroup of $\mathbb{Z}_{t}/H$ of size at most $3$. In this case, $G_{d}$ is contained in a subgroup of $\mathbb{Z}_{t}$ of size at most $3|H|$. Since $|H|\le (\min(\ell,4)-1)|H|\le 1.04|2^{j}G_{d}|$, we have $3|H|\le 3.12|2^{j}G_{d}| < 6.24\xi t < t$, so $G_{d}$ is contained in a proper subgroup of $\mathbb{Z}_{t}$, a contradiction. Thus, the second case cannot happen. 

We now consider the first case, where $2^{j}G_{d}$
is contained in a set of size at most $\frac{\ell}{\ell-1}\cdot 1.04|2^{j}G_{d}|\le 2.08|2^{j}G_{d}|$ which is
an arithmetic progression of $H$-cosets of length $\ell$. As $0\in G_{d}$,
this progression of $H$-cosets contains $0$. Let $m$ be such that
$H=\{x\in\mathbb{Z}_{t}:m|x\}$. Then the $H$-cosets can be identified
with elements of $\mathbb{Z}_{m}$. The common difference of the progression
of $H$-cosets must be coprime to $m$, as otherwise $2^{j}G_{d}$
would be contained in a proper subgroup of $\mathbb{Z}_{t}$. Thus, by rescaling if necessary,
we may assume that the common difference
of the progression of $H$-cosets is $1$. Let $P_{j}$ be the interval in $\mathbb{Z}_{m}$ which corresponds
to the $H$-cosets in the progression containing $2^{j}G_{d}$. Note
that $0\in G_{d}$, so that $2^{h-1}G_{d}\subseteq2^{h}G_{d}$ for all $h$.
Hence, for each $h\le j$, we can choose intervals $P_{h}$ around $0$ in $\mathbb{Z}_{m}$
such that $2^{h}(G_{d}/H) \subseteq P_{h}$, $P_{h}\subseteq P_{j}$ and $2^{j-h}P_h\subseteq P_j$.
The length $\ell$ of $P_{j}$ is at most $m/2$, since otherwise $|2^jG_d| \ge \frac{t/2}{2.08}$, contradicting (\ref{eq:2j}).
We can thus deduce that, for all $h\le j$, $P_{h}$ is an interval of length at most $1+2^{h-j}(|P_j|-1)$ around $0$ in $\mathbb{Z}_{m}$, since, for two intervals $I$, $I'$ around 0 of length $|I|\le |I'| \le m/2$ with $2I\subseteq I'$, we have $|I| \le (|I'|+1)/2$. 
Hence, $G_{d}/H$ is a subset of
an interval of length at most $1+(|P_{j}|-1)/2^{j}$. Since $G_{d}$ is not contained in a proper subgroup of $\mathbb{Z}_{t}$, $(|P_j|-1)/2^j \ge 1$. Thus, we have $1+(|P_{j}|-1)/2^{j} \le |P_{j}|/2^{j-1}$. Therefore, $G_{d}$ is contained in a union of $H$-cosets of size at most
$$|H||P_{j}|/2^{j-1}\le\frac{\ell}{\ell -1} \cdot 1.04|2^jG_d|\cdot 2^{1-j} \le \frac{\ell}{\ell-1} \cdot 1.04 \cdot 2^{3-i} |A| \le 128d,$$ 
where, in the second inequality, we used (\ref{eq:boundkG}) and (\ref{eq:2j}) and, in the final inequality, we used that $i=\lfloor\log_{2}(|A|/2d)\rfloor$ and $\ell \geq 2$. 

If there does not exist $j\in [2,i)$ such that $|2^{j+1}G_d| \le 2.04|2^jG_d|$, then 
\[
|2^{i}G_{d}|\ge2.04^{i-2}|G_{d}|\ge(|A|/2d)^{1.02}|G_{d}|/10.
\]
Combining this with (\ref{eq:boundkG}), we deduce that $|G_{d}|\le\frac{20|A|}{(|A|/2d)^{1.02}}$. 
\end{proof}

Besides Lemma \ref{lem:structure}, we need several other ingredients for the proof of Lemma \ref{lem:growth-}. We begin with the following result, which will also be useful to us in subsequent sections. For this section, the key corollary is that if $A$ is $k$-diverse for $k\ge d-1$, then $\Sigma_d(A)=\mathbb{Z}_d$.

\begin{lem}
\label{lem:full-mod-d}Let $d$ be a positive integer. Let $A$ be a set of integers such that, for each $d'|d$, at least $d'-1$ elements of $A$ are not divisible by $d'$. Then $\Sigma_{d}(A)=\mathbb{Z}_{d}$. Furthermore, if $A$ contains at least $d-1$ elements which are not divisible by $d$, then $\Sigma_d(A)$ contains a non-zero subgroup of $\mathbb{Z}_{d}$. 
\end{lem}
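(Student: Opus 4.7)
The plan is to realise $\Sigma_d(A)$ as an iterated sumset in $\mathbb{Z}_d$ and apply Kneser's theorem. For each $a\in A$ set $B_a:=\{0,\bar a\}\subseteq\mathbb{Z}_d$, where $\bar a$ denotes the residue of $a$ modulo $d$, so that $\Sigma_d(A)=\sum_{a\in A}B_a$. Let $H:=\mathrm{Stab}(\Sigma_d(A))$; since every subgroup of $\mathbb{Z}_d$ has the form $d'\mathbb{Z}_d$ for some $d'\mid d$, I write $|H|=d/d'$.

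For the first part, observe that $|B_a+H|$ equals $|H|$ when $d'\mid a$ and $2|H|$ otherwise. The hypothesis applied at this particular $d'$ furnishes at least $d'-1$ elements of $A$ that are not divisible by $d'$, hence
\[
\sum_{a\in A}|B_a+H|\,\geq\,2(d'-1)|H|+(|A|-(d'-1))|H|=(|A|+d'-1)|H|.
\]
Combining with Kneser's inequality $|\Sigma_d(A)|\geq\sum_{a\in A}|B_a+H|-(|A|-1)|H|$ then gives $|\Sigma_d(A)|\geq d'|H|=d$, so $\Sigma_d(A)=\mathbb{Z}_d$, as desired.

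For the furthermore part only the case $d'=d$ of the divisibility hypothesis is available. Again let $H=\mathrm{Stab}(\Sigma_d(A))$. Since $0\in\Sigma_d(A)$ and $\Sigma_d(A)$ is $H$-invariant, $\Sigma_d(A)\supseteq H$, so it suffices (for $d\geq 2$) to show $H\neq\{0\}$. Suppose instead that $H=\{0\}$. Then $|B_a+H|=|B_a|\in\{1,2\}$ with the value $2$ attained at least $d-1$ times, and the same Kneser bound yields $|\Sigma_d(A)|\geq (d-1)+1=d$, forcing $\Sigma_d(A)=\mathbb{Z}_d$ and hence $\mathrm{Stab}(\Sigma_d(A))=\mathbb{Z}_d$, contradicting $H=\{0\}$.

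Kneser's theorem is not among the tools explicitly recorded in Section~\ref{sec:tools}, so the one minor wrinkle is to justify its invocation. This can be avoided by a direct induction on $d$: if $H\neq\mathbb{Z}_d$, then projecting modulo $H$ gives a smaller instance in $\mathbb{Z}_{d/|H|}$ in which the divisibility hypothesis still holds for every divisor (since any divisor of $d/|H|$ also divides $d$), and the inductive conclusion $\Sigma_{d/|H|}(A)=\mathbb{Z}_{d/|H|}$ together with $H$-invariance of $\Sigma_d(A)$ forces $\Sigma_d(A)=\mathbb{Z}_d$. No other obstacle is anticipated.
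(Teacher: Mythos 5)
Your Kneser-based argument is correct, but it is a genuinely different route from the paper's. You encode $\Sigma_d(A)$ as the iterated sumset $\sum_{a\in A}\{0,\bar a\}$ and let the stabilizer $H=d'\mathbb{Z}_d$ of $\Sigma_d(A)$ do the work: since the divisibility hypothesis is available at the particular divisor $d'$ determined by $H$, the multi-summand Kneser bound $|\Sigma_d(A)|\ge\sum_a|\{0,\bar a\}+H|-(|A|-1)|H|$ immediately yields $|\Sigma_d(A)|\ge d'|H|=d$, and the same computation with $d'=d$ handles the furthermore part (if $H\neq\{0\}$ then $H\subseteq\Sigma_d(A)$ already suffices). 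The paper instead argues elementarily: it adds elements one at a time, notes via a stabilizer-type claim that each step either enlarges the mod-$d'$ subset-sum set or forces it to be a union of cosets of the subgroup generated by the remaining elements, and closes the loop by induction on the number of prime factors of $d'$. What your approach buys is brevity and a single clean inequality covering both assertions; what it costs is reliance on Kneser's theorem (in its several-summand form), a heavier external tool than anything in the paper's Section~\ref{sec:tools}, whereas the paper's proof is self-contained. Both implicitly treat the furthermore statement only for $d\ge2$, which is harmless.

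One caveat: the Kneser-free fallback you sketch at the end does not quite close on its own. Inducting on $d$ by projecting modulo $H$ only produces a smaller instance when $H\neq\{0\}$; in the trivial-stabilizer case the projection is an isomorphism and the induction does not advance, so that case still needs a separate argument (this is exactly where Kneser, or the paper's one-element-at-a-time growth argument, is doing the real work). Since your main proof simply cites Kneser, this does not affect its correctness, but as written the fallback is not a complete substitute for that citation.
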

\begin{proof}
We will use the following simple claim.

\vspace{2mm}

\noindent {\bf Claim.} If $S$ is a subset of $\mathbb{Z}_t$ and $X \subseteq \mathbb{Z}_t$ is such that $|(S+x)\cup S| = |S|$ for all $x\in X$, then $S$ is a union of cosets of the subgroup of $\mathbb{Z}_t$ spanned by $X$. 

\vspace{2mm}

\noindent {\it Proof.} If $|(S+x)\cup S| = |S|$, then $S+x = S$. Thus, by induction, we have that $S+x_1+\dots+x_k = S$ for all $k\in \mathbb{N}$ and $x_1,\dots,x_k\in X$. In particular, we have $S + \langle X\rangle = S$, where $\langle X\rangle$ is the subgroup of $\mathbb{Z}_t$ spanned by $X$. Since $S + \langle X\rangle$ is a union of cosets of $\langle X\rangle$, we obtain the desired conclusion. \qed

\vspace{2mm}

Note that $\Sigma_t(S\cup \{x\}) = \Sigma_t(S) \cup (\Sigma_t(S)+x)$. From the claim, if $S$ is a multiset in $\mathbb{Z}_t$ and $x\in \mathbb{Z}_t$ is coprime to $t$, then we have $|\Sigma_t(S\cup \{x\})| \ge \min(|\Sigma_t(S)|+1,t)$, as either $|\Sigma_t(S\cup \{x\})| \ge |\Sigma_t(S)|+1$ or $\Sigma_t(S)$ is a union of cosets of $x\mathbb{Z}_t = \mathbb{Z}_t$, so $\Sigma_t(S) = \mathbb{Z}_t$. Thus, if $B=\{b_1,\dots,b_{t-1}\}$ is a multiset of size $t-1$ consisting
of elements in $\mathbb{Z}_{t}$ coprime to $t$, then $\Sigma_{t}(B)=\mathbb{Z}_{t}$.
Indeed, this follows easily from the fact that $\Sigma_t(\emptyset) = \{0\}$ and, for each $i\ge 1$, $|\Sigma_t(\{b_1,\dots,b_i\})| \ge \min(|\Sigma_t(\{b_1,\dots,b_{i-1}\})|+1,t)$. 

Suppose now that $A$ is a set of integers such that, for each $d'|d$, at least $d'-1$ elements of $A$ are not divisible by $d'$.
We will prove that $\Sigma_{d'}(A)=\mathbb{Z}_{d'}$ for all $d'|d$ by induction on the number of prime factors (counted
with repeats) of $d'$. When $d'$ is a prime, the conclusion follows
from the observation above. Assume now that the conclusion holds whenever $d'$ has at most $j$ prime factors, for some $j\ge 1$.  

Let $d'$ be a divisor of $d$ with $j+1$ prime factors. Let $A_0$ be the multiset of elements in $A$ not divisible by $d'$, considered modulo $d'$. By our assumption, $A_0$ has size at least $d'-1$. Observe that $\Sigma_{d'}(A)=\Sigma_{d'}(A_0)$. Assume that $|\Sigma_{d'}(A_0)|<d'$. Let $\Sigma_{d'}(0)=\{0\}$. We consider the following iterative process. At step $i\ge 1$, we choose $a_{i}\in A_{i-1}$
so that $|(\Sigma_{d'}(i-1)+a_{i})\setminus\Sigma_{d'}(i-1)|$ is maximized and
let $\Sigma_{d'}(i)=\Sigma_{d'}(i-1)\cup(\Sigma_{d'}(i-1)+a_{i})$ and $A_{i}=A_{i-1}\setminus\{a_{i}\}$. Note that we consider the $A_i$ as a multiset of elements of $\mathbb{Z}_{d'}$ and the $\Sigma_{d'}(i)$ as subsets of $\mathbb{Z}_{d'}$. 

Let $i\le |A_0|$ be the first
step where $|\Sigma_{d'}(i)|\le |\Sigma_{d'}(i-1)|$. Note that $i$ must exist since, otherwise, $|\Sigma_{d'}(|A_0|)| \ge |\Sigma_{d'}(0)| + |A_0| = 1+|A_0|\ge d'$, contradicting our assumption that $|\Sigma_{d'}(|A_0|)| = |\Sigma_{d'}(A_0)| < d'$. 
Since $i$ is the first step with $|\Sigma_{d'}(i)|\le |\Sigma_{d'}(i-1)|$, we must have that $|\Sigma_{d'}(j)| \ge |\Sigma_{d'}(j-1)|+1$ for all $j<i$. Thus, $|\Sigma_{d'}(i)|\ge 1+i-1=i$. In step $i$, we have $|(\Sigma_{d'}(i-1) + a) \cup \Sigma_{d'}(i-1)|=|\Sigma_{d'}(i-1)|$ for all $a\in A_{i-1}$, so, by the claim, $\Sigma_{d'}(i-1)$ is a union of cosets of the subgroup of $\mathbb{Z}_{d'}$ spanned by $A_{i-1}$. Let $d''$ be the largest divisor of $d'$ which divides all elements in $A_{i-1}$. Then the subgroup of $\mathbb{Z}_{d'}$ spanned by $A_{i-1}$ is $d''\mathbb{Z}_{d'}$ and we have that $\Sigma_{d'}(i-1)$ is a union of $d''\mathbb{Z}_{d'}$-cosets.  
Note that $d''\ne d'$, since the elements of $A_0$ are not divisible by $d'$ and $A_{i-1}$ contains at least one element in $A_0$. Thus, $d''<d'$ and, hence, $d''$ has at most $j$ prime factors. By the induction hypothesis, $\Sigma_{d''}(A_0)=\mathbb{Z}_{d''}$.
Note that $\Sigma_{d''}(A_0)=\Sigma_{d''}(\{a_{1},\dots,a_{i-1}\})$, since
all remaining elements of $A_0$ are divisible by $d''$. Thus, $\Sigma_{d'}(i-1)$ contains an element in each $d''\mathbb{Z}_{d'}$-coset of $\mathbb{Z}_{d'}$. Since $\Sigma_{d'}(i-1)$ is a union of $d''\mathbb{Z}_{d'}$-cosets and
contains an element in each $d''\mathbb{Z}_{d'}$-coset of $\mathbb{Z}_{d'}$,
$\Sigma_{d'}(i-1)$ contains all elements of $\mathbb{Z}_{d'}$. Thus, $\Sigma_{d'}(A)=\Sigma_{d'}(i-1)=\mathbb{Z}_{d'}$, completing the induction.

For the second statement, observe, by the claim, that if $|\Sigma_{d}(S\cup\{x\})|=|\Sigma_{d}(S)|$, then $\Sigma_{d}(S)$ is a union of cosets of $x\mathbb{Z}_d$ and, as $0\in \Sigma_d(S)$, we have that $\Sigma_d(S)$ contains the subgroup $x\mathbb{Z}_d$ of $\mathbb{Z}_d$. Thus, if $A$ contains at least $d-1$ integers $a_1,\dots,a_{d-1}$ not divisible by $d$ and $\Sigma_d(A)$ does not contain a non-zero subgroup of
$\mathbb{Z}_{d}$, then we must have $|\Sigma_{d}(\{a_1,\dots,a_i\})| \ge |\Sigma_{d}(\{a_1,\dots,a_{i-1}\})|+1$ for all $i\ge 1$. But then $|\Sigma_d(A)|\ge1+(d-1)=d$, which means that $\Sigma_d(A)$ equals $\mathbb{Z}_{d}$. 
\end{proof}

We remark that the condition in the above lemma is tight, since if $d$ is prime and $A$ consists of $d-2$ elements congruent to $1$ modulo $d$, then $\Sigma_d(A)$ does not contain any non-zero subgroup of $\mathbb{Z}_d$.  

The next lemma gives an upper bound on the number of integers coprime to $W(r)/\gcd(W(r),m)$ in an arithmetic progression. Note that all integers of the form $qu$ where $u|m$ and $\gcd(q,W(r)m)=1$ are coprime to $W(r)/\gcd(W(r),m)$. The proof of this lemma uses the Selberg sieve and may be found in Appendix \ref{sec:number-theoretic}. 

\begin{lem}\label{lem:selberg} Let $r$ and $n$ be sufficiently large positive integers and $m\in [n,\binom{n}{2}]$. Let $X$ be an arithmetic progression of size
$|X|\ge r^{1/16}$ with common difference $b\le n$. Then the number of elements
of $X$ which are coprime to $W(r)/\gcd(W(r),m)$ is at most 
\[
\frac{256|X|\log\log n}{\log r}.
\]
Furthermore, when $b=1$, the number of elements
of $X$ which are coprime to $W(r)/\gcd(W(r),m)$ is at most 
\[
256|X|\prod_{p|W(r),p\nmid m}(1-1/p). 
\]
\end{lem}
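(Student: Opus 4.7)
My plan is to apply Selberg's upper bound sieve. Set $Q = W(r)/\gcd(W(r),m) = \prod_{p \le p_r,\, p \nmid m}p$, so the quantity to bound is $|S|$ where $S = \{x \in X : \gcd(x,Q) = 1\}$. I first reduce to sieving by primes coprime to $b$: for any prime $p \mid Q$ with $p \mid b$, either $p \mid a$ (the first term of $X$), in which case $|S|=0$ and both bounds hold trivially, or $p \nmid a$, in which case no element of $X$ is divisible by $p$ and $p$ drops out. Hence I may assume the sifting set is $\mathcal{P} = \{p \le p_r : p \nmid mb\}$, every element of which is coprime to $b$. Since the count of elements of $X$ divisible by any squarefree $d \mid \prod_{p \in \mathcal{P}}p$ is $|X|/d + r(d)$ with $|r(d)| \le 1$, the standard Selberg upper bound sieve gives, for any level $D \ge 2$,
\[
|S| \;\le\; \frac{|X|}{G(D)} + O\bigl(D^2 (\log D)^2\bigr), \qquad G(D) \;=\; \sum_{\substack{d \le D\\ d \mid \prod_{p \in \mathcal{P},\, p \le D}p}} \frac{\mu^2(d)}{\phi(d)}.
\]

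Since both bounds are trivially implied by $|S|\le |X|$ when $\log r = O(\log\log n)$, I may assume $r$ is large enough that $\log r$ dominates $\log\log n$. Then take $D = |X|^{1/4}/(\log|X|)^5$; using $|X|\ge r^{1/16}$ the error is $O(|X|^{1/2}(\log|X|)^{-8}) = o(|X|\log\log n/\log r)$, and $\log D \ge (\log r)/c'$ for an absolute constant $c'$. The standard one-dimensional Selberg analysis gives $G(D) \ge c_0/V(D)$ for an absolute constant $c_0 > 0$, where $V(D) = \prod_{p \le D,\, p \in \mathcal{P}}(1-1/p)$. By Mertens' third theorem $\prod_{p \le D}(1-1/p) = (e^{-\gamma}+o(1))/\log D$, and the correction from excluded small primes is
\[
\prod_{p \mid mb,\, p \le D}(1-1/p)^{-1} \;\le\; \frac{mb}{\phi(mb)} \;=\; O(\log\log n),
\]
using the classical bound $k/\phi(k) = O(\log\log k)$ together with $mb \le n^3$. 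Hence $V(D) = O(\log\log n/\log r)$, which combined with the sieve inequality gives the first bound $|S| \le 256\,|X|\log\log n/\log r$ once the absolute constants are tracked.

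For the refined bound when $b=1$, $\mathcal{P} = \{p \le p_r : p \nmid m\}$, and the target is $|S| \le 256\,|X|\,V^*$ with $V^* = \prod_{p \le p_r,\, p \nmid m}(1-1/p)$. The only new observation is that
\[
\frac{V(D)}{V^*} \;=\; \prod_{D < p \le p_r,\, p \nmid m}(1-1/p)^{-1} \;\le\; \prod_{D < p \le p_r}(1-1/p)^{-1} \;=\; \frac{\log p_r}{\log D}\bigl(1+o(1)\bigr),
\]
which is $O(1)$ because $\log D$ and $\log p_r$ are of the same order. Hence $V(D) = O(V^*)$ and the refined bound follows from the same sieve inequality. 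The main obstacle is simply the bookkeeping of absolute constants --- the Selberg constant $c_0^{-1}$, the Mertens constant $e^{\gamma}$, and the implicit constant in $k/\phi(k) \le e^{\gamma}\log\log k + O(1)$ --- which must combine to be at most $256$; this is a direct but tedious check, and one may alternatively appeal to an explicit Selberg-type inequality from the literature to bypass the computation.
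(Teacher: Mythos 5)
Your argument is essentially the paper's: both proofs bound the count via the Selberg sieve (the paper invokes the packaged arithmetic-progression form, Theorem 3.8 of Montgomery--Vaughan, with level $\sqrt{|X|}$ and constant $2$, handling the common difference by dividing the progression by its gcd with $b$, while you rerun the standard Selberg machinery with level $|X|^{1/4}/(\log|X|)^5$ and instead drop the primes dividing $b$ from the sifting set), and then both finish with Mertens' theorem together with $mb/\phi(mb)=O(\log\log n)$, respectively the comparison of $\prod_{p\le D}(1-1/p)$ with $\prod_{p\le p_r}(1-1/p)$ when $b=1$. The deferred constant bookkeeping does come out below $256$ with your parameters (roughly $80e^{\gamma}$ plus lower-order terms, using $\log D\ge\frac{1}{80}\log r$), and the only blemish is that your opening reduction is misjustified for the second bound --- $256\prod_{p\mid W(r),\,p\nmid m}(1-1/p)\ge 1$ forces $r=O(1)$, not $\log r=O(\log\log n)$ --- but since that assumption is never actually used in your treatment of the $b=1$ case, the proof stands.
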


Given a cyclic group $\mathbb{Z}_t$ and an interval of integers $[x,x+t)$, we have a natural identification $\psi_t:\mathbb{Z}_t \to [x,x+t)$, where $\psi_t(u)$ is the unique integer in $[x,x+t)$ which is congruent to $u$ modulo $t$. The next lemma shows that under this identification, for a subgroup $H$ of $\mathbb{Z}_t$, a progression of $H$-cosets is identified with a large subset of a union of long arithmetic progressions of integers. A variant of this lemma goes back at least to the proof of Roth's Theorem \cite{R53}.

\begin{lem} \label{lem:ap-mod-t} 
Let $H$ be a subgroup of $\mathbb{Z}_t$ and let $R$ be an arithmetic progression of $H$-cosets. Consider the image $\psi_t(R)$ of $R$ under the identification $\psi_t:\mathbb{Z}_t\to [x,x+t)$. Then $\psi_t(R)$ is contained in a set of size at most $3|R|$ which is a union of arithmetic progressions of integers, each of length at least $|R|^{1/3}$. 
\end{lem}

\begin{proof}
First observe that the image under $\psi_t$ of each $H$-coset is an arithmetic progression. Thus, if $|H| \ge |R|^{1/3}$, then $\psi_t(R)$ is a union of arithmetic progressions, each of length at least $|R|^{1/3}$. 

Assume now that $|H| < |R|^{1/3}$. Let $H = d\mathbb{Z}_t$ for some divisor $d$ of $t$. Let $\ell = |R|/|H|\ge |R|^{2/3}$. By definition, we can write $R = \bigcup_{i\in [\ell]} (x+iy+H)$ for some $x,y\in \mathbb{Z}_t$ and $y\notin H$. For each $H$-coset $x+iy+H$, we can choose a representative $z_i$ for the coset in $[x,x+d)$. Let $P=(z_1,z_2,\dots,z_\ell)$. We have that $P \pmod d$ forms a progression of common difference $u$ in $\mathbb{Z}_d$. We show that $P$ is contained in a set of size at most $3\ell$ which is a union of progressions of integers, each of length at least $\sqrt{\ell}=\sqrt{|R|/|H|} \ge |R|^{1/3}$. From this claim, the conclusion of the lemma easily follows. 

We claim that there exists $s\in [1,\lfloor \sqrt{\ell}\rfloor]$ such that $su$ is congruent to an integer in $[-d/\lceil \sqrt{\ell} \rceil,d/\lceil \sqrt{\ell} \rceil]$ modulo $d$. Partition $\mathbb{Z}_{d}$ into a union of intervals $[kd/\lceil \sqrt{\ell}\rceil,(k+1)d/\lceil \sqrt{\ell} \rceil)$ for $k=0,1,\dots,\lceil \sqrt{\ell} \rceil -1$.  
Suppose that there does not exist $s\in [1,\lfloor \sqrt{\ell}\rfloor]$ such that $su \pmod d \in [-d/\lceil \sqrt{\ell}\rceil,d/\lceil \sqrt{\ell}\rceil]$. Then $u,2u,\dots,\lfloor \sqrt{\ell}\rfloor u \pmod d$ must be contained in the intervals $[kd/\lceil \sqrt{\ell}\rceil,(k+1)d/\lceil \sqrt{\ell} \rceil)$ for $k=1,\dots,\lceil \sqrt{\ell} \rceil - 2$. Since $\lfloor \sqrt{\ell}\rfloor \ge \lceil \sqrt{\ell} \rceil -1$, the pigeonhole principle implies that there are $1\le s'<s'' \le \lfloor \sqrt{\ell}\rfloor$ such that $s'u \pmod d$ and $s''u \pmod d$ are in the same interval $[kd/\lceil \sqrt{\ell}\rceil,(k+1)d/\lceil \sqrt{\ell} \rceil)$. Then $s''-s'\in [1,\lfloor \sqrt{\ell}\rfloor]$ and $(s''-s')u$ is congruent to an integer $v$ in $[-d/\lceil \sqrt{\ell} \rceil,d/\lceil \sqrt{\ell} \rceil]$ modulo $d$, contradicting our assumption. 

Suppose now that $s\in [1,\lfloor \sqrt{\ell}\rfloor]$ is such that $su$ is congruent to an integer in $[-d/\lceil \sqrt{\ell} \rceil,d/\lceil \sqrt{\ell} \rceil]$ modulo $d$. Since $P \pmod d$ forms a progression of common difference $u$ in $\mathbb{Z}_d$, 
we can partition $P$ into $s$ subsets $P_1,P_2,\dots,P_s$ such that $P_i \pmod d$ is a progression with common difference $su$ in $\mathbb{Z}_d$.
Each set $P_i$ can be greedily partitioned into progressions of integers with common difference $v$ such that all of the progressions in the partition, except the first and last ones, have length at least $\lceil \sqrt{\ell} \rceil$. 
By extending arbitrarily the progressions with length less than $\lceil \sqrt{\ell}\rceil$, we obtain that $P$ is contained in a union of arithmetic progressions of integers, each of length at least $\lceil\sqrt{\ell}\rceil$, where the size of the union is at most $(\lceil\sqrt{\ell}\rceil - 1)\cdot 2s + |P| \le 3|P|$. This verifies the desired claim. 
\end{proof}

We will also need the following simple lemma in the proof of Lemma \ref{lem:growth-}.

\begin{lem}\label{lem:Su}
Let $A$ be a multiset of elements of $\mathbb{Z}_t$ and let $d$ be a divisor of $t$. Then, for any $u\in \mathbb{Z}_t/d\mathbb{Z}_t$ such that $\Sigma_t(A)\cap (u+d\mathbb{Z}_t)\ne \emptyset$, 
$$|\Sigma_t(A) \cap (u+d\mathbb{Z}_t)| \ge |\Sigma_t(A \cap d\mathbb{Z}_t)|.$$
\end{lem}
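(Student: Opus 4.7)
The plan is to decompose $A$ according to divisibility by $d$ and then translate a generic element of $\Sigma_t(A \cap d\mathbb{Z}_t)$ by a fixed witness into the prescribed coset. Let $B = A \cap d\mathbb{Z}_t$ and let $A' = A \setminus B$ (as multisets). By the hypothesis $\Sigma_t(A) \cap (u + d\mathbb{Z}_t) \neq \emptyset$, we may fix a subset $S_0 \subseteq A$ whose sum lies in $u + d\mathbb{Z}_t$.

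Split $S_0 = S_0' \sqcup S_0''$ with $S_0' \subseteq A'$ and $S_0'' \subseteq B$, and set $\sigma' := \sum_{s \in S_0'} s \pmod t$. Since every element of $B$ is divisible by $d$, the sum over $S_0''$ lies in $d\mathbb{Z}_t$; combined with $\sum_{s \in S_0} s \in u + d\mathbb{Z}_t$, this forces $\sigma' \in u + d\mathbb{Z}_t$ as well. Now for each $T \subseteq B$, the set $S_0' \cup T$ is a subset of $A$ (since $S_0' \subseteq A'$ and $T \subseteq B$ are disjoint by construction), with sum $\sigma' + \sum_{s\in T} s \pmod t$. This sum again lies in $u + d\mathbb{Z}_t$ because $\sum_{s \in T} s \in d\mathbb{Z}_t$, so it belongs to $\Sigma_t(A) \cap (u + d\mathbb{Z}_t)$.

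Therefore the image $\{\sigma' + v : v \in \Sigma_t(B)\} = \sigma' + \Sigma_t(B)$ is contained in $\Sigma_t(A)\cap(u+d\mathbb{Z}_t)$. Since translation by $\sigma'$ is a bijection of $\mathbb{Z}_t$, this translated set has the same cardinality as $\Sigma_t(B) = \Sigma_t(A \cap d\mathbb{Z}_t)$, yielding the desired inequality
\[
|\Sigma_t(A)\cap(u+d\mathbb{Z}_t)| \ge |\sigma' + \Sigma_t(B)| = |\Sigma_t(A\cap d\mathbb{Z}_t)|.
\]
There is no real obstacle here; the only subtlety is being careful to use $S_0'$ (the non-$d$-divisible part of the witness $S_0$) as the "anchor" so that adjoining any subset of $B$ keeps us inside the fixed coset and, simultaneously, is a legitimate subset of $A$ with no overlap with $S_0'$.
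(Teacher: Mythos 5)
Your proof is correct and follows essentially the same route as the paper: the paper likewise fixes a witness subset sum in the coset, strips out its $d$-divisible part to get an anchor $x$ (your $\sigma'$) that is a sum of distinct elements of $A$ outside $d\mathbb{Z}_t$, and observes that $x+\Sigma_t(A\cap d\mathbb{Z}_t)\subseteq \Sigma_t(A)\cap(u+d\mathbb{Z}_t)$. The only cosmetic difference is that the paper treats $u=0$ separately by taking $x=0$, whereas your argument handles all cosets uniformly.
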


\begin{proof}
Let $S_u = \Sigma_t(A) \cap (u+d\mathbb{Z}_t)$. For all non-zero $u\in \mathbb{Z}_{t}/d\mathbb{Z}_{t}$, if $S_u \ne \emptyset$, then we can find an element $x$ in $S_u$ which is a sum of distinct elements of $A$ which are not in $d\mathbb{Z}_t$. Thus, each element of $x+\Sigma_t(A \cap d\mathbb{Z}_t)$ can be written a sum of distinct elements in $A$, so $x+\Sigma_t(A \cap d\mathbb{Z}_t) \subseteq \Sigma_t(A)$. It is also clear that $x+\Sigma_t(A\cap d\mathbb{Z}_t) \subseteq u+d\mathbb{Z}_t$, so $x+\Sigma_t(A \cap d\mathbb{Z}_t) \subseteq S_u$. If $u=0 \in \mathbb{Z}_t/d\mathbb{Z}_t$, then letting $x=0$, we have $x+\Sigma_t(A \cap d\mathbb{Z}_t) = \Sigma_t(A \cap d\mathbb{Z}_t) \subseteq S_u$. Thus, if $S_u$ is non-empty, then $|S_u| \ge |\Sigma_t(A \cap d\mathbb{Z}_t)|$.
\end{proof}

We can now prove Lemma \ref{lem:growth-}. We recall the statement, that if $t\in[y/v,2y/v)$ and $A$ is a subset of
$Y_{v}$ of size $z/(16\ell)$ which is $y^{1/4}/(128\ell)$-diverse, then $|\Sigma_{t}(A)|\ge\min(\xi,32/\ell)t$. 

\begin{proof}[Proof of Lemma \ref{lem:growth-}]
We consider the following iterative process. Let $\Sigma_t(0)=\{0\}$ and $A_0 = A$. At each step $i\ge1$,
we pick an element $a_{i}$ in $A_{i-1}$ and let $\Sigma_{t}(i)=\Sigma_{t}(i-1)\cup(\Sigma_{t}(i-1)+a_{i})$
and $A_{i}=A_{i-1}\setminus\{a_{i}\}$. In particular, $\Sigma_{t}(i)= \Sigma_{t}(\{a_1,\dots,a_i\}) \subseteq \Sigma_{t}(A)$ for all $i\le |A|$. Let $d_{i}=\gcd(A_{i-1})$. For $i\le |A|/2$, $A_{i-1}$ is a subset of $Y_v$ of size at least $\frac{|A|}{2}\ge \frac{z}{32\ell} > 64(m/\phi(m))\tau(r,m)\frac{y}{r\log r}$, where we used (\ref{eq:z}) and assumed that $r$ is sufficiently large in terms of $\ell$. Thus, $\{vx:x\in A_{i-1}\}$ is a subset of $Y$ of size larger than $64(m/\phi(m))\tau(r,m)\frac{y}{r\log r}$ whose elements are divisible by $vd_{i}$. By Lemma \ref{lem:div-by-divisors}, we obtain $vd_{i}\le y^{1/16}$ and all elements of $A_{i-1}$ have the form $qu$ where $u \mid (m/v)$, $u\le y^{1/16}/v \le y^{1/16}$, $d_i|u$ and $\gcd(q,W(r)m)=1$. We will run the above process for at most $|A|/2$ steps, so we may assume that $i \leq |A|/2$ and these conclusions hold throughout.

For each $i$, we say that step $i$ is either a {\it growth phase}, an {\it unsaturated phase} or a {\it saturated phase}. Note that the cosets of $d_{i}\mathbb{Z}_{t}$ can be indexed by elements of $\mathbb{Z}_{d_{i}}$. For each $u\in\mathbb{Z}_{d_{i}}$, let $S_{u}=\Sigma_t(i-1)\cap(u+d_{i}\mathbb{Z}_{t})$. We say that $i$ is {\it a growth phase} if there exists $u\in \mathbb{Z}_{d_{i}}$ such that $S_u$ is non-empty and has size at most $y^{3/4}$. We say that $i$ is {\it an unsaturated phase} if it is not a growth phase and there exists $u\in \mathbb{Z}_{d_{i}}$ such that $y^{3/4}<|S_u|<\frac{\xi t}{d_i}$. Finally, if step $i$ is neither a growth phase nor an unsaturated phase, then it is a {\it saturated phase}. 

Next we describe how $a_{i}$ is chosen. Let $\Sigma(d,i-1)=\{\sum_{j\in S}a_{j}\pmod t:S\subseteq[i-1]\cap\{j:d|a_{j}\}\}$. Then $\Sigma(d,i-1)$ is a subset of the subgroup $d\mathbb{Z}_t$ of $\mathbb{Z}_t$, which can be identified with $\mathbb{Z}_{t/d}$. We identify $\Sigma(d,i-1)$ with a subset of $\mathbb{Z}_{t/d}$. Similarly, we can identify $A_{i-1}$ with a subset of $\mathbb{Z}_{t/d_i}$. 
If $i$ is a growth phase, we pick $a_{i}$ such that $|\Sigma(d_{i},i)|-|\Sigma(d_{i},i-1)|$
is maximized. Otherwise, if $i$ is not a growth phase, we pick
$a_{i}$ such that $|\Sigma_{t}(i)|-|\Sigma_{t}(i-1)|$ is maximized. 

The following claims capture the key steps in the proof. 

\vspace{2mm} 

\noindent{\bf Claim 1.} The number of growth phases among the first $|A|/2$ steps is at most $(256\ell y^{3/4}/z+\log_{3/2}t+2)(\log_2y^{1/16}+1)$.

\vspace{2mm}

\noindent{\bf Claim 2.} If $i\le |A|/2$ is an unsaturated phase, then $|\Sigma_t(i)|-|\Sigma_t(i-1)| > 2^{12}y/z$. 

\vspace{2mm}

Claim 2 is the most important step in the proof and will take up most of our time. However, before proving these claims, let us see how Lemma \ref{lem:growth-} follows from combining them. 

First, suppose that there exists $i\le |A|/2$ such that $i$ is a saturated phase. By Lemma \ref{lem:full-mod-d}, since $A$ is $y^{1/4}/(128\ell)$-diverse and $d_i < y^{1/4}/(128\ell)$, $\Sigma_{d_i}(\{a_1,\dots,a_{i-1}\})=\Sigma_{d_i}(A)=\mathbb{Z}_{d_i}$. Hence, $S_{u}$ is non-empty for all $u\in\mathbb{Z}_{d_{i}}$. Since $i$ is a saturated phase, we have that $|S_u| \ge \frac{\xi t}{d_i}$ for all $u\in \mathbb{Z}_{d_i}$, so $|\Sigma_t(i-1)| = \sum_{u\in \mathbb{Z}_{d_i}}|S_u| \ge \xi t$. Therefore, $|\Sigma_t(A)| \ge |\Sigma_t(i-1)| \ge \xi t$, as desired. 

Next, suppose that no $i\le |A|/2$ is a saturated phase. In this case, if $i\le |A|/2$ is not a growth phase, it must be an unsaturated phase and, by Claim 2, we have $|\Sigma_t(i)| > |\Sigma_t(i-1)| + 2^{12}y/z$. Since Claim 1 implies that there are at least $|A|/2 - (256\ell y^{3/4}/z+\log_{3/2}t+2)(\log_2y^{1/16}+1)$ unsaturated phases in the first $|A|/2$ steps and $|A| = z/(16\ell)$, we have 
\[
|\Sigma_{t}(A)|\ge|\Sigma_t(|A|/2)|\ge\frac{2^{12}y}{z}\cdot\left(\frac{z}{32\ell}-(256\ell y^{3/4}/z+\log_{3/2} t+2)(\log_2 y^{1/16}+1)\right)\ge \frac{2^{12}y}{64\ell}\ge\frac{32t}{\ell},\]
as required.
\end{proof}

We next give the proofs of Claims 1 and 2, beginning with the simpler of the two.
 
\begin{proof}[Proof of Claim 1]
First, we show that in each step $i \le |A|/2$, if $|\Sigma(d_{i},i-1)|<|A_{i-1}|/2$, then 
\begin{equation}
|\Sigma(d_{i},i)|-|\Sigma(d_{i},i-1)| \ge \max_{a\in A_{i-1}} |(\Sigma(d_{i},i-1)+a)\setminus \Sigma(d_{i},i-1)| \ge |\Sigma(d_{i},i-1)|/2, \label{eq:grow-Sigma-1}
\end{equation}
while if $|A_{i-1}|/2 \le |\Sigma(d_{i},i-1)| \le y^{3/4}$, then 
\begin{equation}
|\Sigma(d_{i},i)|-|\Sigma(d_{i},i-1)| \ge \max_{a\in A_{i-1}} |(\Sigma(d_{i},i-1)+a)\setminus \Sigma(d_{i},i-1)| \ge |A_{i-1}|/8. \label{eq:grow-Sigma-2}
\end{equation}

The first bound (\ref{eq:grow-Sigma-1}) follows directly from Lemma \ref{lem:double-count}, since the set of elements $a\in \mathbb{Z}_{t/d_i}$ for which $$|(\Sigma(d_{i},i-1)+a)\setminus \Sigma(d_{i},i-1)| < |\Sigma(d_{i},i-1)|/2$$has size at most $2|\Sigma(d_{i},i-1)| < |A_{i-1}|$. 

For the second bound (\ref{eq:grow-Sigma-2}), assume, for the sake of contradiction, that for some step $i \le |A|/2$ where $|A_{i-1}|/2\le |\Sigma(d_{i},i-1)| \le y^{3/4}$, $|\Sigma(d_{i},i)|-|\Sigma(d_{i},i-1)| < |A_{i-1}|/8$. Then, for all $a\in A_{i-1}$, we have $ |(\Sigma(d_{i},i-1)+a)\setminus \Sigma(d_{i},i-1)| < |A_{i-1}|/8$. Let $k = \left \lfloor \frac{4|\Sigma(d_i,i-1)|}{|A_{i-1}|}\right\rfloor$ and let
\[
T = \{a\in \mathbb{Z}_{t/d_i} : |(\Sigma(d_{i},i-1)+a)\setminus \Sigma(d_{i},i-1)| < |A_{i-1}|/8\}.
\]
We have $A_{i-1}\subseteq T$ and $0\in T$. By Lemma \ref{lem:stable-period}, for any $a\in kT$, 
$$|(\Sigma(d_{i},i-1)+a)\setminus \Sigma(d_{i},i-1)| < |\Sigma(d_{i},i-1)|/2.$$ 
Hence, by Lemma \ref{lem:double-count}, we have $|k(A_{i-1}\cup \{0\})| \le |kT| \le 2|\Sigma(d_i,i-1)|$. Using that $vd_i \le y^{1/16}$ and $t \ge y/v$, we have $|\Sigma(d_i,i-1)|\le y^{3/4} < t/(2d_i)$, so $|k(A_{i-1}\cup \{0\})| < t/d_i$. Identified as a subset of $\mathbb{Z}_{t/d_i}$, $A_{i-1}$ is not a subset of any proper subgroup of $\mathbb{Z}_{t/d_i}$ by the definition of $d_i$, so $A_{i-1}\cup \{0\}$ is not contained in any coset of a proper subgroup of $\mathbb{Z}_{t/d_i}$. Therefore, by Lemma \ref{lem:Cauchy-Davenport}, we have 
\[
|A_{i-1}\cup \{0\}| \le \frac{2}{k+1} |k(A_{i-1}\cup \{0\})| \le \frac{4|\Sigma(d_i,i-1)|}{k+1} < \frac{4|\Sigma(d_i,i-1)|}{4|\Sigma(d_i,i-1)|/|A_{i-1}|} = |A_{i-1}|,
\]
a contradiction. 

Using (\ref{eq:grow-Sigma-1}) and (\ref{eq:grow-Sigma-2}), we may quickly complete the proof of Claim 1. Note that, by Lemma \ref{lem:Su}, for any $u$ such that $S_u$ is non-empty, $|S_u| \ge |\Sigma(d_{i},i-1)|$. Thus, if $i$ is a growth phase, then $|\Sigma(d_i,i-1)|\le y^{3/4}$. 
Note that $d_{i}|d_{i+1}$, so either $d_{i+1}=d_{i}$ or $d_{i+1}\ge2d_{i}$.
As $d_{i}\le y^{1/16}$ for $i\le |A|/2$, $d_{i}$ can change at most $1+\log_2 y^{1/16}$
times in the first $|A|/2$ steps. 
By (\ref{eq:grow-Sigma-1}), if $|\Sigma(d_{i},i-1)|<|A_{i-1}|/2$, then $|\Sigma(d_{i},i)|-|\Sigma(d_{i},i-1)| \ge |\Sigma(d_i,i-1)|/2$. Thus, for each period among the first
$|A|/2$ steps where $d_{i}$ remains constant, the number of steps
where $|\Sigma(d_{i},i-1)|<|A_{i-1}|/2$ is at most $1+\log_{3/2}t$,
since, in each such step, $|\Sigma(d_{i},i-1)|$ grows by a factor of at least $3/2$. For the remaining steps in this period, where $|A_{i-1}|/2 \le |\Sigma(d_{i},i-1)| \le y^{3/4}$, (\ref{eq:grow-Sigma-2}) implies that $|\Sigma(d_{i},i)|-|\Sigma(d_{i},i-1)| \ge |A_{i-1}|/8 \ge z/(256\ell)$ in each step, so there are at most
$1 + 256\ell y^{3/4}/z$ more growth phases where $d_{i}$
stays constant. Thus, the number of growth phases among the first $|A|/2$
steps in each period where $d_{i}$ stays constant is at most $256 \ell y^{3/4}/z+\log_{3/2}t+2$.
Since $d_{i}$ can change at most $1+\log_2 y^{1/16}$ times in the first $|A|/2$ steps, there are
at most $(256\ell y^{3/4}/z+\log_{3/2}t+2)(\log_2y^{1/16}+1)$ growth phases in the
first $|A|/2$ steps. \qedhere
\end{proof}

Finally, we give the proof of Claim 2, thereby completing the proof of Lemma \ref{lem:growth-}. 
\begin{proof}[Proof of Claim 2]
Let $i$ be an unsaturated phase with $i\le |A|/2$. Assume, for the sake of contradiction, that $|\Sigma_t(i)|-|\Sigma_t(i-1)| \le 2^{12}y/z$. Since $i$ is not a growth phase and not a saturated phase, there exists $u\in \mathbb{Z}_{d_i}$ such that $y^{3/4}<|S_{u}| < \frac{\xi t}{d_{i}}$. 

We now view $S_{u}$ and $A_{i-1}$ as subsets of $\mathbb{Z}_{t/d_{i}}$. Note that, by the definition of $d_i$, $A_{i-1}$ is not a subset of any proper subgroup of $\mathbb{Z}_{t/d_i}$. Let $B$ be the set of elements $a$ of $\mathbb{Z}_{t/d_{i}}$ such that $|(S_u+a)\setminus S_u| \le 2^{12}y/z$. By our choice of $a_i$ and our assumption that $|\Sigma_t(i)|-|\Sigma_t(i-1)| \le 2^{12}y/z$, we have $A_{i-1} \subseteq B$. Since $|S_u|< \frac{\xi t}{d_i}$ and $|S_u|> y^{3/4} > 8 \cdot 2^{12}y/z$ by (\ref{eq:z}), we can apply Lemma \ref{lem:structure} to conclude that either the set $B$ is contained in a proper subgroup of $\mathbb{Z}_{t/d_i}$, $B$ has size at most $\frac{20(2^{13}y/z)^{1.02}}{|S_{u}|^{0.02}}$ or there is a subgroup $H$ of $\mathbb{Z}_{t/d_{i}}$ such that $B$ is contained in a set of size at most $2^{20}y/z$ which is an arithmetic progression of $H$-cosets.
The first possibility cannot hold, since $B$ contains $A_{i-1}$ which is not a subset of any proper subgroup of $\mathbb{Z}_{t/d_i}$. The second possibility also cannot hold, since 
$$\frac{20(2^{13}y/z)^{1.02}}{|S_{u}|^{0.02}}\le\frac{2^{5+13\cdot1.02}y^{1.005}}{z^{1.02}}<|A|/2 \le |A_{i-1}| \le |B|,$$ 
where we used the bound $|S_u| > y^{3/4}$.  
Therefore, there is a subgroup $H$ of $\mathbb{Z}_{t/d_{i}}$ such that
$A_{i-1}$, identified as a subset of $\mathbb{Z}_{t/d_i}$, is contained in a set $R$ of size at most $2^{20}y/z$ which is an arithmetic progression of $H$-cosets. We can identify the elements
of $\mathbb{Z}_{t/d_{i}}$ with elements in $[y/v,y/v+t)\supseteq[y/v,2y/v)$
which are divisible by $d_{i}$. Under this identification, $R$ is identified with a set of integers which contains $A_{i-1}$. 

By Lemma \ref{lem:ap-mod-t}, under the above identification, the image of $R$ is contained in a set of integers of size at most $3|R|$ which is a union of arithmetic progressions $P_s$, $s\in \mathcal{S}$, of integers, each of length at least $|R|^{1/3}$. We have $|R| \ge |A_{i-1}| \ge z/(32\ell)$. Thus, $A_{i-1}$ is contained in a set of size at most $2^{22}y/z$ which is a union of arithmetic progressions $P_s$ of integers, each of length at least $(z/(32\ell))^{1/3}\ge r^{1/16}$, by (\ref{eq:z}) and (\ref{eq:ycond}) from Appendix \ref{appendix:monochromatic}. 
Recall that $A_{i-1}$ is a subset of $[y/v,2y/v)$ consisting of elements of the form $qu$, where $u|m$, $u\le y^{1/16}$,
$d_{i}|u$ and $\gcd(q,W(r)m)=1$. Note that each element of the form $qu$ where $u|m$ and $\gcd(q,W(r)m)=1$ is coprime to $W(r)/\gcd(W(r),m)$. Since each arithmetic progression $P_s$ has length at least $r^{1/16}$ and common difference at most $n$, Lemma \ref{lem:selberg} implies that the number of elements in $P_s$ of the form $qu$, where $u|m$, $u\le y^{1/16}$ and $\gcd(q,W(r)m)=1$, is at most 
\[
256|P_s|\cdot\frac{\log\log n}{\log r}.
\] 
Thus, the number of elements of $R$ (identified with a subset of $[y/v,y/v+t)$) of the form $qu$, where $u|m$, $u\le y^{1/16}$ and $\gcd(q,W(r)m)=1$, is at most 
\[
256 (\sum_{s\in \mathcal{S}}|P_s|)\cdot\frac{\log\log n}{\log r} \le 256\cdot \frac{2^{22}y}{z} \cdot\frac{\log\log n}{\log r}. 
\]
We claim that $$256\cdot\frac{2^{22}y}{z}\cdot\frac{\log\log n}{\log r}<\frac{z}{32l}.$$
This holds if $r \ge 10 \log m/ \log \log m$, since then, from (\ref{eq:tau-log}) in Appendix \ref{appendix:monochromatic}, $\tau(r,m) \ge 1/(4\log r)$ and 
\begin{align*}
\frac{\ell y\log\log n}{z^{2}\log r}&\le\frac{100\ell yr^{2}(\log\log n)}{(\log r)(m/\phi(m))^{2}y^{2}\tau(r,m)^2}
\le \frac{100\ell r^{2}(\log\log n)}{(\log r) \tau(r,m)^2 (m/\phi(m))^2}\cdot \sqrt{\frac{\tau(r,m)(m/\phi(m))}{15mr}}\\
&\le100\ell \sqrt{\frac{64r^3(\log r)(\log \log n)^2}{m(m/\phi(m))^3}}
\le 100\ell \sqrt{64c^3}
\le2^{-40},
\end{align*}
where we used (\ref{eq:z}) in the first inequality, the definition of $y$ in the second inequality, the bound $\tau(r,m)\ge 1/(4\log r)$ in the third inequality, the bound $r\le c\psi(n,m) = c\frac{m^{1/3}(m/\phi(m))}{(\log n)^{1/3}(\log \log n)^{2/3}}$ in the fourth inequality and, in the last inequality, we assumed a sufficiently small choice of $c$ (depending on $\ell$). Next, assume that $r<10\log m/\log \log m$. In this case, we have that $r = c{\cal R}(n,m) = c\rho(n,m)$. Furthermore, $m \ge \frac{n^2}{(\log n)^2}$ as, for $ \frac{n^{3/2}(\log\log n)^{1/2}}{(\log n)^{1/2}}\le m\le \frac{n^2}{(\log n)^2}$, $r = c\rho(n,m) =\Theta\left(\frac{n^2/\phi(m)}{\log(n^2/\phi(m))}\right)$ by Claim \ref{claim:orderrho}, so $r > 10 \log m/\log \log m$ for sufficiently large $n$, contradicting our assumption. We also have that $\tau(r,m) \ge (\phi(m)/m)/(2\log r)$ by (\ref{eq:tau-log2}) in Appendix \ref{appendix:monochromatic}, so 
\begin{align*}
\frac{\ell y\log\log n}{z^{2}\log r}&\le  \frac{100\ell r^{2}(\log\log n)}{(\log r) \tau(r,m)^2 (m/\phi(m))^2}\cdot \sqrt{\frac{\tau(r,m)(m/\phi(m))}{15mr}}\\
&\le 100\ell \sqrt{\frac{8r^3(\log r)(\log \log n)^2}{m}}
\le 100\ell \sqrt{\frac{10^5(\log n)^3}{n^2/(\log n)^2}}
\le 2^{-40},
\end{align*} assuming that $n$ is sufficiently large, where in the third inequality we used $r < 10\log m/\log \log m < 20\log n/\log \log n$. 
Thus, in both cases, $$256\cdot\frac{2^{22}y}{z}\cdot\frac{\log\log n}{\log r}<\frac{z}{32\ell}.$$
This is a contradiction since there are at least $\frac{z}{32\ell}$ integers of the form
$qu$ where $u|m,u\le y^{1/16}$ and $\gcd(q,W(r)m)=1$
contained in $R$. Hence, in each step $i\le|A|/2$ which is an unsaturated phase, $|\Sigma_t(i)|$ grows by at least $2^{12}y/z$. 
\end{proof}

Using Lemma \ref{lem:growth-}, we can now give the proof of Lemma
\ref{lem:build-prime-class}, thus completing our proof of the lower bound in Theorem \ref{thm:monochromatic-subset-sums}. Again we recall the statement, that if $\ell =\lceil32/\xi\rceil$ 
and $A$ is a subset of $Y_{v}$ of size $z/(8\ell)$ which is $y^{1/4}/(32\ell)$-diverse, then $|\Sigma(A)|\ge 3yz/(4\ell^{2}v)$
and $\Sigma(A)$ is not a subset of an arithmetic progression with common difference greater than $1$. 

\begin{proof}[Proof of Lemma \ref{lem:build-prime-class}]
Using Lemma \ref{lem:diverse}, we can partition $A$ into two sets $A_{1},A_{2}$ such that $|A_{1}|=|A_{2}|=z/(16\ell)$
and $A_{1}$ is $y^{1/4}/(128\ell)$-diverse.
By Lemma \ref{lem:growth-}, for each $t$ in $A_{2}$, $|\Sigma_{t}(A_{1})|\ge\min(\xi,32/\ell)t$.
Recall that we chose $\ell=\lceil32/\xi\rceil$, so
$|\Sigma_{t}(A_{1})|\ge32t/\ell$. Therefore, by repeated applications of Lemma \ref{lem:modp},
\[
|\Sigma(A)|=|\Sigma(A_1\cup A_2)|\ge \sum_{t \in A_2} \frac{32 t}{\ell} \ge |A_2| \cdot \frac{32y}{\ell v} > \frac{yz}{\ell^2 v}.  
\]
Finally, we prove that $\Sigma(A)$ is not contained in an arithmetic
progression with common difference $d$ larger than $1$. Indeed,
if $\Sigma(A)$ were contained in an arithmetic progression with common
difference $d$ larger than $1$, then all elements of $\Sigma(A)$
would be in the same congruence class modulo $d$, which in turn implies that all elements of
$A$ would be divisible by $d$. 
But this is impossible since $A$ is $y^{1/4}/(32\ell)$-diverse. 
\end{proof}

\subsection{Proof of the upper bound in Theorem \ref{thm:monochromatic-subset-sums}}

In this section, we show how to improve on the construction of Alon and Erd\H{o}s~\cite{AEr} described in the introduction. We begin with the following simple claim.

\begin{claim}
\label{claim:exists-d}There exists a positive constant $\kappa$
such that the following holds. For each positive integer $m$, let $d_m$ be the product of all the primes at most $(\log m)/64$ which are not prime divisors of $m$, where $d_m = 1$ if the product is over an empty set. Then, for $m$ sufficiently large, $d_{m}\le m^{1/32}$, $\gcd(d_{m},m)=1$ and $\frac{m}{\phi(m)}\cdot\frac{d_{m}}{\phi(d_{m})}\ge\kappa\log\log m$. 
\end{claim}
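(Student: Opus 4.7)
The plan is to verify each of the three properties in turn using standard prime-counting estimates.

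First, since $d_m$ divides $\prod_{p \le (\log m)/64} p$, the Chebyshev (or prime number theorem) estimate $\prod_{p \le x} p = e^{(1+o(1))x}$ gives
\[
d_m \;\le\; \prod_{p \le (\log m)/64} p \;=\; e^{(1+o(1))(\log m)/64} \;\le\; m^{1/32}
\]
for all sufficiently large $m$, which yields the first bound.

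The second property, $\gcd(d_m,m)=1$, is immediate from the definition: by construction $d_m$ is a product of primes that do not divide $m$, hence no prime divides both $d_m$ and $m$.

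For the third property, observe that
\[
\frac{m}{\phi(m)} \cdot \frac{d_m}{\phi(d_m)} \;=\; \prod_{p \mid m}\frac{p}{p-1}\cdot\prod_{p \mid d_m}\frac{p}{p-1}.
\]
The set of primes appearing in this product is $\{p : p\mid m\}\cup\{p\le (\log m)/64:\, p\nmid m\}$, which contains every prime $p\le (\log m)/64$. Since each factor $\tfrac{p}{p-1}$ exceeds $1$, dropping all other factors yields the lower bound
\[
\frac{m}{\phi(m)} \cdot \frac{d_m}{\phi(d_m)} \;\ge\; \prod_{p \le (\log m)/64}\frac{p}{p-1}.
\]
By Mertens' third theorem, $\prod_{p\le x}\tfrac{p}{p-1} = (e^{\gamma}+o(1))\log x$, so this lower bound is $(e^\gamma+o(1))\log\log m - (e^\gamma+o(1))\log 64 \ge \kappa \log\log m$ for a suitable absolute constant $\kappa>0$ and all $m$ large enough. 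No step here is difficult; the only minor care needed is in choosing $\kappa$ small enough to absorb the additive $-\log 64$ term and the $o(1)$ error from Mertens' theorem.
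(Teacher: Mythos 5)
Your proof is correct and follows essentially the same route as the paper: the key observation in both is that every prime up to $(\log m)/64$ divides either $m$ or $d_m$, so the product $\frac{m}{\phi(m)}\cdot\frac{d_m}{\phi(d_m)}$ is at least $\prod_{p\le(\log m)/64}\frac{p}{p-1}$, which Mertens' theorem shows is $\Omega(\log\log m)$. The only cosmetic difference is that you invoke Mertens' third theorem directly while the paper passes through $\frac{p}{p-1}\ge e^{1/p}$ and Mertens' second theorem, and you spell out the (easy) bound $d_m\le m^{1/32}$ that the paper leaves to the reader.
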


\begin{proof}
It is easy to see that $d_{m}\le m^{1/32}$. Furthermore, 
\begin{align*}
\frac{m}{\phi(m)}\cdot\frac{d_{m}}{\phi(d_{m})} & \ge\prod_{p\le(\log m)/64}\frac{p}{p-1}\ge\exp\left(\sum_{p\le(\log m)/64}\frac{1}{p}\right)\\
 & \ge\exp(\log\log((\log m)/64)-\kappa')\ge\kappa\log\log m,
\end{align*}
for some absolute constants $\kappa',\kappa$. 
\end{proof}

We are now ready to prove the upper bound in Theorem~\ref{thm:monochromatic-subset-sums}. For the sake of easy reference, we recall the statement, that, for all $n$ sufficiently large and $m\in [n,\binom{n}{2}]$,
\[
f(n,m)=O\left({\cal R}(n,m)\right),
\] where ${\cal R}(n,m)=\min\left(\psi(n,m),\rho(n,m)\right)$. Here $\psi(n,m)=\frac{m^{1/3}(m/\phi(m))}{(\log n)^{1/3}(\log\log n)^{2/3}}$ and $\rho(n,m)$ is the smallest positive integer $\rho$ such that $\rho\prod_{p | W(\rho)m}(1-1/p)^{-1} \ge n^2/\phi(m)$, where $W(\rho)$ is the product of the first $\rho$ primes. We also recall from the previous subsection that when $m = O\left( \frac{n^{3/2}(\log\log n)^{1/2}}{(\log n)^{1/2}}\right)$, we have ${\cal R}(n,m) = \Theta\left(\psi(n,m)\right)$ and when $m=\Omega\left(\frac{n^{3/2}(\log\log n)^{1/2}}{(\log n)^{1/2}}\right)$, we have ${\cal R}(m,n) = \Theta(\rho(n,m))$.

\begin{proof}[Proof of the upper bound in Theorem~\ref{thm:monochromatic-subset-sums}]
We first consider the case where $m\le\frac{n^{3/2}(\log\log n)^{1/2}}{(\log n)^{1/2}}$. 
Let $r=C\psi(n,m)=\frac{Cm^{1/3}(m/\phi(m))}{(\log n)^{1/3}(\log\log n)^{2/3}}$
for a sufficiently large constant $C$. Note that $r\ge\frac{n^{1/3}}{(\log n)^{2/3}}$.
Our aim is to construct an $r$-coloring of $[1,n-1]$ such that
the set of subset sums of each color class does not contain $m$. We will do this in four steps. 

\vspace{2mm}
\noindent
\textbf{Step 1.} For $k=2,3,\dots,\frac{r}{2}$, we form a color
class $S_{1}(k)=\{\lceil\frac{m}{k+1}\rceil,\lceil\frac{m}{k+1}\rceil+1,\dots,\lfloor\frac{m}{k}\rfloor\}\cap[n-1]$, while,
for $k=1$, we take $S_{1}(1)=\{\lceil\frac{m}{2}\rceil,\lceil\frac{m}{2}\rceil+1,\dots,m-1\}\cap[n-1]$. As defined, the color
classes may overlap, but we can safely assign any element in the overlap to any color class that includes it.
Crucially, no subset sum of $S_{1}(k)$ can contain $m$, since the sum
of at most $k$ elements from $S_{1}(k)$ is less than $m$, while any
sum of $k+1$ elements from $S_{1}(k)$ is larger than $m$. Let $X_{1}=\bigcup_{k=1}^{r/2}S_{1}(k)$. 

\vspace{2mm}
\noindent
\textbf{Step 2.} For each of the first $r/4$ primes $p$ which are coprime to $m$, we form a color class
$S_{2}(p)=\{kp:k\in\mathbb{N}\}\cap[1,n-1]$, noting that no subset sum of
$S_{2}(p)$ can contain $m$, since each element of $\Sigma(S_{2}(p))$
is a multiple of $p$. Let $X_{2}=\bigcup_{p\le p_{r/4},p\nmid m}S_{2}(p)$. 

\vspace{2mm}
\noindent
\textbf{Step 3.} Let $R=[1,n-1]\setminus (X_{1}\cup X_2)$. The construction in \cite{AEr} also uses color classes like those defined in Steps 1 and 2. They then arbitrarily partition the remaining elements $R$ so that the sum of the elements in each of the classes is smaller than $m$. For our improvement, we need to be more careful. Note that elements in the remainder set $R$ are natural numbers
$t\le\frac{2m}{r}$ such that $t$ is coprime to $W(r/4)/\gcd(W(r/4),m)$, that is, all prime divisors of $t$ which are at most $p_{r/4}$ are also prime divisors of $m$. In particular, $t$ is coprime to the integer $d_{m}$ given by Claim~\ref{claim:exists-d}, since $d_m$ has only prime factors at most $(\log m)/64<r/4$ which are not prime divisors of $m$. 

With $\kappa$ also as in Claim~\ref{claim:exists-d}, we next show that there exists a multiple $d$ of $d_{m}$ such that 
\begin{equation}
\kappa(\phi(m)/m)r(\log\log n)/64\le d\le\kappa(\phi(m)/m)r(\log\log n)/32, \label{eq:boundd}
\end{equation}
$\gcd(d,m)=1$ and the largest prime factor of $d$ is at most $p_{r/4}$. Let $x = \frac{\kappa(\phi(m)/m)r\log\log n}{50d_m}$. Note that $x\ge \frac{\kappa r}{100 d_m \log \log n} > n^{1/4}$, since $r\ge n^{1/3}/(\log n)^{2/3}$ and $d_m \le m^{1/32}\le n^{1/16}$. Since $m$ has at most $\log m \le 2\log n$ distinct prime factors, 
there exists a prime $p < n^{1/100}$ such that $p$ does not divide $m$. Let $k$ be the smallest positive integer such that $x/p^k < n^{1/100}$. Then $x/p^{k-1}\in [n^{1/100},n^{2/100}]$ and, by the prime number theorem, the interval $[(1-1/100)x/p^{k-1},(1+1/100)x/p^{k-1}]$ has at least $10^{-3}n^{1/100}/(\log n)$ primes for sufficiently large $n$. Thus, there exists a prime $p'$ in this interval which does not divide $m$. Then $p'p^{k-1} \in [(1-1/100)x,(1+1/100)x]$ and $\gcd(p'p^{k-1},m)=1$, since $p'$ and $p$ are primes which do not divide $m$. We can now verify that $d=p'p^{k-1}d_m$ satisfies (\ref{eq:boundd}), $\gcd(d,m)=1$ and the largest prime factor of $d$ is at most $2n^{2/100} < p_{r/4}$ (noting that all prime factors of $d_m$ are at most $(\log m)/64 < 2n^{2/100}$). Since $d_m|d$, we also have $d/\phi(d)\ge d_{m}/\phi(d_{m})$. Furthermore, all elements of $R$ are coprime to $d$, since any element in $R$ is coprime to $W(r/4)/\gcd(W(r/4),m)$, whereas $d$ is coprime to $m$ and all prime factors of $d$ are at most $p_{r/4}$. 

Fix $s \in \mathbb{Z}_d^\times$. Then there exist $k$ integers congruent to $s\pmod d$ that sum to $m$ only
if $sk\equiv m\pmod d$. Let $x_s$ be the positive integer in $[d]$ that is congruent to $s^{-1}m\pmod d$. Consider now the color classes
\[
S_{s,1}=\left\{ t:\,t\in R,t\equiv s \, (\bmod \, d), t\ge\frac{m}{x_s}\right\} ,
\]
\[
S_{s,2}=\left\{ t:\,t\in R,t\equiv s \, (\bmod \, d),\frac{m}{d+x_s}\le t<\frac{m}{x_s}\right\}.
\]
If a sum of $k$ elements in $S_{s,1}$ is equal to $m \pmod d$,
then $k\ge x_s$. But then the sum of the $k$ elements is larger than $m$. Similarly, if a sum of $k$ elements in $S_{s,2}$
is equal to $m \pmod d$, then $k=x_s$ or $k\ge d+x_s$.
But the sum of $k$ elements is less than
$m$ if $k=x_s$ and larger than $m$ if $k\ge d+x_s$. Thus,
$m\notin\Sigma(S_{s,1})\cup\Sigma(S_{s,2})$. Note that in this step we have in total defined $2\phi(d)$ color classes of the form $S_{s,1}$ and $S_{s,2}$ for $s\in \mathbb{Z}_d^\times$. 

\vspace{2mm}
\noindent
\textbf{Step 4.} Let $R'=R\setminus(\bigcup_{s \in \mathbb{Z}_d^\times}(S_{s,1}\cup S_{s,2})).$ Then all elements of $R'$ are less than $m/d$. Thus, if we arbitrarily
partition $R'$ into sets of size at most $d$, then no set contains
a subset sum which is equal to $m$. Hence, we need at most $|R'|/d$
colors to color $R'$ so that no color class contains $m$ as a subset
sum. Recall that any element in $R'$ is coprime to $W(r/4)/\gcd(W(r/4),m)$. By the second part of Lemma~\ref{lem:selberg}, applied to the interval $[1, m/d]$, we have 
\begin{align*}
|R'|&\le 256(m/d) \prod_{p|W(r/4), p\nmid m}(1-1/p) \\
&\le 256(m/d) \prod_{p|W(r/4)}(1-1/p) \prod_{p|m, p \leq r/4}(1-1/p)^{-1}\\
&<\frac{500m(m/\phi(m))}{d\log r},
\end{align*} where we used that $r$ is sufficiently large, so that $\prod_{p|W(r/4)}(1-1/p) \le 1.1/\log r$, and $\prod_{p|m,p\le r/4}(1-1/p)^{-1} \le m/\phi(m)$. 
Therefore, the number of color classes used in Step 4 is at most 
\begin{align*}
\frac{500m(m/\phi(m))}{d^{2}\log r} &\le\frac{64^{2}\cdot500m(m/\phi(m))^{3}}{\kappa^{2}r^{2}(\log r)(\log\log n)^{2}} \\&<\frac{r}{16},
\end{align*} where the first inequality follows from (\ref{eq:boundd}) and in the second inequality we have assumed that the constant $C$ is chosen sufficiently large.

Combining all four steps, the total number of colors we have used is at most 
\begin{align*}
\frac{r}{2}+\frac{r}{4}+2\phi(d)+\frac{r}{16} \le \frac{13r}{16} + \frac{2md}{\kappa \phi(m)\log \log n} \le \frac{7r}{8},
\end{align*}
where we have used Claim~\ref{claim:exists-d}, so that $d/\phi(d) \ge d_m/\phi(d_m) \ge \kappa (\log \log m) \phi(m)/m \ge \kappa (\log \log n)\phi(m)/m$, and the bound (\ref{eq:boundd}). Thus, we
can use at most $r$ colors to color $[1,n-1]$ such that no monochromatic
subset sum is equal to $m$, as required.

\vspace{2mm}
Next we consider the case $m\in\left[\frac{n^{3/2}(\log\log n)^{1/2}}{(\log n)^{1/2}},\binom{n}{2}\right]$.
Let $r=C\rho(n,m)$, where $C$ is a sufficiently large absolute constant. We
construct the coloring as follows. 

\vspace{2mm}
\noindent
\textbf{Step 1.} For each of the first $7r/8$ primes $p$ that do not divide $m$, we construct a color class $S_1(p)=\{kp:k\in \mathbb{N}\}$. Let $X_1=\bigcup_{p\le p_{7r/8},p\nmid m}S_1(p)$.

\vspace{2mm}
\noindent
\textbf{Step 2.} Let $R=[1,n-1]\setminus X_1$. The set $R$ consists of those integers less than $n$ which are coprime to $W(7r/8)/\gcd(W(7r/8),m)$. By Lemma~\ref{lem:selberg}, the number of elements of $R$ is at most 
\begin{align*}
256 n\prod_{p|W(7r/8),p\nmid m}(1-1/p) &\le 256 n\prod_{p|W(\rho(n,m))m}(1-1/p)\cdot \prod_{p|\gcd(m,W(\rho(n,m)))}(1-1/p)^{-1} \\
&\le 256 n(m/\phi(m)) \prod_{p|W(\rho(n,m))m}(1-1/p),
\end{align*} where in the first inequality we used that $7r/8=7C\rho(n,m)/8>\rho(n,m)$, which holds by choosing the constant $C$ to be sufficiently large, and in the second inequality we used $\prod_{p|\gcd(m,W(\rho(n,m)))}(1-1/p)^{-1} \le \prod_{p|m}(1-1/p)^{-1} = m/\phi(m)$. 

Since each element of $R$ is less than $n$, if a color class contains at most
$m/n$ elements, then no sum of elements from the color class can equal
$m$. Thus, we can use at most 
$$\frac{|R|}{m/n}\le 256 n^2/\phi(m)\cdot\prod_{p|W(\rho(n,m))m}(1-1/p)\le 256 \rho(n,m)$$
colors to color the elements of $R$ so that no monochromatic subset
sum equals $m$. The second inequality follows from the definition of $\rho(n,m)$, which is the smallest positive integer such that $\rho(n,m)\prod_{p|W(\rho(n,m))m}(1-1/p)^{-1} \ge n^2/\phi(m)$. Hence, the total number of colors we used is at most 
\[
\frac{7r}{8}+256 \rho(n,m)\le r,
\]
assuming that $C$ is a sufficiently large absolute constant. 
\end{proof}

\section{\label{sec:Sze-Vu-proof}Long homogeneous progressions in subset sums}

In this section, we prove Theorem~\ref{thm:Sze-Vu-2},
strengthening Theorem \ref{SVthm}, Szemer\'edi and Vu's result \cite{SV} on arithmetic progressions in subset sums, by showing that the progression may be taken to be homogeneous. For our application to the Erd\H{o}s--Graham problem in Section~\ref{sec:extremal-sum}, we will need a somewhat technical strengthening of this result, for which it will be useful to have the notation
\[
\Sigma^{[h]}(A)=\left\{ \sum_{s\in S}s:S\subseteq A,|S|\le h\right\}.
\]
The main result of this section, which includes Theorem~\ref{thm:Sze-Vu-2} as a special case, is now as follows. To gain some intuition, we remark that for a typical set $A$ which is not dominated by multiples of an integer at least $2$, we will simply have $d = 1$.

\begin{thm}
\label{thm:Sze-Vu-2-strong}There exists an absolute constant $C>0$
such that the following holds. For any subset $A$ of $[n]$ of size $m\ge C\sqrt{n}$,
there exists $d\ge1$ such that, for $A'=\{x/d\,:\,x\in A,d|x\}$ and $k = 2^{50}n/m$, $\Sigma^{[k]}(A')$ contains an interval of length at least $n$.
Furthermore, 
\[
|A|-|A'|\le 2^{30}(\log n)^{3}+\frac{2^{30}n}{m}.
\]
\end{thm}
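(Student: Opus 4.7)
The plan follows the outline in Section~\ref{sec:overview}. First, I would preprocess $A$ into a diverse set $A'$ by iterating the following reduction: if the current set has a divisor $d'\geq 2$ that divides all but at most $K := O((\log n)^3 + n/m)$ of its elements, remove the exceptional elements and divide the remainder by $d'$. Since each step at least doubles the accumulated common divisor, the process terminates after at most $\log_2 n$ rounds, producing $A'$ with $\{dx : x \in A'\}\subseteq A$ for some $d\geq 1$ equal to the product of the $d'$s, and with $|A|-|A'|\leq K\log_2 n$, matching the bound in the statement. A slight refinement of this iteration, applied separately on the dyadic partition of $[n]$, ensures in addition that $A'$ meets each dyadic interval $[2^i,2^{i+1})$ either in the empty set or in a large set.

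Next, I would randomly partition $A'$, dyadic block by dyadic block, into $2\ell$ parts $X_1,Y_1,\ldots,X_\ell,Y_\ell$ of comparable sizes. By Lemma~\ref{lem:diverse}, each $X_i$ inherits the diversity of $A'$ with high probability. The core step is then to establish that $|\Sigma(X_i)\pmod b|$ is large for every $b\in Y_i$; this is the role of Lemma~\ref{lem:structure-1} from the overview. Its proof would mirror that of Lemma~\ref{lem:growth-}: run the iterative building process modulo $b$, at each step picking the element of $X_i$ that most expands the current mod-$b$ subset-sum set, and classify the steps as growth, unsaturated, or saturated. The saturated case is handled by Lemma~\ref{lem:full-mod-d}; in the unsaturated case, if growth were small, Lemmas~\ref{lem:structure}, \ref{lem:double-count}, and \ref{lem:Cauchy-Davenport} would force the remaining elements into a highly structured union of short cosets, contradicting the diversity of $X_i$. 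Unlike in Section~\ref{sec:Monochromatic-subset-sums}, no Selberg-sieve input is needed here, since diversity alone supplies the required contradiction, which simplifies this stage.

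With the modular bound in hand, repeated application of Lemma~\ref{lem:modp} using the elements of $Y_i$ shows that $\Sigma(X_i\cup Y_i)$ is a dense subset of a long interval, and diversity prevents it from lying in a nontrivial arithmetic progression. Summing the $\ell$ such sets via Lemma~\ref{lem:Lev} then produces an interval of length at least $n$ inside $\Sigma(A')$. Unwinding the preprocessing step multiplies this interval by $d$, yielding a homogeneous progression of length $n$ and common difference $d$ inside $\Sigma(A)$, which recovers Theorem~\ref{thm:Sze-Vu-2} when no short-sum control is required.

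The hard part will be enforcing the short-sum condition, that every element of the interval lie in $\Sigma^{[k]}(A')$ with $k = 2^{50}n/m$. To do so, I would modify the modular building process by initializing it not with $\Sigma(0)=\{0\}$ but with $\Sigma(0) := \Sigma(X_i\setminus T_0)\pmod b$, where $T_0$ is a large random subset of $X_i$; this way every element produced in the iteration already comes from summing essentially all of $X_i\setminus T_0$ together with only the $O(\log b)$ terms chosen during the iteration. A parallel accounting through Lemmas~\ref{lem:verysimple} and~\ref{lem:Lev}, tracking how many elements from each $X_i\cup Y_i$ actually participate in representing a given element of the final interval, must then be used to cap the total number of summands at $O(n/m)$. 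Carrying this bookkeeping cleanly while simultaneously maintaining diversity, the dyadic distribution, and the randomization estimates is, I expect, the main technical obstacle.
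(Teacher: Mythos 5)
Your overall architecture (preprocess to a nice/diverse set, random partition, a modular growth lemma with growth/unsaturated/saturated phases in which diversity replaces the sieve, then Lemmas~\ref{lem:modp} and~\ref{lem:Lev}) is the paper's, but two of your steps have genuine gaps. First, the preprocessing accounting does not give the stated error term. If in every round you allow up to $K=O((\log n)^3+n/m)$ exceptional elements and there can be $\log_2 n$ rounds, you only get $|A|-|A'|=O\bigl((\log n)^4+(n/m)\log n\bigr)$, which is weaker than the claimed $2^{30}(\log n)^3+2^{30}n/m$ by a $\log n$ factor on the $n/m$ term. The paper's Lemma~\ref{lem:nice-set} avoids this by letting the allowed number of exceptions depend on the divisor actually used, roughly $(\log n)^2+d$ per round with $d=O(n/m)$, and then exploiting that the divisors multiply: since the surviving set stays of size $\Omega(m)$ inside $[n/\prod_i d_i]$, one has $\prod_i d_i\le 4n/m$, hence $\sum_i d_i\le 4n/m$, and the total loss is $O((\log n)^3+n/m)$. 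With a divisor-independent threshold of size $\Omega(n/m)$ (which you do need so that the diversity required by the modular lemma survives), the naive bound cannot be repaired.

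Second, and more seriously, your device for the short-sum constraint is wrong as stated. Initializing the modular process with $\Sigma(X_i\setminus T_0)\pmod b$ defeats the purpose: elements of that set are already sums of up to $|X_i\setminus T_0|=\Theta(m)$ terms, which exceeds the budget $k=2^{50}n/m$ as soon as $m$ is much larger than $\sqrt n$ (for $m\approx n$ one has $k=O(1)$), and your remark that each element ``comes from summing essentially all of $X_i\setminus T_0$'' is precisely what must be avoided. In Lemma~\ref{lem:structure-1} the process is seeded with the \emph{set} $A''=A\setminus A'$ reduced mod $b$, so each seed is a single element; then every element of $\Sigma_b(i)$ is one seed plus at most $i$ iteration elements, and the process is stopped after $\min(m/2,1280b/m)$ steps, which is what caps the number of summands. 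The seed set having size $m/4$ is also what makes the first $(\log b)^2$ steps non-growth phases and lets a saturated phase yield $b/4$ via Lemma~\ref{lem:full-mod-d}; starting from a single point you could not afford the $\sim\log b$ doubling steps within a budget of $O(b/m)$. Finally, the assembly cannot run through Graham's Lemma~\ref{lem:verysimple}, which adds an unbounded number of summands; instead one needs the short-sum analogue of Lemma~\ref{lem:modp}, $|\Sigma^{[h+1]}(B\cup\{b\})|\ge|\Sigma^{[h]}(B)|+|\Sigma_b^{[h]}(B)|$, applied only to the $k'$ largest elements of the reserved part of each block, together with a dyadic mass comparison (Claim~2 of the paper's proof) ensuring those $k'$ elements carry a constant fraction of the total weight, before Lemma~\ref{lem:Lev} is invoked. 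This is exactly the bookkeeping you deferred, and it is where the argument currently fails.
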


Theorem \ref{thm:Sze-Vu-2} immediately follows from Theorem \ref{thm:Sze-Vu-2-strong} by noticing that $\Sigma(A)$ contains the set $\{dy\,:\,y\in \Sigma^{[k]}(A')\}$, which is a homogeneous arithmetic progression with common difference $d$. 

As a crucial step in the proof of Theorem~\ref{thm:Sze-Vu-2-strong}, we first show that subsets of $\mathbb{Z}_b$ satisfying a diversity condition have a large set of subset sums. 
We will need to use the mod $b$ analogue of $\Sigma^{[h]}(A)$, namely,
\[
\Sigma_{b}^{[h]}(A)=\left\{ \sum_{s\in S}s\bmod b:S\subseteq A,|S|\le h\right\}.
\]

\begin{lem} \label{lem:structure-1} 
Let $b$ be a positive integer. Let $A$ be a subset of $\mathbb{Z}_b$ of size  $m \in (80(\log b)^{2},b]$ such that, for each $d|b$ with $d\in [2,4b/m]$, there are at least $64(\log b)^2+8d$ elements in $A$ which are not divisible by $d$. Let $k=1280b/m$.
Then $$|\Sigma_{b}^{[k+1]}(A)|\ge\min(m^{2}/256,b/4).$$ 
\end{lem}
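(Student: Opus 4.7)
The plan is to adapt the iterative building process of Lemma~\ref{lem:growth-} to this length-bounded setting, using the modified initialization anticipated in Section~\ref{sec:overview}. I would first pick a random subset $T_0\subseteq A$ of appropriate size (roughly $\min(k,m/2)$) and apply Lemma~\ref{lem:diverse} to ensure, with positive probability, that both $T_0$ and $A\setminus T_0$ inherit the divisibility-diversity of $A$: for every divisor $d$ of $b$ in $[2,4b/m]$, each set contains at least $32(\log b)^2+4d$ elements not divisible by $d$. I would then initialize $\Sigma(0)=\{a\bmod b:a\in A\setminus T_0\}$, so $|\Sigma(0)|=m-|T_0|$, and run a greedy iteration over $j=1,\ldots,\min(k,|T_0|)$: at step $j$ choose $x_j\in T_{j-1}$ maximizing $|(\Sigma(j-1)+x_j)\setminus\Sigma(j-1)|$, then set $\Sigma(j)=\Sigma(j-1)\cup(\Sigma(j-1)+x_j)$ and $T_j=T_{j-1}\setminus\{x_j\}$. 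The structural point is that each element of $\Sigma(j)$ has the form $a+\sum_{i\in S}x_i$ for some $a\in A\setminus T_0$ and $S\subseteq[j]$, so it lies in $\Sigma_b^{[j+1]}(A)$; thus the terminal $\Sigma$ is contained in $\Sigma_b^{[k+1]}(A)$.

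It remains to prove $|\Sigma|\ge\min(m^2/256,b/4)$ via a phase analysis parallel to that of Lemma~\ref{lem:growth-}. Letting $d_j$ denote the largest divisor of $b$ dividing every element of $T_{j-1}$, I would classify step $j$ as a growth, unsaturated, or saturated phase according to the distribution of $\Sigma(j-1)$ across the cosets of $d_j\mathbb{Z}_b$. A saturated step immediately yields $|\Sigma(j-1)|\ge b/4$: Lemma~\ref{lem:full-mod-d}, combined with the inherited diversity of $T_{j-1}$, guarantees that $\Sigma(j-1)$ meets every relevant coset of $d_j\mathbb{Z}_b$, while the saturation hypothesis lower-bounds the population of each such coset. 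The number of growth phases can be bounded by $O(\log^2 b)$ via the same double-counting and Cauchy--Davenport argument used in Claim~1 of Lemma~\ref{lem:growth-}, hence is negligible compared to $k$.

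The main obstacle is the unsaturated-phase analysis, paralleling Claim~2 of Lemma~\ref{lem:growth-}. If in some unsaturated step the growth $|\Sigma(j)|-|\Sigma(j-1)|$ falls short of the required per-step increment, then the set $B$ of poor choices, which contains $T_{j-1}$, must, by Lemma~\ref{lem:structure}, lie in a short arithmetic progression of cosets of some subgroup $H=d\mathbb{Z}_b$, forcing both that $|T_{j-1}|$ is bounded by a constant times the shortfall and that most elements of $T_{j-1}$ share a common divisor $d\in[2,4b/m]$. Here is the crucial departure from Lemma~\ref{lem:growth-}: rather than invoking the Selberg sieve to exploit the primality of $A$, I would derive a contradiction directly from the divisibility-diversity of $T_{j-1}$ inherited from $A$, which forbids such concentration for any $d$ in that range. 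Balancing the per-unsaturated-phase growth against the two targets $m^2/256$ and $b/4$ in the regimes $m\le 8\sqrt{b}$ and $m>8\sqrt{b}$ respectively then completes the proof; the modified initialization pays off in the second regime, where $|\Sigma(0)|=m-|T_0|$ absorbs a substantial fraction of the $b/4$ target when $m$ is close to $b$.
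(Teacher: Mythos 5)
Your overall architecture (random split of $A$ into an initial seed and an iteration pool, seeding $\Sigma(0)$ with the unused elements so that membership in $\Sigma_b^{[k+1]}(A)$ is automatic, and a growth/unsaturated/saturated phase analysis) matches the paper's proof, but the step you flag as your ``crucial departure'' is exactly where the argument breaks. In an unsaturated phase with small growth, Lemma~\ref{lem:structure} does \emph{not} force the poor-choice set $B\supseteq T_{j-1}$ to concentrate on multiples of some $d\in[2,4b/m]$: its third alternative only places $B$ inside a short arithmetic progression of $H$-cosets, and such a set (e.g.\ an ordinary short interval, taking $H$ trivial) carries no divisibility structure whatsoever. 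The containment-in-a-proper-subgroup alternative, which \emph{would} give a common divisor, is already excluded because $T_{j-1}$ generates $d_j\mathbb{Z}_b$ by the definition of $d_j$. So the divisibility-diversity hypothesis simply cannot contradict the conclusion of Lemma~\ref{lem:structure}, and no per-step growth bound follows; in Lemma~\ref{lem:growth-} the contradiction at this point comes from the Selberg sieve (Lemma~\ref{lem:selberg}) applied via Lemma~\ref{lem:ap-mod-t} to elements with a special multiplicative form, and there is no analogue of that input for a general diverse set. The paper's proof of this lemma therefore does not use Lemma~\ref{lem:structure} at all: in an unsaturated phase it picks the largest coset slice $S_{u_0}$ with $|A_{i-1}|/4<|S_{u_0}|<b/(4d_i)$, and a direct combination of Lemma~\ref{lem:stable-period}, Lemma~\ref{lem:double-count} and Lemma~\ref{lem:Cauchy-Davenport} (applied to $k_i$-fold sums of the poor-shift set inside $d_i\mathbb{Z}_b$) shows some element of the pool increases $|\Sigma|$ by at least $|A_{i-1}|/16$; only the generation property of $A_{i-1}$ is needed, not diversity.

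Two secondary problems in your setup would also need repair. First, you take $|T_0|\approx\min(k,m/2)$ and run the pool down to exhaustion; once $|T_{j-1}|$ drops below about $m/4$ the bound $d_j\le 4b/m$ (which comes from $|T_{j-1}|\le b/d_j$) is lost, so the diversity hypothesis no longer covers the relevant divisors, Lemma~\ref{lem:full-mod-d} cannot certify that every coset of $d_j\mathbb{Z}_b$ is hit (killing the saturated-phase bound $b/4$), and the ``$d_j$ changes at most $\log_2(4b/m)$ times'' count degrades. The paper keeps the pool of size $3m/4$ and runs only $\min(m/2,k)$ steps precisely to preserve $|A_{i-1}|\ge m/4$ throughout. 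Second, you describe a single greedy rule (always maximize $|(\Sigma(j-1)+x_j)\setminus\Sigma(j-1)|$), but the bound on the number of growth phases requires the phase-dependent rule of maximizing $|\Sigma(d_j,j)|-|\Sigma(d_j,j-1)|$ during growth phases; without it the factor-$3/2$ growth of $\Sigma(d_j,\cdot)$, and hence your $O(\log^2 b)$ count, is not justified.
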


\begin{proof}
Let $A'$ be a uniformly random subset of $A$ of size $3m/4$ and $A'' = A\setminus A'$. Let ${\cal E}_1$ be the event that for some $d|b$ with $d\in [2,4b/m]$, there are at most $8(\log b)^2+d$ elements in $A'$ which are not divisible by $d$. Recall that for each $d|b$ and $d\in [2,4b/m]$, at least $64(\log b)^2+8d$ elements of $A$ are not divisible by $d$. By the Chernoff bound for hypergeometric distributions, the probability that there are at most $8(\log b)^2+d$ elements in $A'$ which are not divisible by $d$ is at most $\exp(-64(\log b)^2/32) \le 1/b^2$. By taking a union bound over all $d|b$, the probability that ${\cal E}_1$ happens is then at most $1/b<1$. We may therefore fix a choice of $A'$ and $A''$ so that ${\cal E}_1$ does not hold. 

We consider the following iterative process. Let $\Sigma_b(0)=A''$ and let $A_0 = A'$. At each step
$i\ge 1$, we pick an element $a_{i}$ in $A_{i-1}$ and let $A_{i}=A_{i-1}\setminus\{a_{i}\}$ and 
$\Sigma_b(i)=\Sigma_b(i-1)\cup(\Sigma_b(i-1)+a_{i})$. Observe that the elements in $\Sigma_b(i)$ can be written as the sum of one element in $A''$ and a subset of $A'$ of size at most $i$, so $\Sigma_b(i)\subseteq \Sigma_b^{[i+1]}(A)$. Let $d_{i}|b$ be such that $\langle A_{i-1}\rangle=d_{i}\mathbb{Z}_{b}\cong\mathbb{Z}_{b/d_{i}}$, where $\langle X\rangle$ denotes the subgroup generated by $X$. Note that, by definition, $d_{i}|d_{j}$ if $i\le j$. Furthermore, $|A_{i-1}|\le b/d_{i}$ and $|A_{i-1}| = 3m/4-i+1 \ge m/4$ for $i\le m/2$, so $d_{i}\le 4b/m$ for $i\le m/2$. We will run the above process for at most $m/2$ steps, so we only consider $i\le m/2$ throughout. 

For each $i$, we say that step $i$ is either a {\it growth phase}, an {\it unsaturated
phase} or a {\it saturated phase}. For each $u\in\mathbb{Z}_{b}/d_{i}\mathbb{Z}_{b}\cong\mathbb{Z}_{d_{i}}$, let $S_{u}=\Sigma_b(i-1)\cap(u+d_{i}\mathbb{Z}_{b})$. 
We then say that step $i$ is a {\it growth phase} if
there exists $u\in\mathbb{Z}_{b}/d_{i}\mathbb{Z}_{b}\cong\mathbb{Z}_{d_{i}}$
such that $S_u$ is
non-empty and has size at most $|A_{i-1}|/4$. We say that step $i$ is
an {\it unsaturated phase} if it is not a growth phase and there
exists $u\in\mathbb{Z}_{b}/d_{i}\mathbb{Z}_{b}$ such that $\frac{|A_{i-1}|}{4}<|S_{u}|<\frac{b}{4d_{i}}$.
Finally, if step $i$ is neither a growth phase nor an unsaturated phase, then
it is a {\it saturated phase}. We remark that if $d_i=d$ for all steps $i$ in an interval $[x,y]$, then the interval can be partitioned into three (possibly empty) intervals such that the steps in the first interval are all growth phases, the steps in the second interval are all unsaturated phases and the steps in the third interval are all saturated phases. 

We next discuss how to pick $a_{i}$, which depends on the type of phase. For $d|b$, let $\Sigma(d,i-1)=\Sigma_{b}(\{a_{1},\dots,a_{i-1}\}\cap d\mathbb{Z}_{b})$.
If step $i$ is a growth phase, we pick $a_{i}$ which maximizes $|\Sigma(d_i,i)|-|\Sigma(d_{i},i-1)|$. If step $i$ is an unsaturated or saturated phase, we pick $a_{i}$ which maximizes $|\Sigma_b(i)|-|\Sigma_b(i-1)|$. 

The following claims record important properties of the process we have defined. 

\vspace{2mm}

\noindent {\bf Claim 1.} The first $(\log b)^2$ steps are not growth phases. 

\vspace{1mm}

\noindent {\it Proof.} Consider $i\le (\log b)^2$. Note that $i\le m/2$. Since, for each $d|b$ with $d\in [2,4b/m]$, there are more than $8(\log b)^2$ elements in $A'$ which are not divisible by $d$, there must be at least $7(\log b)^2$ elements in $A_{i-1}$ which are not divisible by $d$. Hence, $d_i = 1$. Thus, there is only one coset $u$ of $d_i\mathbb{Z}_b$ in $\mathbb{Z}_b$ and $|S_u| \ge |\Sigma_b(0)| = m/4 \ge |A_{i-1}|/4$, so $i$ is not a growth phase. \qed

\vspace{2mm}

\noindent {\bf Claim 2.} There are at most $20(\log b)(\log (4b/m))$ growth phases among the first $m/2$ steps. 

\vspace{1mm}

\noindent {\it Proof.} Suppose that step $i$ is a growth phase. By Lemma \ref{lem:Su}, for $u \in \mathbb{Z}_b/d_i\mathbb{Z}_b$, if $S_u$ is non-empty, then $|S_u| \ge |\Sigma(d_i,i-1)|$. Since we are in a growth phase, there is some $u$ such that $S_u$ is non-empty and $|S_u|\le |A_{i-1}|/4$. This implies that $|\Sigma(d_i,i-1)|\le |A_{i-1}|/4$. By Lemma \ref{lem:double-count},
the set of $a\in d_{i}\mathbb{Z}_{b}$ such that $|(\Sigma(d_{i},i-1)+a)\setminus\Sigma(d_{i},i-1)|\le\frac{1}{2}|\Sigma(d_{i},i-1)|$
has size at most $2|\Sigma(d_{i},i-1)|\le|A_{i-1}|/2$. Thus, there
exists $a_{i}\in A_{i-1}$ such that $|\Sigma(d_{i},i)|\ge\frac{3}{2}|\Sigma(d_{i},i-1)|$.
As $|\Sigma(d_i,i)|\le b$ for all $i$, there can be at most $1+\log_{3/2}b$ successive growth
phases with $d_{i}=d$. Since $d_1\le d_2\le \dots \le d_{m/2} \le 4b/m$ and $d_{i+1}\ge 2d_i$ if $d_{i+1}\ne d_i$, $d_i$ can take at most $1+\log_2 (4b/m)$ distinct values. This shows that there can be at most $(1+\log_{3/2}b)(1+\log_2(4b/m)) < 20(\log b)(\log (4b/m))$ 
growth phases among the first $m/2$ steps. \qed 

\vspace{2mm}

\noindent {\bf Claim 3.} Let $i$ be an unsaturated phase. Then $|\Sigma_b(i)|-|\Sigma_b(i-1)|\ge\frac{|A_{i-1}|}{16}$. 

\vspace{1mm}

\noindent {\it Proof.} Note that if $a\in A_{i-1}$, then $a\in d_i\mathbb{Z}_{b}$, since $A_{i-1}\subseteq d_i\mathbb{Z}_{b}$. Thus, for each $u\in \mathbb{Z}_{b}/d_i\mathbb{Z}_{b}$ and $x\in \mathbb{Z}_{b}$, $x+a\in u+d_i\mathbb{Z}_{b}$ if and only if $x\in u+d_i\mathbb{Z}_{b}$. Hence, $(\Sigma_b(i-1)+a)\cap(u+d_{i}\mathbb{Z}_{b})=(\Sigma_b(i-1)\cap (u+d_{i}\mathbb{Z}_{b}))+a = S_u+a$ and $((\Sigma_b(i-1)+a)\setminus \Sigma_b(i-1))\cap (u+d_{i}\mathbb{Z}_{b})) = (S_u+a)\setminus S_u$. Let $u_{0}$ be such that $|S_{u_{0}}|=\max_{u:|S_{u}|<b/4d_{i}}|S_{u}|$. We have
\begin{align*}
|(\Sigma_b(i-1)+a)\setminus\Sigma_b(i-1)|&=\sum_{u \in \mathbb{Z}_{b}/d_i\mathbb{Z}_{b}} |((\Sigma_b(i-1)+a)\setminus\Sigma_b(i-1))\cap (u+d_i\mathbb{Z}_{b})| \\
&= \sum_{u \in \mathbb{Z}_{b}/d_i\mathbb{Z}_{b}}|(S_{u}+a)\setminus S_{u}|\\
&\ge|(S_{u_{0}}+a)\setminus S_{u_{0}}|.
\end{align*}
Let $k_{i}=1+\left\lfloor \frac{4|S_{u_{0}}|}{|A_{i-1}|}\right\rfloor \in\left[2,\frac{8|S_{u_{0}}|}{|A_{i-1}|}\right]$, noting that $|S_{u_0}| > |A_{i-1}|/4$ since step $i$ is not a growth phase. 
Let $P_{i-1}\subseteq d_{i}\mathbb{Z}_{b}$ be the set of elements
$a\in d_{i}\mathbb{Z}_{b}$ such that $|(S_{u_{0}}+a)\setminus S_{u_{0}}|<\frac{1}{2k_{i}}|S_{u_{0}}|$.
Note that $0\in P_{i-1}$ and, by Lemma \ref{lem:stable-period}, for any $a\in k_{i}P_{i-1}$, 
$|(S_{u_{0}}+a)\setminus S_{u_{0}}|<\frac{1}{2}|S_{u_{0}}|$.
Therefore, by Lemma \ref{lem:double-count}, $|k_{i}P_{i-1}|\le2|S_{u_{0}}|$.
Suppose now that $A_{i-1}\subseteq P_{i-1}$. Since $d_i$ is defined so that $\langle A_{i-1}\rangle = d_i\mathbb{Z}_{b} \cong \mathbb{Z}_{b/d_i}$, $A_{i-1}$ is not a subset of a proper subgroup of $\mathbb{Z}_{b/d_i}$ and, since $0\in P_{i-1}$, $P_{i-1}$ is not contained in a
coset of a proper subgroup of $\mathbb{Z}_{b/d_{i}}$. Thus, by Lemma
\ref{lem:Cauchy-Davenport} and the fact that $|k_{i}P_{i-1}|\le2|S_{u_{0}}| < \frac{b}{2d_i} < |\mathbb{Z}_{b/d_i}|$, $|P_{i-1}|\le \frac{2|k_i P_{i-1}|}{k_i} \le \frac{4}{k_{i}}|S_{u_{0}}|$. However, by our choice of $k_{i}$, 
$|A_{i-1}|>\frac{4}{k_{i}}|S_{u_{0}}|$, which is a contradiction. Therefore, $A_{i-1}\cap P_{i-1}^{c}\ne\emptyset$ and, since $a_i$ is chosen so that $|\Sigma_b(i)|-|\Sigma_b(i-1)|=|(\Sigma_b(i-1)+a_i) \setminus \Sigma_b(i-1)|$ is maximized, 
\begin{align*}
|(\Sigma_b(i-1)+a_{i})\cup\Sigma_b(i-1)| & \ge|\Sigma_b(i-1)|+\frac{1}{2k_{i}}|S_{u_{0}}|\\
 & \ge|\Sigma_b(i-1)|+\frac{|A_{i-1}|}{16|S_{u_{0}}|}|S_{u_{0}}|\\
 & = |\Sigma_b(i-1)|+\frac{|A_{i-1}|}{16}.
\end{align*} 
Thus, over any unsaturated phase $i$, $|\Sigma_b(i)|-|\Sigma_b(i-1)|\ge\frac{|A_{i-1}|}{16}$. \qed

\vspace{2mm}

\noindent {\bf Claim 4.} For each step $i\le m/2$, $S_u = \Sigma_b(i-1)\cap(u+d_{i}\mathbb{Z}_{b})$ is non-empty for every $u\in \mathbb{Z}_b/d_i\mathbb{Z}_b$. 

\vspace{1mm}

\noindent {\it Proof.} The claim holds trivially if $d_i=1$. Assume that $d_i>1$. Since $i\le m/2$, we have $d_i|b$ and $d_i \le 4b/m$. For each $d|d_i$ with $d>1$, we have $d|b$ and $d\in [2,4b/m]$, so $A'$ contains at least $d-1$ elements which are not divisible by $d$. By Lemma \ref{lem:full-mod-d}, 
$$\Sigma_{d_{i}}(\{a_{1},\dots,a_{i-1}\}) = \Sigma_{d_{i}}(A') = \mathbb{Z}_{d_{i}},$$ 
where we used that the elements of $A'\setminus \{a_{1},\dots,a_{i-1}\}$ are all divisible by $d_{i}$. The claim follows upon noting that we can identify $\mathbb{Z}_b/d_i\mathbb{Z}_b$ with $\mathbb{Z}_{d_i}$ and, under this identification, $S_u$ is non-empty if and only if $u\in \Sigma_{d_{i}}(\{a_{1},\dots,a_{i-1}\})$. \qed

\vspace{2mm}

We now complete the proof of the lemma using these claims. First, assume that there is no saturated phase $i$ with $i\le \min(m/2,k)$. Then, among the first $\min(m/2,k)$ steps, from Claims 1 and 2, at least $\max(\min(m/2,k)-20(\log b)(\log(4b/m)), \min((\log b)^2,k))$ steps are unsaturated phases. Note that if $k=1280b/m \le 80(\log b)^2$, then $(\log b)^2 \ge k/80$ and 
$$\max(\min(m/2,k)-20(\log b)(\log(4b/m)), \min((\log b)^2,k)) \ge \min((\log b)^2,k) \ge k/80.$$ 
If $k=1280b/m \ge 80(\log b)^2$, then $20(\log b)(\log (4b/m)) \le k/2$ and $20(\log b)(\log (4b/m)) \le m/4$ by our assumption on $m$, so 
\begin{align*}
\max(\min(m/2,k)-20(\log b)(\log(4b/m)), \min((\log b)^2,k)) &\ge \min(m/2,k)-20(\log b)(\log(4b/m))\\ &\ge \min(m/4,k/2).
\end{align*} 
In either case, we have $$\max(\min(m/2,k)-20(\log b)(\log(4b/m)), \min((\log b)^2,k)) \ge \min(m/4,k/80).$$ For each step $i$ which is an unsaturated phase, we have, by Claim 3, that $|\Sigma_b(i)|- |\Sigma_b(i-1)| \geq |A_{i-1}|/16 \ge m/64$. Hence, recalling that $k=1280b/m$, we get 
\[
|\Sigma_{b}^{[k+1]}(A)|\ge|\Sigma_b(\min(m/2,k))| \ge \min\left(\frac{m}{4},\frac{k}{80}\right)\cdot\frac{m}{64}= \min\left(\frac{m^{2}}{256},\frac{b}{4}\right).
\]

If, instead, there is a saturated phase $i_0$ with $i_0 \le \min(m/2,k)$, then, for each $u \in \mathbb{Z}_b/d_{i_0}\mathbb{Z}_b$ with $S_u = \Sigma_b(i_0-1)\cap(u+d_{i_0}\mathbb{Z}_{b})$ non-empty, $|S_u| \ge \frac{b}{4d_{i_0}}$. But Claim 4 implies that $S_u$ is non-empty for all $u \in \mathbb{Z}_b/d_{i_0}\mathbb{Z}_b$, so that
\[
|\Sigma_{b}^{[k+1]}(A)| \ge |\Sigma_{b}(i_0)|\ge\sum_{u\in \mathbb{Z}_b/d_{i_0}\mathbb{Z}_b} |S_u| \ge \frac{b}{4d_{i_0}}\cdot d_{i_0} = \frac{b}{4}.
\]
Hence, the desired conclusion holds in both cases. 
\end{proof}

Let $\ell=2^{15}$. We say that a subset $A$ of $[n]$ of size $m$
is \emph{nice} if the following conditions hold:
\begin{enumerate}
\item \label{enu:cond1:divsmall}There is no $d \in [2,8\ell n/m]$ such that
all but at most $512\ell (\log n)^2 + 64\ell d$ elements of $A$ are divisible by $d$.
\item \label{enu:cond2-interval}For each dyadic interval $I_{j} = [2^{j-1},2^j)\cap [n]$, either $|A\cap I_{j}|=0$ or $|A\cap I_{j}|\ge 64\ell (\log n)$. 
\end{enumerate}
The next lemma says that any large subset of $[n]$ contains a multiple of a large nice set.  
\begin{lem}
\label{lem:nice-set}There exists a constant $C>0$ such that the following holds. Let $A$ be a subset of $[n]$ of size $m\ge Cn^{1/2}$.
Then there is an integer $d$ and a set $A'$ of integers such that 
\begin{itemize}
\item $A'$ is nice, 
\item $\{dx : x\in A'\}\subseteq A$ and 
\item $|A|-|A'|\le 1000\ell (\log n)^{3}+\frac{256\ell n}{m}$.
\end{itemize}
\end{lem}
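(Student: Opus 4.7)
My plan is to iteratively prune $A$ by interleaving two kinds of operations: dividing out common factors (to restore condition~(i) of niceness) and discarding sparse dyadic intervals (to restore condition~(ii)). I maintain a pair $(A^{(t)}, d^{(t)})$ with $A^{(t)} \subseteq [n^{(t)}]$, $n^{(t)} = \lfloor n/d^{(t)}\rfloor$, and $\{d^{(t)} x : x \in A^{(t)}\} \subseteq A$; start with $A^{(0)} = A$ and $d^{(0)} = 1$. At each step, if there exists $d' \in [2, 2n^{(t)}/|A^{(t)}|]$ such that at least $|A^{(t)}| - 512\ell(\log n)^2 - 64\ell d'$ elements of $A^{(t)}$ are divisible by $d'$, perform a \emph{Type~I} step: set $A^{(t+1)} = \{a/d' : a \in A^{(t)}, d' \mid a\}$ and $d^{(t+1)} = d' \cdot d^{(t)}$. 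Otherwise, if some dyadic interval $I_j \cap [n^{(t)}]$ satisfies $0 < |A^{(t)} \cap I_j| < 64\ell \log n$, perform a \emph{Type~II} step: discard $A^{(t)} \cap I_j$. The process terminates when neither step applies, and I set $A' := A^{(T)}$ and $d := d^{(T)}$.

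The size loss $|A| - |A'|$ will be controlled by three estimates. First, by an inductive argument (with $m \ge C\sqrt{n}$ for a sufficiently large constant $C$), one shows $|A^{(t)}| \ge m/2$ throughout, so each Type~I divisor obeys $d'^{(t)} \le 2n^{(t)}/|A^{(t)}| \le 4n/(d^{(t)} m)$ and, in particular, $d^{(T)} \le 4n/m$; since each Type~I step at least doubles $d^{(t)}$, the number $T_I$ of such steps is at most $\log_2(4n/m) = O(\log n)$. Second, using the elementary inequality $\sum_t d'^{(t)} \le \prod_t d'^{(t)} = d^{(T)}$, valid for any collection of integers $\ge 2$ via the identity $(a-1)(b-1) \ge 1$, the total Type~I size loss satisfies
\[
T_I \cdot 512\ell(\log n)^2 + 64\ell \sum_t d'^{(t)} \le 512\ell(\log n)^3 + 256\ell n/m.
\]
Third, between consecutive Type~I steps the ambient interval is fixed, so at most $\log n$ Type~II steps can occur per phase (one per dyadic scale); with $O(\log n)$ phases and each step costing at most $64\ell \log n$ elements, the total Type~II loss is $O(\ell(\log n)^3)$, absorbed by the remaining slack in the $1000\ell(\log n)^3$ term.

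At termination, condition~(ii) of niceness holds by construction, but condition~(i) holds a priori only for $d' \in [2, 2n^{(T)}/|A^{(T)}|]$. To extend it to the full range $d \in [2, 8\ell n^{(T)}/|A^{(T)}|]$ required by the definition, I will use a volume argument: for any $d$ in this larger range the set $[n^{(T)}] \cap d\mathbb{Z}$ has size at most $n^{(T)}/d \le |A^{(T)}|/2$, so at most $|A^{(T)}|/2$ elements of $A^{(T)}$ are divisible by $d$. Since $|A^{(T)}| \ge m/2$ and $64\ell d = O(\ell^2 n/m)$ in this range, choosing $C$ large enough in $m \ge C\sqrt{n}$ forces $|A^{(T)}|/2 < |A^{(T)}| - 512\ell(\log n)^2 - 64\ell d$, verifying condition~(i) automatically for these $d$.

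The main obstacle will be the bookkeeping to maintain the inductive hypothesis $|A^{(t)}| \ge m/2$ throughout: since Type~II removals can shift both the threshold $|A^{(t)}| - 512\ell(\log n)^2 - 64\ell d$ and the count of divisibles, new Type~I violations can appear after Type~II cleanup, so one must show that the interleaved iteration converges without the cumulative size loss ever escaping the stated bound. The calibration of the constants so that the coefficient of $n/m$ comes out to exactly $256\ell$ hinges on the tight inequality $\sum_t d'^{(t)} \le d^{(T)} \le 4n/m$ together with the choice to restrict Type~I to the range $d' \le 2n^{(t)}/|A^{(t)}|$ rather than the wider range $8\ell n^{(t)}/|A^{(t)}|$ permitted by the nice condition itself.
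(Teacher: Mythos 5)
Your proposal is correct and follows essentially the paper's own strategy: iterate the two pruning operations, bound the number of division steps by $\log_2(4n/m)$, bound $\sum_t d'^{(t)}$ by $\prod_t d'^{(t)} = d^{(T)}\le 4n/m$, bound the dyadic-cleanup loss by $O(\ell(\log n)^3)$, and close the induction $|A^{(t)}|\ge m/2$ because the total loss $1000\ell(\log n)^3+256\ell n/m$ is below $m/2$ once $m\ge C\sqrt n$ with $C$ large. The one real variation is that you cap the Type~I divisors at $2n^{(t)}/|A^{(t)}|$, which gives $d^{(T)}\le 4n/m$ directly, and then restore the wider divisor range at termination by a counting (volume) argument; the paper instead triggers the division step for any $d_i\in[2,8\ell n/|A_i|]$, stops if the size drops below $m/2$, and deduces $\prod_i d_i\le n/|A_s|\le 4n/m$ a posteriori from $A_s\subseteq[n/\prod_i d_i]$ and $|A_s|\ge m/4$ --- both routes yield the same constants, and your version trades that stopping-rule bookkeeping for the terminal volume check. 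One small adjustment: the niceness condition you must verify at the end (and the form actually invoked in the proof of Theorem \ref{thm:Sze-Vu-2-strong}) concerns all $d\in[2,8\ell n/|A'|]$ with the \emph{original} $n$, not merely $d\le 8\ell n^{(T)}/|A^{(T)}|$; fortunately your volume argument covers this larger range verbatim, since it only uses $d\ge 2n^{(T)}/|A^{(T)}|$ to guarantee at least $|A^{(T)}|/2\ge m/4$ elements of $A^{(T)}$ not divisible by $d$, while $512\ell(\log n)^2+64\ell d\le 512\ell(\log n)^2+1024\ell^2 n/m<m/4$ for $C$ sufficiently large.
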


\begin{proof}
We consider the following iteration. Let $A_{0}=A$ and $n_0 = n$. Note that $A_0 \subseteq [n_0]$. For each $i\ge0$, if $|A_{i}| < m/2$, we stop. If $A_{i}$ is nice, we let $A'=A_{i}$ and stop. Otherwise, $A_{i}\subseteq [n_i]$ is not nice and $|A_i| \ge m/2$. If (\ref{enu:cond1:divsmall})
does not hold, we let $A_{i+1}=\{x/d_i:x\in A_{i},d_i|x\}$, where
$d_i\in [2,8\ell n/|A_{i}|]$ is such that all but at most $512\ell (\log n)^2 + 64\ell d_i$
elements of $A_{i}$ are divisible by $d_i$. Note that $d_i \le 8\ell n/|A_i| \le 16\ell n/m$. Let $n_{i+1} = n_i / d_i$. Then $$|A_{i+1}|\ge|A_{i}|-512\ell (\log n)^2 - 64\ell d_i$$ and $A_{i+1}\subseteq[n_{i+1}]$. If (\ref{enu:cond1:divsmall}) holds and (\ref{enu:cond2-interval}) does
not hold, we remove all elements in $A_{i}$ which are contained
in dyadic intervals $I_{j}$ with $|A\cap I_{j}|<64\ell (\log n)$
and let $A_{i+1}$ be the resulting set. Let $n_{i+1}=n_i$, so $A_{i+1} \subseteq [n_{i+1}]$, and let $d_i = 1$. In this case, $|A_{i+1}|\ge |A_{i}| - 64\ell (\log n) (1+\log_2 n)$. 

We show that we will always stop and output a nice set with the required properties. Let $s$ be the step where we stop. Note that there can be at most $\log_2 n$ steps where (\ref{enu:cond1:divsmall}) does not hold. Furthermore, the number of steps where (\ref{enu:cond2-interval}) does not hold is at most one more than the number of steps where (\ref{enu:cond1:divsmall}) does not hold. Thus, we have 
\begin{equation}|A_s| \ge |A_0| - (1+\log_2 n)(512\ell (\log n)^2 + 64\ell (\log n)(1+\log_2 n)) - 64\ell \sum_{i\le s-1:d_i > 1} d_i.\label{eq:boundAs}
\end{equation} 
Furthermore, $n_s = n/(\prod_{i\le s-1}d_i)$, so $A_s \subseteq [n/(\prod_{i\le s-1}d_i)]$. We also have that
\begin{align*}
|A_s| &\ge |A_{s-1}| - \max(64\ell (\log n)(1+\log_2 n), 512\ell (\log n)^2 + 64\ell d_{s-1}) \\
&\ge |A_{s-1}| - 512\ell (\log n)^2 - 1024 \ell^2 n/m \\
&\ge m/4,
\end{align*} where we used that $d_{s-1} \le 16\ell n/m$, $|A_{s-1}|\ge m/2$, $m \ge C\sqrt{n}$ for a sufficiently large constant $C$ and $n$ is sufficiently large. Hence, 
$$\prod_{i\le s-1} d_i \le n/|A_s|\le 4n/m.$$ 
Since $d_i \ge 2$ for each $i \le s-1$ with $d_i > 1$, we have $\sum_{i\le s-1:d_i > 1}d_i \le \prod_{i\le s-1}d_i \le 4n/m$. Hence, combining with (\ref{eq:boundAs}), $$|A_s| \ge |A_0| - (1+\log_2 n)(512\ell (\log n)^2 + 64\ell (\log n)(1+\log_2 n)) -64\ell \cdot \frac{4n}{m} \ge m - 1000\ell (\log n)^3 - \frac{256\ell n}{m} \ge \frac{m}{2},$$ assuming that $m\ge C\sqrt{n}$ for $C$ sufficiently large and $n$ is sufficiently large. This implies that the iteration stops at step $s$ because $A_s$ is nice. The set $A'=A_s$ then satisfies all of the required properties. 
\end{proof}

We are now in a position to prove the main result of this section, Theorem~\ref{thm:Sze-Vu-2-strong}.

\begin{proof}[Proof of Theorem \ref{thm:Sze-Vu-2-strong}]
By Lemma \ref{lem:nice-set}, we can find a nice set $A'$ and an
integer $d$ such that $\{dx : x\in A'\}\subseteq A$ and 
\begin{align*}
|A|-|A'|&\le 1000 \ell (\log n)^{3}+\frac{256\ell n}{m}.
\end{align*}
In particular, $|A'|\ge 5|A|/8$ for $n$ sufficiently large. Partition $A'$ into $\ell$ sets $A_{1}',\dots,A_{\ell}'$
as follows. Let $A_{<}'$ be the set consisting of the $7|A'|/8$
smallest elements in $A'$ and let $A'_{>}$ be the remaining elements.
Partition $A'_{<}$ into $\ell$ sets $A_{<,1}',\dots,A_{<,\ell}'$, each of size $|A'_{<}|/\ell$, and partition $A'_{>}$ into $\ell$ sets $A_{>,1}',\dots,A_{>,\ell}'$, each of size $|A'_{>}|/\ell$, uniformly at random. Let $A_{i}'=A_{<,i}'\cup A_{>,i}'$.

Let $b_{i,1},b_{i,2},\dots,b_{i,|A'_{>,i}|}$ be a uniformly random enumeration of $A'_{>,i}$ and we then define two sets $B_{i,1} = \{b_{i,1},b_{i,2},\dots,b_{i,|A'_{>,i}|/2}\}$ and $B_{i,2} = \{b_{i,|A'_{>,i}|/2+1},\dots,b_{i,|A'_{>,i}|}\}$. 
Let $k'=2560\ell n/m+1$. Theorem~\ref{thm:Sze-Vu-2-strong} will follow easily from the next two claims.

\vspace{2mm}

\noindent {\bf Claim 1.} Suppose that $C$ and $n$ are sufficiently large. Then, with probability at least $3/4$, for all $i\in[\ell]$ and all $j\in[|A'_{>,i}|/2+1,|A'_{>,i}|]$,
\[
|\Sigma_{b_{i,j}}^{[k']}(A'_{<,i}\cup B_{i,1})|\ge\frac{b_{i,j}}{4}.
\]

\noindent {\bf Claim 2.} Let $M(i)$ be the sum of the largest $k'$ elements in $B_{i,2}$, let $\overline{M}(i)$ be the sum of the largest $2k'$ elements in $A'_{i}$ and let $M$ be the sum of the largest $2 \ell k'$ elements in $A'$. Then, with probability at least $3/4$, for all $i\in [\ell]$, $$M(i) \ge \frac{M}{8\ell}$$ and $$\overline{M}(i) \le \frac{4M}{\ell}.$$

\vspace{2mm}

Before proving these claims, we show how to complete the proof of Theorem \ref{thm:Sze-Vu-2-strong} assuming that their conclusions both hold, which happens with probability at least $1/2$. For any subset $J$ of $[|A'_{>,i}|/2+1,|A'_{>,i}|]$ of size $k'$, let $J=\{j_1,j_2,\dots,j_{k'}\}$ for $j_1<j_2<\dots<j_{k'}$. A straightforward adaptation of Lemma \ref{lem:modp} shows that for any set of integers $A$ and any integer $m\notin A$, we have $|\Sigma^{[h+1]}(A\cup \{m\})| \ge |\Sigma^{[h]}(A)|+|\Sigma_m^{[h]}(A)|$. For each $v \le k'$, apply this statement with $h=k'+v-1$, $m=b_{i,j_v}$ and $A = A'_{<,i}\cup B_{i,1}\cup \{b_{i,j_1},\dots,b_{i,j_{v-1}}\}$ to conclude that
\begin{align*}
&|\Sigma^{[k'+v]}(A'_{<,i}\cup B_{i,1} \cup \{b_{i,j_1},\dots,b_{i,j_v}\})| \\
&\ge |\Sigma^{[k'+v-1]}(A'_{<,i}\cup B_{i,1} \cup \{b_{i,j_1},\dots,b_{i,j_{v-1}}\})| + |\Sigma_{b_{i,j_v}}^{[k'+v-1]}(A'_{<,i}\cup B_{i,1} \cup \{b_{i,j_1},\dots,b_{i,j_{v-1}}\})| \\
&\ge |\Sigma^{[k'+v-1]}(A'_{<,i}\cup B_{i,1} \cup \{b_{i,j_1},\dots,b_{i,j_{v-1}}\})|+|\Sigma_{b_{i,j_v}}^{[k']}(A'_{<,i}\cup B_{i,1})|.
\end{align*} Thus, $$|\Sigma^{[2k']}(A'_{<,i}\cup B_{i})| \ge |\Sigma^{[2k']}(A'_{<,i}\cup B_{i,1} \cup \{b_{i,j_1},\dots,b_{i,j_{k'}}\})| \ge \sum_{v\le k'} |\Sigma_{b_{i,j_v}}^{[k']}(A'_{<,i}\cup B_{i,1})|.$$
By Claim 1, we have, for each $v\le k'$, that $|\Sigma_{b_{i,j_{v}}}^{[k']}(A'_{<,i}\cup B_{i,1})|\ge b_{i,j_{v}}/4$. Thus, $$|\Sigma^{[2k']}(A'_{<,i}\cup B_{i})| \ge \frac{1}{4}\sum_{v\le k'}b_{i,j_v} = \frac{1}{4}\sum_{j\in J}b_{i,j}.$$ By choosing $J$ to be the set of indices of the $k'$ largest elements of $B_{i,2}$, we obtain that 
$$|\Sigma^{[2k']}(A'_{i})| \ge \frac{1}{4}M(i).$$ 
Therefore, by Claim 2, we have that, for all $i\in[\ell]$, 
\[
|\Sigma^{[2k']}(A'_{i})|\ge\frac{M}{32\ell} \ge \frac{1}{128} \frac{4M}{\ell}.
\] 
Also by Claim 2, $\Sigma^{[2k']}(A_{i}')\subseteq[\overline{M}(i)]\subseteq [4M/\ell]$. 
Therefore, by Lemma \ref{lem:Lev} with $q = 4M/\ell$ and $n = q/128$, $\Sigma^{[2k']}(A'_{1})+\cdots+\Sigma^{[2k']}(A'_{\ell})$
contains an interval of length at least 
\[
\frac{1}{256}\cdot\frac{4M}{\ell}\cdot \ell=\frac{1}{64}M\ge\frac{1}{128}\ell k' m> n,
\]
where we used that $M$ is the sum of the largest $2 \ell k'<|A'|/2$ elements of $A'$, so that $M \ge 2\ell k' |A'|/2 > \ell k' m/2$, the bound $k'\ge\frac{2560 \ell n}{m}$ and $\ell=2^{15}$. Thus, $\Sigma^{[2\ell k']}(A')$ contains an interval of length at least $n$. 
\end{proof}

\begin{proof}[Proof of Claim 1]
Assume that, for some $i\in[\ell]$ and $j\in[|A'_{>,i}|/2+1,|A'_{>,i}|]$,
\begin{equation}
|\Sigma_{b_{i,j}}^{[k']}(A'_{<,i}\cup B_{i,1})|<\min\left(\frac{m^{2}}{1024\ell^2},\frac{b_{i,j}}{4}\right). \label{eq:ineq-wrong}
\end{equation}
Note that the size of the set $A'_{<,i}\cup B_{i,1}$, considered as a subset of $\mathbb{Z}_{b_{i,j}}$, is at least $\frac{7|A'|}{8\ell}>\frac{m}{2\ell}$ and at most $\frac{|A'|}{\ell}<\frac{m}{\ell}$, since all elements of $A'_{<,i}$ are smaller than $b_{i,j}$ and, hence, are distinct modulo $b_{i,j}$. But then, since $\frac{m}{2\ell} > 80(\log b_{i,j})^2$, Lemma \ref{lem:structure-1} with $b=b_{i,j}$ and $A = A'_{<,i}\cup B_{i,1}$ implies that if (\ref{eq:ineq-wrong}) holds, there must be some $d\in [2,8\ell n/m]$ such that all but at most $64(\log n)^2+8d$ elements of $A'_{<,i}\cup B_{i,1}$ are divisible by $d$.

Since $A'$ is nice, for each $d\in [2,8\ell n/m]$, at least $512\ell (\log n)^2 + 64 \ell d$
elements of $A'$ are not divisible by $d$. By the pigeonhole principle, we obtain that, for each $d \in [2,8\ell n/m]$, either $A'_>$ or $A'_<$ contains at least $256\ell (\log n)^2+32 \ell d$ elements not divisible by $d$. 

Note that $A'_{<,i}$ is distributed as a uniformly random subset of $A'_{<}$ of size $|A'_<|/\ell$ and $B_{i,1}$ is distributed as a uniformly random subset of $A'_>$ of size $|A'_>|/(2\ell)$. Consider the event ${\cal E}(i)$ that, for some $d\in [2,8\ell n/m]$, $A'_{<,i}\cup B_{i,1}$ contains at most $64(\log n)^2+8d$ elements which are not divisible by $d$. By the argument of Lemma \ref{lem:diverse} and a union bound over all $d\le 8\ell n/m\le n$, ${\cal E}(i)$ happens with probability at most $n\exp(-256\ell (\log n)^2/(16\ell)) < 1/n$. Thus, by a union bound over all $i\in [\ell]$, for sufficiently large $n$, the probability of the event $\bigcup_{i\in [\ell]}{\cal E}(i)$ is at most $1/4$. 

By our earlier observations, (\ref{eq:ineq-wrong}) cannot hold under the complement of the event $\bigcup_{i\in [\ell]}{\cal E}(i)$, so, provided $m\ge C\sqrt{n}$ for sufficiently large $C$,  
\begin{equation*}
|\Sigma_{b_{i,j}}^{[k']}(A'_{<,i}\cup B_{i,1})|\ge \min\left(\frac{m^{2}}{1024\ell^2},\frac{b_{i,j}}{4}\right)=\frac{b_{i,j}}{4} 
\end{equation*}
holds for all $i\in[\ell]$ and all $j\in[|A'_{>,i}|/2+1,|A'_{>,i}|]$ with probability at least $3/4$. 
\end{proof}

\begin{proof}[Proof of Claim 2]
Since $A'$ is nice, for each dyadic interval $I_{j}$ in $[n]$, either $A'$ is disjoint from $I_{j}$ or $A'$ intersects $I_{j}$ in at least $64\ell(\log n)$ elements. Note that there exists $j_0$ such that the dyadic intervals $I_j$ which intersect $A'$ have at least $64\ell(\log n)$ common elements with $A'_>$ for $j>j_0$, $|I_{j_0} \cap A'_>| < 64\ell (\log n)$ and $I_j \cap A'_> = \emptyset$ for $j<j_0$. As in the proof of Lemma \ref{lem:diverse}, Chernoff's inequality for hypergeometric distributions implies that the probability $|A'_{>,i} \cap I_j| > 2|A'_>\cap I_j|/\ell$ is at most $\exp(-|A'_>\cap I_j|/(3\ell)) \le \exp(-2\log n)$. Similarly, the probability that $|B_{i,2} \cap I_j| < |A'_>\cap I_j|/(4\ell)$ is at most $\exp(-|A'_>\cap I_j|/(32\ell)) \le \exp(-2\log n)$. Thus, by a union bound, with probability at least $3/4$, for each $i\in [\ell]$ and $j>j_0$, 
\begin{equation}
|B_{i,2} \cap I_j| \ge \frac{1}{4\ell} |A'_> \cap I_j| \label{eq:claim2eq}
\end{equation} and 
\begin{equation}
|A'_{>,i} \cap I_j| \le \frac{2}{\ell} |A'_> \cap I_j|.\label{eq:claim2eq2}
\end{equation} 

Assume now that (\ref{eq:claim2eq}) and (\ref{eq:claim2eq2}) hold for all $i\in [\ell]$ and $j>j_0$. Note that since $k' = 2560\ell n/m+1$ and $m \ge C\sqrt{n}$, we have $|A'_>| > 2\ell k' + 64\ell (\log n)$. Let $X$ be the set consisting of the largest $2\ell k'$ elements of $A'_>$, which is the same as the set of the largest $2\ell k'$ elements of $A'$. Observe that there is $j_1 > j_0$ 
such that, for all $j>j_1$, $X \supseteq A'_> \cap I_j$ and, for all $j<j_1$, $X \cap I_j =\emptyset$. Since, for each $j\ge j_1 > j_0$ and $i\in [\ell]$, $|B_{i,2} \cap I_j| \ge \frac{1}{4\ell} |A'_> \cap I_j| \ge \frac{1}{4\ell}|X\cap I_j|$, we have that $B_{i,2}$ contains a subset with $\left \lceil \frac{1}{4\ell} |X\cap I_j| \right\rceil$ elements in $I_j$ for each $j\ge j_1$. We next show that $\sum_{j\ge j_1} \left \lceil \frac{1}{4\ell} |X\cap I_j| \right\rceil < k'$. Indeed, let $t$ be the number of indices $j\ge j_1$ such that $X\cap I_j \ne \emptyset$. Note that $\sum_{j\ge j_1} |X\cap I_j| = |X| = 2\ell k' > 256\ell$ and, for each $j>j_1$ for which $X\cap I_j$ is non-empty, $|X\cap I_j|=|A'_>\cap I_j| \geq 64\ell (\log n)$. Thus, 
\begin{equation}
t\le 1+\frac{|X|}{64\ell (\log n)} < \frac{k'}{16}. \label{eq:bound-t}
\end{equation}Therefore, $$\sum_{j\ge j_1} \left \lceil \frac{1}{4\ell} |X\cap I_j| \right\rceil \le t + \sum_{j\ge j_1} \frac{1}{4\ell} |X\cap I_j| < \frac{k'}{16}+\frac{k'}{2} < k'.$$ 
Since $B_{i,2}$ contains a subset of size less than $k'$ with at least $\left\lceil\frac{1}{4\ell} |X\cap I_j|\right\rceil$ elements in $I_j$ for each $j\ge j_1$ and $M(i)$ is the sum of the $k'$ largest elements of $B_{i,2}$, one has 
\[
M(i) \ge \sum_{j\ge j_1} 2^{j-1} \cdot \frac{1}{4\ell} |X \cap I_j| = \frac{1}{8\ell}\sum_{j\ge j_1} 2^{j} \cdot |X \cap I_j| \ge \frac{M}{8\ell}.
\]
We also have $|A'_{>,i} \cap I_j| \leq \frac{2}{\ell}|A'_>\cap I_j| = \frac{2}{\ell}|X\cap I_j|$ for all $j>j_1$. Moreover, 
$$\sum_{j\ge j_1}\left\lfloor \frac{2}{\ell}|X\cap I_j| \right\rfloor \ge \sum_{j\ge j_1}\frac{2}{\ell}|X\cap I_j| - t \ge 4k'-\frac{k'}{16}>2k',$$
where we used the bound (\ref{eq:bound-t}). 
Thus, there exists a set of size at least $2k'$ containing $\left\lfloor \frac{2}{\ell}|X\cap I_j| \right\rfloor$ elements in $I_j$ for each $j \ge j_1$ such that the elements of this set dominate the $2k'$ largest elements of $A'_{>,i}$. 
Hence,
\[
\overline{M}(i) \le \sum_{j\ge j_1} 2^{j} \cdot \frac{2}{\ell} |X \cap I_j| \le \frac{4M}{\ell},
\]
completing the proof of Claim 2. 
\end{proof}

Both S\'{a}rk\"{o}zy~\cite{Sar2} and Szemer\'edi and Vu~\cite{SV2} also state results which apply to 
$$\Sigma^{(h)}(A)=\left\{\sum_{s\in S}s:S\subseteq A, |S|=h\right\},$$
the set of subset sums formed by adding exactly $h$ distinct elements from $A$.
By a small modification of our proof, we can also derive the following variant of Theorem \ref{thm:Sze-Vu-2-strong} that applies in this context. 

\begin{thm}
There exists an absolute constant $C > 0$ such that the following holds. For any subset $A$ of $[n]$ of size $m\ge C\sqrt{n}$, there exists $d\ge 1$ and $r\in [0,d-1]$ such that, for $A'=\{(x-r)/d\,:\,x\in A, d|(x-r)\}$ and $k\ge C n/m$, $\Sigma^{(k)}(A')$ contains an interval of length at least $n$. Furthermore, 
\[
|A|-|A'|\le C((\log n)^3+n/m).
\]
\end{thm}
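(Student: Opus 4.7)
I would adapt the proof of Theorem \ref{thm:Sze-Vu-2-strong} to handle sums of exactly $k$ elements. The preprocessing step mimics Lemma \ref{lem:nice-set}: iteratively divide out common divisors to obtain $d \geq 1$, then use pigeonhole over residues modulo $d$ to choose $r \in [0, d-1]$ maximizing $|A \cap (r + d\mathbb{Z})|$. Setting $A' = \{(x-r)/d : x \in A,\ d \mid (x-r)\}$, this preprocessing guarantees that $A'$ is ``nice'' (in the sense of Section \ref{sec:Sze-Vu-proof}) with $|A| - |A'| \leq C((\log n)^3 + n/m)$. The shift $r$, absent from the original theorem, is essential here: sums of exactly $k$ elements of $A \cap (r + d\mathbb{Z})$ are all congruent to $kr \pmod d$, so the normalization $x \mapsto (x-r)/d$ converts them into sums of exactly $k$ elements of $A'$.

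The main technical step is adapting Lemma \ref{lem:structure-1} to give $|\Sigma_b^{(k+1)}(A)| \geq \min(m^2/256,\, b/4)$ for sums of \emph{exactly} $k+1$ elements. The original update rule $\Sigma_b(i) = \Sigma_b(i-1) \cup (\Sigma_b(i-1) + a_i)$ mixes sums of varying sizes. I would instead choose two distinct elements $a_i, a_i' \in A_{i-1}$ at each step and use $\Sigma_b^{(i)} = (\Sigma_b^{(i-1)} + a_i) \cup (\Sigma_b^{(i-1)} + a_i')$, which preserves the invariant that $\Sigma_b^{(i)}$ consists of sums of exactly $i$ chosen elements. The expansion $|\Sigma_b^{(i)}| - |\Sigma_b^{(i-1)}|$ is now controlled by $|(\Sigma_b^{(i-1)} + (a_i - a_i')) \setminus \Sigma_b^{(i-1)}|$; Lemma \ref{lem:double-count} applied to the difference set $\{a - a' : a, a' \in A_{i-1}\}$, which is rich by the diversity of $A_{i-1}$, ensures that good pairs always exist in each unsaturated phase, and the saturated and growth phase analysis carries over with analogous modifications.

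The top-level argument then follows the template of Theorem \ref{thm:Sze-Vu-2-strong}. One partitions $A'$ into $\ell$ blocks, applies the adapted core lemma to each, and combines via an ``exact'' version of Lemma \ref{lem:modp} (namely $|\Sigma^{(k+1)}(A \cup \{m\})| \geq |\Sigma^{(k+1)}(A)| + |\Sigma_m^{(k)}(A)|$, whose proof is the same as the original). Lev's lemma applies to $\Sigma^{(k_1)}(A_1') + \cdots + \Sigma^{(k_\ell)}(A_\ell') \subseteq \Sigma^{(k_1+\cdots+k_\ell)}(A')$ without modification, yielding a long interval in $\Sigma^{(k)}(A')$. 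By varying how the total count is distributed among blocks (i.e., the $k_i$), the construction produces such an interval for every $k \geq Cn/m$ in the required range, and unwinding the normalization recovers the homogeneous progression of common difference $d$ in the sums of exactly $k$ elements of $A \cap (r+d\mathbb{Z})$.

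The hardest part will be the adapted core lemma: because expansion now comes from differences $a_i - a_i'$ rather than single elements, degenerate situations (e.g., $A_{i-1}$ concentrated on a coset of a subgroup) must be ruled out by the diversity maintained through preprocessing, and each iteration-phase analysis must carry over quantitatively. A possible simplification that sidesteps modifying Lemma \ref{lem:structure-1} is to set aside a small padding reserve $R \subseteq A'$ of consecutive integers: since $\Sigma^{(j)}(R)$ is itself an interval of length at least $j(|R|-j)/2$ for each $j \leq |R|$, one can pad a sum of any size $j \leq k_0$ from $A' \setminus R$ (furnished by Theorem \ref{thm:Sze-Vu-2-strong} applied to $A' \setminus R$) up to exactly $k$ elements while preserving the interval structure, converting the ``at most $k_0$'' conclusion into the desired ``exactly $k$'' one whenever $k$ lies in a suitable range.
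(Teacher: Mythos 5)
The paper itself omits the proof of this variant (it is stated after Theorem \ref{thm:Sze-Vu-2-strong} with the remark that the details are similar and omitted), so your sketch has to stand on its own, and it has two concrete gaps. The more serious one is the assembly step: the ``exact version of Lemma \ref{lem:modp}'' you invoke, namely $|\Sigma^{(k+1)}(A\cup\{m\})|\ge|\Sigma^{(k+1)}(A)|+|\Sigma_m^{(k)}(A)|$, is false, and its proof is not ``the same as the original''. In Lemma \ref{lem:modp} the largest element of $\Sigma(A)$ in a residue class modulo $m$, shifted by $m$, cannot lie in $\Sigma(A)$; here it can perfectly well lie in $\Sigma^{(k+1)}(A)$, which has one extra summand and hence larger elements in the same class. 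Concretely, take $A=[N]$, $m=N+1$, $k=1$: then $\Sigma^{(2)}(A\cup\{m\})=\{3,\dots,2N+1\}$ has $2N-1$ elements, while $|\Sigma^{(2)}(A)|+|\Sigma^{(1)}_m(A)|=(2N-3)+N=3N-3$. So the step in which the large elements $b_{i,j}$ are absorbed one at a time has no justification; repairing it requires a genuinely different mechanism (for instance forcing exactly one element from each of a chain of pairs, so every step increments the summand count by one, at which point the growth is governed by differences $b-b'$ and the modular analysis has to be redone), and this is where most of the real work of the adaptation lies.

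The second gap is the preprocessing. Iterating ``divide out common divisors'' and then making a single pigeonhole choice of $r$ modulo the final $d$ does not give the affine non-degeneracy the exact-count argument needs. Take $A=\{x\le n:\ x\equiv 1\ (\mathrm{mod}\ 3)\}$: no $d\ge2$ divides more than about a third of $A$, so your loop terminates with $d=1$, $r=0$, $A'=A$ (and $A$ essentially satisfies the niceness conditions of Section \ref{sec:Sze-Vu-proof}), yet $\Sigma^{(k)}(A)\subseteq k+3\mathbb{Z}$ contains no interval of length $3$. What is needed is an affine analogue of Lemma \ref{lem:nice-set}: iterate on the event that all but a bounded number of elements lie in a single class $r_i\bmod d_i$ with $d_i\ge2$, compose the maps $x\mapsto(x-r_i)/d_i$, and state the diversity hypothesis of the adapted Lemma \ref{lem:structure-1} in terms of congruence classes (equivalently the difference set), since with your pairing update the expansion is driven by $a_i-a_i'$ and $\langle A_{i-1}-A_{i-1}\rangle$ can be a proper subgroup even when $\langle A_{i-1}\rangle=\mathbb{Z}_b$ — your sketch names this degeneracy but the preprocessing you describe does not exclude it. Finally, the proposed shortcut of a ``padding reserve $R\subseteq A'$ of consecutive integers'' is not available in general: $A'$ is an arbitrary dense subset of an interval and may contain no run of consecutive integers of length more than $O(1)$, so $\Sigma^{(j)}(R)$ need not be an interval and this route cannot replace the modified core lemma; likewise, covering all $k\ge Cn/m$ (rather than one value of $k$) needs an explicit argument, e.g.\ a wide admissible range for the block counts $k_i$ or a complementation step for large $k$, which the sketch does not supply.
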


That is, if $A \subset [n]$ has size $m \geq C \sqrt{n}$ and $k \geq Cn/m$, then $\Sigma^{(k)}(A)$ contains an arithmetic progression of length at least $n$.
Since we do not need this variant and the proof is rather similar to that of Theorem~\ref{thm:Sze-Vu-2-strong}, we omit the details. 

Instead, we conclude the section by proving Corollary~\ref{cor:nonavg}, that there is a constant $C$ such that $H(n)$ and $h(n)$ are both at most $C \sqrt{n}$, where we recall that $H(n)$ is the largest integer for which there are two subsets of $[n]$ of size $H(n)$ whose sets of subset sums have no non-zero common element and $h(n)$ is the size of the largest non-averaging subset of $[n]$.

\begin{proof}[Proof of Corollary \ref{cor:nonavg}]
For the bound on $H(n)$, we need to show that for any two subsets $S_1,S_2 \subset [n]$, each of size $m \geq C\sqrt{n}$, there are non-empty subsets $S_1' \subset S_1$ and $S_2'\subset S_2$ such that $\sum_{s_1\in S'_1}s_1=\sum_{s_2 \in S'_2} s_2$.
To this end, order the elements of $S_1 \cup S_2$ in increasing order and let $M$ be the median. Without loss of generality, we may assume that the smallest $m/2$ elements from $S_1$ are each at most $M$ and the largest $m/2$ elements from $S_2$ are each at least $M$. Let $R_1\subset S_1$ consist of the smallest $m/2$ elements from $S_1$ and $R_2 \subset S_2$ consist of the largest $m/2$ elements of $S_2$.

Applying Theorem \ref{thm:Sze-Vu-2-strong} to $R_1$, we see that, provided $C$ is sufficiently large, $\Sigma^{[k]}(R_1)$ with $k=2^{52}n/m$ contains a homogeneous arithmetic progression $P$ of common difference $d \leq 4M/m$ and length at least $2n$ whose minimum element is at most $kM$. Note now, by the pigeonhole principle, that any $d$ element sequence contains a subsequence (consisting of consecutive terms) whose sum is divisible by $d$. We may therefore partition $R_2$ greedily into subsets $T_1 \cup \cdots \cup T_s$, each of size at most $d$, such that for each $i<s$ the sum of the elements in $T_i$ is a multiple of $d$. Note that the sum of the elements in any $T_i$ is at most $dn$, while the sum of all the elements in $R_2 \setminus T_s$ is at least $(m/2-d)M \geq kM$. It therefore follows that, for some $j$, the sum $\sum_{i=1}^j \sum_{t \in T_i}t$, which is a sum of elements from $S_2$, lies in the homogeneous arithmetic progression $P$.

For the bound on $h(n)$, we apply Straus' inequality $h(n) \leq 2H(n) + 2$ (see~\cite{Straus}), whose proof we include for completeness. Indeed, suppose that we have a subset of $[n]$ of size $p = 2H(n) + 3$, say $\{a_1, a_2, \dots, a_p\}$ with $a_1 < a_2 < \dots < a_p$. Writing $q = H(n) + 2$, we see that $a_q$ is the median element and the sets $\{a_q - a_i : 1 \leq i < q\}$ and $\{a_j - a_q : q < j \leq p\}$ are both subsets of $[n]$ of size $H(n) + 1$. Therefore, by the definition of $H(n)$, there must be sets $I \subset [q-1]$ and $J \subset [q+1, p]$ such that $\sum_{i \in I} (a_q - a_i) = \sum_{j \in J} (a_j - a_q)$. Rearranging, we see that
$a_q = \frac{1}{|I| + |J|} \sum_{i \in I \cup J} a_i$, so the set is not non-averaging.
\end{proof}

\section{\label{sec:extremal-sum}Subsets avoiding a given subset sum}

Recall that $g(n,m)$ is the maximum size of a subset of $[n]$ with no subset sum equal to $m$. Using the results of Section \ref{sec:Sze-Vu-proof}, we now prove Theorem~\ref{thm:Alon-conjecture}, giving the precise value of $g(n,m)$. Theorem~\ref{thm:Alon-conjecture} states that there is a constant $C$ such that if $m\in\left[Cn(\log n),\frac{n^{2}}{12(\log n)^{2}}\right]$, then 
$$g(n,m)=s(n,m) := \left\lfloor \frac{n}{\textrm{snd}(m)}\right\rfloor +\textrm{snd}(m)-2,$$ 
where $\textrm{snd}(m)$ is the smallest positive integer which is not a divisor of $m$. Moreover, if $m\in \left[\frac{n^{2}}{12(\log n)^{2}},\binom{n+1}{2}\right]$, then $g(n,m)=\max\left(s(n,m),(1+o(1))\sqrt{2m}\right)$.

\begin{proof}[Proof of Theorem \ref{thm:Alon-conjecture}] We consider the cases $m\le \frac{n^{2}}{12(\log n)^{2}}$ and $m>\frac{n^{2}}{12(\log n)^{2}}$ separately.

\vspace{2mm}

\noindent {\bf Case 1.} $Cn(\log n) \le m\le\frac{n^{2}}{12(\log n)^{2}}$ for $C$ sufficiently large. 

\vspace{2mm}

Let $A\subseteq[n]$ be such
that $m\notin\Sigma(A)$. Assume that $|A|\ge s(n,m)+1$.
We claim that $\textrm{snd}(m)\le1.01\log m$ for $m$ sufficiently large. Indeed, if $\textrm{snd}(m) > 1.01\log m$, then $m \ge \textrm{lcm}(1,2,\dots,1.01\log m)$. It is easy to see that $\textrm{lcm}(1,2,\dots,1.01\log m) = \exp\left(\sum_{x\le 1.01\log m} \Lambda(x)\right)$, where $\Lambda$ is the von Mangoldt function given by $\Lambda(x)=\log p$ if $x=p^k$ is a prime power and $\Lambda(x)=0$ otherwise. But, by the prime number theorem, $\sum_{x\le 1.01\log m} \Lambda(x) \ge 1.005\log m$ for $m$ sufficiently large, so that $m \ge \textrm{lcm}(1,2,\dots,1.01\log m) \ge \exp(1.005\log m)$, a contradiction. Thus, $\textrm{snd}(m) \le 1.01\log m \le 2.02 \log n$ and, in particular, $|A|\ge\frac{n}{2.02\log n}$. 

Let $A^{*}$ be a random
subset of $A$ where each element is chosen independently with probability
$1/10$. By Hoeffding's inequality, $|A|/9\ge |A^*|\ge |A|/11$ with high probability. 
Suppose that $2\le d\le n$ is such that there are at least $(\log n)^3$ elements in $A$ which are not divisible by $d$. Again by Hoeffding's inequality, the probability that the number of elements in $A$ which are not divisible by $d$ is more than $20$ times larger than the number of elements in $A^{*}$ which are not divisible by $d$ is at most $\exp(-(\log n)^3/800)$. Thus, by the union bound, the probability that there exists $d\in [2,n]$ such that there are at least $(\log n)^3$ elements in $A$ which are not divisible by $d$ and the number of elements in $A$ which are not divisible by $d$ is at least $20$ times larger than the number of elements in $A^{*}$ which are not divisible by $d$ is at most $n\exp(-(\log n)^3/800)<1/4$. Denote this latter event by ${\cal E}$ and assume from here on that $A^*$ has been chosen so that $|A|/9\ge |A^*|\ge |A|/11$ and ${\cal E}$ does not hold.

By Theorem \ref{thm:Sze-Vu-2-strong}, there exists $d$ such that,
for $A'=\{x/d:\,x\in A^{*},d|x\}$, we have $|A'|\ge|A^{*}|-2^{30}(\log n)^3 - \frac{2^{30}n}{|A^*|}$
and $\Sigma^{[k]}(A')$ contains an interval $I$ of length at least
$n$ for $k=2^{50}n/h$, where $h=|A^{*}|$. Note that $|A'| \ge |A^{*}|-2^{30}(\log n)^3 - \frac{2^{30}n}{|A^*|} \ge |A^{*}| - 2^{40}(\log n)^{3}$, as $|A^*| \ge \frac{|A|}{11} \ge \frac{n}{22.22\log n}$. Since ${\cal E}$ does not hold, there are at most $2^{50}(\log n)^{3}$ elements
in $A$ which are not divisible by $d$. 

Let $A''=\{x/d:x\in A\setminus A^{*},d|x\}$. Since $|A^*| \le |A|/9$ and there are at most $2^{50}(\log n)^{3}$ elements in $A$ which are not divisible by $d$, the size of $A''$ is at least $|A|-|A^{*}| - 2^{50}(\log n)^{3} \ge 8|A|/9 - 2^{50}(\log n)^{3} \ge 0.87|A|$. Note that the smallest element of $I$ is at most $nk/d\le2^{50}\frac{n}{h}\cdot \frac{n}{d}$ and each element in $A''$ is at most $n/d \leq n$. Therefore, by Lemma~\ref{lem:verysimple}, $\Sigma^{[k+|A''|]}(A'\cup A'')$ contains the interval $[2^{50}\frac{n}{h}\cdot \frac{n}{d},\sum_{x\in A''}x)$.
We have 
\[
\sum_{x\in A''}x\ge\frac{|A''|^{2}}{2}\ge\frac{|A|^{2}}{2.7}\ge\frac{n^{2}}{12(\log n)^{2}}
\]
and 
\[
\frac{n}{h}\cdot n\le 30n(\log n).
\]
Hence, $\Sigma(A)$ contains all multiples $z$ of $d$ with 
$2^{60}n(\log n)\le z\le\frac{n^{2}d}{12(\log n)^{2}}$.
In particular, if $m\notin\Sigma(A)$, then $d\nmid m$. Thus, $d\ge\textrm{snd}(m)$.
Recall that at most $2^{50}(\log n)^{3}$ elements of $A$ are not
divisible by $d$. Therefore, if $d\ge\textrm{snd}(m) + 1$, then
\[
|A|\le 2^{50}(\log n)^{3}+\left\lfloor \frac{n}{d}\right\rfloor \le 2^{50}(\log n)^{3}+\frac{n}{\textrm{snd}(m)+1}<\frac{n}{\textrm{snd}(m)}-\frac{n}{4\textrm{snd}(m)^{2}}<s(n,m),
\]
a contradiction. Thus, $d=\textrm{snd}(m)$. 

Since $|A| \ge \left\lfloor \frac{n}{\textrm{snd}(m)}\right\rfloor +\textrm{snd}(m)-1$ and at most $\left\lfloor \frac{n}{\textrm{snd}(m)}\right\rfloor $
elements in $A$ are divisible by $d$, there exist at least $d-1$ elements in $A$ which are not divisible by $d$. Let $\bar{A}$ be a set of $d-1$ such elements.
Note that $\bar{A}$ is disjoint from $\{dx:x\in A' \cup A''\}$. By Lemma \ref{lem:full-mod-d},
$\Sigma_{d}(\bar{A})$ contains a non-zero subgroup $d'\mathbb{Z}_d$ of $\mathbb{Z}_{d}$.
Since $d=\textrm{snd}(m)$, $d'|m$ for any $d'|d$ and $d' \ne d$. Thus, there exists a subset of $\bar{A}$ whose
sum $y$ is congruent to $m$ modulo $d$. Furthermore, $y$ is at most $dn$ since $|\bar{A}|=d-1$ and all elements of $\bar{A}$ are at most $n$. Noting that $d \le 4\log n$, we
have $m - y \ge Cn(\log n) - nd \ge 2^{60}n(\log n)$ for sufficiently large $C$. We also have $m - y \leq m \le \frac{n^2}{12(\log n)^2}$. Hence, $m - y\in\Sigma(\{dx:x\in A' \cup A''\})$, so $m \in\Sigma(\{dx:x\in A' \cup A''\} \cup\bar{A})$. Thus, if $|A|\ge s(n,m)+1$,
then $m\in\Sigma(A)$. Hence, $g(n,m)\le s(n,m)$. Since we already noted in the introduction that $g(n,m)\ge s(n,m)$, this completes the proof in this case.

\vspace{2mm}

\noindent {\bf Case 2.} $\frac{n^{2}}{12(\log n)^{2}}\le m\le \binom{n+1}{2}$. 

\vspace{2mm}

Let $A \subseteq [n]$ be such that $m\notin\Sigma(A)$. Assume that 
$$|A|\ge1+\max\left(s(n,m),\sqrt{2m}(1+2^{50}(\log n)^{2}/n^{1/3})\right).$$
Let $A^{*}$ be a random subset of $A$ where each element is chosen independently with probability $n^{-1/3}$. By Hoeffding's inequality, $0.9|A|/n^{1/3} \le |A^*| \le 1.1|A|/n^{1/3}$ with high probability. As in the case above, we can again define an event ${\cal E}$, in this case that there exists $d \in [2,n]$ such that there are at least $n^{1/3} (\log n)^2$ elements in $A$ which are not divisible by $d$ and the number of elements in $A$ which are not divisible by $d$ is at least $2 n^{1/3}$ times larger than the number of elements in $A^{*}$ which are not divisible by $d$, and show that it happens with probability at most $1/4$. We now fix $A^*$ with $0.9|A|/n^{1/3} \le |A^*| \le 1.1|A|/n^{1/3}$ such that ${\cal E}$ does not hold.

By Theorem \ref{thm:Sze-Vu-2-strong},
there exists $d$ such that, for $A'=\{x/d:\,x\in A^{*},d|x\}$, we have $|A'|\ge|A^{*}|-2^{30}(\log n)^3 - \frac{2^{30}n}{|A^*|}$
and $\Sigma(A')$ contains an interval $I$ of length at least $n$. Note that $|A|>s(n,m) \ge \frac{n}{2.02\log n}$, so $|A^*| \ge \frac{0.9|A|}{n^{1/3}} \ge \frac{0.4n^{2/3}}{\log n}$ and $|A'| \ge |A^{*}|-2^{30}(\log n)^3 - \frac{2^{30}n}{|A^*|} \ge |A^*| - 2^{40}n^{1/3}(\log n)$. Since ${\cal E}$ does not hold, there are at most $2^{41}n^{2/3}(\log n)$ elements of $A$ which are not divisible by $d$. 

Since $|A|\ge s(n,m)+1$,
we must again have $d\le\textrm{snd}(m)$. If $d=\textrm{snd}(m)$, then,
as above, we can find at most $d-1$
elements of $A$ which are not divisible by $d$ and
whose sum is congruent to $m$ modulo $d$. If $d<\textrm{snd}(m)$,
then $d|m$. In either case, there is a (possibly empty) sum $y$ of at most $d-1$
elements of $A$ not divisible by $d$ such
that $d | (m-y)$. Therefore, to show that $m\in\Sigma(A)$, it suffices
to show that $(m-y)/d\in\Sigma(\{x/d:x\in A,d|x\})$. 

Note that $\Sigma(A')$ contains an interval $I$ where the largest
element of $I$ is at most $\sum_{x\in A'}x\le 1.1n^{5/3}/d$ and each element in $\{x/d:x\in A\setminus A^{*},d|x\}$ is at most $n/d$. By Lemma \ref{lem:verysimple}, $\Sigma(\{x/d:x\in A,d|x\})$ contains the interval $[1.1n^{5/3}/d,\sum_{z\in A\setminus A^{*},d|z}z/d]$.
The number of elements in $A\setminus A^{*}$ which are divisible
by $d$ is at least $|A|-|A^*| - 2^{41}n^{2/3}(\log n)\ge |A|-1.1|A|/n^{1/3}-2^{41}n^{2/3}(\log n)\ge |A|(1-2^{42}n^{-1/3}(\log n)^{2})>\sqrt{2m}$.
Hence, 
\[
\sum_{z\in A\setminus A^{*},d|z}z/d\ge\sum_{i=1}^{\lceil\sqrt{2m}\rceil}i\ge m.
\]
Thus, $\Sigma(\{x/d:x\in A,d|x\})$ contains $(m-y)/d$, since $(m-y)/d\ge(m-dn)/d>1.1n^{5/3}/d$
and $(m-y)/d<m$. 

Hence, 
\[
g(n,m)\le\max\left(s(n,m),\sqrt{2m}(1+2^{50}(\log n)^{2}/n^{1/3})\right).
\]
Since $g(n,m) \ge s(n,m)$ and $g(n,m) \ge \lfloor \sqrt{2m} \rfloor$ as the interval $[\lfloor \sqrt{2m} \rfloor-1]$ does not have a subset sum which equals $m$, we have $$g(n,m)=\max\left(s(n,m),\sqrt{2m}(1+O((\log n)^{2}/n^{1/3}))\right),$$
completing the proof.
\end{proof}

\appendix 
\section{Supplementary results for Section \ref{sec:density-ramsey}}

\subsection{The growth rate of $F$}\label{appendix:est-density-ramsey}
In Section \ref{sec:density-ramsey}, we consider a sequence of positive integers $F=(f_n)_{n\ge 1}$ which satisfies $f_n = \sum_{i\le \epsilon n}f_i$ for all $n \ge n_0$. Here we establish the asymptotic for $F$ claimed in the introduction.

\begin{claim}
Let $F=(f_n)_{n\ge 1}$ be a sequence of positive integers which satisfies $f_n = \sum_{i\le \epsilon n}f_i$ for all $n \ge n_0$. Then $f_n = \exp\left(\left(\frac{1}{2\log (1/\epsilon)}+o(1)\right)(\log n)^2\right)$.
\end{claim}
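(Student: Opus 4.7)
The plan is to establish matching upper and lower bounds on $\log f_n$ by iterating the recurrence. I would first observe that $F$ is eventually non-decreasing: since $f_n - f_{n-1} = \sum_{\epsilon(n-1)<i\le \epsilon n}f_i\ge 0$ once $n\ge n_0$, the sequence is monotone nondecreasing from some point on. In particular, for $n$ large the dominant term in $f_n=\sum_{i\le \epsilon n} f_i$ is $f_{\lfloor \epsilon n\rfloor}$, and we have the two-sided control
\[
\delta\epsilon n\cdot f_{\lfloor(1-\delta)\epsilon n\rfloor}\;\le\;f_n\;\le\;\epsilon n\cdot f_{\lfloor\epsilon n\rfloor}
\]
for every fixed $\delta\in(0,1/2)$ and all sufficiently large $n$, where the lower bound comes from restricting the sum to $i\in(\lfloor(1-\delta)\epsilon n\rfloor,\lfloor\epsilon n\rfloor]$ and using monotonicity.

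For the upper bound, I would iterate the right inequality along the sequence $N_0=n$, $N_{k+1}=\lfloor\epsilon N_k\rfloor$, stopping at the first $K$ with $N_K<n_0$. Taking logarithms yields
\[
\log f_n\;\le\;\sum_{k=0}^{K-1}\log(\epsilon N_k)+\log f_{N_K}.
\]
Since $\log N_k=\log n-k\log(1/\epsilon)+O(1)$ and $K=\log n/\log(1/\epsilon)+O(1)$, a direct sum gives
\[
\sum_{k=0}^{K-1}\log(\epsilon N_k)\;=\;K\log n-\frac{K^2}{2}\log(1/\epsilon)-K\log(1/\epsilon)+O(K)\;=\;\frac{(\log n)^2}{2\log(1/\epsilon)}+O(\log n),
\]
while $\log f_{N_K}=O(1)$ depends only on the finitely many initial terms. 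This gives the upper half of the claim.

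For the lower bound, I would iterate $N_{k+1}=\lfloor(1-\delta)\epsilon N_k\rfloor$ and obtain
\[
\log f_n\;\ge\;\sum_{k=0}^{K_\delta-1}\log(\delta\epsilon N_k)+\log f_{N_{K_\delta}},
\]
with $K_\delta=\log n/\log(1/((1-\delta)\epsilon))+O(1)$. The same kind of sum computation yields
\[
\log f_n\;\ge\;\frac{(\log n)^2}{2\log(1/((1-\delta)\epsilon))}+O_\delta(\log n).
\]
Since $\delta>0$ is arbitrary and $\log(1/((1-\delta)\epsilon))\to\log(1/\epsilon)$ as $\delta\to 0$, we conclude that
\[
\log f_n\;\ge\;\Big(\tfrac{1}{2\log(1/\epsilon)}-o(1)\Big)(\log n)^2,
\]
where the $o(1)$ is obtained by letting $\delta=\delta(n)$ shrink slowly enough (say $\delta(n)=1/\log\log n$) that the $O_\delta(\log n)$ error remains negligible compared with $(\log n)^2$.

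Combining the two bounds gives $f_n=\exp\!\big((\tfrac{1}{2\log(1/\epsilon)}+o(1))(\log n)^2\big)$. The main obstacle is purely bookkeeping: one has to track that the iteration terminates after the right number of steps, that the floor operations and the multiplicative constants from the initial segment are absorbed into the $o(1)$ term, and that the lower-bound parameter $\delta$ can be sent to zero at a rate compatible with the error estimate. None of this is deep, but it must be done with care to produce a clean asymptotic of the form $\exp\!\big((\tfrac{1}{2\log(1/\epsilon)}+o(1))(\log n)^2\big)$ rather than a bound with an extraneous $\log 2$ or similar constant in the denominator.
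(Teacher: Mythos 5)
Your argument is correct, but it proceeds differently from the paper. You telescope the recurrence along a geometric descent $N_{k+1}=\lfloor\epsilon N_k\rfloor$ (resp.\ $\lfloor(1-\delta)\epsilon N_k\rfloor$), using eventual monotonicity to trap $f_n$ between $c\,\delta\epsilon n\, f_{\lfloor(1-\delta)\epsilon n\rfloor}$ and $O(\epsilon n)\, f_{\lfloor\epsilon n\rfloor}$, sum the logarithms over the $K\approx \log n/\log(1/\epsilon)$ steps, and then remove the window parameter $\delta$ at the end (either by taking $\liminf$ for each fixed $\delta$ and letting $\delta\to0$, which already suffices, or by a diagonal choice $\delta(n)\to0$, which needs the extra care you flag since the per-step inequality requires $\delta\epsilon N\gtrsim 1$, so the descent must stop at a $\delta$-dependent threshold). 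The paper instead runs a direct induction against explicit comparison functions: for the upper bound it verifies $f_n\le \exp\bigl(((\log n)^2+C)/(2\log(1/\epsilon))\bigr)$ by bounding the sum by $\epsilon m$ times its largest term, and for the lower bound it builds a second-order correction into the exponent, taking $g(x)=\exp\bigl(((\log x)^2-C'\log x\log\log x-C)/(2\log(1/\epsilon))\bigr)$ with $C'=100\log(1/\epsilon)$, compares $\sum_{i\le\epsilon m}g(i)$ with an integral and checks $g'(y)\le\epsilon^2 g(\epsilon y)$ to close the induction. The trade-off: your route is more elementary and transparent (pure telescoping plus an optimization over the window width, with all floors, initial-segment constants and the validity threshold of the step inequality absorbed into the $O(\log n)$ error, exactly the bookkeeping you acknowledge), whereas the paper's choice of a single corrected comparison function avoids any limiting parameter and yields a clean monotone induction at the cost of having to guess the $\log x\log\log x$ correction term. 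Both give the same asymptotic $\exp\bigl((\tfrac{1}{2\log(1/\epsilon)}+o(1))(\log n)^2\bigr)$.
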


\begin{proof}
We first show by induction that there is a constant $C$ for which $f_n \le \exp\left(\frac{1}{2\log(1/\epsilon)}((\log n)^2+C)\right)$ for all positive integers $n$, which would imply the upper bound in the claim. We can choose $C$ sufficiently large so that this holds for all $n \le \max(n_0,10/\epsilon)$. Let $m> \max(n_0,10/\epsilon)$. If $f_n\le \exp\left(\frac{(\log n)^2+C}{2\log(1/\epsilon)}\right)$ for all $n\le m-1$, then \begin{align*} 
f_m &= \sum_{i\le \epsilon m}f_i \le  \epsilon m \cdot \exp\left(\frac{1}{2\log(1/\epsilon)}((\log ( \epsilon m))^2+C)\right) \\ &= \exp\left(\frac{1}{2\log(1/\epsilon)}((\log m -\log (1/\epsilon))^2+ C + 2\log(1/\epsilon)\log(\epsilon m))\right) \\ &\le \exp\left(\frac{1}{2\log(1/\epsilon)}((\log m)^2 + C)\right),
\end{align*} 
completing the induction proof of the desired upper bound on $f_n$.

We now turn to proving the desired lower bound on $f_n$ in the claim. Let $C' = 100\log(1/\epsilon)$. Let $\tilde{g}(x) = \exp\left(\frac{(\log x)^2 - C' \log x \log \log x}{2\log(1/\epsilon)}\right)$. Note that there is $x_0 > 0$ depending only on $\epsilon$ such that $(\log x)^2 - C' \log x \log \log x$ is increasing for all $x \ge x_0$. Let $m(\epsilon)$ be the least positive integer such that, for all $m\ge m(\epsilon)$, $$\epsilon \tilde{g}(m) \le \tilde{g}(m-1/\epsilon)-\tilde{g}(x_0/\epsilon).$$ It is easy to verify that such $m(\epsilon)$ exists. Let $g(x)=\exp\left(\frac{(\log x)^2 - C' \log x \log \log x - C}{2\log(1/\epsilon)}\right)$, where $C$ is a sufficiently large constant to be chosen later. We next show by induction that, for an appropriate choice of $C$, $f_n \ge g(n)$ for all $n\ge 2$. We choose $C$ sufficiently large that the above claim holds for all $n \le \max(n_0,x_0,m(\epsilon))$. 
Let $m\ge \max(n_0,x_0,m(\epsilon))$. If $f_n\ge g(n)$ for all $n\le m-1$, then \begin{align*} f_m &= \sum_{i\le \epsilon m}f_i \ge \sum_{i\le \epsilon m} g(i) \\ &\ge \int_{x_0}^{\epsilon m - 1} g(x) dx = \int_{x_0/\epsilon}^{m-1/\epsilon} \epsilon g(\epsilon y)dy, \end{align*} where in the last step we used the change of variable $y=x/\epsilon$. Note now that 
\begin{align*}g'(y)&=\exp\left(\frac{1}{2\log(1/\epsilon)}((\log y)^2 - C' \log y \log \log y - C)\right)\cdot \frac{2 \log y - C' - C' \log \log y}{2 y \log(1/\epsilon)}\\& \le \exp\left(\frac{1}{2\log(1/\epsilon)}\left[(\log y)^2 - C' \log y \log \log y - C - 2\log(1/\epsilon)\log y + 2\log(1/\epsilon) \log(\log y/\log (1/\epsilon)) \right]\right).\end{align*} 
Thus,
\begin{align*} 
&g'(y)/g(\epsilon y) \\
&\leq \exp\left(\frac{1}{2\log(1/\epsilon)}\left[(\log y)^2 - C' \log y \log \log y - C - 2\log(1/\epsilon)\log y + 2\log(1/\epsilon) \log(\log y/\log (1/\epsilon)) \right]\right) \\ 
&\qquad \qquad  \cdot \exp\left(\frac{1}{2\log(1/\epsilon)}\left[-(\log(\epsilon y))^2+C'\log(\epsilon y)\log \log (\epsilon y)+C\right]\right)\\
&\le \exp\left(\frac{1}{2\log(1/\epsilon)}\left[2\log(1/\epsilon) \log(\log y/\log (1/\epsilon)) + C'\log(\epsilon)\log\log y\right]\right) \\ &\le \exp(-C'(\log\log y)/4) \le \epsilon^2, 
\end{align*} 
where in the last inequality we used the fact that $C' = 100\log(1/\epsilon)$. Then $$f_m \ge \int_{x_0/\epsilon}^{m-1/\epsilon} \epsilon g(\epsilon y)dy \ge \int_{x_0/\epsilon}^{m-1/\epsilon} \frac{g'(y)}{\epsilon} dy \ge \frac{g(m-1/\epsilon)-g(x_0/\epsilon)}{\epsilon} \ge g(m),$$ where in the last inequality we used the definition of $m(\epsilon)$ and the fact that $m\ge m(\epsilon)$. This completes the induction. 
\end{proof}

\subsection{Proof of Lemma \ref{lem:ramsey-density}\label{appendix:ramsey-density}}

In this subsection, we give the proof of Lemma \ref{lem:ramsey-density}, which is a key component in the proof of Theorem \ref{epsilonthm2}. First, we recall the setting and the statement of the lemma. Let $\epsilon_0>0$ be a sufficiently small constant. 
Let $B=(b_n)_{n\ge 1}$ be an $\epsilon$-friendly sequence. For $j$ sufficiently large, we choose $a_j$ to be a uniform random integer in $[b_j,b_{j+1})$ which has no prime factor at most $(\max(1/\epsilon, 1/\epsilon_0))^{4000}$ and let $A=(a_j)_{j \geq 1}$. For small $j$, we choose $a_j$ to be an arbitrary integer in $[b_j,b_{j+1})$. We let $h(i)$ be the smallest integer for which $b_{h(i)}\ge 2^i$ and $A_i = A\cap [b_{h(i)},b_{h(i+1)-1})$. 

\begin{customlemma}{4.4}
There exist positive constants $\epsilon_0$, $C_1$ and $C_2$ such that the following holds. For $i$ sufficiently large, with positive probability,
the set $A_{i}$ has the property that, for any subset $A'_{i}\subset A_{i}$
with $|A'_{i}|\geq(\min(\epsilon,\epsilon_0)/4)|A_{i}|$, $A'_{i}$ contains
a subset $A''_{i}$ with $|A''_{i}|\leq C_{1}i$ such that $\Sigma(A''_{i})$
contains every integer in the interval $[y,2y]$, where $y=C_{2}2^{i}i$. 
\end{customlemma}

The proof of this lemma has been consigned to an appendix because of its similarity to the proof of Lemma \ref{lem:main-ramsey}. Indeed, the difference between the two proofs consists mainly of minor modifications to account for the non-uniformity in the distribution of the elements of $A_i$. However, for completeness, we give the proof in full, beginning with the following lemma, which is the analogue of Lemma \ref{lem:cover} in this context. 

\begin{lem}\label{lem:ramsey-appendix}
For a sufficiently large positive constant $C_0$, the following holds. Assume that $\epsilon>0$ is sufficiently small. Let $i$ be sufficiently large and let $m$ be an integer in $[2^i,2^{i+1})$ with no prime factor at most $\epsilon^{-4000}$. If $S$ is a uniformly chosen random subsequence of $A_i$ of size $C_0 i$, then $|\Sigma_m(S)| < 2^{i-2}$ with probability less than $\epsilon^{1000C_0 i}$. 
\end{lem}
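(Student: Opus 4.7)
The proof will closely parallel that of Lemma~\ref{lem:cover}, with modifications to handle the non-uniform distribution of elements in $A_i$. First, by symmetry, a uniformly random size-$C_0 i$ subsequence of $A_i$ has the same distribution as the following two-step procedure: choose a uniformly random index set $J=\{j_1<\cdots<j_q\}\subseteq\{h(i),\ldots,h(i+1)-1\}$ of size $q=C_0 i$, and then take $S=(a_{j_1},\ldots,a_{j_q})$, where each $a_{j_t}$ is independently uniform on $[b_{j_t},b_{j_t+1})\cap X_\epsilon$, with $X_\epsilon$ the set of integers having no prime factor at most $\epsilon^{-4000}$. We condition on $J$ throughout and write $S_t=(a_{j_1},\ldots,a_{j_t})$.

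Next, as in the proof of Lemma~\ref{lem:cover}, we classify each step $t\in[2,q]$ as bad or good according to whether $|\Sigma_m(S_t)|$ grows substantially beyond $|\Sigma_m(S_{t-1})|$, using two regimes (with threshold roughly $2^i/\log(1/\epsilon)$) and a small expansion parameter $\delta=\epsilon^{c_1}$ for a suitable $c_1>0$. The first key estimate bounds the conditional bad probability. Conditioning on $S_{t-1}$, Lemma~\ref{lem:double-count} bounds the bad residues in $\mathbb{Z}_m$ in regime~(i) by $O(|\Sigma_m(S_{t-1})|)$, and an iteration with Lemmas~\ref{lem:stable-period} and~\ref{lem:Cauchy-Davenport}, together with the fact that $m$ has no prime factor at most $\epsilon^{-4000}$ (so every proper subgroup of $\mathbb{Z}_m$ has index at least $\epsilon^{-4000}$), bounds it by $O(\delta m)$ in regime~(ii). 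Since friendly condition~(iv) implies $\Delta b_{j_t}<m$, each residue class of $\mathbb{Z}_m$ contributes at most one integer to $[b_{j_t},b_{j_t+1})$; and Mertens' theorem shows that $[b_{j_t},b_{j_t+1})\cap X_\epsilon$ has size at least $\kappa\Delta b_{j_t}/\log(1/\epsilon)$ for some absolute $\kappa>0$. Plugging these together and using friendly conditions~(iii) and~(iv) to control $\Delta b_{j_t}$ from below by a constant multiple of $\Delta b_{h(i)}$, one obtains that each bad step has conditional probability at most $\epsilon^{c_2}$ for some constant $c_2>0$ (tunable through the choice of $c_1$ and the $\epsilon^{-4000}$ threshold).

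The second key estimate, analogous to Claim~2 of the proof of Lemma~\ref{lem:cover}, shows that if $|\Sigma_m(S)|<2^{i-2}$, then the number of non-bad steps is at most $O(i)$: regime~(i) contributes at most $\log_{3/2}(2^i)\le i$ non-bad steps (since $|\Sigma_m|$ grows by a factor of at least $3/2$ each time), and regime~(ii) contributes at most $\delta^{-1}\log\log(1/\epsilon)$ non-bad steps, which is $O(1)$ in $i$ for fixed $\epsilon$. A union bound over the $\binom{q}{O(i)}$ choices of the non-bad positions then yields
\[
\Pr\bigl(|\Sigma_m(S)|<2^{i-2}\bigr)\le \binom{C_0 i}{O(i)}\,\epsilon^{c_2(C_0 i-O(i))}\le \epsilon^{1000 C_0 i},
\]
provided $C_0$ is chosen sufficiently large in terms of $c_2$ and $\epsilon$ is sufficiently small.

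The main obstacle is calibrating the thresholds $\epsilon^{-4000}$ and $\delta=\epsilon^{c_1}$ so that the conditional bad probability per step is bounded by $\epsilon^{c_2}$ with $c_2$ large enough that, after the union bound absorbs the combinatorial factor $\binom{q}{O(i)}$ and the $O(i)$ non-bad steps are accounted for, the final bound is at most $\epsilon^{1000 C_0 i}$. Compared with Lemma~\ref{lem:cover}, the additional subtlety is that the sampling interval $[b_{j_t},b_{j_t+1})$ is much shorter than the modulus~$m$, so the sampled residues are concentrated on a short window of $\mathbb{Z}_m$ rather than spread uniformly, and one must verify via the friendly conditions that this interval is nonetheless long enough for the coprimality density bound to translate into a usefully small bad probability.
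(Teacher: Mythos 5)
Your overall skeleton is the right one and matches the paper's argument (which adapts Lemma~\ref{lem:cover}): classify steps as bad or good in two regimes, bound the per-step bad probability via Lemmas~\ref{lem:double-count}, \ref{lem:stable-period} and \ref{lem:Cauchy-Davenport} together with the fact that $m$ has no small prime factor, show that fewer than $O(i)$ steps can be good when $|\Sigma_m(S)|<2^{i-2}$, and finish with a union bound over the positions of the good steps. However, two quantitative steps fail as you have set them up. The more serious one is your decision to condition on the index set $J$ throughout. Given $J$ and the history, the next element is uniform on $X_{j_t}\subseteq[b_{j_t},b_{j_t+1})$, an interval of length roughly $2^i/t$ where $t=h(i+1)-h(i)-1\approx|A_i|\to\infty$. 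The sets of bad residues produced by Lemmas~\ref{lem:double-count} and \ref{lem:Cauchy-Davenport} have size proportional to $2^i$ (times a factor that is small only in $\epsilon$), and nothing prevents them from concentrating inside this one short window, since they depend on the history; so the bound ``(number of bad residues) $\times$ (one integer per residue class) $/|X_{j_t}|$'' that you propose is of order $|G|\,t\log(1/\epsilon)/2^i$, which is not $\epsilon^{c_2}$ and is in fact vacuous once $|G|\gtrsim 2^i/t$. The paper avoids this by conditioning only on $S_{t-1}$ (equivalently, on which intervals have been used so far, not on which will be used later): the next element is then spread over $\bigcup_{k\notin T}X_k$, a set of size comparable to $\tau 2^i$, and each of the at most $|G|$ bad integers in the whole block $[b_{h(i)},b_{h(i+1)-1})$ carries point probability $O(1/(c|X|))$, which is what makes the per-step probability genuinely small.

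The second issue is the calibration of the regime-(i) threshold. With your choice ``roughly $2^i/\log(1/\epsilon)$'', the bad set in regime (i) has size up to $2|\Sigma_m(S_{t-1})|\approx 2^{i+1}/\log(1/\epsilon)$, while even the full support $\bigcup_k X_k$ has size only about $\tau 2^i\approx 2^i/(c'\log(1/\epsilon))$; the resulting per-step bound is a constant, not $\epsilon^{c_2}$, and no choice of $\delta=\epsilon^{c_1}$ repairs this (and the exponent $4000$ is fixed by the construction of $A$, so it is not available as a tuning knob). The threshold must be taken polynomially small in $\epsilon$ relative to $2^i$; the paper takes $2^i/w$ with $w=\epsilon^{-4000}$ and $\delta=1/w$, which makes both regimes give a per-step probability of order $(\log w)/w$ and still keeps the number of good regime-(ii) steps at $\delta^{-1}\log_2 w=w\log_2 w=O_\epsilon(1)$, so the final union bound $\binom{C_0i}{2i}p^{(C_0-2)i}<\epsilon^{1000C_0i}$ goes through for $C_0$ large. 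Your counting of good steps and the final union bound are otherwise fine; fixing the conditioning and the threshold as above turns your sketch into essentially the paper's proof.
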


\begin{proof}
Let $w=\epsilon^{-4000}$. Denote by $X$ the set of integers $[b_{h(i)},b_{h(i+1)-1})$ with no prime divisor at most $w$. For each $j \in [h(i),h(i+1)-1)$, let $X_{j}$ be the set of integers in $[b_j,b_{j+1})$ with no prime divisor at most $w$. Let $t = h(i+1)-h(i)-1$, which is the number of intervals $[b_j,b_{j+1})$ in $[b_{h(i)},b_{h(i+1)-1})$. Note that for each interval $I$ of integers of sufficient length, the number of elements in the interval which are coprime to all the primes at most $w$ is $(\tau+o(1))|I|$, where $\tau = \phi(W)/W$ with $W$ being the product of all primes at most $w$. Since $B$ is a friendly sequence, we have $b_{j+1}-b_j \le (b_{j'+1}-b_{j'})/c$ for all $j\in [h(i)-1,h(i+1)-1]$ and $j'\in [h(i),h(i+1)-2]$. Thus, 
$$2^i \le b_{h(i+1)} - b_{h(i)-1} \le (1+2/(ct))(b_{h(i+1)-1}-b_{h(i)}).$$ 
Since $t$ tends to infinity as $i$ tends to infinity, we have that $b_{h(i+1)-1} - b_{h(i)} \ge \frac{7}{8} 2^i$ for sufficiently large $i$. Hence, for large $i$, we have that
\begin{equation}
|X| \ge \frac{2^i\tau}{2}. \label{eq:bound-X} 
\end{equation}
Again by properties of friendly sequences, the length of the intervals $[b_j,b_{j+1})$ for $j\in [h(i),h(i+1)-1)$ are within a factor $1/c$ of each other and the minimum length of an interval $[b_j,b_{j+1})$ with $j\in [h(i),h(i+1)-1)$ tends to infinity as $i$ tends to infinity. We thus obtain that all $|X_j|$ with $j\in [h(i),h(i+1)-1)$ are within a factor $2/c$ of each other for $i$ sufficiently large. Hence, 
\begin{equation}
|X_j| \ge \frac{c|X|}{2t}. \label{eq:bound-Xj}
\end{equation}

Let ${\cal D}$ be the distribution of a random integer in $[b_{h(i)},b_{h(i+1)-1})$, where the probability that an element $a\in X_j$ is chosen is $\frac{1}{|X_j|t}$. Observe that the random sequence $S$ is a sequence of $C_0 i$ random integers with distribution ${\cal D}$, subject to the condition that no two elements come from the same interval $[b_j,b_{j+1})$. 

Let $q = C_0 i$. Let $S=(s_{1},s_{2},\dots,s_{q})$. Let $S_j=(s_1, s_2, \ldots,s_j)$ denote the sequence consisting of the first $j$ elements of $S$. Let $\delta=1/w$. Call $j \in [2,q]$ {\it bad} if 
\begin{itemize} 
\item $|\Sigma_{m}(S_j)|\leq \frac{3}{2}|\Sigma_{m}(S_{j-1})|$ and $|\Sigma_{m}(S_{j-1})|\leq \frac{2^i}{w}$ or 
\item $|\Sigma_{m}(S_j)|\leq (1+\delta)|\Sigma_{m}(S_{j-1})|$ and $\frac{2^i}{w} < |\Sigma_{m}(S_{j-1})|<2^{i-2}$. 
\end{itemize}
The following two claims will allow us to complete the proof of the lemma.

\vspace{2mm}

\noindent {\bf Claim 1.} The probability that $j$ is bad conditioned on the choice of $S_{j-1}$ is at most $p:=\frac{16}{cw\tau}$.

\vspace{2mm}

\noindent {\bf Claim 2.} If $|\Sigma_m(S)|<2^{i-2}$, then all but fewer than $2i$ integers in $[2,q]$ are bad. 

\vspace{2mm}

Assuming Claim 1, for any $B \subset [2,q]$, the probability that all elements in $B$ are bad is at most $p^{|B|}$. By Merten's third theorem, we have $\tau = (e^{-\gamma}+o(1))/\log w \ge 1/(2\log w)$ for sufficiently small $\epsilon$, so 
\begin{equation}
p \le \frac{32\log w}{cw}. \label{eq:bound-p}
\end{equation}
From Claim 2, if $|\Sigma_m(S)|<2^{i-2}$, then there is a set $B$ of $q-2i$ integers $i \in [2,q]$ which are bad. Taking a union bound over all such choices of $B$, the probability that $|\Sigma_m(S)|<2^{i-2}$ is at most 
\[{q \choose q-2i}p^{|B|}={q \choose 2i}p^{|B|}<
(eC_0)^{2i} \left(\frac{32\log w}{cw}\right)^{C_0 i-2i}<\epsilon^{1000C_0 i},
\] where in the first inequality we used (\ref{eq:bound-p}) and in the second inequality we assume a sufficiently large choice of $C_0$ and note that $w=\epsilon^{-4000}$ with $\epsilon$ sufficiently small.
\end{proof}

To complete the proof, it remains to verify Claims 1 and 2. 

\vspace{2mm}

\noindent {\it Proof of Claim 1.} Fix $S_{j-1}=(s_1,\ldots,s_{j-1})$. Conditioned on this choice of $S_{j-1}$, we bound the probability that $j$ is bad. Let $T$ be the set of $k$ such that $[b_k,b_{k+1})$ contains at least one of $s_1,\ldots,s_{j-1}$. Observe that conditioned on $s_1,\dots,s_{j-1}$, the distribution of $s_j$ is supported on $\bigcup_{k\in [h(i),h(i+1)-1)\setminus T} X_k$ and, for $x_k \in X_k$ with $k\in [h(i),h(i+1)-1)\setminus T$, the conditional probability that $s_j$ is equal to $x_k$ is 
$$\frac{1}{|X_k|(t-|T|)}\le \frac{2}{|X_k|t} \le \frac{4}{c|X|},$$ 
where we used (\ref{eq:bound-Xj}). 

If $|\Sigma_{m}(S_{j-1})| \geq 2^{i-2}$, then $j$ cannot be bad (so the event that $j$ is bad has probability zero). We may therefore restrict attention to the two cases $|\Sigma_{m}(S_{j-1})| \leq 2^i/w$ and $2^i/w <|\Sigma_{m}(S_{j-1})| < 2^{i-2}$. 

For the first case, note, by Lemma \ref{lem:double-count}, that the number of $s$ with $|\Sigma_{m}(S_{j-1}\cup\{s\})|\le\frac{3}{2}|\Sigma_{m}(S_{j-1})|$ is at most $\frac{|\Sigma_{m}(S_{j-1})|^{2}}{|\Sigma_{m}(S_{j-1})|/2}=2|\Sigma_{m}(S_{j-1})|$. Therefore, if $|\Sigma_{m}(S_{j-1})| \leq 2^i/w$, the probability that $j$ is bad conditioned on $S_{j-1}$ is at most $$2|\Sigma_{m}(S_{j-1})| \cdot \frac{4}{c|X|} \le \frac{16}{cw\tau} = p,$$ where in the inequality we used (\ref{eq:bound-X}). 

Suppose now that $2^i/w <|\Sigma_{m}(S_{j-1})| < 2^{i-2}$. For a positive integer $D$, let $G_{D}$ be the set of $s$ such that $|\Sigma_{m}(S_{j-1}\cup\{s\}) |\le |\Sigma_{m}(S_{j-1})|+D$. Let $d = \lfloor \delta |\Sigma_{m}(S_{j-1})|\rfloor$, so $j$ is bad in this case if and only if $s_j \in G_d$. Let $k=\lfloor \frac{1}{2\delta}\rfloor$, so $kd \leq |\Sigma_{m}(S_{j-1})|/2$. By Lemma~\ref{lem:stable-period}, $kG_{d}\subseteq G_{kd}$, so 
$\left|kG_{d}\right|\le|G_{kd}| \le 2|\Sigma_{m}(S_{j-1})| < 2^{i-1}$, where the middle inequality is again by the consequence of Lemma \ref{lem:double-count} noted above. 

If  $|G_{d}|\le\frac{m}{w}$, then $|G_{d}|\le \frac{m}{w} \leq \frac{2^{i+1}}{w} = 2\delta 2^i$. 
Otherwise, $|G_{d}|>\frac{m}{w}$. In this case, since $m$ has no prime divisor
at most $w$, no subgroup of $\mathbb{Z}_{m}$ has size larger than
$\frac{m}{w}$. Thus, $G_{d}$ cannot be contained in a coset
of a non-trivial subgroup. By Lemma \ref{lem:Cauchy-Davenport}, since $\left|kG_{d}\right| \le 2^{i-1} < m$, we must have 
$\left|kG_{d}\right|\ge (k+1)|G_{d}|/2 \geq |G_d|/(4\delta)$. Hence, $|G_{d}| \leq 4\delta|kG_d| \leq 4\delta 2^{i-1} =2\delta 2^i$. Thus, in either case, conditioned on the choice of $S_{j-1}$, the probability that $j$ is bad, which is the same as the probability that $s_j \in G_d$, is at
most $$|G_d|\cdot \frac{4}{c|X|} \leq \frac{16\delta}{c\tau} = p,$$ where we again used (\ref{eq:bound-X}). \qed  

\vspace{2mm}

\noindent {\it Proof of Claim 2.} As $S_{j-1}\subset S_j$ for $j\in[2,q]$, $\Sigma_m(S_{j-1}) \subset \Sigma_m(S_{j})$ and, hence, $1 \leq |\Sigma_m(S_1)| \leq \cdots \leq |\Sigma_m(S_q)|=|\Sigma_m(S)| < 2^{i-2}$. Therefore, the number of $j$ which are not bad with $|\Sigma_m(S_{j-1})| \leq 2^i/w$ and $|\Sigma_m(S_{j})| \geq \frac{3}{2}|\Sigma_m(S_{j-1})|$ is at most $\log_{3/2} 2^i = i\frac{\log 2}{\log (3/2)}$, as we get a factor of $3/2$ for each such $j$. Moreover, since $(1+\delta)^{\delta^{-1} \log_2w} \geq 2^{\log_2 w} = w$, the number of elements $j$ which are not bad with $2^{i-2}>|\Sigma_m(S_{j-1})| >  2^i/w$ and $|\Sigma_m(S_j)| \geq (1+\delta)|\Sigma_m(S_{j-1})|$ is at most $\delta^{-1}\log_2 w = w\log_2 w$, as we get a factor of $1+\delta$ for each such $j$. Therefore, the number of $j \in [2,q]$ which are not bad is at most $i\frac{\log 2}{\log (3/2)}+w \log_2 w < 2i$ for sufficiently large $i$. \qed

\vspace{2mm}

We are now ready to prove Lemma \ref{lem:ramsey-density}.

\begin{proof}[Proof of Lemma \ref{lem:ramsey-density}]
By replacing $\epsilon$ with $\min(\epsilon,\epsilon_0)$, we only need to prove Lemma \ref{lem:ramsey-density} for $\epsilon\le \epsilon_0$. Thus, by choosing $\epsilon_0$ sufficiently small, it suffices to prove that the following holds for sufficiently small $\epsilon$. If $i$ is sufficiently large, then any subset $A_i'$ of $A_i$ with $|A_i'|\ge (\epsilon/4)|A_i|$ contains a subset $A_i''$ with $|A_i''|\le C_1 i$ such that $\Sigma(A_i'')$ contains every integer in the interval $[y,2y]$, where $y=C_2 2^i i$. 

For a given $C_1$ and $i$ sufficiently large, we have that $|A_i| \ge 400\epsilon^{-1}C_1 i$. Consider a random partition of $A_{i}$ into subsets of size $4\epsilon^{-1} C_1 i$ and consider a uniform random ordering of each subset as a sequence of integers. Let the obtained sequences be $A_{i,1},\dots,A_{i,u}$. 

We will show that for an appropriate choice of $C_1$, there exists a positive constant $C_2$ such that, with positive probability, the following event ${\cal E}$ holds. For all $k\le u$ and all subsequences $A'_{i,k}$ of $A_{i,k}$ of size $(\epsilon/4)|A_{i,k}|$, $\Sigma(A'_{i,k})$ contains the interval $[y,2y]$ for $y=C_2 2^i i$. 

Fix $k\le u$ and fix a subset $I'$ of $[4\epsilon^{-1}C_1i]$ of size $C_1 i$. Let $A'_{i,k}$ be the subsequence of $A_{i,k}$ consisting of elements with index in $I'$. Let $\ell$ be a constant to be chosen later. We partition $I'$ into a subset $I''$ of size $7C_1 i/8$ and $\ell$ subsets $I''_{k,1},\dots,I''_{k,\ell}$ of equal size such that each subset in the partition consists of consecutive terms from $I'$. Let $A''_{i,k}$ be the elements with index in $I''$ and, for each $j\in [\ell]$, let $S_{k,j}$ be the elements with index in $I''_{k,j}$. Let $J'_{k,j}$ be the first $|I''_{k,j}|/2$ elements of $I''_{k,j}$ and let $J''_{k,j}$ be the remaining elements. Let $S'_{k,j}$ be the elements with index in $J'_{k,j}$ and let $S''_{k,j}$ be the elements with index in $J''_{k,j}$. Then $S'_{k,j}$ has the same distribution as a random subsequence of $A_i$ of length $|S_{k,j}|/2$. We choose $C_1 = 16\ell C_0$, so that $|S'_{k,j}| = C_0 i$, where $C_0$ is the constant defined in Lemma \ref{lem:ramsey-appendix}. By Lemma \ref{lem:ramsey-appendix} and a union bound, we have that $|\Sigma_m(S'_{k,j})| \ge 2^{i-2}$ for all $m\in [2^i,2^{i+1})$ with no prime factor at most $\epsilon^{-4000}$ with probability at least $1-2^i\epsilon^{1000C_0 i} > 1-\epsilon^{800C_0i}$, assuming that $\epsilon$ is sufficiently small. 
Thus, by another union bound, with probability at least $1-\ell\epsilon^{800C_0i}$, $|\Sigma_m(S'_{k,j})| \ge 2^{i-2}$ for all $j\le \ell$ and all $m\in S''_{k,j}$. By repeated application of Lemma \ref{lem:modp}, we have that 
$$|\Sigma(S_{k,j})| = |\Sigma(S'_{k,j}\cup S''_{k,j})| \ge |S''_{k,j}|2^{i-2}\ge \frac{1}{16} 2^{i}|A'_{i,k}|/(4\ell).$$ 
We also have that $\Sigma(S_{k,j})$ is a subset of the interval $[0, 2^{i}|A'_{i,k}|/(4\ell)]$. Furthermore, $\Sigma(S_{k,j})$ is not contained in any arithmetic progression with common difference greater than $1$, as otherwise there exists $1<d\le 16$ such that all elements of $S_{k,j}$ are divisible by $d$, contradicting the fact that elements of $A_i$ do not have prime factors at most $\epsilon^{-4000}$. Thus, choosing $\ell = 33$, 
by Lemma \ref{lem:Lev}, we have that $S_{k,1}+\ldots+S_{k,\ell}$ contains an interval of length at least $2^{i}|A'_{i,k}|/(4\ell)$. Hence, $\Sigma(A'_{i,k}\setminus A''_{i,k})$ contains an interval $[a,b]$ of length at least $2^{i}|A'_{i,k}|/(4\ell) > 2^{i+1}$. Note that $a < b \le 2^{i-2}|A'_{i,k}|$. By Lemma \ref{lem:verysimple}, we then have that $\Sigma(A'_{i,k})$ contains the interval $[a,b+\sum_{x\in A''_{i,k}}x] \supset [y,2y]$ for $y = \frac{1}{4}2^{i}|A'_{i,k}| = \frac{C_1}{4}2^i i$. Let $C_2 = \frac{C_1}{4}$. 

By taking a union bound over all possible choices of $I'$, the probability that there exists a subsequence $A'_{i,k}$ of $A_{i,k}$ of size $(\epsilon/4)|A_{i,k}|$ such that $\Sigma(A'_{i,k})$ does not contain the interval $[y,2y]$ for $y=C_2 2^i i$ is at most ${|A_{i,k}| \choose (\epsilon/4)|A_{i,k}|} \ell \epsilon^{800C_0 i}$. By a union bound over all $k\le u$, we then obtain that the event ${\cal E}$ holds with probability at least 
$$1 - u {|A_{i,k}| \choose (\epsilon/4)|A_{i,k}|} \ell \epsilon^{800C_0 i} \ge 1 - 2^{i+7} (4e/\epsilon)^{C_1 i}\epsilon^{800C_0 i}  \ge 1 - \epsilon^{200C_0 i} > 0,$$ 
where we used that $\epsilon$ is sufficiently small, $C_1 = 16 \ell C_0$ with $C_0$ sufficiently large, $\ell = 33$ and $u\le 2^{i+1}$.

Assume now that the event ${\cal E}$ holds. For any subset $A'_i$ of $A_i$ such that $|A'_i|\ge (\epsilon/4)|A_i|$, there exists $k\le u$ such that $|A'_i \cap A_{i,k}| \ge (\epsilon/4)|A_{i,k}|$. Thus, defining $A''_i$ to be an arbitrary subset of $A'_i \cap A_{i,k}$ of size $(\epsilon/4)|A_{i,k}| = C_1 i$, we have that $\Sigma(A''_i)$ contains the interval $[y,2y]$ for $y=C_2 2^i i$, as required. 
\end{proof}

\section{Supplementary results for Section \ref{sec:Monochromatic-subset-sums}}

\subsection{Number-theoretic estimates}\label{sec:number-theoretic}

This short section contains the proofs of some number-theoretic estimates which were used in Section~\ref{sec:Monochromatic-subset-sums}.
We will need the following simple lemma. 

\begin{lem}\label{lem:sum-div}
One has 
\[
\frac{m}{\zeta(2)\phi(m)}\le\prod_{p|m}\left(1+\frac{1}{p}\right)\le\sum_{u|m}\frac{1}{u}\le\prod_{p|m}\left(1+\frac{1}{p-1}\right)=\frac{m}{\phi(m)}.
\]
\end{lem}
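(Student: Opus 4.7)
The plan is to verify each of the four (in)equalities separately by straightforward manipulation of Euler products; no serious obstacle arises. I will work from the outside in.

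The rightmost equality $\prod_{p|m}(1+\frac{1}{p-1}) = m/\phi(m)$ is immediate upon rewriting $1+\frac{1}{p-1} = \frac{p}{p-1} = (1-1/p)^{-1}$ and invoking the standard identity $\phi(m)/m = \prod_{p|m}(1-1/p)$. For the leftmost inequality, I would divide through to get
\[
\frac{m/\phi(m)}{\prod_{p|m}(1+1/p)} = \prod_{p|m}\frac{p}{p-1}\cdot\frac{p}{p+1} = \prod_{p|m}\frac{p^2}{p^2-1} = \prod_{p|m}(1-p^{-2})^{-1},
\]
which is bounded by $\prod_p (1-p^{-2})^{-1} = \zeta(2)$ since we are restricting the Euler product for $\zeta(2)$ to a subset of primes. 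Rearranging yields $m/(\zeta(2)\phi(m)) \le \prod_{p|m}(1+1/p)$.

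For the middle two inequalities, observe that expanding $\prod_{p|m}(1+1/p) = \sum_{S} \prod_{p\in S}(1/p)$ over subsets $S$ of the prime divisors of $m$ gives exactly $\sum_{u|m,\ u\text{ squarefree}} 1/u$. Since squarefree divisors form a subset of all divisors and every summand is positive, this sum is at most $\sum_{u|m}1/u$, establishing the second inequality. For the third inequality, write $\sum_{u|m}1/u = \sigma(m)/m$ and use the multiplicative formula
\[
\frac{\sigma(m)}{m} = \prod_{p^a \| m}\frac{p^{a+1}-1}{(p-1)p^a} = \prod_{p^a\|m}\frac{p-p^{-a}}{p-1};
\]
dropping the nonnegative term $p^{-a}/(p-1)$ in each factor gives $\sigma(m)/m \le \prod_{p|m}\frac{p}{p-1} = \prod_{p|m}(1+\frac{1}{p-1})$, which completes the chain.
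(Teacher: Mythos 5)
Your proof is correct and follows essentially the same route as the paper: the squarefree-divisor expansion for the middle inequality, the identity $\prod_{p|m}(1+1/p)=\frac{m}{\phi(m)}\prod_{p|m}(1-p^{-2})$ together with $\prod_{p|m}(1-p^{-2})^{-1}\le\zeta(2)$ for the leftmost bound, and an Euler-product bound for $\sum_{u|m}1/u$ (you use the exact formula for $\sigma(m)/m$ where the paper uses the full geometric series $\prod_{p|m}(1+1/p+1/p^2+\cdots)$, an immaterial difference).
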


\begin{proof}
By considering the squarefree divisors of $m$, we have 
\[
\sum_{u|m}\frac{1}{u}\ge\prod_{p|m}\left(1+\frac{1}{p}\right).
\]
On the other hand, 
\[
\sum_{u|m}\frac{1}{u}\le\prod_{p|m}\left(1+\frac{1}{p}+\frac{1}{p^{2}}+\cdots\right)=\prod_{p|m}\left(1+\frac{1}{p-1}\right).
\]
Furthermore, 
\[
\prod_{p|m}\left(1+\frac{1}{p}\right)=\prod_{p|m}\left(1+\frac{1}{p-1}\right)\cdot\prod_{p|m}\left(1-\frac{1}{p^{2}}\right)=\frac{m}{\phi(m)}\prod_{p|m}\left(1-\frac{1}{p^2}\right) \ge\frac{m}{\zeta(2)\phi(m)}. \qedhere
\] 
\end{proof}

Our first aim is to prove Lemma \ref{lem:pseudoprimes-count}, which gives upper and lower bounds on the number of integers in an interval with certain number-theoretic properties. The following lemma, of a similar flavor, is a key component in the proof. Recall that $W(r)=\prod_{i=1}^{r}p_i$, where $p_i$ is the $i^{\textrm{th}}$ prime, and $\tau(r,m)=\phi(W(r)m)/(W(r)m)=\prod_{p|W(r)m}(1-1/p)$.

\begin{lem}\label{31appendixextra}
Let $r$, $n$ and $m$ be positive integers such that $m\in [n,\binom{n}{2}]$, $r\le n$ and $r$ is sufficiently large. For any interval $I=[x,2x)$ with $x\ge n^{1/6}$, the number of integers in $I$ which are coprime to $W(r)m$ is at most $8\tau(r,m)x$. If also $x \ge r^{1.5}$, then the number of integers in $I$ which are coprime to $W(r)m$ is at least $\frac{1}{4}\tau(r,m)x$. 
\end{lem}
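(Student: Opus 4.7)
The plan is to prove both estimates via a Selberg-sieve argument parallel to the proof of Lemma~\ref{lem:selberg}. Let $P$ denote the set of primes dividing $W(r)m$, decomposed as $P = P_0 \cup P_1$, where $P_0 = \{p_1, \ldots, p_r\}$ are the first $r$ primes and $P_1$ consists of primes $p > p_r$ dividing $m$. Since $m \le \binom{n}{2}$, one has $|P_1| \le \omega(m) = O(\log n/\log p_r)$, and hence $\sum_{p \in P_1} 1/p = O(\log n/(p_r \log p_r)) = o(1)$ for $r$ large; in particular $\prod_{p \in P_1}(1 - 1/p)^{-1} \le 2$. First I would dispose of a trivial case: if $p_r > 2x$, then every integer in $[x, 2x)$ has a prime factor strictly less than $p_r$, so the count $S := |\{n \in [x, 2x) : \gcd(n, W(r)m) = 1\}|$ is zero and the upper bound holds trivially; the lower-bound hypothesis $x \ge r^{1.5}$ gives $x > 2r\log r \ge p_r$ for $r$ large, so this trivial case does not arise there.

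For the upper bound with $p_r \le 2x$, I would apply the Selberg $\Lambda^2$-sieve with sieving parameter $z = x^{1/3}$ to obtain
\[
S \le (1+o(1))\, x \prod_{p \in P,\, p \le z}(1 - 1/p) + O(z^2).
\]
Writing the sieve factor as $\tau(r,m) \cdot \prod_{p \in P,\, p > z}(1 - 1/p)^{-1}$, Mertens' third theorem yields $\prod_{z < p \le p_r}(1 - 1/p)^{-1} \sim \log p_r/\log z \le \log(2x)/((\log x)/3) \le 4$ for $x$ large, while the contribution from $P_1 \cap (z, \infty)$ is at most $2$, so the missing-primes correction is bounded by $8$. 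The error $O(z^2) = O(x^{2/3})$ is dominated by $\tau(r,m)x$, since $\tau(r,m) \ge 1/(8\log n \log\log n)$ by inequality~(\ref{eq:ineq-tau}) and $x \ge n^{1/6}$ makes $x^{2/3}$ into $o(\tau(r,m)x)$ for large $n$. Choosing constants loosely, this gives $S \le 8\tau(r,m)x$.

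For the lower bound I would invoke the Jurkat--Richert linear sieve (or the Rosser--Iwaniec lower-bound sieve), which for $z = x^{1/3}$ yields $S(A, P, z) \ge (1 - o(1))\, x V(z) - O(z^2)$, where $V(z) = \prod_{p \in P,\, p \le z}(1-1/p)$ and $S(A, P, z)$ sieves only by primes of $P$ that are at most $z$. Passage from $S(A, P, z)$ to $S = S(A, P, \infty)$ rests on two observations: (i) primes $p > 2x$ divide no integer in $[x, 2x)$, so they may be ignored; (ii) primes in $P \cap (z, 2x]$ are finite in number, and an explicit inclusion--exclusion over them (using $\sum 1/p = O(1)$ there) produces $S \ge (1-o(1))\, x \prod_{p \in P,\, p \le 2x}(1-1/p) - O(z^2)$, which after reintroducing the bounded missing-primes factor beats $(1/4)\tau(r,m)x$. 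The hypothesis $x \ge r^{1.5}$ is used here to guarantee $p_r \le x/2$, so that primes in $(z, p_r]$ contribute a bounded (rather than exploding) factor. The principal obstacle throughout is precisely this twin balance: $z$ must be simultaneously large enough to tame $\prod_{p > z,\, p \in P}(1-1/p)^{-1}$ and small enough to keep the Selberg error $O(z^2)$ subdominant to $\tau(r,m)x$, which is exactly why the thresholds $x \ge n^{1/6}$ and $x \ge r^{1.5}$ appear.
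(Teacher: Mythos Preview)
Your argument has a genuine gap: the claim that $\sum_{p \in P_1} 1/p = o(1)$ (and hence $\prod_{p\in P_1}(1-1/p)^{-1}\le 2$) is not justified. You bound this sum by $|P_1|/p_r = O(\log n/(p_r\log p_r))$, but the hypothesis is only that $r$ is larger than some absolute constant, not that $r$ is large relative to $\log n$. If, say, $r$ is a fixed large constant and $m$ is the product of the primes $p_{r+1},\ldots,p_{r+K}$ with $K\asymp \log n/\log\log n$, then $\sum_{p\in P_1}1/p \asymp \log\log\log n$, which is unbounded. In this regime your Selberg bound degrades to $O((\log\log n)\,\tau(r,m)x)$, which does not give the stated $8\tau(r,m)x$. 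The same issue resurfaces in your lower-bound argument when you try to subtract off the contribution from primes in $P_1\cap(z,2x]$.

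The paper avoids this by a case split on the size of $r$ relative to $\log m$. When $r\ge (\log m)/100$, one has $|P_1|\le (\log m)/\log r$ and $|P_1|/p_r = O(1/\log r)$, so your type of argument works (the paper actually quotes a sieve bound for integers coprime to $W(r)$ alone and then handles the few primes in $P_1$ separately). When $r<(\log m)/100$, the total number $M$ of primes dividing $W(r)m$ is at most $r+2\log m/\log\log m<(\log n)/10$, so a crude inclusion--exclusion over all of them gives the count as $\tau(r,m)x$ up to an error $O(2^M)=O(n^{0.07})$, which is negligible against $\tau(r,m)x\ge n^{1/6}/(8\log n\log\log n)$. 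Your single-regime Selberg approach cannot cover the second case, because there the ``missing primes'' correction $\prod_{p\in P_1}(1-1/p)^{-1}$ can genuinely be as large as $m/\phi(m)$.
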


\begin{proof}
By \cite[Theorem 7.11]{MV}, for each interval $I=[x,2x)$ with $x \ge p_r/2$, 
the number of integers in $I$ which are coprime to $W(r)$ is at most $(1+o(1))\frac{x}{\log p_r} \le 2\tau(r,1)x$, where we used that $p_r=(1+o(1))r\log r$ and (\ref{eq:bound-Wrho}). For $x<p_r/2$, the number of integers in $I$ which are coprime to $W(r)$ is $0 < 2\tau(r,1)x$. If also $x \ge r^{1.5} > p_r$, then the number of integers in $I$ which are coprime to $W(r)$ is at least $\left(\frac{1}{2}-o(1)\right)\frac{x}{\log p_r} \ge \frac{1}{2}\tau(r,1)x$, again using (\ref{eq:bound-Wrho}).

Consider the case $r\ge (\log m)/100$. Then $\tau(r,1)\le 1/\log r \le 4\tau(r,m)$ by (\ref{eq:bound-Wrho}) and (\ref{eq:tau-log}), 
so the number of integers in $I$ which are coprime to $W(r)m$ is at most $8\tau(r,m)x$. For $x\ge r^{1.5}$, we have seen that there are at least $\frac{1}{2}\tau(r,1)x$ integers in $I$ which are coprime to $W(r)$. For each prime factor $p$ of $m$ that is larger than $p_r>r$, the number of integers in $I$ divisible by $p$ and coprime to $W(r)$ is the same as the number of integers in $[x/p,2x/p)$ coprime to $W(r)$, which is at most $2\tau(r,1)x/p$. Since there are at most $(\log m)/(\log r)$ such prime factors, the number of integers in $I$ which are coprime to $W(r)m$ is at least 
\begin{align*}
\frac{1}{2}\tau(r,1)x - \frac{\log m}{\log r} \cdot \frac{2\tau(r,1)x}{r} 
&\ge \frac{1}{2}\tau(r,1)x - \frac{200r\tau(r,1)x}{r\log r}\\
&> \tau(r,m)x/4,
\end{align*} 
where, in the first inequality, we used the assumption $r\ge (\log m)/100$ and, in the second inequailty, we used that $r$ is sufficiently large and $\tau(r,m) \le \tau(r,1)$ by (\ref{eq:mrho}). 

Next, consider the case $r<(\log m)/100$. By the inclusion-exclusion principle, the number of integers in $I=[x,2x)$ which are coprime to $W(r)m$ is
\[
x+\sum_{k=1}^{M}(-1)^{k}\sum_{\substack{p_1<p_2<\dots<p_k,\\p_1,p_2,\dots,p_k|W(r)m}}\frac{x}{p_1p_2\cdots p_k} + O(2^M),
\] 
which is within an additive $O(2^M)$ of $x\prod_{p|W(r)m}(1-1/p)$, where $M$ is the number of distinct primes that divide $W(r)m$. Since $M\le r+2(\log m)/(\log \log m) < (\log n)/10$ and $x\ge n^{1/6}$, the number of integers in $I$ coprime to $W(r)m$ is at least $\frac{1}{2}\tau(r,m)x$ and at most $2\tau(r,m)x$. 
\end{proof}

\begin{customlemma}{5.2}
Let $r$, $n$ and $m$ be positive integers such that $m\in [n,\binom{n}{2}]$, $r\le n$ and $r$ is sufficiently large. For any interval $I=[x,2x)$ with $x\ge n^{1/4}$, there are at most $8(m/\phi(m))\tau(r,m)x$ integers in $I$ of the form $qu$, where $u|m$, $u\le x^{1/16}$ and $q$ is coprime to $W(r)m$. If also $x \ge r^2$, then there are at least $\frac{1}{8}(m/\phi(m))\tau(r,m)x$  integers in $I$ of this form.
\end{customlemma}

\begin{proof}
Observe that for $x\ge n^{1/4}$ and each fixed $u|m$ with $u\le x^{1/16}$, Lemma \ref{31appendixextra} implies that the number of integers in $I$ of the form $qu$ where $q$ is coprime to $W(r)m$, which is the same as the number of integers in $[x/u,2x/u)$ which are coprime to $W(r)m$, is at most $8\tau(r,m)x/u$, where we used that $x/u \ge n^{1/6}$. If also $x\ge r^2$, then Lemma \ref{31appendixextra} similarly implies that the number of integers in $I$ of the form $qu$ where $q$ is coprime to $W(r)m$ is at least $\frac{1}{4}\tau(r,m)x/u$, where we used that $x/u \ge r^{1.5}$. 

Hence, the number of integers in $I$ of
the form $qu$, where $u|m$, $u\le x^{1/16}$ and $q$ is coprime
to $W(r)m$, is at least 
\begin{align*}
\frac{1}{4}\tau(r,m)\sum_{u|m,\,u\le x^{1/16}}\frac{x}{u} & \ge\frac{1}{4}\tau(r,m)\left(\sum_{u|m}\frac{1}{u}-\frac{\sigma(m)}{x^{1/16}}\right)x\\
 & \ge\frac{1}{4}\tau(r,m)\cdot\frac{m}{2\phi(m)}x\\
 & \ge\frac{1}{8}(m/\phi(m))\tau(r,m)x,
\end{align*}
where $\sigma(m)$ is the number of positive divisors of $m$, which is smaller
than $m^{1/100}$ for $m$ sufficiently large, and we used Lemma \ref{lem:sum-div} in the second inequality. Similarly, the number
of integers in $I$ of the form $qu$, where $u|m$, $u\le x^{1/16}$
and $q$ is coprime to $W(r)m$, is at most 
\[
8\tau(r,m)\sum_{u|m,\,u\le x^{1/16}}\frac{x}{u}\le8(m/\phi(m))\tau(r,m)x,
\] 
where we again used Lemma \ref{lem:sum-div}. 
\end{proof}

We now prove Lemma \ref{lem:selberg}, which gives an upper bound on the number of integers in an arithmetic progression which are coprime to $W(r)/\gcd(W(r),m)$. The proof employs the Selberg sieve. 

\begin{customlemma}{5.9} Let $r$ and $n$ be sufficiently large positive integers and $m\in [n,\binom{n}{2}]$. Let $X$ be an arithmetic progression of size $|X|\ge r^{1/16}$ with common difference $b\le n$. Then the number of elements
of $X$ which are coprime to $W(r)/\gcd(W(r),m)$ is at most 
\[
\frac{256|X|\log\log n}{\log r}.
\]
Furthermore, when $b=1$, the number of elements
of $X$ which are coprime to $W(r)/\gcd(W(r),m)$ is at most 
\[
256|X|\prod_{p|W(r),p\nmid m}(1-1/p). 
\]
\end{customlemma}

\begin{proof}
First, we prove the lemma in the case where the elements of the arithmetic progression are coprime to $b$. By the Selberg sieve \cite[Theorem 3.8]{MV}, applied with $q = b$ and $P = W(r)/\gcd(W(r),bm)$, which is coprime to $b$, the number of integers coprime to $W(r)/\gcd(W(r),bm)$ contained in any arithmetic progression of length $k\ge r^{1/16}$ and common
difference $b$ is at most 
\begin{equation*}
2k\prod_{p|(W(r)/\gcd(W(r),bm)),\,p\le \sqrt{k}}\frac{p-1}{p}\le 2k\prod_{p|(W(r)/\gcd(W(r),bm)),\,p\le r^{1/32}}\frac{p-1}{p}.
\end{equation*}
Since each prime $p\le r^{1/32}<r$ is either a divisor of $\gcd(W(r),bm)$ or a divisor of $W(r)/\gcd(W(r),bm)$, for $r$ sufficiently large, we have that
$$\left(\prod_{p|\gcd(W(r), bm)}\frac{p-1}{p}\right)\cdot \left(\prod_{p|(W(r)/\gcd(W(r),bm)),\,p\le r^{1/32}}\frac{p-1}{p}\right) \le \prod_{p\le r^{1/32}}\frac{p-1}{p} \le \frac{32}{\log r},$$ 
where we used Mertens' third theorem. 
Since $W(r)/\gcd(W(r),bm) \,|\,W(r)/\gcd(W(r),m)$, the number of integers coprime to $W(r)/\gcd(W(r),m)$ contained in any arithmetic progression of length $k\ge r^{1/16}$ and common
difference $b$ is at most 
\begin{equation}
\left(\prod_{p|\gcd(W(r), bm)}\frac{p}{p-1}\right)\cdot\frac{64k}{\log r}=\frac{\gcd(W(r), bm)}{\phi(\gcd(W(r), bm))}\cdot \frac{64k}{\log r}, \label{eq:Selberg}
\end{equation}assuming that the elements of the arithmetic progression are coprime to $b$. 

If the elements of $X$ are not coprime to $b$, let $d$ be the greatest common divisor of $b$ and the elements of $X$. Let $Y=\{x/d:x\in X\}$. Then $Y$ is an arithmetic progression of size $|X|$ and common difference $b/d$ whose elements are coprime to $b/d$. Furthermore, the number of elements of $X$ coprime to $W(r)/\gcd(W(r),m)$ is at most the number of elements of $Y$ coprime to $W(r)/\gcd(W(r),m)$. By (\ref{eq:Selberg}), the number of elements of $Y$ coprime to $W(r)/\gcd(W(r),m)$ is at most 
\[
\frac{\gcd(W(r), bm/d)}{\phi(\gcd(W(r), bm/d))}\cdot \frac{64|Y|}{\log r} \le \frac{\gcd(W(r), bm)}{\phi(\gcd(W(r), bm))}\cdot \frac{64|X|}{\log r},
\]
where we used that 
$$\frac{\gcd(W(r), bm/d)}{\phi(\gcd(W(r), bm/d))} = \prod_{p|\gcd(W(r), bm/d)}\frac{p}{p-1} \le \prod_{p|\gcd(W(r),bm)}\frac{p}{p-1} = \frac{\gcd(W(r), bm)}{\phi(\gcd(W(r), bm))}.$$Thus, for any arithmetic progression $X$ with common difference $b$, the number of integers coprime to $W(r)/\gcd(W(r),m)$ in $X$ is at most 
\begin{equation}
\frac{\gcd(W(r), bm)}{\phi(\gcd(W(r), bm))}\cdot \frac{64|X|}{\log r}.\label{eq:Selberg2}
\end{equation}
The first claim in the lemma follows immediately upon noticing that $bm\le n^3$, so $\frac{\gcd(W(r), bm)}{\phi(\gcd(W(r), bm))} < 2\log \log (bm) < 4\log \log n$. 

The second claim in the lemma follows from (\ref{eq:Selberg2}) by observing that when $b=1$, 
\begin{align*}
\frac{\phi(\gcd(W(r), bm))}{\gcd(W(r), bm)} \cdot \log r &\ge \prod_{p|\gcd(W(r),m)} \frac{p-1}{p} \cdot  \frac{1}{2\tau(r,1)} \\
&= \frac{1}{2} \prod_{p|\gcd(W(r),m)} \frac{p-1}{p} \prod_{p|W(r)}\frac{p}{p-1}\\
&= \frac{1}{2} \prod_{p|W(r),p\nmid m}(1-1/p)^{-1},
\end{align*}
where we used (\ref{eq:bound-Wrho}) in the first inequality. 
\end{proof}

\subsection{Further estimates for Subsection \ref{subsec:monochromatic-subset-sums}\label{appendix:monochromatic}}

In this subsection, we collect several important estimates that are used throughout Subsection \ref{subsec:monochromatic-subset-sums}.
To this end, let $n$ be a sufficiently large positive integer and $m \in [n,\binom{n}{2}]$. For a positive integer $\rho$, recall that $W(\rho)=\prod_{i=1}^{\rho}p_i$ and $\tau(\rho,m)=\phi(W(\rho)m)/(W(\rho)m)=\prod_{p|W(\rho)m}(1-1/p)$. We define $\rho(n,m)$ to be the smallest positive integer $\rho$ such that $$\rho/\tau(\rho,m) \ge n^2/\phi(m).$$ Note that $\rho/\tau(\rho,m)$ is increasing as a function of $\rho$ and 
\begin{equation}
\frac{m}{\phi(m)}\cdot \frac{W(\rho)}{\phi(W(\rho))} \ge \frac{1}{\tau(\rho,m)} \ge \max\left(\frac{m}{\phi(m)}, \frac{W(\rho)}{\phi(W(\rho))}\right). \label{eq:mrho}
\end{equation}
For sufficiently large $\rho$, we have, by Mertens' third theorem, that 
\begin{equation}
\frac{1}{\tau(\rho,1)} = \frac{W(\rho)}{\phi(W(\rho))} \in [1.6\log \rho, 1.8\log \rho].\label{eq:bound-Wrho}
\end{equation}Thus, 
\begin{equation}
\tau(\rho,m) \ge \frac{\phi(m)/m}{2\log \rho}. \label{eq:tau-log2}
\end{equation}
Hence, for $\rho$ sufficiently large with $\rho \le n$, 
\begin{equation}\label{eq:ineq-tau}
\frac{1}{\tau(\rho,m)} \le 4\log \rho \log \log m \le 8\log n\log \log n.
\end{equation}Furthermore, for $\rho\ge 10\log m/\log \log m$, noting that $m$ has at most $2\log m/\log \log m$ distinct prime factors larger than $10\log m/\log \log m$, we have 
\begin{equation*}
\prod_{p|m,p>p_\rho} (1-1/p)^{-1} \le \left(1-\frac{\log \log m}{10\log m}\right)^{-2\log m/\log \log m} \le 2, \label{eq:bound-tau-large}
\end{equation*} 
so 
\begin{equation}
\frac{1}{\tau(\rho,m)} \in [\log \rho, 4\log \rho]. \label{eq:tau-log}
\end{equation} 

The next claim gives the order of $\rho(n,m)$ when $m\le n^2/(\log n)^2$. 

\begin{claim}\label{claim:orderrho}
For $m\le n^2/(\log n)^2$,  
\begin{equation*}
\rho(n,m) = \Theta\left(\frac{n^2/\phi(m)}{\log(n^2/\phi(m))}\right).
\end{equation*}
\end{claim}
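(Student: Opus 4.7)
Write $N = n^2/\phi(m)$; the goal is to show $\rho(n,m) = \Theta(N/\log N)$. The key observation is that, by (\ref{eq:tau-log}), whenever $\rho \geq 10\log m/\log\log m$ we have $1/\tau(\rho,m) \in [\log\rho, 4\log\rho]$, so that $\rho/\tau(\rho,m) = \Theta(\rho\log\rho)$. Hence the defining inequality $\rho/\tau(\rho,m) \geq N$ behaves, up to constants, like $\rho\log\rho \geq N$, whose smallest solution is of order $N/\log N$. The hypothesis $m\le n^2/(\log n)^2$ enters only through the inequality $N\ge n^2/m\ge (\log n)^2$, which guarantees that $N$ is large; in particular, $\log N \geq 2\log\log m -O(1)$ and $N/\log N \gg \log m$ for $n$ sufficiently large.

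For the upper bound, I would pick $\rho_0 = \lceil 8N/\log N\rceil$. Since $N \geq (\log n)^2 \geq (\log m)^2/4$, one checks that $\rho_0 \geq 10\log m/\log\log m$ for $n$ large, so (\ref{eq:tau-log}) applies and gives
\[
\frac{\rho_0}{\tau(\rho_0,m)} \;\geq\; \rho_0\log\rho_0 \;\geq\; \frac{8N}{\log N}\cdot\frac{\log N}{2} \;=\; 4N \;\geq\; N,
\]
which yields $\rho(n,m)\le\rho_0 = O(N/\log N)$.

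For the lower bound, I would take $\rho_1 = \lfloor cN/\log N\rfloor$ with $c$ small, and show $\rho_1/\tau(\rho_1,m) < N$, which forces $\rho(n,m) > \rho_1$. This splits into two cases. If $\rho_1 \geq 10\log m/\log\log m$, then (\ref{eq:tau-log}) gives $1/\tau(\rho_1,m)\le 4\log\rho_1 \leq 4\log N$, so $\rho_1/\tau(\rho_1,m) \leq 4c N < N$ for $c<1/4$. If instead $\rho_1 < 10\log m/\log\log m$, then one cannot rely on (\ref{eq:tau-log}), so I would instead use (\ref{eq:mrho}) together with (\ref{eq:bound-Wrho}) and the standard estimate $m/\phi(m) = O(\log\log m)$ to obtain
\[
\frac{\rho_1}{\tau(\rho_1,m)} \;\leq\; \rho_1 \cdot \frac{m}{\phi(m)}\cdot \frac{W(\rho_1)}{\phi(W(\rho_1))} \;=\; O\!\left(\log m \cdot \log\log m\right),
\]
which is $o(N)$ since $N\geq (\log m)^2/4$. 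Either way $\rho_1/\tau(\rho_1,m)<N$, proving $\rho(n,m) = \Omega(N/\log N)$.

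The main (minor) obstacle is the non-uniform behavior of $1/\tau(\rho,m)$: for small $\rho$, the contribution of prime divisors of $m$ lying above $p_\rho$ can inflate $1/\tau(\rho,m)$ beyond $\Theta(\log\rho)$, so one cannot just quote (\ref{eq:tau-log}) over the full range. This is handled by the two-case split above, where the second case only arises when $\rho_1$ is much smaller than the target $\Theta(N/\log N)$ and a very crude estimate already suffices. The restriction $m \leq n^2/(\log n)^2$ is exactly what keeps $N$ large enough to make this crude estimate go through.
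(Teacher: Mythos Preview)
Your proposal is correct and follows essentially the same approach as the paper: both reduce to the approximation $\rho/\tau(\rho,m)\approx\rho\log\rho$ via (\ref{eq:tau-log}) once $\rho\ge 10\log m/\log\log m$, and both handle the small-$\rho$ regime with a crude bound showing $\rho/\tau(\rho,m)<N$ there. The only organizational difference is that the paper first proves $\rho(n,m)\ge 10\log m/\log\log m$ outright (using (\ref{eq:ineq-tau})) so that (\ref{eq:tau-log}) applies throughout, whereas you carry the case split through the lower-bound argument; the substance is the same.
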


\begin{proof}
Since $m\le n^2/(\log n)^2$, we have $n^2/\phi(m) > n^2/m \ge (\log n)^2$. 
Moreover, if $\rho$ is a positive integer such that $\rho < 10\log m/\log \log m$, then, by (\ref{eq:ineq-tau}), we have that 
\[
\rho/\tau(\rho,m) \le \frac{10\log m}{\log \log m} \cdot 4\log \rho \log \log m < (\log n)^2 < n^2/\phi(m).
\]
Thus, we must have $\rho(n,m) \ge 10\log m/\log \log m$. 

If now $\rho$ is a positive integer such that $\rho \ge 10\log m/\log \log m$, we have $\tau(\rho,m)^{-1} \in [\log \rho,4\log \rho]$ by (\ref{eq:tau-log}). Therefore, if $\rho \ge 16\frac{n^2/\phi(m)}{\log(n^2/\phi(m))}$, then, by monotonicity of $\rho \mapsto \frac{\rho}{\tau(\rho,m)}$, 
\[
\frac{\rho}{\tau(\rho,m)} \ge \frac{16\frac{n^2}{\phi(m)}\cdot \log\left(\frac{16n^2/\phi(m)}{\log(n^2/\phi(m))}\right)}{\log(n^2/\phi(m))} \ge 8\frac{n^2}{\phi(m)}
\]
and so $\rho(n,m) \le 16\frac{n^2/\phi(m)}{\log(n^2/\phi(m))}$. On the other hand, if $10\log m/\log \log m\le \rho \le  \frac{1}{16}\frac{n^2/\phi(m)}{\log(n^2/\phi(m))}$, then 
\[
\frac{\rho}{\tau(\rho,m)} \le \frac{\frac{n^2}{\phi(m)}\cdot 4\log\left(\frac{n^2/\phi(m)}{16\log(n^2/\phi(m))}\right)}{16\log(n^2/\phi(m))} \le \frac{1}{4}\frac{n^2}{\phi(m)}
\]
and so $\rho(n,m) \ge \frac{1}{16}\frac{n^2/\phi(m)}{\log(n^2/\phi(m))}$, as required. 
\end{proof}

Recall that $\psi(n,m)=\frac{m^{1/3}(m/\phi(m))}{(\log n)^{1/3}(\log\log n)^{2/3}}$ and ${\cal R}(n,m)=\min\left(\psi(n,m),\rho(n,m)\right)$. Using Claim \ref{claim:orderrho}, it is easy to show that ${\cal R}(n,m) = \Theta\left(\psi(n,m)\right)$ when $m = O\left( \frac{n^{3/2}(\log\log n)^{1/2}}{(\log n)^{1/2}}\right)$ and ${\cal R}(m,n) = \Theta(\rho(n,m))$ when $m=\Omega\left(\frac{n^{3/2}(\log\log n)^{1/2}}{(\log n)^{1/2}}\right)$.

Recall that in Subsection \ref{subsec:monochromatic-subset-sums}, we define $r=c{\cal R}(n,m)$ for a sufficiently small absolute constant $c$. The next claim establishes the existence of the integer $y$ used in Lemma \ref{lem:lem-choosey}. 

\begin{claim}\label{claim:cond-for-y}
Let $n$ and $m\in [n,\binom{n}{2}]$ be positive integers such that $n$ and $\rho(n,m)$ are sufficiently large. Let $r=c{\cal R}(n,m)$, where $c>0$ is sufficiently small. Then there exists an integer $y<n/2$ with 
$$m\in\left[\frac{y^{2}(m/\phi(m))\tau(r,m)}{25r},\frac{y^{2}(m/\phi(m))\tau(r,m)}{15r}\right].$$
Moreover, one may choose $y$ such that
\begin{equation} 
y\ge \max(r^2,n^{3/5}) \label{eq:ycond}
\end{equation}
and
\begin{equation} 
64 (m/\phi(m)) \tau(r, m) \frac{y}{r\log r} > n^{1/4}. \label{eq:ycond2}
\end{equation}
\end{claim}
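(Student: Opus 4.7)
\medskip

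\noindent\textbf{Proof proposal for Claim \ref{claim:cond-for-y}.} Set
\[
T \;=\; \frac{mr}{(m/\phi(m))\tau(r,m)} \;=\; \frac{r\phi(m)}{\tau(r,m)},
\]
so that the target condition on $y$ reads $y\in[\sqrt{15T},5\sqrt{T}]$. The plan is first to prove that this interval lies inside $[\max(r^2,n^{3/5}),n/2)$ and has length $\ge 1$, so any integer inside it is a valid choice, and then to check that the extra inequality \eqref{eq:ycond2} follows automatically from the lower bound $y\ge \sqrt{15T}$.

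Upper bound on $y$. Since $r\le c\,\mathcal{R}(n,m)\le c\rho(n,m)$ and $\tau(\cdot,m)$ is non-increasing in its first argument,
\[
\frac{r}{\tau(r,m)}\;\le\;\frac{c\rho(n,m)}{\tau(\rho(n,m),m)}.
\]
By minimality of $\rho(n,m)$, $(\rho-1)/\tau(\rho-1,m)<n^2/\phi(m)$, and $\tau(\rho-1,m)/\tau(\rho,m)\le(1-1/p_{\rho})^{-1}\le 2$, so $\rho(n,m)/\tau(\rho(n,m),m)\le 3n^2/\phi(m)$. Hence $T\le 3cn^2$, giving $5\sqrt{T}\le 5\sqrt{3c}\,n<n/2$ once $c$ is small enough. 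This, together with $T\ge 1$ (easily verified), ensures that $[\sqrt{15T},5\sqrt{T}]$ has length $(5-\sqrt{15})\sqrt{T}>1$ and so contains an integer.

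Lower bound on $y$. From $\tau(r,m)^{-1}\ge m/\phi(m)$ (inequality~\eqref{eq:mrho}) one gets $T\ge rm$. For the $r^2$ threshold, use $r\le c\psi(n,m)\le 3c\,m^{1/3}\log\log n$ (recall $\mathcal R\le \psi$), which gives $r^3\le O(c^3\,m(\log\log n)^3)\le 25m$ for small $c$ and large $n$; therefore $r^4\le 25rm\le 25T$, i.e.\ $5\sqrt T\ge r^2$. For the $n^{3/5}$ threshold, split into the two regimes used to define $\mathcal R$: in the $\psi$-regime, $rm\ge cm^{4/3}/(\log n)^{1/3}(\log\log n)^{2/3}\ge cn^{4/3}/\text{polylog}(n)\ge n^{6/5}$ for $n$ large; in the $\rho$-regime, $m\ge n^{3/2}(\log\log n)^{1/2}/(\log n)^{1/2}$ and $r\ge 1$, so again $rm\ge n^{6/5}$. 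Thus $T\ge n^{6/5}/25$ and $5\sqrt T\ge n^{3/5}$. Pick $y$ to be any integer in $[\max(\sqrt{15T},r^2,n^{3/5}),\,5\sqrt T]$; the previous estimates show this interval is non-empty.

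Verification of \eqref{eq:ycond2}. Since $y\ge\sqrt{15T}=\sqrt{15r\phi(m)/\tau(r,m)}$,
\[
64(m/\phi(m))\tau(r,m)\frac{y}{r\log r}
\;\ge\;\frac{64\sqrt{15}\,(m/\phi(m))\sqrt{\tau(r,m)\phi(m)/r}}{\log r}
\;\ge\;\frac{248\sqrt{m/r}}{\log r},
\]
where the last step uses $\tau(r,m)\phi(m)\ge\phi(m)^2/m$ and $(m/\phi(m))\cdot\phi(m)/\sqrt{mr}=\sqrt{m/r}$. Since $r\le c\psi(n,m)\le O(m^{1/3}\log\log n)$, we have $m/r\ge\Omega(m^{2/3}/\log\log n)\ge \Omega(n^{2/3}/\log\log n)$, which comfortably exceeds $n^{1/2}(\log r)^2/248^2$ for $n$ large. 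This gives \eqref{eq:ycond2} and completes the plan.

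The only mildly delicate point is the uniform bound $5\sqrt T\le n/2$, since it crucially uses that $c$ is chosen small enough to absorb the constant from $\rho(n,m)/\tau(\rho(n,m),m)\le 3n^2/\phi(m)$; the other inequalities are robust polynomial-versus-polylogarithmic comparisons that are easy to separate.
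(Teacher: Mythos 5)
Your overall strategy is the same as the paper's: set $T=r\phi(m)/\tau(r,m)$, so that the admissible $y$ are precisely the integers in $[\sqrt{15T},5\sqrt{T}]$, show this window lies in $[\max(r^{2},n^{3/5}),n/2)$ and contains an integer, and then deduce \eqref{eq:ycond2} from the lower bound on $y$. Where you genuinely diverge is the upper bound $5\sqrt{T}<n/2$: the paper splits into the two regimes of $m$ and argues separately (via $\tau(r,m)^{-1}\in[\log r,4\log r]$ in the first, via the definition of $\rho(n,m)$ in the second), whereas you use $r\le c\rho(n,m)$, monotonicity of $\rho\mapsto\rho/\tau(\rho,m)$ and minimality of $\rho(n,m)$ to get $T\le 3cn^{2}$ uniformly. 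That is a clean unification of the paper's case analysis for this part of the claim.

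Two of your estimates are, however, false as written. First, for the threshold $y\ge r^{2}$ you discard the $(\log n)^{1/3}$ in the denominator of $\psi$ and claim $r^{3}\le O(c^{3}m(\log\log n)^{3})\le 25m$ ``for small $c$ and large $n$''; since $c$ is an absolute constant, the factor $(\log\log n)^{3}$ cannot be absorbed, and this chain fails as $n\to\infty$. Keeping $\psi$ intact, $r\le c\psi\le 4c\,m^{1/3}\bigl(\tfrac{\log\log n}{\log n}\bigr)^{1/3}$ gives $r^{3}\le 64c^{3}m\tfrac{\log\log n}{\log n}\le m$ for small $c$, hence $r^{4}\le rm\le T$ and even $r^{2}\le\sqrt{T}<\sqrt{15T}$; this repair also removes the unaddressed issue of whether $[\max(r^{2},n^{3/5}),5\sqrt{T}]$ actually contains an integer when its left endpoint exceeds $\sqrt{15T}$ (with your weaker bound $r^{3}\le 25m$ that window could have length below $1$). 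Second, in verifying \eqref{eq:ycond2} you invoke $\tau(r,m)\phi(m)\ge\phi(m)^{2}/m$, i.e.\ $\tau(r,m)\ge\phi(m)/m$; the inequality runs the other way, since $\tau(r,m)=\prod_{p\mid W(r)m}(1-1/p)\le\prod_{p\mid m}(1-1/p)=\phi(m)/m$. The correct substitute is \eqref{eq:tau-log2}, $\tau(r,m)\ge\frac{\phi(m)/m}{2\log r}$, which turns your lower bound into $\Omega\bigl(\sqrt{m/r}/(\log r)^{3/2}\bigr)$; since $m/r\ge m^{2/3}\ge n^{2/3}$, this still comfortably exceeds $n^{1/4}$, so the conclusion survives, but the step must be rewritten. (The paper instead uses $y\ge r^{2}$ to get $y/r\ge\sqrt{y}$ and the same $\tau$ bound, arriving at $32\sqrt{y}/(\log n)^{2}>n^{1/4}$.)
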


\begin{proof}
We consider the cases $n\le m\le \frac{n^{3/2}(\log\log n)^{1/2}}{(\log n)^{1/2}}$ and $\frac{n^{3/2}(\log\log n)^{1/2}}{(\log n)^{1/2}}<m\le \binom{n}{2}$ separately.

\vspace{2mm}

{\noindent \bf Case 1:} $n\le m\le\frac{n^{3/2}(\log\log n)^{1/2}}{(\log n)^{1/2}}$.  

\vspace{2mm}

In this case, we have
$\frac{n^{1/3}}{(\log n)^{2/3}}\le r\le\frac{cCm^{1/3}(m/\phi(m))}{(\log n)^{1/3}(\log\log n)^{2/3}}$, where $C$ is some absolute constant independent of all other parameters. Since $r\ge \frac{n^{1/3}}{(\log n)^{2/3}}$, by (\ref{eq:tau-log}), we have $1/\tau(r,m) \in [\log r,4\log r]$. We also have 
\[
\sqrt{\frac{mr\log r}{m/\phi(m)}}\le\sqrt{\frac{cCm^{4/3}\log n}{(\log n)^{1/3}(\log\log n)^{2/3}}}\le n\sqrt{cC}.
\] Thus, for sufficiently small $c$, there exists an integer $y$ such that $y< \sqrt{\frac{25mr}{(m/\phi(m))\tau(r,m)}}<\frac{n}{2}$ and $y > \sqrt{\frac{15mr}{(m/\phi(m))\tau(r,m)}}$. 
This integer $y$ then satisfies $y<n/2$ and 
$$m\in\left[\frac{y^{2}(m/\phi(m))\tau(r,m)}{25r},\frac{y^{2}(m/\phi(m))\tau(r,m)}{15r}\right].$$
Furthermore, we have 
\begin{equation}
\frac{r^{3}(m/\phi(m))}{m(\log n)}\le\frac{(cC)^{3}m(m/\phi(m))^{4}}{m(\log n)^{2}(\log\log n)^{2}}<1/2.\label{eq:bound-r^2}
\end{equation} 
Since $r\ge \frac{n^{1/3}}{(\log n)^{2/3}}>n^{3/10}$, we also have 
\[
y > \sqrt{\frac{15mr\log r}{m/\phi(m)}} > \sqrt{\frac{mr\log n}{m/\phi(m)}} >  r^{2} > n^{3/5},
\] 
where we used (\ref{eq:bound-r^2}) in the third inequality. 
\medskip

\vspace{2mm}

{\noindent \bf Case 2:} $\frac{n^{3/2}(\log\log n)^{1/2}}{(\log n)^{1/2}}<m\le \binom{n}{2}$.

\vspace{2mm}

In this case, we have $cC^{-1}\rho(n,m)\le r\le cC\rho(n,m)$, where $C$ is again an absolute constant and we assume that $\rho(n,m)$ is sufficiently large. By the definition of $\rho(n,m)$, 
\[
\sqrt{\frac{mr/\tau(r,m)}{m/\phi(m)}}\le\sqrt{\frac{2cCmn^{2}/\phi(m)}{m/\phi(m)}}\le n\sqrt{2cC}.
\]
Thus, for sufficiently small $c$, there exists an integer $y$ such that $y< \sqrt{\frac{25mr}{(m/\phi(m))\tau(r,m)}}<\frac{n}{2}$ and $y > \sqrt{\frac{15mr}{(m/\phi(m))\tau(r,m)}}$. 
This integer $y$ then satisfies $y<n/2$ and 
$$m\in\left[\frac{y^{2}(m/\phi(m))\tau(r,m)}{25r},\frac{y^{2}(m/\phi(m))\tau(r,m)}{15r}\right].$$

If $\frac{n^{3/2}(\log\log n)^{1/2}}{(\log n)^{1/2}}<m<n^{7/4}$, we have $\rho(n,m) \ge n^{1/8}$, so $\tau(\rho(n,m),m) \le 1/\log \rho(n,m) \le 10/\log n$ by (\ref{eq:tau-log}). Using this, we obtain $$\phi(m)^2 \ge \frac{m^2}{4(\log \log m)^2} \ge \frac{n^3}{16(\log \log n)(\log n)} \ge 4\tau(\rho(n,m),m)^2 n^3.$$ If $m\ge n^{7/4}$, we also easily have $$\tau(\rho(n,m),m)^2 n^3 \le n^3 \le \frac{1}{4}\phi(m)^2.$$ Since $\frac{\rho(n,m)}{\tau(\rho(n,m),m)} \in [\frac{n^2}{\phi(m)},\frac{2n^2}{\phi(m)}]$ by the definition of $\rho(n,m)$, we obtain $\rho(n,m) \le \sqrt{n}$ in both ranges $\frac{n^{3/2}(\log\log n)^{1/2}}{(\log n)^{1/2}}<m<n^{7/4}$ and $n^{7/4}\le m \le \binom{n}{2}$. 
Moreover, since $r\ge cC^{-1}\rho(n,m)$, 
$$\tau(r,m) =\tau(\rho(n,m),m) \prod_{i \in (r,\rho(n,m)],p_i\nmid m}(1-1/p_i)^{-1} \le \frac{1.8\log \rho(n,m)}{1.6\log r}\tau(\rho(n,m),m)\le 25\tau(\rho(n,m),m),$$ where we used that $\rho(n,m)$ is sufficiently large. 
Hence, 
\begin{equation}
y \ge \sqrt{\frac{mr}{25(m/\phi(m))\tau(r,m)}}\ge \frac{\sqrt{cC^{-1}}}{25} \sqrt{\frac{m\rho(n,m)}{(m/\phi(m))\tau(\rho(n,m),m)}} \ge \frac{n\sqrt{cC^{-1}}}{25}\ge c^2 C^2 \rho(n,m)^2 \ge r^{2}, \label{eq:yr^2}
\end{equation}
where we used the definition of $\rho(n,m)$, the bound $\rho(n,m)\le \sqrt{n}$ and assumed $c$ is sufficiently small. Furthermore, from (\ref{eq:yr^2}), for $n$ sufficiently large, we have $$y\ge \frac{n\sqrt{cC^{-1}}}{25} > n^{3/5}.$$

\vspace{2mm}

Thus, in both Case 1 and Case 2, there exists a choice of $y < n/2$ with $$m\in\left[\frac{y^{2}(m/\phi(m))\tau(r,m)}{25r},\frac{y^{2}(m/\phi(m))\tau(r,m)}{15r}\right]$$
such that (\ref{eq:ycond}) also holds. Moreover, (\ref{eq:ycond2}) holds, since
\begin{equation*} 
64 (m/\phi(m)) \tau(r, m) \frac{y}{r\log r} \ge \frac{32y^{1/2}}{(\log n)^2} > n^{1/4},
\end{equation*}
where we used that $y\ge \max(r^2,n^{3/5})$, $\log r\le \log n$ and, by (\ref{eq:tau-log2}), $(m/\phi(m))\tau(r,m) \ge \frac{1}{2\log r} \ge \frac{1}{2\log n}$.
\end{proof}

\end{document}